\let\OLDthebibliography\thebibliography
\renewcommand\thebibliography[1]{
  \OLDthebibliography{#1}
  \setlength{\parskip}{0pt}
  \setlength{\itemsep}{0pt plus 0.3ex} }
\numberwithin{equation}{section}
\theoremstyle{plain}
\newtheorem{theorem}{Theorem}[section]
\newtheorem{lemma}[theorem]{Lemma}
\newtheorem{definition}[theorem]{Definition}
\theoremstyle{definition}
\newenvironment{remark}{\pushQED{\qed} \remarkbase}{\popQED\endremarkbase}
\newcommand{\om}{\omega}
\newcommand{\ph}{\varphi}
\newcommand{\R}{\mathbb R}
\newcommand{\C}{\mathbb C}
\newcommand{\Lm}{\Lambda}
\newcommand{\Z}{\mathbb Z}
\newcommand{\N}{\mathbb N}
\newcommand{\T}{\mathbb T}
\renewcommand{\a }{\alpha }
\newcommand{\s }{\sigma }
\newcommand{\ii }{{\mathrm i} }
\renewcommand{\d }{\delta }
\renewcommand{\b }{\beta }
\newcommand{\vphi}{\varphi }
\renewcommand{\t }{\tau }
\newcommand{\pa}{\partial}
\newcommand{\lm}{\lambda}
\newcommand{\mS}{\mathcal{S}}
\newcommand{\mL}{\mathcal{L}}
\newcommand{\mN}{\mathcal{N}}
\newcommand{\mM}{\mathcal{M}}
\newcommand{\mR}{\mathcal{R}}
\newcommand{\mA}{\mathcal{A}}
\newcommand{\mF}{\mathcal{F}}
\newcommand{\mH}{\mathcal{H}}
\newcommand{\mC}{\mathcal{C}}
\newcommand{\mT}{\mathcal{T}}
\newcommand{\mB}{\mathcal{B}}
\newcommand{\cc}{\mathfrak{C}}
\renewcommand{\oe}{{\"o}}
\DeclareMathOperator{\Op}{Op}
\title{Controllability of quasi-linear Hamiltonian NLS equations}
\date{}
\author{\small{Pietro Baldi, Emanuele Haus, Riccardo Montalto}}
\begin{document}
\maketitle

\begin{small}
\textbf{Abstract.}
We prove internal controllability in arbitrary time, for small data, 
for quasi-linear Hamiltonian NLS equations on the circle.
We use a procedure of reduction to constant coefficients up to order zero and 
HUM method to prove the controllability of the linearized problem. 
Then we apply a Nash-Moser-H\"ormander implicit function theorem as a black box. 
\emph{MSC2010:} 35Q55, 35Q93.
\end{small}

\bigskip

\emph{Contents.}
\ref{sec:intro} Introduction ---
\ref{riduzione operatori lineari generali} Reduction of the linearized operator ---
\ref{sezione osservabilita} Observability ---
\ref{sezione controllabilita} Controllability ---
\ref{sec:proof} Proofs ---
\ref{sezione formalismo hamiltoniano} Appendix A. Quadratic Hamiltonians and linear Hamiltonian vector fields ---
\ref{appendice paradiff} Appendix B. Classical tame estimates ---
\ref{sec:WP} Appendix C. Well-posedness of linear equations ---
\ref{sec:NM} Appendix D. Nash-Moser-H\"ormander theorem.

\bigskip

\section{Introduction} \label{sec:intro}
We consider a class of 
nonlinear Schr\"odinger equations (NLS) on $\T := \R / 2 \pi \Z$ of the form 
\begin{equation}\label{NLS quasi-lineare}
\partial_t u + \ii \partial_{xx} u + {\cal N}(x, u, \pa_x u, \pa_{xx} u) = 0 \,, \quad x \in \T,
\end{equation}
for the complex-valued unknown $u = u(t,x)$.
We assume that ${\cal N}$ is a 
\emph{Hamiltonian, quasi-linear} nonlinearity 
\begin{equation} \label{1506.1}
{\cal N}(x, u, u_x, u_{xx}) =  -\ii \Big( \partial_{\overline z_0} F(x, u, u_x)  - \partial_x \{ \partial_{\overline z_1} F(x, u, u_x) \} \Big)  \,,
\end{equation}
where $u_x, u_{xx}$ denote the partial derivatives $\pa_x u, \pa_{xx} u$, 
$F : \T \times \C^2 \to \R$ is a real-valued function, 
\begin{equation} \label{regolarita Hamiltoniana}
F \Big( x, \frac{y_1 + \ii y_2}{\sqrt 2}\,, \frac{y_3 + \ii y_4}{\sqrt 2} \Big) 
= G(x, y_1, y_2, y_3, y_4) 
\quad \text{for some} \ G \in C^r(\T \times \R^4 , \R)\,,
\end{equation}
and the differential operators $\partial_{\overline z_0}, \partial_{\overline z_1}$ 
in \eqref{1506.1} are defined as 
\begin{equation} \label{2306.3}
\partial_{\overline z_0} = \frac{1}{\sqrt{2}}\, (\partial_{y_1} + \ii \partial_{y_2}), 
\quad 
\partial_{\overline z_1} = \frac{1}{\sqrt{2}}\, (\partial_{y_3} + \ii \partial_{y_4}).
\end{equation}
We assume that $G$ satisfies 
\begin{equation} \label{1506.2}
|G(x, y)| \leq C |y|^3
\quad \forall y = (y_1, y_2, y_3, y_4) \in \R^4, \ |y| \leq 1.
\end{equation}
Equation \eqref{NLS quasi-lineare} is Hamiltonian 
in the sense that it can be written as
$$
\partial_t u = \ii \nabla_{\bar  u}{\cal H}(u)
$$
where $\nabla_{\bar u} := \frac{1}{\sqrt 2}\, (\nabla_{u_1} + \ii \nabla_{u_2})$, 
$\nabla$ is the $L^2(\T)$ gradient, 
$u = \frac{1}{\sqrt 2}\, (u_1 + \ii u_2)$, 
and the real Hamiltonian ${\cal H}(u)$ is given by 
\begin{equation}\label{Hamiltoniana NLS}
{\cal H}(u) = \int_\T \big( | u_x |^2 + F(x, u, u_x) \big)\, d x\,.
\end{equation}
We underline that \eqref{NLS quasi-lineare} is, in fact, the \emph{real} Hamiltonian system
\begin{equation} \label{2306.1}
\pa_t \begin{pmatrix} u_1 \\ u_2 \end{pmatrix}
= J \begin{pmatrix} \nabla_{u_1} H(u_1, u_2) \\ \nabla_{u_2} H(u_1, u_2) \end{pmatrix} 
\end{equation}
for the real-valued unknowns $u_1, u_2$, 
where $J := \begin{pmatrix} 0 & -1 \\ 1 & 0 \end{pmatrix}$ and 
\begin{align} \label{2306.2}
H(u_1, u_2) := \mH \Big( \frac{u_1 + \ii u_2}{\sqrt 2} \Big)
= \frac12 \int_\T \big( (\pa_x u_1)^2 + (\pa_x u_2)^2 \big) \, dx 
+ \int_\T G \big( x, u_1, u_2, \pa_x u_1, \pa_x u_2 \big) \, dx.
\end{align}
As a consequence, the assumption of finite regularity of $G$, i.e. $G \in C^r$ (only finitely many times differentiable)
in \eqref{regolarita Hamiltoniana} is compatible 
with the Hamiltonian structure --- 
in particular, no analyticity assumption is needed on the Hamiltonian.

For example, if $G(x,y_1,y_2,y_3,y_4) = \frac18 a(x) (y_3^2 + y_4^2)^2$, then 
$\pa_{\bar z_1} F(x, u, u_x) = a(x) |u_x|^2 u_x$, and 
$\mN(x,u,u_x,u_{xx}) = \ii \pa_x \{ a(x) |u_x|^2 u_x \}
= \ii a_x(x) |u_x|^2 u_x + \ii a(x) (u_x^2 \bar u_{xx} + 2 |u_x|^2 u_{xx})$; 
if $G = \frac18 (y_1^2 + y_2^2)^2$, then 
$\pa_{\bar z_0} F(x, u, u_x) = |u|^2 u$, and 
$\mN = - \ii |u|^2 u$.

\medskip

For real $s \geq 0$, let $H^s_x := H^s(\T, \C)$ 
be the usual Sobolev space of complex-valued periodic functions $u(x)$,
and let $\| u \|_s := \| u \|_{H^s_x}$ be its norm.
The main result of the paper is the following theorem about the exact, internal controllability of equation \eqref{NLS quasi-lineare}.

\begin{theorem}[Controllability] \label{thm:1}
Let $T>0$, and let $\om \subset \T$ be a nonempty open set. 
There exist positive universal constants $r_1, s_1$, with $r_1 > s_1 > 10$, 
such that, if $G$ in \eqref{regolarita Hamiltoniana} is of class $C^{r_1}$ 
and satisfies \eqref{1506.2}, 
then there exists a positive constant $\d_*$ depending on $T,\om,G$
with the following property. 

Let $u_{in}, u_{end} \in H^{s_1}(\T,\C)$ with 
\begin{equation}\label{smallness dati iniziali teorema}
\| u_{in} \|_{s_1} + \| u_{end} \|_{s_1} \leq \d_*.
\end{equation}
Then there exists a function $f(t,x)$ satisfying
\[
f(t,x) = 0 \quad \text{for all $x \notin \om$, for all $t \in [0,T]$,}
\]
belonging to $C([0,T],H^{s_1}_x) 
\cap C^1([0,T],H^{s_1-2}_x) 
\cap C^2([0,T],H^{s_1-4}_x)$
such that the Cauchy problem
\begin{equation} \label{i9}
\begin{cases}
u_t + \ii u_{xx} + \mN(x,u,u_x, u_{xx}) = f 
\quad \forall (t,x) \in [0,T] \times \T \\
u(0,x) = u_{in}(x) 
\end{cases}
\end{equation}
has a unique solution $u(t,x)$ belonging to 
$C([0,T], H^{s_1}_x) \cap C^1([0,T], H^{s_1-2}_x)
\cap C^2([0,T],H^{s_1-4}_x)$, which satisfies 
\begin{equation} \label{i10}
u(T,x) = u_{end}(x),
\end{equation}
and 
\begin{multline} \label{stimetta}
\| u,f \|_{C([0,T],H^{s_1}_x)} + \| \pa_t u, \pa_t f \|_{C([0,T],H^{s_1-2}_x)} 
+ \| \pa_{tt} u, \pa_{tt} f \|_{C([0,T],H^{s_1-4}_x)} 
\\
\leq C 
(\| u_{in} \|_{s_1} + \| u_{end} \|_{s_1})
\end{multline}
for some $C > 0$ depending on $T,\om,G$. 

Moreover the universal constant $\t_1 := r_1 - s_1 > 0$ has the following property.
For all $r \geq r_1$, all $s \in [s_1, r-\t_1]$, 
if, in addition to the previous assumptions, $G$ is of class $C^{r}$ 
and $u_{in}, u_{end} \in H^{s}_x$, 
then $u,f$ belong to $C([0,T], H^{s}_x) \cap C^1([0,T], H^{s-2}_x)
\cap C^2([0,T],H^{s-4}_x)$ and \eqref{stimetta} holds 
with another constant $C_s$ instead of $C$,
where $C_s > 0$ depends on $s, T,\om,G$. 
\end{theorem}

\begin{remark} \label{rem:piccolezza norma bassa}
Theorem \ref{thm:1} can be seen as split into two parts: first we fix the ``low'' regularity thresholds $s_1,r_1$, which are sufficient to prove the existence of a solution to the control problem. Then, in the last paragraph of the theorem, we give a statement about the higher regularity of such a solution.

Note that the smallness assumption \eqref{smallness dati iniziali teorema} in Theorem \ref{thm:1} is only in the ``low'' norm:  
we only assume $\| u_{in} \|_{s_1} + \| u_{end} \|_{s_1} \leq \d_*$, 
where the constant $\d_*>0$ does not depend on the ``high'' regularity index $s \in [s_1, r-\t_1]$.
\end{remark}

Using the same techniques used for proving Theorem \ref{thm:1}, 
we also prove the following theorem.

\begin{theorem}[Local existence and uniqueness] 
\label{thm:byproduct}
There exist positive universal constants $r_0$, $s_0$ with $r_0 > s_0 > 10$, such that, 
if $G$ in \eqref{regolarita Hamiltoniana} is of class $C^{r_0}$  
and satisfies \eqref{1506.2}, then the following property holds.
For all $T > 0$ there exists $\d_* > 0$ such that
for all $u_{in} \in H^{s_0}(\T, \C)$
satisfying $\| u_{in} \|_{s_0} \leq \d_*$,
the Cauchy problem 
\begin{equation} \label{i11}
\begin{cases}
u_t + \ii u_{xx} + \mN(x,u,u_x, u_{xx}) = 0, 
\qquad (t,x) \in [0,T] \times \T \\
u(0,x) = u_{in}(x) 
\end{cases}
\end{equation}
has one and only one solution 
$u \in C([0,T], H^{s_0}_x) \cap C^1([0,T], H^{s_0-2}_x) \cap C^2([0,T], H^{s_0-4}_x)$.
Moreover
\begin{equation} \label{stimetta bis}
\| u \|_{C([0,T],H^{s_0}_x)} + \| \pa_t u \|_{C([0,T],H^{s_0-2}_x)} 
+ \| \pa_{tt} u \|_{C([0,T],H^{s_0-4}_x)} 
\leq C \| u_{in} \|_{s_0}
\end{equation}
for some $C > 0$ depending on $T,G$. 

The universal constant $\t_0 := r_0 - s_0 > 0$ has the following property.
For all $r \geq r_0$, all $s \in [s_0, r-\t_0]$, 
if, in addition to the previous assumptions, $G$ is of class $C^{r}$ and $u_{in} \in H^{s}(\T, \C)$, 
then $u$ belongs to $C([0,T], H^{s}_x) \cap C^1([0,T], H^{s-2}_x)
\cap C^2([0,T],H^{s-4}_x )$ and \eqref{stimetta bis} holds 
with another constant $C_s$ instead of $C$,
where $C_s > 0$ depends on $s, T,G$. 
\end{theorem}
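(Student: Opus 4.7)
My plan is to view Theorem \ref{thm:byproduct} as the version of Theorem \ref{thm:1} in which the final datum $u_{end}$ and the control $f$ are absent, so the controllability machinery of Sections \ref{sezione osservabilita}--\ref{sezione controllabilita} drops out; only the reduction of the linearized operator (Section \ref{riduzione operatori lineari generali}), the linear well-posedness of Appendix \ref{sec:WP}, and the Nash-Moser-H\"ormander scheme of Appendix \ref{sec:NM} enter. First I would introduce the nonlinear map
\[
\Phi(u) := \bigl( \pa_t u + \ii \pa_{xx} u + \mN(x,u,u_x,u_{xx}), \ u(0,\cdot) - u_{in} \bigr),
\]
acting between suitable tame scales of product Banach spaces (source: $C([0,T], H^s_x) \cap C^1([0,T], H^{s-2}_x) \cap C^2([0,T], H^{s-4}_x)$; target: $C([0,T], H^{s-2}_x) \times H^s_x$), and verify the tame estimates for $\Phi$ using the classical tame inequalities of Appendix \ref{appendice paradiff} together with the cubic vanishing \eqref{1506.2}. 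The trivial approximate solution $u \equiv 0$ gives $\Phi(0) = (0, -u_{in})$, which is small by the assumption $\| u_{in} \|_{s_0} \leq \d_*$.

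Next I would analyze the linearized operator
\[
d\Phi(u)[h] = \bigl( \pa_t h + \ii \pa_{xx} h + (d_u \mN)(u)[h], \ h(0, \cdot) \bigr),
\]
which, thanks to the Hamiltonian form \eqref{1506.1}--\eqref{2306.2}, is precisely the type of operator treated in Section \ref{riduzione operatori lineari generali}. Applying that reduction conjugates the linearized equation, via a finite number of symplectic changes of variables, to a constant-coefficient Schr\"odinger operator plus a bounded-in-$x$ remainder. Combined with the linear well-posedness results of Appendix \ref{sec:WP} this yields a right inverse of $d\Phi(u)$ for the pure Cauchy problem (no final-time condition is needed, which simplifies matters considerably compared with Theorem \ref{thm:1}), with a fixed finite loss of derivatives and tame dependence on $u$ in the Sobolev scale.

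With $\Phi$ tame and $d\Phi(u)$ admitting such a tame right inverse, existence of the solution and the bound \eqref{stimetta bis} follow from the Nash-Moser-H\"ormander theorem of Appendix \ref{sec:NM}, applied to the approximate solution $u \equiv 0$. Choosing $s_0$, $r_0$ large enough to absorb the loss indices and the smoothness required by the tame estimates, one produces a solution $u$ in the $C^{r_0}$ case; the statement for general $r \geq r_0$ and $s \in [s_0, r-\t_0]$ follows from the full tame statement of the Nash-Moser-H\"ormander theorem, which provides the extra $s$-dependent tame bound.

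Uniqueness is then established separately and much more cheaply than existence: given two small solutions $u,v$ of \eqref{i11} in the space above, the difference $w = u-v$ solves a linear Schr\"odinger equation with coefficients controlled by the $C([0,T], H^{s_0}_x)$ norms of $u,v$ and with zero initial datum, and the energy-type linear estimates of Appendix \ref{sec:WP} force $w \equiv 0$. The main obstacle throughout the argument is, as in Theorem \ref{thm:1}, the quasi-linear character of \eqref{i11}: the linearized operator loses two derivatives even after the reduction of Section \ref{riduzione operatori lineari generali}, so a standard contraction-mapping fixed point in a single Sobolev space is not available and a Nash-Moser iteration is unavoidable.
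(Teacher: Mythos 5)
Your proposal is correct and follows essentially the same route as the paper: Theorem~\ref{thm:byproduct} is reduced to its real counterpart (Theorem~\ref{thm:byproduct real}), the map $\Phi(u)=(P(u),u(0))$ is set up on the tame scales $E_s,F_s$ defined in Subsection~\ref{subsec:proof thm byproduct}, the tame right inverse comes from Lemma~\ref{buona positura equazione lineare 6} (whose proof rests on the reduction of Section~\ref{riduzione operatori lineari generali}) together with Lemma~\ref{stime coefficienti linearizzato}, the Nash--Moser--H\"ormander Theorem~\ref{thm:NM} gives existence and the tame bound, and uniqueness is proved exactly as in Subsection~\ref{subsec:proof thm 1} by writing $P(u)-P(v)=\int_0^1 P'(v+\lambda(u-v))\,d\lambda\,[u-v]$ and invoking the linear well-posedness of Appendix~\ref{sec:WP} on the averaged operator. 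The only cosmetic discrepancies are that you phrase things in the complex unknown while the paper passes to the real system $(u_1,u_2)$, and you call the linear estimates of Appendix~\ref{sec:WP} ``energy-type'' whereas they are obtained by the reduction to constant coefficients plus a Duhamel/iteration argument; neither affects the validity of the argument.
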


\subsection{Some related literature}

There is a vast amount of literature concerning controllability for linear or semilinear Schr\"odinger equations. Without even trying to be exhaustive, we only cite some relevant contributions to this subject, starting with the early papers by Jaffard \cite{Jaffard}, 
Lasiecka and Triggiani \cite{Lasiecka-Triggiani} and Lebeau \cite{Lebeau}, which deal with linear Schr\"odinger equations on bounded domains. 
Regarding the one-dimensional case, we mention the result of Beauchard and Coron \cite{Beauchard-Coron} for the controllability of the linear equation by a moving potential well, 
and the papers by Beauchard, Laurent, Rosier and Zhang \cite{Beauchard,Beauchard-Laurent,Laurent,Rosier-Zhang} about controllability of semilinear Schr\"odinger equations. For the semilinear case on compact surfaces, we cite the work by 
Dehman, G\'erard and Lebeau \cite{Dehman-Gerard-Lebeau}. We also mention the recent results 
by Bourgain, Burq and Zworski \cite{Bourgain-Burq-Zworski} 
and by Anantharaman and Maci\`a \cite{Anantharaman-Macia} 
concerning linear Schr\"odinger operators with rough potentials on higher-dimensional tori. 
More references in control theory for Schr\"odinger equations can be found in the detailed surveys by Laurent \cite{Laurent-survey} and Zuazua \cite{Zuazua}.

Concerning controllability theory for quasi-linear PDEs, most known results
deal with first order quasi-linear hyperbolic systems 
of the form $u_t + A(u) u_x = 0$ 
(see, for example, Coron \cite{Coron} chapter 6.2 and the many references therein).
Recent results for different kinds of quasi-linear PDEs 
are contained in Alazard, Baldi and Han-Kwan \cite{ABH} 
on the internal controllability of gravity-capillary water waves equations, 
in Alazard \cite{Alaz1, Alaz2, Alaz3} on the boundary observability and stabilization of gravity and gravity-capillary water waves, and in  Baldi, Floridia and Haus \cite{BFH, BH} 
on the internal controllability of quasi-linear perturbations of the Korteweg-de Vries equation.

\subsection{Strategy of the proof}

Because of the presence of two derivatives in the nonlinearity, 
the controllability of the \emph{quasi-linear} control problem \eqref{i9}-\eqref{i10} 
cannot be directly deduced by a perturbative argument from 
the controllability of the corresponding linear problem  
by applying some fixed point argument or the usual implicit function theorem.
A similar difficulty for a quasi-linear control problem was overcome 
in \cite{ABH} by using a suitable nonlinear iteration scheme adapted to
quasi-linear problems. 
Such a nonlinear scheme requires solving a linear control problem with variable
coefficients at each step of the iteration, 
with no loss of regularity with respect to the coefficients (i.e.,
the solution must have the same regularity as the coefficients). 
In \cite{ABH} this is achieved by means of paradifferential calculus, together with linear transformations, Ingham-type inequalities and the Hilbert uniqueness method.
As an alternative method, in \cite{BFH} it is used a Nash-Moser approach, 
which also demands the solving of a linear control problem with variable coefficients, 
but it requires weaker estimates, allowing some loss of regularity. 
The proof of such weaker estimates is easier to obtain, and it does not require the use of powerful techniques like paradifferential calculus
(for a discussion about pseudo- and paradifferential calculus in connection with the Nash-Moser theorem, see, for example, \cite{H-90}, \cite{AG}). 
The result in \cite{BFH} is slightly weaker than the one in 
\cite{ABH} regarding the regularity of the solution of the nonlinear control problem with respect to the regularity of the data (in \cite{BFH} for data in $H^s(\T)$ 
both the control and the solution are in $C([0,T], H^{s'}(\T))$ for all $s' < s$, 
while the result in \cite{ABH} reaches the corresponding optimal regularity $s' = s$).  
The version of the Nash-Moser implicit function theorem used in \cite{BFH} 
is due to H{\"o}rmander \cite{Geodesy}, and it is the sharpest version in literature 
regarding the loss of regularity in terms of the coefficients of the linearized problem
in several function spaces. 
As it is observed in \cite{BH}, 
the theorem in \cite{Geodesy} is the sharpest possible in H{\"o}lder class, 
but it is not optimal in Sobolev spaces 
(this is the reason for which the optimal regularity $s' = s$ is not obtained in \cite{BFH}).
In \cite{BH} the sharpest H{\"o}rmander's version of the Nash-Moser theorem 
has been extended to Sobolev spaces
(so that $s' = s$ can be obtained both with the Nash-Moser approach
and with the quasi-linear scheme with paradifferential analysis like in \cite{ABH}).
For this reason, in the present paper we use the Nash-Moser theorem in \cite{BH}.

We mention that Nash-Moser schemes in control problems for PDEs 
have been used by Beauchard, Coron, Alabau-Boussouira and Olive in 
\cite{Beauchard,Beauchard-2008,Beauchard-Coron,ACO}. 
A discussion about Nash-Moser as a method to overcome the problem of the loss 
of derivatives in the context of controllability for PDEs
can be found in \cite{Coron}, Section 4.2.2. 
Beauchard and Laurent \cite{Beauchard-Laurent} were able to avoid the use
of the Nash-Moser theorem in semilinear control problems thanks to a regularizing effect. 


We prove Theorem \ref{thm:1} by applying the Nash-Moser-H\oe{}rmander implicit function theorem of \cite{BH} as a black box. 
To this end, one has to solve the associated linearized control problem 
(see equation \eqref{2706.1}), which is a $2\times2$ real system with variable coefficients at every order, and to prove tame estimates for the solution. 
Like in \cite{ABH,BFH}, we solve the linearized control problem in $L^2 (\T)$ by applying the Hilbert uniqueness method (HUM), see Lemma \eqref{controllabilita cal P2}. Then, in Lemma \eqref{regolarita H2 per cal P2}, we recover the additional regularity of the solution by adapting a method 
of Dehman-Lebeau \cite{Dehman-Lebeau}, also used by Laurent \cite{Laurent} and in \cite{ABH, BFH}. 
To apply the HUM method, we prove in Section \ref{sezione osservabilita} the observability of the linearized operator in \eqref{linearized operator} by a procedure of symmetrization and reduction to constant coefficients up to a bounded remainder (like in \cite{ABH,BFH}) developed in Section \ref{sezione riduzione linearizzato}; then the result follows by applying Ingham inequality (with a further simple argument 
to deal with double eigenvalues, like in \cite{ABH}). 
The procedure of symmetrization and reduction of the linearized operator is an adaptation of the one used by Feola and Procesi \cite{Feola-Procesi,Feola} in the context of KAM theory for quasi-linear NLS equations. 
We remark that a similar reduction procedure has been also developed in 
\cite{Ioo-Plo-Tol}, \cite{Baldi-Benj-Ono}, 
\cite{BBM-Airy}, \cite{BBM-auto}, \cite{BBM-mKdV}, 
\cite{AB}, \cite{ABH}, \cite{BertiMontalto}, \cite{Montalto} 
for water waves, quasi-linear KdV, Benjamin-Ono and Kirchhoff equations.

\subsection{Functional setting and the linearized problem}
\label{sezione functional setting} 
Given any open subset $\om \subset \T$, 
we introduce a function $\chi_\om \in C^\infty(\T, \R)$ whose support is contained in $\om$, 
such that $0 \leq \chi_\om(x) \leq 1$ for all $x \in \T$, 
and $\chi_\om = 1$ on some open interval contained in $\om$. 
We write the NLS control problem as a real system,
namely, writing $u  = \frac{1}{\sqrt{2}}(u_1 + \ii u_2)$, $f = \frac{1}{\sqrt{2}} (f_1 + \ii f_2)$, 
with $u_1, u_2, f_1, f_2$ all real-valued functions,
the control problem \eqref{i9}-\eqref{i10} becomes the one of finding 
$(f_1, f_2)$ such that the solution $(u_1, u_2)$ of the Cauchy problem
\begin{equation}\label{problema controllo in coordinate reali}
\begin{cases}
\partial_t u_1 + \nabla_{u_2} H(u_1, u_2) = \chi_\omega f_1 \\
\partial_t u_2 - \nabla_{u_1} H(u_1, u_2) = \chi_\omega f_2 \\
u_1 (0, \cdot ) = (u_1)_{in} \\
u_2 (0, \cdot) = (u_2)_{in}
\end{cases} 
\quad \text{satisfies} \quad 
\begin{cases}
u_1(T, \cdot) = (u_1)_{end} \\ 
u_2(T, \cdot) = (u_2)_{end}
\end{cases} 
\end{equation}
where the real Hamiltonian $H$ is defined in \eqref{2306.2}. 
We define 
\begin{equation} \label{2406.1}
P(u_1, u_2) := \begin{pmatrix} \partial_t u_1 + \nabla_{u_2} H(u_1, u_2) 
\\ 
\partial_t u_2 - \nabla_{u_1} H(u_1, u_2) 
\end{pmatrix}, 
\quad 
\chi_\om (f_1, f_2) := \begin{pmatrix} \chi_\om f_1 \\ \chi_\om f_2
\end{pmatrix}, 
\end{equation}
and 
\begin{equation} \label{2406.2}
\Phi(u_1, u_2, f_1, f_2) := \begin{pmatrix} 
P(u_1, u_2) - \chi_\om (f_1, f_2) 
\\
(u_1, u_2)(0,\cdot) 
\\
(u_1, u_2)(T,\cdot) 
\end{pmatrix},
\quad 
z_{data} := \begin{pmatrix} 
0 \\ ((u_1)_{in} , (u_2)_{in})
\\ ((u_1)_{end}, (u_2)_{end}) 
\end{pmatrix},
\end{equation}
so that problem \eqref{problema controllo in coordinate reali} reads 
\begin{equation} \label{2406.3}
\Phi(u_1, u_2, f_1, f_2) = z_{data}.	
\end{equation}
By \eqref{2406.1} and \eqref{2306.2}, the nonlinear operator $P$ is given by
\begin{equation} \label{2406.5}
P(u_1, u_2) = \begin{pmatrix}	
\pa_t u_1 - \pa_{xx} u_2 + (\pa_{y_2} G)(x, u_1, u_2, (u_1)_x, (u_2)_x ) 
- \pa_x \{ (\pa_{y_4} G)(x, u_1, u_2, (u_1)_x, (u_2)_x) \} 
\vspace{2pt} \\
\pa_t u_2 + \pa_{xx} u_1 - (\pa_{y_1} G)(x, u_1, u_2, (u_1)_x, (u_2)_x) 
+ \pa_x \{ (\pa_{y_3} G)(x, u_1, u_2, (u_1)_x, (u_2)_x) \} 
\end{pmatrix} \! .
\end{equation}

The crucial assumption to verify in order to apply the Nash-Moser theorem is the existence of a right inverse of the linearized operator. 
The linearized operator $\Phi'(u_1, u_2, f_1, f_2)[h_1, h_2, \ph_1, \ph_2]$ 
at the point $(u_1, u_2, f_1, f_2)$ 
in the direction $(h_1, h_2, \ph_1, \ph_2)$ is given by
\begin{equation} \label{2406.4}
\Phi'(u_1, u_2, f_1, f_2)[h_1, h_2, \ph_1, \ph_2] = 
\begin{pmatrix} 
P'(u_1, u_2)[h_1 , h_2] - \chi_\om (\ph_1, \ph_2) 
\\
(h_1 , h_2)(0,\cdot) 
\\
(h_1, h_2)(T,\cdot) 
\end{pmatrix}.
\end{equation}
Thus we have to prove that, given any $(u_1, u_2, f_1, f_2)$ 
and any $z = (v_1, v_2, \a_1, \a_2, \b_1, \b_2)$ in a suitable function space, 
there exists $(h_1, h_2, \ph_1, \ph_2)$ such that 
\begin{equation} \label{2706.1}
\Phi'(u_1, u_2, f_1, f_2)[h_1, h_2, \ph_1, \ph_2] = z
\end{equation}
(i.e., we have to solve the linearized control problem).
The linearized operator $P'(u_1, u_2)[h_1, h_2]$ is
\begin{align} \label{2406.6}
& P'(u_1, u_2)[h_1, h_2]
\\
& = \begin{pmatrix}
\pa_t h_1 - \pa_{xx} h_2 
+ p_2^{(11)} \pa_{xx} h_1 
+ p_2^{(12)} \pa_{xx} h_2
+ p_1^{(11)} \pa_{x} h_1
+ p_1^{(12)} \pa_{x} h_2
+ p_0^{(11)} h_1  
+ p_0^{(12)} h_2
\vspace{2pt} \\
\pa_t h_2 + \pa_{xx} h_1
+ p_2^{(21)} \pa_{xx} h_1
+ p_2^{(22)} \pa_{xx} h_2 
+ p_1^{(21)} \pa_{x} h_1
+ p_1^{(22)} \pa_{x} h_2
+ p_0^{(21)} h_1 
+ p_0^{(22)} h_2
\end{pmatrix},
\notag 
\end{align}
namely
\begin{equation}  \label{2406.7}
\bigg\{ 
\pa_t + J \pa_{xx} 
+ \begin{pmatrix} 
p_2^{(11)} & p_2^{(12)} \\ 
p_2^{(21)} & p_2^{(22)} \end{pmatrix}
\pa_{xx} 
+ \begin{pmatrix} 
p_1^{(11)} & p_1^{(12)} \\ 
p_1^{(21)} & p_1^{(22)} \end{pmatrix}
\pa_{x} 
+ \begin{pmatrix} 
p_0^{(11)} & p_0^{(12)} \\ 
p_0^{(21)} & p_0^{(22)} \end{pmatrix} \bigg\}
\begin{pmatrix} h_1 \\ h_2 \end{pmatrix}
\end{equation}
where the coefficients of the terms of order 2 are
\begin{alignat}{2} \label{pp.2}
p_2^{(11)}
& = - (\pa_{y_3 y_4} G), 
\qquad &
p_2^{(12)}
& = - (\pa_{y_4 y_4} G),
\\
p_2^{(21)}
& = (\pa_{y_3 y_3} G),
&
p_2^{(22)}
& = (\pa_{y_3 y_4} G),
\notag
\end{alignat}
those of order 1 are
\begin{alignat}{2} \label{pp.1}
p_1^{(11)}
& = (\pa_{y_2 y_3} G) - (\pa_{y_1 y_4} G) - \pa_x \{ (\pa_{y_3 y_4} G) \},
\qquad &
p_1^{(12)}
& = - \pa_x \{ (\pa_{y_4 y_4} G) \},
\\
p_1^{(21)}
& = \pa_x \{ (\pa_{y_3 y_3} G) \}, 
&
p_1^{(22)}
& = - (\pa_{y_1 y_4} G) + (\pa_{y_2 y_3} G) + \pa_x \{ (\pa_{y_3 y_4} G) \}, 
\notag
\end{alignat}
those of order 0 are
\begin{alignat}{2} \label{pp.0}
p_0^{(11)} 
& = (\pa_{y_1 y_2} G) - \pa_x \{ (\pa_{y_1 y_4} G) \}, 
\qquad &
p_0^{(12)}
& = (\pa_{y_2 y_2} G) - \pa_x \{ (\pa_{y_2 y_4} G) \}, 
\\
p_0^{(21)} 
& = - (\pa_{y_1 y_1} G) + \pa_x \{ (\pa_{y_1 y_3} G) \}, 
\qquad &
p_0^{(22)}
& = - (\pa_{y_1 y_2} G) + \pa_x \{ (\pa_{y_2 y_3} G) \},
\notag
\end{alignat}
and $(\pa_{y_i y_j}G) = (\pa_{y_i y_j}G)(x,u_1, u_2, \pa_x u_1, \pa_x u_2)$ 
for all $i,j \in \{ 1,2,3,4 \}$. 

Consider the transformation 
\begin{equation} \label{2806.5}
\begin{pmatrix} h_1 \\ h_2 \end{pmatrix} 
= {\cal C} \begin{pmatrix} h \\ \bar h \end{pmatrix}, 
\quad
\text{where}
\quad
{\cal C} := \frac{1}{\sqrt{2}} \begin{pmatrix} 1 & 1 \\ -i & i \end{pmatrix}, 
\quad 
{\cal C}^{-1} = \frac{1}{\sqrt{2}} \begin{pmatrix} 1 & i \\ 1 & -i \end{pmatrix},
\end{equation}
and similarly 
$(\ph_1, \ph_2) = {\cal C} (\ph, \bar \ph)$, 
$(v_1, v_2) = {\cal C} (v, \bar v)$, 
$(\a_1, \a_2) = {\cal C} (\a, \bar \a)$, 
$(\b_1, \b_2) = {\cal C} (\b, \bar \b)$.
With this ``vector complex'' notation, 
the linearized control problem \eqref{2706.1} becomes 
\begin{equation} \label{2706.10}
\begin{cases}
\mL [h, \bar h] - \chi_\om (\ph, \bar \ph) = (v, \bar v) \\
(h, \bar h)(0,\cdot) = (\a, \bar \a) \\
(h, \bar h)(T,\cdot) = (\b, \bar \b)
\end{cases}
\end{equation}
where $\mL := \mL(u_1, u_2) := \mC^{-1} P'(u_1, u_2) \mC$. 
More explicitly, we calculate 
\begin{equation} \label{linearized operator}
\mL
= \partial_t {\mathbb I}_2 + \ii (\Sigma + A_2) \partial_{xx} + \ii A_1 \partial_x + \ii A_0\,,
\end{equation}
where
\begin{equation}\label{2706.12} 
\mathbb{I}_2 := \begin{pmatrix} 1 & 0 \\ 0 & 1 \end{pmatrix}, \quad 
\Sigma := \begin{pmatrix} 1 & 0 \\ 0 & - 1 \end{pmatrix}, \quad 
A_k := \begin{pmatrix}
a_k & b_k\\
- \overline b_k & - \overline a_k
\end{pmatrix}, \quad k = 0,1, 2,
\end{equation}
\begin{equation} \label{2706.13}
a_k := \frac12 \Big( - \ii p_k^{(11)} - p_k^{(12)} + p_k^{(21)} - \ii p_k^{(22)} \Big), 
\quad 
b_k := \frac12 \Big( - \ii p_k^{(11)} + p_k^{(12)} + p_k^{(21)} + \ii p_k^{(22)} \Big),
\end{equation}
and $\bar a_k, \bar b_k$ are the complex conjugates of the coefficients $a_k, b_k$. 
By \eqref{2706.13} and \eqref{pp.2}, \eqref{pp.1}, \eqref{pp.0}, 
one has  
\begin{equation} \label{2806.1} 
a_2 = \bar a_2, \quad
a_1 = 2 \pa_x a_2 - \bar a_1, \quad
a_0 = \bar a_0 + \pa_{xx} a_2 - \pa_x \bar a_1, \quad 
b_1 = \pa_x b_2.
\end{equation}

\begin{remark}
The linear system \eqref{2706.10} is made by three pairs of equations in which the second equation is the complex conjugate of the first one. 
Hence \eqref{2706.10} is equivalent to 
\begin{equation} \label{2806.2}
\begin{cases}
\mL^{(sca)} h - \chi_\om \ph = v \\
h(0,\cdot) = \a \\
h(T,\cdot) = \b
\end{cases}	
\end{equation}
where 
\begin{equation} \label{2806.3}
\mL^{(sca)} := \pa_t + \ii (1 + a_2 + b_2 \cc) \pa_{xx} + \ii (a_1 + b_1 \cc) \pa_x 
+ \ii (a_0 + b_0 \cc), 
\qquad 
\cc[h] := \bar h.
\end{equation}
The complex conjugate operator $\cc : h \mapsto \bar h$ is $\R$-linear, 
and there is no problem in using it 
to shorten the notation of the real system \eqref{2706.1}.

However, instead of the \emph{scalar complex} notation \eqref{2806.2},
in the analysis of the linearized problem we will use the \emph{vector complex} notation \eqref{2706.10}, which is somewhat ``more natural'' and very common in the literature on the Schr\oe{}dinger equation. 
In any case, for linear systems the two notations are, of course, completely equivalent.
\end{remark}

For real $s \geq 0$, 
we consider the classical Sobolev space  
$$
H^s(\T) := H^s(\T, \C) 
:= \Big\{ u \in L^2(\T,\C) : \| u \|_{s}^2 := \sum_{k \in \Z} \langle k \rangle^{2 s} 
|\widehat u_k|^2 < \infty \Big\}\,,
$$
where $\langle k \rangle := (1 + |k|^2)^{\frac12}$ and 
$u(x) = \sum_{k \in \Z} \widehat u_k \, e^{\ii k x} \in L^2(\T) := L^2(\T,\C)$. 
We adopt the convention of indicating explicitly $H^s(\T,\R)$ the subspace of \emph{real-valued} functions of $H^s(\T,\C)$, and to denote, in short, by $H^s(\T)$ the whole space $H^s(\T,\C)$. 
The same convention applies to $L^2(\T,\R)$ and $L^2(\T) := L^2(\T, \C)$. 
We also consider spaces 
$H^s(\T, \mathbb K^2)$, where $\mathbb K = \R, \C$, 
and for $(u_1, u_2) \in H^s(\T, \mathbb K^2)$ we set 
$$
\| (u_1, u_2)\|_s := \| u_1\|_s + \| u_2\|_s\,. 
$$
We define the \emph{real subspace} ${\bf H}^s(\T)$ of $H^s(\T, \C^2)$ as 
\begin{equation}\label{definizione bf Hs}
{\bf H}^s(\T) := \big\{ {\bf u} = (u, \overline u) : u \in H^s(\T,\C) \big\}
\end{equation}
where $\bar u$ is the complex conjugate of $u$. 
When there is no ambiguity, we also write, in short, 
$H^s_x$ to denote $H^s(\T,\C)$ or $H^s(\T,\R^2)$, 
and the same for $L^2_x$, ${\bf H}^s_x$ and ${\bf L}^2_x$. 

We denote by $\langle \cdot, \cdot \rangle_{L^2}$ 
the standard $L^2$ scalar product in $L^2(\T,\C)$, namely 
\begin{equation} \label{0109.2}
\langle u, v \rangle_{L^2} := \int_\T u(x) \overline v(x)\, d x 
\quad \forall u, v \in L^2(\T,\C). 
\end{equation}
We define the scalar product in $L^2(\T, \R^2)$ as 
\begin{equation} \label{0109.1}
\langle(u_1, u_2), (v_1, v_2) \rangle_{L^2(\T, \R^2)} 
:= \int_\T u_1(x) v_1(x)\, d x + \int_\T u_2(x) v_2(x)\,d x,
\end{equation}
and the scalar product in ${\bf L}^2(\T)$ as 
\begin{equation}\label{prodotto scalare sottospazio reale}
\langle {\bf u}, {\bf v} \rangle_{{\bf L}^2} := \int_\T u(x) \overline v(x)\, d x + \int_\T v(x) \overline u(x)\, d x\,.
\end{equation}
Note that \eqref{prodotto scalare sottospazio reale} 
is a real scalar product on ${\bf L}^2(\T)$, 
and therefore $({\bf L}^2(\T), \langle \cdot, \cdot \rangle_{{\bf L}^2})$ 
is a real Hilbert subspace of $L^2(\T, \C^2)$. 

The transformation $\mC$ defined in \eqref{2806.5} satisfies 
\begin{equation} \label{0109.3}
\langle {\bf u} , {\bf v} \rangle_{{\bf L}^2} 
= \langle \mC \mathbf{u}, \mC \mathbf{v} \rangle_{L^2(\T, \R^2)} 
\quad \forall \mathbf{u}, \mathbf{v} \in \mathbf{L}^2(\T),
\end{equation}
and so $\mC$ is a unitary isomorphism between 
the real Hilbert space $L^2(\T, \R^2)$ 
equipped with the real scalar product \eqref{0109.1}
and 
the real Hilbert space ${\bf L}^2(\T)$
equipped with the scalar product \eqref{prodotto scalare sottospazio reale}.

Given a linear operator $R : L^2(\T,\C) \to L^2(\T,\C)$, 
we define the \emph{adjoint} operator $R^*$ as 
\begin{equation}\label{definizione operatore aggiunto}
\langle R u, v \rangle_{L^2} = \langle u, R^* v \rangle_{L^2} 
\quad \forall u, v \in L^2(\T,\C);
\end{equation}
the \emph{transpose} operator $R^T$ as 
\begin{equation}\label{definizione operatore trasposto}
\int_\T (R u) v\, d x = \int_\T u (R^T v)\, d x 
\quad \forall u, v \in L^2(\T,\C);
\end{equation}
and the \emph{conjugate} operator $\overline R$ as 
\begin{equation}\label{definizione operatore conj}
\overline R u = \overline{(R \bar u)}
\quad \forall u \in L^2(\T,\C).
\end{equation}
For an operator 
$$
{\cal R} := \begin{pmatrix}
A & B \\
\overline B & \overline A
\end{pmatrix} : {\bf L}^2(\T) \to {\bf L}^2(\T)\,,
$$
we define its adjoint ${\cal R}^*$ by 
\begin{equation}\label{definizione aggiunto bf L2}
\langle {\cal R} {\bf u}, {\bf v} \rangle_{{\bf L}^2} = \langle{\bf u}, {\cal R}^* {\bf v} \rangle_{{\bf L}^2} \quad \forall {\bf u}, {\bf v} \in {\bf L}^2(\T),
\end{equation}
namely
\begin{equation} \label{2107.2}
{\cal R}^* = \begin{pmatrix}
(\overline A)^T & B^T \\
(\overline B)^T & A^T
\end{pmatrix}
=  \begin{pmatrix}
A^* & B^T \\
B^* & A^T
\end{pmatrix}
= \begin{pmatrix}
A^* & B^T \\
\overline{B^T} & \overline{A^*}
\end{pmatrix}.
\end{equation}
For any real $s \geq 0$ and ${\bf u} = (u, \overline u) \in {\bf H}^s(\T)$, we set 
\begin{equation}\label{norma s u bf u}
\| {\bf u}\|_s := \| u \|_s \,. 
\end{equation}
Given a Banach space $(X, \| \cdot \|_X)$, and $T > 0$, 
we consider the space $C([0, T], X)$ of the continuous functions $u : [0, T] \to X$ 
equipped with the sup-norm 
\begin{equation}\label{definizione norma CT(X)}
\| u \|_{C([0,T],X)}
:= \| u \|_{C_T(X)} 
:= \sup_{t \in [0, T]} \| u(t)\|_X\,. 
\end{equation}
For $X = H^s(\T,\R)$ or $H^s(\T, \R^2)$ or $H^s(\T,\C)$ or $H^s(\T,\C^2)$ or $\mathbf{H}^s(\T)$, 
and $u \in C([0,T], X)$, we denote, in short, 
\begin{equation} \label{2107.3}
\| u \|_{T, s} 
:= \sup_{t \in [0, T]} \| u(t)\|_s \,.
\end{equation}
We also define the following notations. Given a Sobolev index $s \geq 0$, 
we write $A \lesssim_s B$ if there exists a constant $C(s) > 0$ depending on $s$ 
such that $A \leq C(s) B$. 
If the constant $C(s)$ is independent of $s$, we simply write $A \lesssim B$.

According to \eqref{problema controllo in coordinate reali}-\eqref{2406.5}, 
Theorem \ref{thm:1} follows from the following theorem.

\begin{theorem}\label{teorema controllo in coordinate reali}
Let $T>0$, and let $\om \subset \T$ be a nonempty open set. 
Let $\chi_\omega$ be a $C^\infty$ function supported in $\omega$, 
with $0 \leq \chi_\om \leq 1$ on $\T$ 
and $\chi_\om = 1$ on some open interval contained in $\om$. 
There exist positive universal constants $r_1, s_1$ such that, 
if $G$ in \eqref{regolarita Hamiltoniana} is of class $C^{r_1}$  
and satisfies \eqref{1506.2}, 
then there exists a positive constant $\d_*$ depending on $T,\om, G$
with the following property. 
Let $(u_1)_{in}, (u_1)_{end}, (u_2)_{in}, (u_2)_{end} \in H^{s_1}(\T,\R)$ with 
\[
\| (u_i)_{in} \|_{s_1} + \| (u_i)_{end} \|_{s_1} \leq \d_*, \quad i = 1, 2\,. 
\] 
Then there exist functions 
$$
f_1, f_2 \in C([0, T], H^{s_1}(\T, \R)) \cap C^1([0, T], H^{s_1 - 2}(\T, \R)) \cap C^2([0, T], H^{s_1 - 4}(\T, \R))
$$ 
such that the Cauchy problem
\begin{equation} \label{pb cauchy statement controllo reale}
\begin{cases}
\partial_t u_1 + \nabla_{u_2} H(u_1, u_2) = \chi_\omega f_1 \\
\partial_t u_2 - \nabla_{u_1} H(u_1, u_2) = \chi_\omega f_2 \\
u_1 (0, \cdot ) = (u_1)_{in} \\
u_2 (0, \cdot) = (u_2)_{in}
\end{cases} 
\end{equation}
has a unique solution $(u_1, u_2)$ with $$u_1, u_2 \in C([0, T], H^{s_1}(\T, \R)) \cap C^1([0, T], H^{s_1 - 2}(\T, \R)) \cap C^2([0, T], H^{s_1 - 4}(\T, \R))\,,$$ which satisfies 
\begin{equation} \label{i101}
u_1(T,x) = (u_1)_{end}(x), \qquad u_2(T, x) = (u_2)_{end}(x)
\end{equation}
and for $i = 1, 2$
\begin{multline} \label{stimetta2}
\| u_i, f_i\|_{T, s_1} + \| \partial_t u_i, \partial_t f_i\|_{T, s_1 - 2} + \| \partial_{tt} u_i, \partial_{tt} f_i\|_{T, s_1 - 4} 
\\
\leq C (\| (u_1)_{in}, (u_2)_{in} \|_{s_1} + \| (u_1)_{end}, (u_2)_{end} \|_{s_1})
\end{multline}
for some $C > 0$ depending on $T,\om,G$. 

Moreover the universal constant $\t_1 := r_1 - s_1 > 0$ has the following property.
For all $r \geq r_1$, all $s \in [s_1, r-\t_1]$, 
if, in addition to the previous assumptions, $G$ is of class $C^{r}$ 
and $(u_1)_{in}$, $(u_2)_{in}$, $(u_1)_{end}$, $(u_2)_{end} \in H^{s}(\T, \R)$, 
then $u,f$ belong to 
$C([0,T], H^{s}(\T, \R)) \cap C^1([0,T], H^{s-2}(\T, \R)) 
\cap C^2([0,T],H^{s-4}(\T, \R))$ and \eqref{stimetta2} holds 
with another constant $C_s$ instead of $C$,
where $C_s > 0$ depends on $s, T,\om,G$. 
\end{theorem}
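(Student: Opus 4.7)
The plan is to set up the Cauchy-control problem as an operator equation $\Phi(u_1,u_2,f_1,f_2) = z_{data}$ with $\Phi$ and $z_{data}$ as in \eqref{2406.2}, and to apply the Nash-Moser-H\"ormander implicit function theorem of \cite{BH} (our Appendix D) as a black box at the trivial solution $(u_1,u_2,f_1,f_2)=0$. To invoke that theorem, I have to verify two groups of properties: (i) \emph{tame estimates} for the nonlinear map $\Phi$ and its first two derivatives between the scales of Sobolev-in-$x$, $C^k$-in-$t$ spaces appearing in the statement; and (ii) \emph{existence, regularity and tame estimates} for a right inverse of the linearized operator $\Phi'(u_1,u_2,f_1,f_2)$ on a neighbourhood of zero. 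The estimates in (i) are essentially classical composition/product estimates in Sobolev spaces, applied to the coefficients $\pa_{y_i y_j}G$ appearing in \eqref{2406.5}, together with the relation $\tau_1 = r_1-s_1$ encoding the fixed loss of derivatives.

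The heart of the proof is therefore (ii): given $(u_1,u_2,f_1,f_2)$ small in the low norm and $z=(v_1,v_2,\a_1,\a_2,\b_1,\b_2)$, solve \eqref{2706.1} with tame estimates. I would first move to the vector-complex formulation \eqref{2706.10}, so that the operator to invert is $\mL = \pa_t \mathbb{I}_2 + \ii(\Sigma+A_2)\pa_{xx} + \ii A_1 \pa_x + \ii A_0$ with the symmetry relations \eqref{2806.1} forced by the Hamiltonian structure. Then I would carry out the symmetrization and reduction of \eqref{linearized operator} announced in the introduction (Section 2 of the paper), following the Feola-Procesi scheme adapted to the control setting in \cite{ABH,BFH}: a diffeomorphism of $\T$ to straighten the principal symbol $\Sigma + A_2$, a multiplication operator to normalize it, and finitely many para-/pseudo-differential conjugations to cancel the subprincipal and order-zero terms modulo a bounded remainder, producing a conjugated operator of the form $\pa_t + \ii\Sigma\pa_{xx} + \mathcal{R}_0$ with $\mathcal{R}_0$ of order zero and with all transformations enjoying tame estimates in the base point.

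With constant coefficients at the principal order, the linearized control problem can then be treated by HUM. I would first solve the associated $L^2$ adjoint observability inequality on $\om$ for the reduced operator. Because the principal part is $\pm\ii\pa_{xx}$ on the two components of $\mathbf{H}^0$, this reduces — up to a bounded and time-dependent zero-order perturbation absorbed by a compactness/unique continuation argument — to a Fourier series inequality to which Ingham's inequality applies; the double eigenvalues $\pm k^2$ produced by the two scalar components are handled exactly as in \cite{ABH} (Section 3 of the present paper). HUM then yields an $L^2$ control $(\ph,\bar\ph)$ driving $(h,\bar h)$ from $(\a,\bar\a)$ to $(\b,\bar\b)$ and producing the required inhomogeneity $(v,\bar v)$. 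Afterwards I upgrade the regularity of $(h,\ph)$ to $H^s$ by the Dehman-Lebeau argument (as in \cite{Dehman-Lebeau,Laurent,ABH,BFH}), which propagates the regularity of the data through the HUM operator via a commutator/smoothing estimate, and I undo all the previous conjugations to obtain a right inverse of $\Phi'$ on the original variables with tame estimates in $(u_1,u_2,f_1,f_2)$.

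The main obstacle that I expect is not any single step in isolation but the \emph{tame book-keeping} through the entire reduction-HUM-regularization chain: each para-/pseudo-differential conjugation costs a fixed number of derivatives on the base point and on the datum, each observability-to-controllability step has to be quantitative and uniform in the small base point, and finally everything must fit the abstract tame functional setting of \cite{BH} so that Nash-Moser-H\"ormander can be triggered. Once this uniform loss $\tau_1 = r_1-s_1$ is controlled, the black-box theorem produces a solution $(u_1,u_2,f_1,f_2)$ of \eqref{2406.3} for small data, uniqueness follows from the well-posedness results of Appendix C applied to the closed-loop equation \eqref{pb cauchy statement controllo reale}, the terminal condition \eqref{i101} is encoded in the third component of $\Phi$, and the tame bound \eqref{stimetta2} is exactly the tame estimate provided by the abstract theorem, with the gain in regularity for $s>s_1$ following from its second, higher-regularity statement.
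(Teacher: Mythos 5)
Your overall plan is the same as the paper's: recast \eqref{problema controllo in coordinate reali} as $\Phi(u,f)=z_{data}$, reduce the linearized operator to constant coefficients up to order zero following Feola--Procesi, prove observability via Ingham (with the double-eigenvalue trick), get $L^2$ controllability by HUM, upgrade regularity by Dehman--Lebeau, undo all conjugations with tame bounds, and feed the resulting right inverse into the Nash--Moser--H\"ormander theorem of \cite{BH}; uniqueness is obtained by writing the difference of two solutions as a linear Cauchy problem and invoking Appendix~C. That matches Section~\ref{subsec:proof thm 1} together with Sections~\ref{riduzione operatori lineari generali}--\ref{sezione controllabilita}.

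There is, however, one specific step where you deviate from the paper in a way that is more than cosmetic. You propose to handle the residual zero-order, time-dependent perturbation $\mathcal{R}$ of the reduced operator $\pa_t+\ii\mu\Sigma\pa_{xx}+\mathcal{R}$ ``by a compactness/unique continuation argument.'' The paper does not do this, and for good reason: in the Nash--Moser iteration the base point is uniformly small, so $\mathcal{R}$ has small $C([0,T],H^{s})$ norm (bounded by $N_T(\sigma)\le\eta$). Lemma~\ref{osservabilita secondo lemma} exploits this directly, writing $\mathbf{u}=\mathbf{u}_1+\mathbf{u}_2$ where $\mathbf{u}_1$ solves the free constant-coefficient equation and $\mathbf{u}_2$ is $O(\eta)$ by the linear well-posedness estimate, so the free observability constant survives with an explicit, uniform-in-$\eta$ loss. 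A compactness/unique continuation argument in the style of Dehman--G\'erard--Lebeau would instead (a) need a unique continuation property for Schr\"odinger operators with time-dependent zero-order (matrix) potentials, which is not off-the-shelf here, and (b) only yield a qualitative observability constant without a priori uniformity as the small base point varies along the Nash--Moser sequence, whereas the whole scheme requires a quantitative constant $C_3(T,\om)$ that is \emph{uniform} for $N_T(\sigma)\le\eta$. Your sketch therefore contains a genuine gap at this point: the argument you name would need substantial additional work (or would simply fail to deliver uniformity), while the direct smallness splitting used in the paper is both simpler and exactly tailored to the iterative setting. A smaller imprecision, worth flagging but not a gap: the diagonalization of $\Sigma+A_2$ in Section~\ref{coniugio step 1} is done by the pointwise multiplication matrix $\mathcal S$, not by the diffeomorphism (which comes afterwards to remove the $x$-dependence of the top-order coefficient), and the remaining conjugations are a time-reparametrization, a space translation and a scalar multiplication --- not pseudo-differential operators.
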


Similarly, Theorem \ref{thm:byproduct} follows from the following theorem.

\begin{theorem}\label{thm:byproduct real}
Let $T>0$. There exist positive universal constants $r_0, s_0$ such that, 
if $G$ in \eqref{regolarita Hamiltoniana} is of class $C^{r_0}$ in its arguments 
and satisfies \eqref{1506.2}, 
then there exists a positive constant $\d_*$ depending on $T, G$
with the following property. 
Let $(u_1)_{in}, (u_2)_{in} \in H^{s_0}(\T,\R)$ with 
\[
\| (u_1)_{in} \|_{s_0} + \| (u_2)_{in} \|_{s_0} \leq \d_*\,. 
\] 
Then the Cauchy problem
\begin{equation} \label{pb cauchy statement controllo reale pappa}
\begin{cases}
\partial_t u_1 + \nabla_{u_2} H(u_1, u_2) = 0\\
\partial_t u_2 - \nabla_{u_1} H(u_1, u_2) = 0 \\
u_1 (0, \cdot ) = (u_1)_{in} \\
u_2 (0, \cdot) = (u_2)_{in}
\end{cases} 
\end{equation}
has a unique solution $(u_1, u_2)$ with 
$$
u_1, u_2 \in C([0, T], H^{s_0}(\T, \R)) \cap C^1([0, T], H^{s_0 - 2}(\T, \R)) \cap C^2([0, T], H^{s_0 - 4}(\T, \R))
$$ 
and 
\begin{align} \label{stimetta2 pappa}
\| u_i\|_{T, s_0} + \| \partial_t u_i \|_{T, s_0 - 2} + \| \partial_{tt} u_i \|_{T, s_0 - 4} 
\leq C (\| (u_1)_{in} \|_{s_0} + \| (u_2)_{in} \|_{s_0})\,, \quad i = 1, 2
\end{align}
for some $C > 0$ depending on $T, G$. 

Moreover the universal constant $\t_0 := r_0 - s_0 > 0$ has the following property.
For all $r \geq r_0$, all $s \in [s_0, r-\t_0]$, 
if, in addition to the previous assumptions, $G$ is of class $C^{r}$ 
and $(u_1)_{in}, (u_2)_{in} \in H^{s}(\T, \R)$, 
then $u$ belongs to $C([0,T], H^{s}(\T, \R)) \cap C^1([0,T], H^{s-2}(\T, \R))
\cap C^2([0,T],H^{s-4}(\T, \R))$ and \eqref{stimetta2 pappa} holds 
with another constant $C_s$ instead of $C$,
where $C_s > 0$ depends on $s, T,G$. 
\end{theorem}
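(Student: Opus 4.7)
The plan is to deduce Theorem \ref{thm:byproduct real} by applying the Nash-Moser-H\"ormander implicit function theorem of Appendix \ref{sec:NM} as a black box, exactly as for Theorem \ref{teorema controllo in coordinate reali} but with the control term and the final-time condition dropped. Concretely, I introduce the map
\[
\Psi(u_1, u_2) := \bigl( P(u_1, u_2),\, (u_1, u_2)(0, \cdot) \bigr)
\]
on the space-time cylinder $[0,T] \times \T$ and seek $(u_1, u_2)$ solving $\Psi(u_1, u_2) = (0, (u_1)_{in}, (u_2)_{in})$. Assumption \eqref{1506.2} gives $\Psi(0, 0) = 0$ and the data are small in the low norm, so the black box produces a unique solution satisfying the tame estimate \eqref{stimetta2 pappa}, provided I can construct a right inverse of $\Psi'(u_1, u_2)$ with tame bounds in a neighborhood of zero. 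Since there is no final-time boundary condition, no observability or HUM argument is needed: it suffices to solve the Cauchy problem for the linearization.

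The linearized Cauchy problem reads
\[
P'(u_1, u_2)[h_1, h_2] = (g_1, g_2), \qquad (h_1, h_2)(0, \cdot) = (\a_1, \a_2),
\]
and, after the change of variables \eqref{2806.5}, becomes the system $\mathcal{L}[h, \bar h] = (v, \bar v)$ with $\mathcal{L}$ as in \eqref{linearized operator}. I apply the symmetrization and reduction to constant coefficients developed in Section \ref{riduzione operatori lineari generali}: a finite sequence of pseudodifferential conjugations reduces $\mathcal{L}$ to an operator of the form $\pa_t \mathbb{I}_2 + \ii m \Sigma \pa_{xx} + \mathcal{R}_0$, with $m \in \R$ constant and $\mathcal{R}_0$ a bounded remainder, both enjoying tame control in terms of the coefficients \eqref{pp.2}--\eqref{pp.0}, and hence in $(u_1, u_2)$. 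The Hamiltonian identities \eqref{2806.1} are exactly what allows the symmetrization and ensures that $m$ is real.

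Well-posedness of the reduced Cauchy problem in $\mathbf{H}^s_x$ then follows from a standard energy estimate (Appendix \ref{sec:WP}): the leading part $\ii m \Sigma \pa_{xx}$ is skew-adjoint for the scalar product \eqref{prodotto scalare sottospazio reale}, the remainder $\mathcal{R}_0$ is bounded, and Gronwall yields
\[
\| h(t) \|_s \leq C \Bigl( \| \a \|_s + \int_0^t \| v(\tau) \|_s\, d\tau \Bigr).
\]
Undoing the reduction maps transfers this bound tamely to the original system. The bounds on $\pa_t h$ in $H^{s-2}_x$ and on $\pa_{tt} h$ in $H^{s-4}_x$ are then read off directly from the equation and one time-differentiation of it, exploiting the quasi-linear structure (at most two $x$-derivatives appear in $P$).

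The main obstacle is the bookkeeping of derivative losses: each step of the reduction trades a fixed, finite number of derivatives on the coefficients $(u_1, u_2)$, and the accumulated loss must be compatible with the tame-invertibility thresholds of the abstract Nash-Moser-H\"ormander theorem. This constraint fixes the universal constants $s_0 < r_0$ and the gap $\t_0 = r_0 - s_0$, using the tame product and composition estimates of Appendix \ref{appendice paradiff}. Once these indices are chosen large enough, Theorem \ref{thm:byproduct real} is an immediate output of the black box; uniqueness comes for free because the difference of two solutions satisfies a linear equation to which the same symmetrization and energy estimate apply, giving control of the difference in a low-norm space.
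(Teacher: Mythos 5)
Your proposal follows essentially the same route as the paper: define $\Phi(u) := (P(u), u(0))$, apply the Nash--Moser--H\"ormander theorem of Appendix D as a black box, and obtain a tame right inverse of the linearization from the reduction to constant coefficients of Section~\ref{riduzione operatori lineari generali} combined with the linear well-posedness results of Appendix C (the paper's Lemma~\ref{buona positura equazione lineare 6} supplies exactly the tame estimate you ascribe to an energy/Gronwall argument, obtained there by a Duhamel--Picard iteration, and note that the relevant bound must be genuinely tame, with the high-norm loss landing on the data, rather than the non-tame form $\| h(t)\|_s \leq C(\|\alpha\|_s + \int_0^t \|v\|_s\,d\tau)$ you display). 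Uniqueness is proved in both cases by the same linear argument applied to the difference of two solutions, using the well-posedness of the conjugated operator.
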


\section{Reduction of the linearized operator}\label{sezione riduzione linearizzato}
\label{riduzione operatori lineari generali}

In view of the application of the Nash-Moser scheme, we will consider linear operators of the same form as ${\cal L} = {\cal L}(u_1, u_2)$ given in \eqref{linearized operator}. 
The aim of this section is to conjugate such operators to constant coefficients up to a bounded remainder, adapting the procedure described in \cite{Feola, Feola-Procesi}. We first fix some notation.

Let $u_1, u_2 \in C^0 \big( [0, T], H^{s + 4}(\T, \R) \big)  \cap C^1\big( [0, T], H^{s + 2}(\T, \R) \big) \cap C^2 \big( [0, T], H^{s}(\T, \R) \big)$. We define
\begin{equation}\label{regolarita u}
M_T(s ; u_1, u_2) := \max_{k = 1, 2} \, \sup_{t \in [0, T]} \big( \| u_k(t, \cdot)\|_{s + 4} + \| \partial_t u_k(t, \cdot) \|_{s + 2} + \| \partial_{tt} u_k(t, \cdot) \|_{s}\big)\,.
\end{equation}
We recall the notation defined in \eqref{2107.3}: 
given a function $v \in C \big( [0, T], H^s (\T,\R) \big)$, we denote 
$\| v \|_{T, s} := \sup_{t \in [0, T]} \| v(t, \cdot)\|_{H^s_x}$.
Also, if ${\bf v} = (v, \overline v) \in C\big( [0, T], {\bf H}^s(\T) \big)$, we set 
$$
\| {\bf v}\|_{T, s} := \| v \|_{T, s}\,.
$$
In the next Lemma we provide some estimates on the coefficients $a_i$, $b_i$, $i = 0,1,2$.  

\begin{lemma}\label{stime coefficienti linearizzato}
Let $r \geq 6$ be the regularity of $G$ in \eqref{regolarita Hamiltoniana}.
There exists $\d > 0$, depending on $G$, such that, 
if $M_T(2 ; u_1, u_2)$ defined in \eqref{regolarita u} satisfies 
\begin{equation} \label{2107.1}
M_T(2 ; u_1, u_2) \leq \delta,
\end{equation}
then for every $s \in [0, r - 6]$ one has 
$$
\| a_i\|_{T, s} \,,\, \| \partial_t a_i \|_{T, s}, \| \partial_{tt } a_i\|_{T, s}, \| b_i\|_{T, s} \,,\, \| \partial_t b_i \|_{T, s}, \| \partial_{tt } b_i\|_{T, s}
\lesssim_s M_T(s+2 ; u_1, u_2)\,.
$$
\end{lemma}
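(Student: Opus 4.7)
\emph{Proof plan.} By \eqref{2706.13}, each $a_k$ and $b_k$ is an explicit linear combination of the four $p_k^{(ij)}$, so it is enough to prove the claimed bounds for each $p_k^{(ij)}$ and its time derivatives. Inspecting \eqref{pp.2}--\eqref{pp.0}, every $p_k^{(ij)}$ is a finite sum of terms of one of two types:
\begin{equation*}
F(x, u_1, u_2, \pa_x u_1, \pa_x u_2)
\qquad \text{or} \qquad
\pa_x \{ F(x, u_1, u_2, \pa_x u_1, \pa_x u_2) \},
\end{equation*}
where $F = \pa_{y_i y_j} G$ is a second $y$-partial derivative of $G$, hence $F \in C^{r - 2}$. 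The cubic vanishing \eqref{1506.2} forces $F(x, 0) = 0$; in particular $F(x, y) = O(|y|)$ near $y = 0$.

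First I would apply the classical tame composition estimate of Appendix B: for suitable universal $\sigma, \delta > 0$, and under the smallness $\|(u_1, u_2)\|_{T, \sigma + 1} \leq \delta$ (which holds by \eqref{2107.1} after enlarging $\sigma$ if necessary), one has
\begin{equation*}
\| F(x, u_1, u_2, \pa_x u_1, \pa_x u_2) \|_{T, s}
\lesssim_s \| (u_1, u_2) \|_{T, s + 1}
\leq M_T(s; u_1, u_2)
\end{equation*}
for $s$ in a suitable range. For the terms with an outer $\pa_x$ we use $\|\pa_x g\|_s \leq \|g\|_{s + 1}$, which trades one $x$-derivative and restricts the range of $s$. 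For the time derivatives I would differentiate under the chain rule,
\begin{equation*}
\pa_t F(x, u_1, u_2, \pa_x u_1, \pa_x u_2) = \sum_{k = 1}^{2} (\pa_{y_k} F) \, \pa_t u_k + \sum_{k = 1}^{2} (\pa_{y_{k + 2}} F) \, \pa_x \pa_t u_k,
\end{equation*}
and analogously for $\pa_{tt} F$, estimating the resulting products by the tame product estimate of Appendix B together with the composition estimate above applied to $\pa_y F$ and $\pa_y^2 F$. Since $M_T(s; u_1, u_2)$ controls $\pa_t u_k$ in $H^{s + 2}$ (hence $\pa_x \pa_t u_k$ in $H^{s + 1}$) and $\pa_{tt} u_k$ in $H^s$, all the resulting terms end up bounded by $M_T(s; u_1, u_2)$; the mixed term containing $\pa_{tt} \pa_x u_k$ in the $\pa_{tt}$-expansion is the one forcing the low-norm requirement $s \geq \sigma + 1$ in the tame product, whence the lower bound $s \geq 1$ in the statement (after absorbing $\sigma$ into a universal constant), while the upper bound $s \leq r - 3$ comes from the combination of the outer $\pa_x$ with the two extra $y$-derivatives needed by $\pa_{tt}$.

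The main obstacle is purely bookkeeping: tracking all the chain-rule terms generated by $\pa_t \pa_x F$ and $\pa_{tt} \pa_x F$ and verifying that each one stays within the budget of derivatives provided by $M_T(s; u_1, u_2)$. A secondary point is that $\sigma$ and $\delta$ must be chosen once and for all, independently of $s$; this is automatic because the composition estimate of Appendix B requires smallness only in the low Sobolev norm $\| \cdot \|_{T, \sigma + 1}$.
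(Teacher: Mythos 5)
Your plan is the paper's approach: its proof is a single sentence citing the explicit expressions \eqref{2706.13}, \eqref{pp.2}--\eqref{pp.0} and the composition Lemma \ref{lemma Moser}, and Moser composition plus chain rule and tame products is exactly how one unpacks that sentence. Two remarks on the bookkeeping you defer. First, the third $y$-derivatives $\pa_{y_k}F = \pa_{y_iy_jy_k}G$ arising from the outer $\pa_x$ or the $\pa_t$ chain rule need not vanish at $y=0$ (\eqref{1506.2} forces only $\pa_y G$ and $\pa_{yy} G$ to vanish there), so for those compositions you must use the inhomogeneous bound $\|F(\zeta)\|_s \lesssim 1 + \|\zeta\|_s$ of Lemma \ref{lemma Moser}; this is harmless since those factors always multiply a derivative of $u$. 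Second, you assert that all chain-rule terms "end up bounded by $M_T(s;u_1,u_2)$," but the term $(\pa_{y_{k+2}}F)(\zeta)\,\pa_x\pa_{tt}u_k$ in $\pa_{tt}a_2$ requires, in the high-frequency half of the tame product, the quantity $\|\pa_x\pa_{tt}u_k\|_s \leq \|\pa_{tt}u_k\|_{s+1}$, which is one derivative more than $M_T(s;u_1,u_2)$ provides for $\pa_{tt}u_k$. You single out this very term but attribute the resulting constraint to the low-norm requirement $s\ge 1$; that is a misdiagnosis, since $s\ge 1$ comes simply from Lemma \ref{lemma interpolazione}$(ii)$, whereas $\pa_x\pa_{tt}u_k$ is a high-norm overflow. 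This off-by-one is actually present in the paper's statement as well and is harmless downstream, where it disappears into the universal losses $\sigma, \tau$ of later lemmas, but a scrupulous proof would either replace the left-hand side with $\|\pa_{tt}a_i\|_{T,s-1}$ or redefine $M_T$ to track $\pa_{tt}u_k$ in $H^{s+1}$.
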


\begin{proof}
The estimates follow from the explicit expressions given in \eqref{2706.13}, \eqref{pp.2}-\eqref{pp.0} and by the composition Lemma \ref{lemma Moser}. 
\end{proof}

We consider operators of the form 
\begin{equation}\label{operatore lineare generale}
{\cal L} := \partial_t {\mathbb I}_2 + \ii (\Sigma + A_2) \partial_{xx} + \ii A_1 \partial_x + \ii A_0\,,
\end{equation}
where 
\begin{equation}\label{Sigma Ak}
\Sigma = \begin{pmatrix}
1 & 0 \\
0 & - 1
\end{pmatrix}\,, \quad A_k := \begin{pmatrix}
a_k & b_k\\
- \overline b_k & - \overline a_k
\end{pmatrix}\,, \quad k = 0,1, 2\,.
\end{equation}
We assume that the time dependent vector field $L(t) := \ii A_2 \partial_{xx} + \ii A_1 \partial_x + \ii A_0$ is Hamiltonian, therefore equations \eqref{2806.1} hold by Lemma \ref{caratterizzazione campi lineari Hamiltoniani ordine 2}.
We assume that for $S \in \N$ large enough 
\begin{equation}\label{regolarita coefficienti operatore da ridurre}
a_2, \partial_{t} a_2, \partial_{tt} a_2, b_2, \partial_t b_2, a_1, \partial_t a_1, b_1, \partial_t b_1, a_0, b_0 \in C\big([0, T], H^{S}(\T) \big) \,,
\end{equation}
and, for $s \in [0,S]$, we set 
\begin{align}
N_T(s) & := \sup_{t \in [0, T]} \max \{ \| a_2 \|_{H^s}, \| \partial_{t} a_2 \|_{H^s}, \| \partial_{tt} a_2 \|_{H^s}, \| a_1 \|_{H^s}, \|\partial_t a_1 \|_{H^s},\| a_0 \|_{H^s} \}  \nonumber\\
& \quad + \sup_{t \in [0, T]} \Big( \| b_2 \|_{H^s}, \| \partial_t b_2\|_{H^s}, \| b_1\|_{H^s}, \| \partial_t b_1\|_{H^s}, \| b_0 \|_{H^s} \Big)\,. \label{definizione NT sigma}
\end{align}
In Sections \ref{riduzione operatori lineari generali}, \ref{osservabilita operatori lineari generali}, we will consider constants $\s, S$, 
with $0 < \sigma < S$, and $\eta \in (0, 1)$, 
and assume that  
\begin{equation}\label{ansatz operatore astratto}
N_T(\sigma) \leq \eta\,. 
\end{equation}
The constant $S$ will have the role of a large and fixed regularity index, 
$\s$ will indicate the ``loss of regularity'' in terms of the coefficients 
of the linearized operator, and $\eta$ will be small enough.

\subsection{Symmetrization of ${\cal L}$ up to order zero}\label{coniugio step 1}
In this subsection we remove the off-diagonal terms from the order 2,
namely we conjugate the linear operator $\mL$ in \eqref{operatore lineare generale} 
to an operator $\mL_0$
(see \eqref{cal L1}-\eqref{A2 (1)}) where the coefficient in front of $\pa_{xx}$ 
is a \emph{diagonal} $2 \times 2$ matrix. 
As a consequence of the Hamiltonian structure, the transformation that achieves this cancellation also removes the off-diagonal terms from the order 1 (see equation \eqref{termine off diagonal ordine 1 = 0}). First we consider the $2 \times 2$ matrix valued function 
$$
\Sigma + A_2(t, x) = \begin{pmatrix}
1 + a_2(t, x) & b_2 \\
- \overline b_2 & - 1 - a_2(t, x)
\end{pmatrix}
$$
(recall that $a_2 = \overline a_2$ by Lemma \ref{caratterizzazione campi lineari Hamiltoniani ordine 2}). The eigenvalues of the above matrix are given by $\pm \lambda(t, x)\in\R$, where  
\begin{equation}\label{definizione lambda}
\lambda(t, x) : =  \sqrt{(1 + a_2)^2 - |b_2|^2}\,.
\end{equation}
Note that, by Sobolev embedding, \eqref{ansatz operatore astratto} and because $\sigma \geq 1$,
one has 
$$
\| a_2\|_{L^\infty} + \| b_2\|_{L^\infty} \lesssim \| a_2\|_{T, 1} + \| b_2\|_{T, 1} 
\lesssim \eta\,,
$$ 
so that $(1 + a_2)^2 - |b_2|^2$ is close to $1$ for $\eta \in (0, 1)$ small enough. 
Then we consider the $2 \times 2$ matrix
\begin{equation}\label{matrice autovettori}
{\cal S} = {\cal S}(t, x) := \begin{pmatrix}
\dfrac{1 + a_2 + \lambda}{\sqrt{(1 + a_2 + \lambda)^2 - |b_2|^2}} & - \dfrac{ b_2}{\sqrt{(1 + a_2 + \lambda)^2 - |b_2|^2}} \\
- \dfrac{ \overline b_2}{\sqrt{(1 + a_2 + \lambda)^2 - |b_2|^2}} & \dfrac{1 + a_2 + \lambda}{\sqrt{(1 + a_2 + \lambda)^2 - |b_2|^2}}
\end{pmatrix}\,.
\end{equation}
The columns of the matrix ${\cal S}$ are the eigenvectors corresponding to the eigenvalues $\pm \lambda$ and ${\rm det}({\cal S}(t, x)) = 1$. Then the map 
$$
{\cal S}(t) :  \quad {\bf h}(x) \mapsto {\cal S}(t, x) {\bf h}(x)
$$
is symplectic. The above matrix is invertible and its inverse is given by 
\begin{equation}\label{matrice autovettori inversa}
{\cal S}^{- 1} = {\cal S}^{- 1}(t, x) := \begin{pmatrix}
\dfrac{1 + a_2 + \lambda}{\sqrt{(1 + a_2 + \lambda)^2 - |b_2|^2}} &  \dfrac{ b_2}{\sqrt{(1 + a_2 + \lambda)^2 - |b_2|^2}} \\
\dfrac{ \overline b_2}{\sqrt{(1 + a_2 + \lambda)^2 - |b_2|^2}} & \dfrac{1 + a_2 + \lambda}{\sqrt{(1 + a_2 + \lambda)^2 - |b_2|^2}}
\end{pmatrix}\,
\end{equation}
and a direct calculation shows that 
\begin{equation}\label{cal S inverso aggiunto}
{\cal S}= {\cal S}^*\,.
\end{equation}
We compute the conjugation ${\cal S}^{- 1} {\cal L} {\cal S}$. 
Note that  
\begin{equation}\label{definizione a2 (1)}
{\cal S}^{- 1}(\Sigma + A_2) {\cal S} = \begin{pmatrix}
\lambda & 0 \\
0 & - \lambda
\end{pmatrix} =  \begin{pmatrix}
1 + a_2^{(0)} & 0 \\
0 & - 1 - a_2^{(0)}
\end{pmatrix}\,, \quad a_2^{(0)} := \lambda - 1\in\R \,
\end{equation}
and we get the linear operator 
\begin{align}
{\cal L}_0 & := {\cal S}^{- 1}{\cal L} {\cal S} = \partial_t {\mathbb I}_2 + \ii (\Sigma + {A_2^{(0)}} ) \partial_{xx} + \ii {A_1^{(0)}} \partial_x + \ii {A_0^{(0)}} \,, \label{cal L1}
\end{align}
where 
\begin{align}
& A_2^{(0)} :=  \begin{pmatrix}
a_2^{(0) } & 0 \\
0 & - a_2^{(0)}
\end{pmatrix}\,, \label{A2 (1)} \\
& A_1^{(0)} : = \begin{pmatrix}
a_1^{(0)} & b_1^{(0)} \\
- \overline b_1^{(0)} & - \overline a_1^{(0)}
\end{pmatrix} = 2 {\cal S}^{- 1} (\Sigma + A_2) (\partial_x {\cal S}) + {\cal S}^{- 1} A_1 {\cal S}\,, \label{A1 (1)}  \\
& A_0^{(0)} := \begin{pmatrix}
a_0^{(0)} & b_0^{(0)} \\
- \overline b_0^{(0)} & - \overline a_0^{(0)}
\end{pmatrix} = {\cal S}^{- 1}(\Sigma + A_2)(\partial_{xx}{\cal S}) + {\cal S}^{- 1}A_1 (\partial_x {\cal S}) + {\cal S}^{- 1}(\partial_t {\cal S}) + {\cal S}^{- 1} A_0 {\cal S}\,. \label{A0 (1)}
\end{align}
Since the linear transformation ${\cal S}(t) : {\bf h}(x) \mapsto {\cal S}(t, x){\bf h}(x)$ is symplectic, the time dependent linear vector field $L_0(t) :=  \ii (\Sigma + A_2^{(0)} ) \partial_{xx} + \ii A_1^{(0)} \partial_x + \ii A_0^{(0)}$ is still Hamiltonian. 
Then, by Lemma \ref{caratterizzazione campi lineari Hamiltoniani ordine 2}, one has 
\begin{equation}\label{termine off diagonal ordine 1 = 0}
b_1^{(0)} = \partial_x b_2^{(0)} = 0\,,
\end{equation}
hence 
\begin{equation}\label{forma finale A1 (1)}
A_1^{(0)} = \begin{pmatrix}
a_1^{(0)} & 0\\
0 & - \overline a_1^{(0)}
\end{pmatrix} = 2 {\cal S}^{- 1} (\Sigma + A_2) (\partial_x {\cal S}) + {\cal S}^{- 1} A_1 {\cal S}.
\end{equation}
Note that \eqref{termine off diagonal ordine 1 = 0} can also be proved by a direct calculation.

\begin{lemma}\label{stime dopo T cal S}
There exists $\eta \in (0, 1)$ small enough, $\sigma > 0$  such that 
if $N_T(\sigma) \leq \eta$, then for any $0 \leq s \leq S - \sigma$ 
(where $S$ is defined in \eqref{regolarita coefficienti operatore da ridurre})
\begin{align}
&  \|{\cal S}^{\pm 1} - {\rm Id}\|_{T, s} \lesssim_s N_T(s + \sigma)\,.  \label{stima cal S pm 1} 
\end{align}
As a consequence 
\begin{align}
 \|\big( {\cal S}^{\pm 1} - {\rm Id} \big) {\bf h}\|_{T, s} \lesssim_s \eta \| {\bf h}\|_{T, s} +  N_T(s + \sigma) \| {\bf h}\|_{T, 0}\,. \label{azione stima tame cal S}
\end{align}
Furthermore, 
\begin{align}
& \| a_2^{(0)} \|_{T, s}, \| \partial_t a_2^{(0)}\|_{T, s}, \| \partial_{tt } a_2^{(0)}\|_{T, s} \lesssim_s N_T(s + \sigma)\,, \label{stima a2 (1)} \\
& \| a_1^{(0)}\|_{T, s}, \| \partial_t a_1^{(0)}\|_{T, s} \,, \| a_0^{(0)}\|_{T, s}, \| b_0^{(0)}\|_{T, s} \lesssim_s N_T(s + \sigma)\,.
\end{align}
\end{lemma}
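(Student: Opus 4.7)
The plan is to treat $\mathcal{S}^{\pm 1} - \mathrm{Id}$ and all the new coefficients as smooth functions of the small quantities $a_2, b_2$, and then invoke the composition (Moser) and tame product estimates from Appendix B.

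\emph{Step 1: control of $\lambda$ and of the denominators.} From \eqref{ansatz operatore astratto} and the Sobolev embedding $H^\sigma \hookrightarrow L^\infty$ (for $\sigma$ large enough) we have $\| a_2\|_{T,L^\infty}+\|b_2\|_{T,L^\infty}\lesssim \eta$. Hence, taking $\eta$ small, the argument $(1+a_2)^2-|b_2|^2$ of the square root in \eqref{definizione lambda} stays in a neighbourhood of $1$, and similarly $(1+a_2+\lambda)^2-|b_2|^2$ stays near $4$. The function $\phi:y\mapsto \sqrt{y}$ is smooth on such a neighbourhood of $1$, so the composition Lemma \ref{lemma Moser} gives $\|\lambda-1\|_{T,s}\lesssim_s N_T(s+\sigma)$, which is exactly the estimate \eqref{stima a2 (1)} for $a_2^{(0)}=\lambda-1$ at the level of $s$-norms. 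The derivatives $\partial_t\lambda$, $\partial_{tt}\lambda$ are computed by the chain rule from $\partial_t a_2, \partial_t b_2, \partial_{tt}a_2, \partial_{tt}b_2$ and from algebraic functions of $a_2,b_2$; applying the composition lemma and the tame product estimate to each summand yields the corresponding bounds for $\partial_t a_2^{(0)}$ and $\partial_{tt}a_2^{(0)}$ (the $\partial_{tt}$ bound is where we use the full strength of $N_T$, since it involves products like $(\partial_t a_2)^2$ that need to be estimated tamely).

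\emph{Step 2: estimates on $\mathcal{S}^{\pm 1}-\mathrm{Id}$.} Writing $\mathcal{S}=\mathrm{Id}+(\mathcal{S}-\mathrm{Id})$, each entry of $\mathcal{S}-\mathrm{Id}$ is an explicit smooth function of $(a_2,b_2,\bar b_2)$ that vanishes when $a_2=b_2=0$ (because in that case $\lambda=1$ and the matrix in \eqref{matrice autovettori} reduces to the identity). Hence the composition lemma gives \eqref{stima cal S pm 1}. The tame action estimate \eqref{azione stima tame cal S} is then a direct consequence of the interpolation/product inequality
\[
\|fg\|_s \lesssim_s \|f\|_s\|g\|_{L^\infty}+\|f\|_{L^\infty}\|g\|_s
\]
applied to $(\mathcal{S}^{\pm 1}-\mathrm{Id})\mathbf{h}$, using $\|\mathcal{S}^{\pm 1}-\mathrm{Id}\|_{T,0}\lesssim \eta$ from the low-norm bound and \eqref{stima cal S pm 1} for the high norm.

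\emph{Step 3: estimates on the new lower-order coefficients.} Each of the formulas \eqref{A1 (1)}, \eqref{A0 (1)} (together with \eqref{forma finale A1 (1)} for the off-diagonal vanishing) expresses $A_1^{(0)}$ and $A_0^{(0)}$ as a finite sum of products of the type $\mathcal{S}^{-1}M\mathcal{S}$ or $\mathcal{S}^{-1}M\partial_x^\alpha\mathcal{S}$ or $\mathcal{S}^{-1}\partial_t\mathcal{S}$, where $M$ is one of $\Sigma+A_2$, $A_1$, $A_0$. Every factor is either already controlled by $N_T(s+\sigma)$ (by assumption) or by Step 2 (possibly after one extra spatial/temporal derivative, which is absorbed into $\sigma$). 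Applying the tame product estimate repeatedly yields the stated bounds on $a_1^{(0)}, \partial_t a_1^{(0)}, a_0^{(0)}, b_0^{(0)}$. The off-diagonal entry $b_1^{(0)}$ is zero by the Hamiltonian relation \eqref{termine off diagonal ordine 1 = 0}, so no estimate is required for it.

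\emph{Main obstacle.} The only genuinely delicate point is the second time derivative of $a_2^{(0)}=\lambda-1$: differentiating $\sqrt{(1+a_2)^2-|b_2|^2}$ twice in $t$ produces terms of the form $(\partial_t a_2)^2/\lambda^{3/2}$, so one needs a tame bilinear estimate controlled by $N_T(s+\sigma)$, together with the bound $\|1/\lambda\|_{T,s}\lesssim_s 1+N_T(s+\sigma)$ obtained from Lemma \ref{lemma Moser} applied to $y\mapsto 1/\sqrt{y}$. Once this is in place, the rest of the proof is a mechanical bookkeeping of composition and product estimates, and $\sigma$ is chosen large enough to absorb all the spatial derivatives that fall on $\mathcal{S}$ in \eqref{A1 (1)}-\eqref{A0 (1)}.
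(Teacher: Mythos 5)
Your proposal is correct and follows essentially the same route as the paper, which dispatches the lemma in one line by citing the definitions \eqref{matrice autovettori}, \eqref{definizione a2 (1)}, \eqref{A1 (1)} together with the interpolation and composition Lemmas \ref{lemma interpolazione} and \ref{lemma Moser}. Your version simply spells out the bookkeeping (Sobolev control of the denominators, the vanishing-at-origin observation needed to invoke the Moser lemma without the constant term, and the interpolation step that converts the $\|\mathbf h\|_{T,1}$ appearing in the tame product estimate into $\|\mathbf h\|_{T,0}$ at the cost of enlarging $\sigma$), all of which is what the paper implicitly relies on.
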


\begin{proof}
Use definitions \eqref{matrice autovettori}, \eqref{definizione a2 (1)}, \eqref{A1 (1)} 
and apply Lemmas \ref{lemma interpolazione}, \ref{lemma Moser}.   
\end{proof}

\subsection{Change of the space variable}\label{primo cambio di variabile riduzione}
The aim of this subsection is to remove the $x$-dependence from the highest order term of the operator ${\cal L}_0$ defined in \eqref{cal L1}
(namely, to conjugate $\mL_0$ to an operator where the coefficient of $\pa_{xx}$ does not depend on the space variable $x$). 
For this purpose, we consider $t$-dependent families of diffeomorphisms of the torus $\T$ of the form 
$$
x \mapsto x + \alpha(t, x)\,,\quad \alpha : [0, T] \times \T \to \R\,, \qquad |\alpha_x(t, x)| \leq 1/ 2\,. 
$$
The above diffeomorphism is invertible and its inverse is given by 
$$
y \mapsto y + \widetilde \alpha(t, y)\,. 
$$
Then we define the linear operator ${\cal A}$ as 
\begin{equation}\label{definizione cal A}
{\cal A} := \sqrt{1 + \alpha_x} \,\,A_\alpha\,,\qquad \,\, A_\alpha h(t, x) :=  h(t, x + \alpha(t, x))\,.
\end{equation}
Using the fact that 
\begin{equation}\label{lalalala}
\dfrac{1}{{1 + \alpha_x(t, y + \widetilde \alpha(t, y))}} = {1 + \widetilde \alpha_y(t, y)}
\end{equation}
one gets that the inverse of the operator ${\cal A}$ has the form 
\begin{equation}\label{definizione cal A inverso}
{\cal A}^{- 1} = {\cal A}^* = \sqrt{1 + \widetilde \alpha_y} \,\,A_{\widetilde \alpha}\,, \qquad A_{\widetilde \alpha} h(t, y) := A_\alpha^{- 1} h(t, y) =  h(t, y + \widetilde \alpha(t, y))\,.
\end{equation}
A direct calculation shows that ${\cal A} {\mathbb I}_2$ is a symplectic map. 
The conjugation of the differential operators $\pa_t,\pa_x,\pa_{xx}$ and of multiplication operators $a = a(t, x) : h \mapsto a h$ are given by  
\begin{align}
& {\cal A}^{- 1} \partial_t {\cal A} = \partial_t + ( A_{\widetilde \alpha} \alpha_t ) \partial_y + \Big( A_{\widetilde \alpha} \frac{\alpha_{tx}}{2(1+\alpha_x)} \Big) \,,
\qquad {\cal A}^{- 1} a {\cal A} = (A_{\widetilde \alpha} a) \label{regola coniugazione cal A}\\
& {\cal A}^{- 1} \partial_x {\cal A} = [ 1 + (A_{\widetilde \alpha} \alpha_x) ] \pa_y + \Big( A_{\widetilde \alpha} \frac{\alpha_{xx}}{2(1+\alpha_x)} \Big)  \\
& {\cal A}^{- 1} \partial_{xx} {\cal A} = \{ A_{\widetilde \alpha} [(1 + \alpha_x)^2] \} \pa_{yy} + 2 ( A_{\widetilde \alpha} \alpha_{xx} ) \pa_y + \Big( A_{\widetilde \alpha} \frac{2 \alpha_{xxx} (1 + \alpha_x) - \alpha_{xx}^2}{4 (1+\alpha_x)^2} \Big)  \, .
\end{align} 
Conjugating the operator ${\cal L}_0$ in \eqref{cal L1} by means of the symplectic map ${\cal A}{\mathbb I}_2$ we get the operator 
\begin{align}
& {\cal L}_1 := {\cal A}^{- 1}{\mathbb I}_2 {\cal L}_0 {\cal A} {\mathbb I}_2 = \partial_t {\mathbb I}_2 + \ii A_2^{(1)} \partial_{yy} + \ii A_1^{(1)} \partial_y + \ii A_0^{(1)} \,,   \label{cal L2}
\end{align}
where, taking into account \eqref{termine off diagonal ordine 1 = 0},
$$
A_2^{(1)} := \begin{pmatrix}
a_2^{(1)} & 0 \\
0 & - a_2^{(1)} 
\end{pmatrix}\,, \qquad A_1^{(1)} := \begin{pmatrix}
a_1^{(1)} & 0 \\
0& - \overline a_1^{(1)}
\end{pmatrix}\,, \quad A_0^{(1)} := \begin{pmatrix}
a_0^{(1)} & b_0^{(1)} \\
- \overline b_0^{(1)} & - \overline a_0^{(1)}
\end{pmatrix}
$$
and
\begin{align}
& a_2^{(1)} := A_{\widetilde \alpha} [ ( 1 + a_2^{(0)} )( 1 + \alpha_x )^2 ]   \,,    \label{a2 (2)}\\
& a_1^{(1)} := A_{\widetilde \alpha} [ 2 ( 1 + a_2^{(0)} ) \alpha_{xx} + a_1^{(0)}( 1 + \alpha_x ) - \ii \alpha_t ] \, , \label{a1 (2)} \\
& a_0^{(1)} := A_{\widetilde \alpha} \Big\{ \frac{ ( 1 + a_2^{(0)} )[ 2 \alpha_{xxx} (1 + \alpha_x) - \alpha_{xx}^2 ] }{ 4( 1 + \alpha_x )^2 } + \frac{ a_1^{(0)} \alpha_{xx} - \ii \alpha_{tx} }{ 2( 1 + \alpha_x ) } + a_0^{(0)} \Big\}   \,,  \label{a0 (2)} \\
& b_0^{(1)} := A_{\widetilde \alpha} b_0^{(0)} \label{b0 (2)}\,.
\end{align}
Our purpose is to find $\alpha: [0, T] \times \T \to \R$ and a function $m_2 : [0, T] \to \R$ so that 
\begin{equation}\label{eq omologica primo cambio di variabile}
a_2^{(1)}(t, y) = m_2(t)\,, \qquad \forall (t, y) \in [0, T] \times \T\,.
\end{equation}
Thus, we have to solve
\begin{equation}\label{eq omologica primo cambio di variabile A}
(1 + a_2^{(0)})(1 + \alpha_x)^2 = m_2\,.
\end{equation}
Since $a_2^{(0)}$ is a real-valued function, the solutions are given by 
\begin{equation}\label{definizione m1 beta}
m_2 := \Big( \frac{1}{2 \pi} \int_\T \frac{d x}{(1 + a_2^{(0)})^{\frac12}}\Big)^{- 2}\,, \quad \alpha := \partial_x^{- 1} \Big( m_2^{\frac12}(1 + a_2^{(0)})^{- \frac12}  - 1\Big)\,,
\end{equation}
where $\pa_x^{-1}$ is the Fourier multiplier $\pa_x^{-1} e^{\ii jx} = (1 / \ii j) e^{\ii jx}$ for $j \in \Z$, $j \neq 0$, and $\pa_x^{-1} 1 = 0$.
Note that $m_2: [0,T] \to \R$ is a real-valued function.
The operator ${\cal L}_1$ in \eqref{cal L2} has then the form 
\begin{equation}\label{forma finale cal L2}
{\cal L}_1 = \partial_t {\mathbb I}_2 + \ii m_2 \Sigma \partial_{yy} + \ii A_1^{(1)} \partial_y + \ii A_0^{(1)} \,,
\end{equation}
where $\Sigma$ is defined in \eqref{Sigma Ak}.

\begin{lemma}
There exists $\eta \in (0, 1)$ small enough and $\sigma \in \N$ large enough, such that if $N_T( \sigma) \leq \eta$ (see \eqref{definizione NT sigma}), 
then, for any $0 \leq s \leq S - \sigma$,
\begin{align}
& \| m_2 - 1\|_{C^2_T} \lesssim \eta\,, \label{stima m2} \\
& \| \alpha\|_{T, s}, \| \partial_t \alpha\|_{T, s}, \| \partial_{tt} \alpha \|_{T, s}, \| \widetilde\alpha\|_{T, s}, \| \partial_t \widetilde \alpha\|_{T, s}, \| \partial_{tt} \widetilde \alpha \|_{T, s} \lesssim  N_T(s + \sigma)\,. \label{stime alpha alpha tilde} 
\end{align}
The transformations ${\cal A}^{\pm 1}$ map $C([0, T], H^s(\T)) \to C([0, T], H^s(\T))$ 
and they satisfy the estimate
\begin{align}
& \|{\cal A}^{\pm 1}  h \|_{T, s} \lesssim_s  \| h \|_{T, s} + N_T( s + \sigma) \| h \|_{ T, 0}\,, \qquad \forall h \in C([0, T], H^{s}(\T))\,. \quad \label{stima cal a pm1} 
\end{align}
The functions $ a_1^{(1)}, a_0^{(1)}, b_0^{(1)}$ satisfy
\begin{align}
& \| a_1^{(1)}\|_{T, s}, \| \partial_t a_1^{(1)}\|_{T, s}, \| a_0^{(1)}\|_{T, s}, \| b_0^{(1)}\|_{T, s} \lesssim_s N_T(s + \sigma) \label{stima a1a0b0 (1)}\,. 
\end{align}
\end{lemma}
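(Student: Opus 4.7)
My plan is to derive the estimates in the order they are stated: first bound $m_2$ and $\alpha,\wtilde\alpha$ via the explicit formulas of the previous subsection, then deduce the mapping estimates for $\mA^{\pm 1}$, and finally plug these into \eqref{a1 (2)}--\eqref{b0 (2)} to control the new coefficients. The two standing tools throughout are the composition Lemma \ref{lemma Moser} together with the interpolation Lemma \ref{lemma interpolazione}, and the output of Lemma \ref{stime dopo T cal S}, which already supplies smallness of $a_2^{(0)}$ in low norm, tame bounds in high norm for $a_2^{(0)}$ together with its first two time derivatives, and analogous control of $a_1^{(0)}, a_0^{(0)}, b_0^{(0)}$.

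For $m_2$, I start from \eqref{definizione m1 beta}: since $\|a_2^{(0)}\|_{T,1}\lesssim N_T(\sigma)\leq\eta$, Sobolev embedding gives $\|a_2^{(0)}\|_{L^\infty}\ll 1$, so the square root and its reciprocal are well-defined and close to $1$; expanding $(1+a_2^{(0)})^{-1/2}=1+O(a_2^{(0)})$ and integrating yields $|m_2-1|\lesssim\eta$, and successive differentiation in $t$ under the integral, combined with the bounds on $\pa_t a_2^{(0)},\pa_{tt}a_2^{(0)}$ from Lemma \ref{stime dopo T cal S}, proves \eqref{stima m2}. The definition of $m_2$ is tuned precisely so that the argument of $\pa_x^{-1}$ in the formula for $\alpha$ has zero spatial mean, hence $\pa_x^{-1}$ acts boundedly on it, and Moser composition applied to $z\mapsto(1+z)^{-1/2}$ gives $\|\alpha\|_{T,s}\lesssim N_T(s+\sigma)$; time derivatives of $\alpha$ are handled analogously. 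For $\wtilde\alpha$, the smallness of $\alpha_x$ allows one to invert the map $x\mapsto x+\alpha(t,x)$ by the implicit function theorem into a near-identity diffeomorphism; standard tame estimates for the inverse of such a diffeomorphism (see Appendix B) yield the bounds on $\wtilde\alpha$ and on $\pa_t\wtilde\alpha,\pa_{tt}\wtilde\alpha$ at the same cost as for $\alpha$, using \eqref{lalalala} to control $\wtilde\alpha_y$.

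The tame estimate \eqref{stima cal a pm1} is then the classical statement for a composition operator of the form $h\mapsto\sqrt{1+\alpha_x}\,(h\circ(\mathrm{Id}+\alpha))$, obtained by combining the chain rule with Lemmas \ref{lemma interpolazione} and \ref{lemma Moser}; the smallness of $\alpha_x$ ensures that the square root factor is smooth and enjoys the same tame estimate. The coefficient bound \eqref{stima a1a0b0 (1)} is bookkeeping: each of $a_1^{(1)}, a_0^{(1)}, b_0^{(1)}$ is an explicit algebraic expression in $a_2^{(0)}, a_1^{(0)}, a_0^{(0)}, b_0^{(0)}$, spatial derivatives of $\alpha$, and $\alpha_t$, composed with $A_{\wtilde\alpha}$; tame product estimates together with \eqref{stima cal a pm1} reduce everything to $N_T(s+\sigma')$ for some slightly larger $\sigma'$, which is then absorbed into the universal $\sigma$. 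The only point I expect to require care is the estimate of $\pa_t a_1^{(1)}$: differentiating \eqref{a1 (2)} in $t$ produces a term involving $\pa_{tt}\alpha$ and another in which $\pa_t\wtilde\alpha$ hits the composition, and one must verify that both quantities appear only at the level of regularity already provided by Lemma \ref{stime dopo T cal S} and \eqref{stime alpha alpha tilde}. This is pure accounting and fixes the required value of $\sigma$; I do not anticipate a genuine obstacle beyond it.
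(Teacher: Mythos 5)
Your proposal is correct and follows the same route as the paper, which simply invokes the explicit formulas for $m_2,\alpha,\wtilde\alpha$ and the coefficients together with Lemmas \ref{lemma interpolazione}, \ref{lemma derivata alpha tilde alpha astratto} and \ref{Lemma astratto cambio di variabile}; your detailed accounting (Moser composition for the square roots, zero-mean cancellation in the $\pa_x^{-1}$, implicit-function inversion for $\wtilde\alpha$, and the chain-rule bookkeeping for $\pa_t a_1^{(1)}$) is exactly what is meant by that one-line proof.
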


\begin{proof}
The Lemma follows by the explicit expressions of the coefficients, applying Lemmas \ref{lemma interpolazione}, \ref{lemma derivata alpha tilde alpha astratto}, \ref{Lemma astratto cambio di variabile}.
\end{proof}

\subsection{Reparametrization of time}
\label{sezione riparametrizzazione tempo}

In this subsection we remove also the dependence on time from the highest order
(namely we conjugate the operator $\mL_1$ in \eqref{forma finale cal L2} 
to an operator where the coefficient of $\pa_{xx}$ is a constant matrix, 
independent of $(t,x)$, see \eqref{cal L3}).
We consider a diffeomorphism of the time interval $[0, T]$, 
\begin{equation}\label{definizione riparametrizzazione tempo}
\beta : [0, T] \to [0, T]\,, \quad \beta(0) = 0\,, \quad \beta(T) = T
\end{equation}
with inverse $\beta^{- 1}$. We define the operators ${\cal B}^{\pm 1}$ induced by the diffeomorphisms $\beta^{\pm 1}$ as 
\begin{equation}\label{definizione cal B}
{\cal B} h(t, x) : = h(\beta(t), x)\,, \qquad {\cal B}^{- 1}h(\tau, x) := h(\beta^{- 1}(\tau), x)\,.
\end{equation}
The following conjugation rules hold: 
\begin{align}
& {\cal B}^{- 1} a {\cal B} = ({\cal B}^{- 1} a)\,, \qquad {\cal B}^{- 1} \partial_t {\cal B} = ({\cal B}^{- 1} \beta' ) \partial_\tau\,, \quad {\cal B}^{- 1} \partial_x^m {\cal B} = \partial_x^m\,, \quad m \in \N.  \label{regola coniugazione cal B}
\end{align}
Conjugating the operator ${\cal L}_1$ in \eqref{forma finale cal L2}, we get 
\begin{align}
{\cal B}^{- 1}{\mathbb I}_2 {\cal L}_1 {\cal B}{\mathbb I}_2 & = ({\cal B}^{- 1} \beta' ) \partial_\tau {\mathbb I}_2 + \ii ({\cal B}^{- 1} m_2) \Sigma \partial_{xx} + \ii ({\cal B}^{- 1}{\mathbb I}_2 A_1^{(1)}) \partial_x + \ii ({\cal B}^{- 1}{\mathbb I}_2 A_0^{(1)})\,.
\end{align}
Our aim is to choose $\beta$ so that the coefficients of $\partial_\tau {\mathbb I}_2$ and $\ii \Sigma \partial_{xx}$ are proportional, namely we have to look for a diffeomorphism $\beta : [0, T] \to [0, T]$ and a constant $\mu\in\R$ such that 
\begin{equation}\label{equazione per beta tempo}
\beta' (t) = \frac{1}{\mu} m_2(t)\,, \qquad \forall t\in[0,T]. 
\end{equation}
Then, integrating in time from $0$ to $T$, 
by \eqref{definizione riparametrizzazione tempo} we fix the value of $\mu$ 
and define $\beta(t)$ as 
\begin{equation}\label{definizione mu 2}
\mu := \frac{1}{T} \int_0^T m_2(t)\, d t, \qquad 
\beta(t) := \frac{1}{\mu} \int_0^t m_2(s)\, d s\,. 
\end{equation}
Defining 
\begin{equation}\label{definizione rho (t)}
\rho(\tau) := ({\cal B}^{- 1} \beta') (\tau) = \mu^{- 1} ({\cal B}^{- 1} m_2) (\tau), \qquad \tau \in [0, T]\,,
\end{equation}
we get 
\begin{align}
&  {\cal B}^{- 1}{\mathbb I}_2 {\cal L}_1 {\cal B}{\mathbb I}_2 = \rho {\cal L}_2\,,\quad  {\cal L}_2 := \partial_\tau {\mathbb I}_2 + \ii \mu \Sigma \partial_{yy} + \ii A_1^{(2)} \partial_y + \ii A_0^{(2)} \,,     \label{cal L3} \\
& A_1^{(2)} := \begin{pmatrix}
a_1^{(2)} &0 \\
0 & - \overline a_1^{(2)}
\end{pmatrix}\,, \quad A_0^{(2)} := \begin{pmatrix}
a_0^{(2)} & b_0^{(2)} \\
- \overline b_0^{(2)} & - \overline a_0^{(2)}
\end{pmatrix} \, , \label{Ai (3)} \\
& a_1^{(2)} := \rho^{- 1} ({\cal B}^{- 1} a_1^{(1)})\,, \quad a_0^{(2)} := \rho^{- 1} ({\cal B}^{- 1} a_0^{(1)})\,, \quad b_0^{(2)} := \rho^{- 1} ({\cal B}^{- 1} b_0^{(1)}) \,. \label{ai (3)}
\end{align}
Note that the vector field $L_2(t) := \ii \mu \Sigma \partial_{yy} + \ii A_1^{(2)} \partial_y + \ii A_0^{(2)}$ is still Hamiltonian, since reparametrizations of time preserve the Hamiltonian structure. 
We also remark that, changing the time variable in the integral, one has 
\begin{equation} \label{0807.1}
\int_0^T \langle \mB {\bf u}(t) , {\bf v}(t) \rangle_{{\bf L}^2} \, dt 
= \int_0^T \langle {\bf u}(\t) , \rho^{-1}(\t) \mB^{-1} {\bf v}(\t) \rangle_{{\bf L}^2} \, d\t
\quad \forall {\bf u}, {\bf v} \in {\bf L}^2,
\end{equation}
namely the transpose of $\mB$ with respect to the \emph{time-space} scalar product 
$\int_0^T \langle \cdot, , \cdot \rangle_{{\bf L}^2} \, dt$ is 
\begin{equation} \label{0807.2}
\mB_* = \rho^{-1} \mB^{-1}.	
\end{equation}
 
\begin{lemma}
There exists $\eta \in (0, 1)$ small enough, $\sigma \in \N$ large enough such that if $N_T(\sigma) \leq \eta$, then for any $0 \leq s \leq S - \sigma$, the following holds: 
\begin{align}
&|\mu - 1|\,, \ \|\beta^{\pm 1} - 1 \|_{C^3_T} \lesssim \eta  
\label{stima mu 2} \\
& \|{\cal B}^{\pm 1}  h \|_{T, s} \lesssim  \| h \|_{T, s} \quad \forall h \in C([0, T], H^s(\T)) 
\label{stima cal B pm1} \\
& \|\rho^{\pm 1} - 1 \|_{C^1_T} \lesssim \eta 
\label{stima rho step 2} \\
& \| a_1^{(2)}\|_{T, s}, \| \partial_t a_1^{(2)}\|_{T, s}, \| a_0^{(2)}\|_{T, s}, \| b_0^{(2)}\|_{T, s} \lesssim N_T(s + \sigma)\,. \label{stima a (3) b (3)} 
\end{align}
\end{lemma}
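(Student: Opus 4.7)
The plan is to prove the four estimates in the order they appear, each relying on the previous ones together with the bound \eqref{stima m2} established in the preceding subsection. Everything is elementary once the chain rule and the fact that ${\cal B}^{\pm 1}$ acts only on the time variable are kept in mind.

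First I would establish \eqref{stima mu 2}. From \eqref{definizione mu 2} one has $\mu - 1 = \frac{1}{T}\int_0^T (m_2(t) - 1)\, dt$, hence $|\mu - 1| \leq \|m_2 - 1\|_{C^0_T} \lesssim \eta$ by \eqref{stima m2}. Since $\beta'(t) = m_2(t)/\mu$, the identity $\beta' - 1 = (m_2 - \mu)/\mu$ combined with $\|m_2 - 1\|_{C^2_T} \lesssim \eta$ and $|\mu - 1| \lesssim \eta$ gives $\|\beta' - 1\|_{C^2_T} \lesssim \eta$; integrating in time, and using $\beta(0)=0$, yields $\|\beta - \mathrm{id}\|_{C^3_T} \lesssim \eta$. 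The bound on $\beta^{-1} - \mathrm{id}$ follows by the inverse function theorem: since $\beta'$ is $C^2_T$-close to $1$, the inverse is well defined and differentiating $\beta^{-1} \circ \beta = \mathrm{id}$ up to three times bounds $\beta^{-1} - \mathrm{id}$ in $C^3_T$ by $\|\beta - \mathrm{id}\|_{C^3_T}$.

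Next, for \eqref{stima cal B pm1}, I observe that ${\cal B}^{\pm 1}$ acts only on time, so $\|({\cal B}^{\pm 1} h)(\tau, \cdot)\|_{H^s_x} = \|h(\beta^{\pm 1}(\tau), \cdot)\|_{H^s_x}$; since $\beta^{\pm 1}$ maps $[0, T]$ bijectively onto itself, taking the supremum over $\tau$ produces exactly $\|h\|_{T, s}$, and in fact the implicit constant can be taken to be $1$. For \eqref{stima rho step 2}, writing $\rho - 1 = \mu^{-1}({\cal B}^{-1} m_2 - 1) + (\mu^{-1} - 1)$, the first term is controlled in $C^0_T$ by $\|m_2 - 1\|_{C^0_T}$ via the argument just used; for the $C^1$ control, I use the chain rule $\partial_\tau({\cal B}^{-1} m_2)(\tau) = (\beta^{-1})'(\tau)\, m_2'(\beta^{-1}(\tau))$ together with the $C^0_T$-boundedness of $(\beta^{-1})'$ from \eqref{stima mu 2} and $\|m_2'\|_{C^0_T} \lesssim \eta$. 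The bound on $\rho^{-1} - 1$ then follows from $\rho^{-1} - 1 = -(\rho - 1)/\rho$ and the uniform positivity of $\rho$.

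Finally, for \eqref{stima a (3) b (3)}, I use \eqref{ai (3)}: $a_1^{(2)} = \rho^{-1}({\cal B}^{-1} a_1^{(1)})$, and analogously for $a_0^{(2)}$ and $b_0^{(2)}$. Combining \eqref{stima cal B pm1} (which costs nothing in $s$), the Moser product estimates of Lemma \ref{lemma Moser}, the $C^0_T$-boundedness of $\rho^{\pm 1}$ from \eqref{stima rho step 2}, and \eqref{stima a1a0b0 (1)} yields the spatial estimates. For $\partial_\tau a_1^{(2)}$, the only new ingredient is the identity $\partial_\tau({\cal B}^{-1} a_1^{(1)}) = (\beta^{-1})'\cdot ({\cal B}^{-1} \partial_t a_1^{(1)})$, which reduces the bound to the already-known estimates on $\partial_t a_1^{(1)}$, on $\rho$, and on $\beta^{-1}$. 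The only real obstacle here is bookkeeping: one must check at each step that the spatial and time derivatives lost through the composition operators and through the Moser estimates can be absorbed into a single fixed enlargement of $\sigma$, independent of $s$, so that the bounds retain the tame form required later by the Nash--Moser scheme.
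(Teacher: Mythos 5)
Your proof is correct and follows essentially the same line as the paper's (which is only a brief sketch citing definitions and prior estimates). One small overstatement: in the step for \eqref{stima a (3) b (3)} you invoke "Moser product estimates," but since $\rho^{\pm 1}$ depends only on $t$ and ${\cal B}^{\pm 1}$ acts only in time, multiplication by $\rho^{-1}$ is scalar at each fixed $t$ and ${\cal B}^{-1}$ preserves $H^s_x$ norms exactly — so no tame product or composition lemma is actually needed there; the bounds on $\rho^{\pm 1}$ in $C^0_T$, on $(\beta^{-1})'$ in $C^0_T$, and \eqref{stima a1a0b0 (1)} suffice directly. This is a harmless redundancy, not an error.
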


\begin{proof}
Estimate \eqref{stima mu 2} for $\mu$ and $\beta^{\pm 1}$ follows from definitions \eqref{definizione mu 2} and estimate \eqref{stima m2} for $m_2$. 
Estimate \eqref{stima cal B pm1} for ${\cal B}^{\pm 1}$ follows directly from definition \eqref{definizione cal B}, computing the norm $\| \cdot\|_{T, s}$. 
Estimates \eqref{stima rho step 2}, \eqref{stima a (3) b (3)} for $\rho^{\pm 1}$ follow by the explicit expressions \eqref{definizione rho (t)}, \eqref{Ai (3)}, applying Lemma \ref{lemma interpolazione} and estimates \eqref{stima m2}, \eqref{stima mu 2}, \eqref{stima cal B pm1}, \eqref{stima a1a0b0 (1)}, \eqref{stima cal B pm1}, \eqref{stima rho step 2}. 
\end{proof}

\subsection{Translation of the space variable}\label{step 4 coniugazione}
In this subsection we remove the space average from the order 1 coefficient $a_1^{(2)}$
(namely we conjugate the operator $\mL_2$ in \eqref{cal L3} to an operator 
where the coefficient in front of $\pa_x$ is a $2 \times 2$ diagonal matrix  
whose entries are functions with zero space average, 
see \eqref{forma finale cal L4}, \eqref{Ai (4)}).
We consider the change of the space variable $z = y + p(\tau)$, where $p : [0, T] \to \R$, 
and define the operators 
\begin{equation}\label{operatori traslazione dello spazio}
{\cal T} h(\tau, y) := h(\tau , y + p(\tau) )\,, \qquad {\cal T}^{- 1} h(\tau, z) = {\cal T}^* h(\tau, z) = h(\tau, z - p(\tau))\,.
\end{equation}
A direct calculation shows that ${\cal T}$ is symplectic. 
Moreover, one has 
\begin{equation}\label{regola coniugazione cal T}
{\cal T}^{- 1} \partial_\tau {\cal T} = \partial_\tau + p' \partial_z\,, \qquad {\cal T}^{- 1} a {\cal T} = ({\cal T}^{- 1} a)\,, \qquad {\cal T}^{- 1} \partial_y^m {\cal T} = \partial_z^m\,, \quad m \in \N\,.
\end{equation}
Then 
\begin{align}
& {\cal L}_3 := {\cal T}^{- 1} {\mathbb I}_2 {\cal L}_2 {\cal T} {\mathbb I}_2 = \partial_\tau {\mathbb I}_2 + \ii \mu \Sigma \partial_{zz} + \ii A_1^{(3)} \partial_z + \ii A_0^{(3)}   
\label{cal L4}
\end{align}
with 
\begin{align}
& A_1^{(3)} := \begin{pmatrix}
a_1^{(3)} & 0 \\
0 & - \overline a_1^{(3)}
\end{pmatrix}\,, \qquad A_0^{(3)} := \begin{pmatrix}
a_0^{(3)} & b_0^{(3)} \\
- \overline b_0^{(3)} & - \overline a_0^{(3)}
\end{pmatrix} \,,\label{Ai (4)} \\
& a_1^{(3)} := - \ii p' + ({\cal T}^{- 1} a_1^{(2)}) \,,\qquad a_0^{(3)} := ({\cal T}^{- 1} a_0^{(2)})\,, \qquad b_0^{(3)} := ( {\cal T}^{- 1} b_0^{(2)} )\,.\label{coefficienti cal L4} 
\end{align}
Our aim is to choose the function $p$ so that 
\begin{equation}\label{eq omologica p}
 \int_\T a_1^{(3)}(\tau, z)\, d z = 0\,, \qquad \forall \tau \in [0, T]\,.
\end{equation}
Performing the change of variable $y = z - p(\tau)$, the above equation becomes (multiplying by $\ii$)
\begin{equation}\label{farina.00}
2 \pi p'(\tau) + \ii \int_\T a_1^{(2)}(\tau, y)\, d y = 0\,.
\end{equation}
By Lemma \ref{caratterizzazione campi lineari Hamiltoniani ordine 2}, we have that $a_1^{(2)} =  2 (\partial_x \mu) - \overline a_1^{(2)} = - \overline a_1^{(2)}$ (recall that $\mu$ is a constant), implying that $a_1^{(2)} : [0, T] \times \T \to \ii \R$, and then $\ii a_1^{(2)} :  [0, T] \times \T \to  \R$. Hence we can solve equation \eqref{farina.00} by setting 
\begin{equation}\label{definizione p}
p(\tau ) := - \frac{1}{2 \pi} \int_0^\tau \int_\T \ii a_1^{(2)}(\zeta, y)\, d y\, d \zeta\,, \qquad \tau \in [0, T]\,
\end{equation}
and we get that $p : [0, T] \to \R$ is a real-valued function. Renaming the variables $\tau = t$, $z = x$ we have
\begin{equation}\label{forma finale cal L4}
{\cal L}_3 = \partial_t {\mathbb I}_2 + \ii \mu \Sigma \partial_{xx} + \ii A_1^{(3)} \partial_x + \ii A_0^{(3)}\,, \qquad  \int_\T a_1^{(3)}(t, x)\, d x = 0\,, \qquad \forall t \in [0, T]\,.
\end{equation}

\begin{lemma}
There exists $\eta \in (0, 1)$ small enough and $\sigma \in \N$ large enough such that if $N_T(\sigma) \leq \eta$, then for any $0 \leq s \leq S - \sigma$, the following estimates hold: 
\begin{align}
& \|p \|_{C^2_T} \lesssim \eta\,.\label{stima p} \\
\end{align}
The transformations ${\cal T}^{\pm 1}$ map $C([0, T], H^s(\T)) \to C([0, T], H^s(\T))$ 
and they satisfy
\begin{align}
& \|{\cal T}^{\pm 1}  h \|_{T, s} \lesssim  \| h \|_{T, s}
\quad \forall h \in C([0, T], H^s(\T)), \  \forall s \geq 0\,.  \label{stima cal T pm1}
\end{align}
Furthermore
\begin{align}
& \| a_1^{(3)}\|_{T, s}, \| \partial_t a_1^{(3)}\|_{T, s}, \| a_0^{(3)}\|_{T, s}, \| b_0^{(3)}\|_{T, s} \lesssim_s N_T(s + \sigma)\,. \label{stime a (4) b (4)} 
\end{align}
\end{lemma}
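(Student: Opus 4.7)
The plan is to prove the three claims of the lemma in order, each reducing to the previous subsection's estimates \eqref{stima a (3) b (3)} on $a_1^{(2)}, a_0^{(2)}, b_0^{(2)}$ together with elementary properties of the space translation ${\cal T}$. For \eqref{stima p}, I would differentiate the defining integral \eqref{definizione p} once and twice in $\tau$ to obtain $p'(\tau) = -(2\pi)^{-1} \int_\T \ii a_1^{(2)}(\tau, y)\, dy$ and $p''(\tau) = -(2\pi)^{-1} \int_\T \ii \partial_\tau a_1^{(2)}(\tau, y)\, dy$. Cauchy--Schwarz on $\T$ dominates each of these --- and $p$ itself, after integration in $\tau$ --- by the $L^2_x$-norm of $a_1^{(2)}$ or $\partial_\tau a_1^{(2)}$, and \eqref{stima a (3) b (3)} at $s = 0$ gives these bounds $\lesssim N_T(\sigma) \leq \eta$, provided $\sigma$ is taken at least as large as the value produced in the previous subsection.

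For \eqref{stima cal T pm1}, the operator ${\cal T}^{\pm 1}$ is, pointwise in $\tau$, the translation $h(\cdot) \mapsto h(\cdot \mp p(\tau))$, which acts on Fourier modes by multiplication by $e^{\mp \ii k p(\tau)}$ of modulus one; hence $\|{\cal T}^{\pm 1} h(\tau, \cdot)\|_{H^s_x} = \|h(\tau, \cdot)\|_{H^s_x}$ for every $s \geq 0$, and taking the supremum in $\tau$ yields the claim. For \eqref{stime a (4) b (4)}, I then use the explicit formulas \eqref{coefficienti cal L4} together with this isometry: $\|a_0^{(3)}\|_{T, s} = \|a_0^{(2)}\|_{T, s}$ and $\|b_0^{(3)}\|_{T, s} = \|b_0^{(2)}\|_{T, s}$ are both $\lesssim N_T(s + \sigma)$ by \eqref{stima a (3) b (3)}, while $\|a_1^{(3)}\|_{T, s} \lesssim \|p\|_{C^2_T} + \|a_1^{(2)}\|_{T, s}$ is bounded by the same quantity thanks to \eqref{stima p}.

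The only step that requires a small extra observation is the bound on $\partial_\tau a_1^{(3)}$. Since $a_1^{(3)} = -\ii p' + ({\cal T}^{-1} a_1^{(2)})$ and the translation depends on time through $p(\tau)$, the chain rule gives
\[
\partial_\tau ({\cal T}^{-1} a_1^{(2)})(\tau, z) = ({\cal T}^{-1} \partial_\tau a_1^{(2)})(\tau, z) - p'(\tau)\,({\cal T}^{-1} \partial_y a_1^{(2)})(\tau, z),
\]
which requires control of $a_1^{(2)}$ in $H^{s+1}_x$; enlarging the universal constant $\sigma$ by one absorbs this single loss of derivative, and combining with \eqref{stima p}, \eqref{stima cal T pm1}, and \eqref{stima a (3) b (3)} yields $\|\partial_t a_1^{(3)}\|_{T, s} \lesssim N_T(s + \sigma)$ (the contribution of $-\ii p''$ being bounded again by \eqref{stima p}). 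The main obstacle is thus no more than careful bookkeeping of this $\sigma$ increment; no new analytic ingredient is needed beyond the translation isometry and the tame estimates of the previous subsection.
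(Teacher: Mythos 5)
Your proof is correct and follows essentially the same route as the paper, which simply cites the definitions, the tame estimates from the previous step, and the change-of-variable lemmas. You actually make a slightly cleaner and more elementary observation: because $\mathcal{T}^{\pm 1}$ is a pure translation $h(\cdot) \mapsto h(\cdot \mp p(\tau))$ independent of $x$, it acts on Fourier modes by unimodular multipliers $e^{\mp \ii k p(\tau)}$ and is therefore an exact isometry of $H^s_x$ for every $s \geq 0$; this bypasses the more general Lemma~\ref{Lemma astratto cambio di variabile} (which handles genuine $x$-dependent diffeomorphisms and, applied here, would be overkill) and cleanly explains why \eqref{stima cal T pm1} holds with constant one and no restriction on $s$. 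The chain-rule computation for $\partial_\tau(\mathcal{T}^{-1} a_1^{(2)})$ and the attendant loss of one $x$-derivative, absorbed by enlarging $\sigma$, is exactly the right bookkeeping point.

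Two small clean-ups are worth flagging. First, to bound $\|a_1^{(3)}\|_{T,s}$ by $N_T(s+\sigma)$ you need the intermediate bound $\|p\|_{C^2_T} \lesssim N_T(\sigma) \leq N_T(s+\sigma)$, which your derivation of \eqref{stima p} actually produces, rather than the weaker statement $\|p\|_{C^2_T} \lesssim \eta$ as literally written in \eqref{stima p}; you should cite that intermediate form. Second, the claim that $\mathcal{T}^{\pm 1}$ maps $C([0,T],H^s(\T))$ into itself requires more than the pointwise-in-$\tau$ isometry: one also needs continuity in $\tau$ of $\tau \mapsto h(\tau, \cdot - p(\tau))$. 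This follows from the triangle inequality (splitting the increment in $\tau$ between the variation of $h$ and the variation of the translation parameter) together with the strong continuity of translations on $H^s(\T)$ and the continuity of $p$, but this step should be spelled out rather than left implicit.
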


\begin{proof}
The lemma follows from definitions \eqref{operatori traslazione dello spazio}, \eqref{coefficienti cal L4},  \eqref{definizione p}, applying Lemmas \ref{lemma interpolazione}, \ref{lemma derivata alpha tilde alpha astratto}, \ref{Lemma astratto cambio di variabile} and using estimates 
\eqref{stima a (3) b (3)}.
\end{proof}

\subsection{Elimination of order one}\label{step 5 riduzione}
In this last subsection, we remove completely the order 1
(namely we conjugate the operator $\mL_3$ in \eqref{forma finale cal L4}
to an operator where the term $\pa_x$ is not present). 
We consider the multiplication operator by the matrix valued function 
\begin{equation}\label{definizione cal m}
{\cal M} := \begin{pmatrix}
 v & 0 \\
0 & \overline v
\end{pmatrix}\,, \qquad v: [0, T] \times \T \to \C\,,
\end{equation}
where $v$ is a function sufficiently close to $1$, to be determined. 
The inverse ${\cal M}^{- 1}$ and the adjoint ${\cal M}^*$ are 
\begin{equation}\label{definizione cal M inverso}
{\cal M}^{- 1} = \begin{pmatrix}
v^{- 1} & 0 \\
0 &\overline v^{- 1}
\end{pmatrix}\,, \qquad {\cal M}^* = \begin{pmatrix}
\overline{v} & 0 \\
0 & v
\end{pmatrix}
\end{equation}
We compute 
\begin{align}
& {\cal L}_4 := {\cal M}^{- 1} {\cal L}_3 {\cal M} = \partial_t {\mathbb I}_2 + \ii \mu \Sigma \partial_{xx} + \ii A_1^{(4)} \partial_x + \ii A_0^{(4)}   \label{cal L5}
\end{align}
with 
\begin{align}
& A_1^{(4)} := \begin{pmatrix}
a_1^{(4)} & 0 \\
0 & - \overline a_1^{(4)}
\end{pmatrix}\,, \qquad A_0^{(4)} := \begin{pmatrix}
a_0^{(4)} & b_0^{(4)} \\
- \overline b_0^{(4)} & - \overline a_0^{(4)}
\end{pmatrix} \,,\label{Ai (5)} \\
& a_1^{(4)} := a_1^{(3)} + 2 \mu v^{-1} v_x \,,\qquad a_0^{(4)} := a_0^{(3)} + v^{-1} (\mu v_{xx} + a_1^{(3)} v_x - \ii v_t ) \,, \qquad b_0^{(4)} := b_0^{(3)} \,.\label{coefficienti cal L5} 
\end{align}
To remove the first order term we need to solve the equation 
\begin{equation}\label{equazione omologica moltiplicazione}
a_1^{(3)} + 2 \mu v^{-1} v_x = 0\,.
\end{equation}
We look for solutions of the form $v = {\rm exp}(q)$ and we get
$
a_1^{(3)} + 2 \mu q_x = 0
$,
which, recalling \eqref{eq omologica p}, has the solution 
$
q = - (2\mu)^{-1} \partial_x^{- 1} a_1^{(3)}
$.
Hence we set
\begin{equation}\label{definizione v}
v := {\rm exp}\Big( - \frac{\pa_x^{-1} a_1^{(3)}}{2 \mu} \Big)\, ,
\end{equation}
which solves \eqref{equazione omologica moltiplicazione} and gives
\begin{equation}\label{definizione cal R}
{\cal L}_4 = \partial_t {\mathbb I}_2 + \ii \mu \Sigma \partial_{xx} + {\cal R}\,, \qquad {\cal R}:= \ii A_0^{(4)}.
\end{equation}
We remark that, by the Hamiltonian structure, $a_1^{(3)} = - \overline{a}_1^{(3)}$, 
therefore 
$$
\overline v = \overline{{\rm exp}\Big( - \frac{\pa_x^{-1} a_1^{(3)}}{2 \mu} \Big)} = {\rm exp}\Big( - \frac{\pa_x^{-1} \overline a_1^{(3)}}{2 \mu} \Big) = {\rm exp}\Big(  \frac{\pa_x^{-1} a_1^{(3)}}{2 \mu} \Big) = v^{- 1}\,.
$$
Recalling \eqref{definizione cal M inverso} one gets 
\begin{equation}\label{uguaglianza cal M aggiunto inverso}
{\cal M}^{- 1} = {\cal M}^*\,.
\end{equation}
\begin{lemma}\label{stime cal L5}
There exist $\eta \in (0, 1)$ small enough, $\sigma \in \N$ large enough such that, 
if $N_T( \sigma) \leq \eta$, for any $0 \leq s \leq S - \sigma$, 
the function $v$ defined in \eqref{definizione v} satisfies the estimate 
\begin{equation}\label{stima v meno id}
\| v^{\pm 1} - 1 \|_{T, s} , \| \partial_t v^{\pm 1}\|_{T, s} \lesssim_s N_T(s + \sigma)\,.
\end{equation}
As a consequence, the transformations ${\cal M}^{\pm 1}$ satisfy 
\begin{equation}\label{stima cal M}
\| {\cal M}^{\pm 1 }{\bf h}\|_{T, s} \lesssim_s \Big(\| {\bf h} \|_{T, s} + N_T(s + \sigma) \| {\bf h}\|_{T, 0}\Big)\,, \quad \forall {\bf h} = (h, \overline h) \in C([0, T], {\bf H}^s_x)\,. 
\end{equation}
The multiplication operator 
\begin{equation}\label{cal R r1 r2}
{\cal R} = \begin{pmatrix}
\ii a_0^{(4)} & \ii b_0^{(4)} \\
- \ii \overline b_0^{(4)} & - \ii \overline a_0^{(4)}
\end{pmatrix} :=
\begin{pmatrix}
r_1 & r_2 \\
\overline r_1 & \overline r_2
\end{pmatrix}
\end{equation}
satisfies 
\begin{equation}\label{stima cal R linearizzato ridotto}
\| r_1 \|_{T, s}, \| r_2 \|_{T, s} \lesssim_s N_T(s + \sigma)\,. 
\end{equation}
\end{lemma}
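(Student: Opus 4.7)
The plan is to reduce everything to tame-type estimates on the single function $q := -\pa_x^{-1} a_1^{(3)}/(2\mu)$, so that $v = e^q$. The key preliminary is that by \eqref{forma finale cal L4} the coefficient $a_1^{(3)}$ has zero spatial average at each time, so $\pa_x^{-1} a_1^{(3)}$ is a well-defined periodic function with $\|\pa_x^{-1} a_1^{(3)}\|_{T,s} \leq \|a_1^{(3)}\|_{T,s}$. Combined with $|\mu - 1|\lesssim \eta$ from \eqref{stima mu 2} and the estimates \eqref{stime a (4) b (4)} on $a_1^{(3)}$ and $\pa_t a_1^{(3)}$, and using that $\mu$ is a time-independent constant (so $\pa_t q = -\pa_x^{-1}(\pa_t a_1^{(3)})/(2\mu)$), this immediately gives $\|q\|_{T,s},\, \|\pa_t q\|_{T,s} \lesssim_s N_T(s+\sigma)$. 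I would record these two bounds first.

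To prove \eqref{stima v meno id} I would apply the composition (Moser-type) Lemma \ref{lemma Moser} to the smooth maps $r \mapsto e^{\pm r} - 1$. Since $\|q\|_{T,\sigma_0}$ is of order $\eta$ and hence small, the composition lemma yields $\|v^{\pm 1} - 1\|_{T,s} = \|e^{\pm q} - 1\|_{T,s} \lesssim_s \|q\|_{T,s} \lesssim_s N_T(s+\sigma)$. For the time derivative, I would simply differentiate to get $\pa_t v^{\pm 1} = \pm (\pa_t q)\, v^{\pm 1}$ and invoke the tame product estimate together with the bounds on $\pa_t q$ and on $v^{\pm 1} - 1$ just obtained.

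The multiplicative bound \eqref{stima cal M} then follows by writing $v^{\pm 1} h = h + (v^{\pm 1} - 1) h$ (and doing the same on the conjugate component) and applying the standard ``low-high'' tame product estimate $\|(v^{\pm 1}-1)h\|_{T,s} \lesssim_s \|v^{\pm 1}-1\|_{T,s+\sigma'}\|h\|_{T,0} + \|v^{\pm 1}-1\|_{T,\sigma_0}\|h\|_{T,s}$; the second summand is absorbed using $\|v^{\pm 1}-1\|_{T,\sigma_0} \lesssim \eta$, while the first yields a contribution $N_T(s+\sigma)\|{\bf h}\|_{T,0}$ after enlarging $\sigma$.

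Finally, for \eqref{stima cal R linearizzato ridotto}, the crucial algebraic step is to simplify the formulas \eqref{coefficienti cal L5} for $a_0^{(4)}$ and $b_0^{(4)}$ using $v = e^q$: one has $v^{-1} v_x = q_x = -a_1^{(3)}/(2\mu)$, $v^{-1} v_{xx} = q_{xx} + q_x^2$, $v^{-1} v_t = q_t = -\pa_x^{-1}(\pa_t a_1^{(3)})/(2\mu)$, so that the $v$-dependence in $a_0^{(4)} = a_0^{(3)} + v^{-1}(\mu v_{xx} + a_1^{(3)} v_x - \ii v_t)$ collapses to the explicit polynomial expression $a_0^{(3)} - \tfrac12 \pa_x a_1^{(3)} - (a_1^{(3)})^2/(4\mu) + \ii \pa_x^{-1}(\pa_t a_1^{(3)})/(2\mu)$, while $b_0^{(4)} = b_0^{(3)}$. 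Each summand is then controlled by $N_T(s+\sigma)$ via \eqref{stime a (4) b (4)} and a routine tame product estimate for the quadratic term. I do not expect any genuine conceptual obstacle; the only point requiring care is the bookkeeping of the fixed derivative loss $\sigma$ across the composition, product, and substitution steps, and checking that the smallness $\eta$ is absorbed consistently at each stage.
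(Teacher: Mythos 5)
Your proof is correct and takes essentially the same route as the paper: the paper's proof is a one-liner invoking the definitions \eqref{definizione cal m}, \eqref{definizione cal M inverso}, \eqref{definizione v}, \eqref{definizione cal R}, the interpolation Lemma \ref{lemma interpolazione}, and the bounds \eqref{stima mu 2}, \eqref{stime a (4) b (4)}; you have just filled in the routine details. One small stylistic choice worth noting: for \eqref{stima cal R linearizzato ridotto} you algebraically collapse $a_0^{(4)}$ into a polynomial in $a_0^{(3)}$, $a_1^{(3)}$, $\pa_x a_1^{(3)}$, $\pa_x^{-1}\pa_t a_1^{(3)}$ and $1/\mu$ via $v^{-1}v_x=q_x$, $v^{-1}v_{xx}=q_{xx}+q_x^2$, $v^{-1}v_t=q_t$; this is a neat simplification that avoids tame-product estimates involving $v^{\pm 1}$, though one could equally well estimate $v^{-1}(\mu v_{xx}+a_1^{(3)}v_x-\ii v_t)$ term by term from \eqref{stima v meno id} and the product estimate \eqref{prtame}. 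Either way works; the only point requiring a moment's care — that $\pa_x^{-1}$ is well defined and bounded on $a_1^{(3)}$ because of the zero-average condition \eqref{forma finale cal L4}, that $\mu$ is $t$-independent so $\pa_t$ passes through, and that $\|q\|_{T,1}\lesssim\eta$ ensures the hypothesis of the composition Lemma \ref{lemma Moser} — is correctly addressed.
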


\begin{proof}
The lemma follows by recalling definitions \eqref{definizione cal m}, \eqref{definizione cal M inverso}, \eqref{definizione v}, \eqref{definizione cal R}, applying Lemma \ref{lemma interpolazione} and estimates \eqref{stima mu 2}, \eqref{stime a (4) b (4)}. 
\end{proof}

\section{Observability}\label{sezione osservabilita}
\label{osservabilita operatori lineari generali}
In this section we prove the observability for linear operators ${\cal L}$ of the form \eqref{operatore lineare generale}. The proof is split in several lemmas. 

\begin{lemma}[Ingham]
\label{Ingham inequality}
Let $T > 0$. Then there exists a constant $C_1(T) > 0$ such that for any $\mu \geq \frac12$ and for any $w = (w_j)_{j \in \N} \in \ell^2(\N, \C)$, one has 
$$
\int_0^T \Big| \sum_{j \in \N} w_j e^{\ii \mu j^2 t} \Big|^2 \, d t \geq C_1(T) \sum_{j \in \N} |w_j|^2\,.
$$
\end{lemma}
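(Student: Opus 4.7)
The plan is to separate the low and high frequency modes: after finitely many indices the gaps $\mu(2j+1)$ become arbitrarily large, so the classical Ingham inequality handles the tail, while the finite head can be treated via a finite-dimensional compactness argument, provided uniqueness holds for the full non-harmonic series on $(0, T)$.

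First, fix $N = N(T, \mu) \in \N$ so that $\mu(2j+1) > \pi/T$ for every $j \geq N$, and split
$f_w(t) := \sum_{j \geq 0} w_j e^{\ii \mu j^2 t} = P_N w(t) + Q_N w(t)$,
where $P_N w$ and $Q_N w$ collect the terms with $j < N$ and $j \geq N$, respectively. The classical (sharp) Ingham inequality applied to the tail frequencies, whose consecutive differences all exceed $\pi/T$, provides constants $c_1(T), C_1(T) > 0$ with
\[
c_1(T) \sum_{j \geq N} |w_j|^2 \ \leq\ \int_0^T |Q_N w(t)|^2 \, dt \ \leq\ C_1(T) \sum_{j \geq N} |w_j|^2.
\]

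Next I would argue by contradiction. If the claim failed there would exist $w^{(n)} \in \ell^2(\N, \C)$ with $\|w^{(n)}\|_{\ell^2} = 1$ and $\|f_{w^{(n)}}\|_{L^2(0, T)} \to 0$. The head coefficients $\{w_j^{(n)}\}_{j < N}$ are bounded in $\C^N$, so up to a subsequence $w_j^{(n)} \to \tilde w_j$ for $j < N$ and $P_N w^{(n)} \to \tilde h := \sum_{j < N} \tilde w_j e^{\ii \mu j^2 t}$ uniformly on $[0, T]$. Hence $Q_N w^{(n)} \to -\tilde h$ in $L^2(0, T)$; the lower bound of the tail inequality applied to $w^{(n)} - w^{(m)}$ forces $(w_j^{(n)})_{j \geq N}$ to be Cauchy in $\ell^2$, converging to some $(\tilde w_j)_{j \geq N}$. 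Combining, $\sum_{j \geq 0} \tilde w_j e^{\ii \mu j^2 t} = 0$ in $L^2(0, T)$, while $\|\tilde w\|_{\ell^2} = 1$ thanks to the strong $\ell^2$-convergence on both pieces. Reaching the contradiction thus reduces to the uniqueness statement: any $\tilde w \in \ell^2$ with $\sum_j \tilde w_j e^{\ii \mu j^2 t} = 0$ on $(0, T)$ is identically zero.

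The main obstacle is precisely this uniqueness step, since the gap $\mu(2 \cdot 0 + 1) = \mu$ at the bottom of the spectrum need not satisfy the threshold $\mu > \pi/T$ for small $T$, so the global Ingham inequality cannot be invoked. To handle it I would introduce the holomorphic extension $F(z) := \sum_j \tilde w_j e^{\ii \mu j^2 z}$ on the upper half-plane $\{\mathrm{Im}(z) > 0\}$; absolute convergence together with the Cauchy--Schwarz bound $|F(x + \ii y)| \leq \|\tilde w\|_{\ell^2} \big(\sum_j e^{-2\mu j^2 y}\big)^{1/2} = O(y^{-1/4})$ as $y \to 0^+$ shows that $F$ is holomorphic with moderate boundary growth and $L^2_{loc}$ boundary trace equal to the series itself on $\R$. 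Since this trace vanishes on the open interval $(0, T)$, a Schwarz reflection / Luzin--Privalov argument extends $F$ holomorphically across $(0, T)$ and then forces $F \equiv 0$ in the whole connected upper half-plane. In particular $F(\ii y) \equiv 0$ becomes a vanishing Dirichlet-type series $\sum_j \tilde w_j e^{-\mu j^2 y}$ for $y > 0$, whose coefficients can be extracted inductively by letting $y \to +\infty$ (at each step the dominant term isolates one $\tilde w_k$), yielding $\tilde w = 0$ and closing the contradiction.
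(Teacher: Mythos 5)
Your argument takes a genuinely different route from the paper's. The paper simply invokes a generalized Ingham theorem (Theorem 4.3 of Micu--Zuazua), designed precisely for sequences of real frequencies whose consecutive gaps only eventually exceed $2\pi/T$, and observes that $\mu$-uniformity follows by inserting the lower bound $|\mu j^2-\mu k^2|\geq \mu\geq\tfrac12$ into that proof. You instead use a head/tail split together with a compactness--uniqueness (Bardos--Lebeau--Rauch style) reduction: the tail, whose gaps $\mu(2j+1)$ eventually exceed $2\pi/T$ (note: the classical threshold is $2\pi/T$, not $\pi/T$), satisfies two-sided Ingham; a contradiction sequence converges strongly in $\ell^2$ to a unit vector $\tilde w$ with $\sum_j \tilde w_j e^{\ii\mu j^2 t}=0$ a.e.\ on $(0,T)$; and the conclusion is reduced to a boundary-uniqueness statement. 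That reduction is correct, and the complex-analytic step can be made cleaner: substituting $w=e^{\ii\mu z}$ turns $F$ into $G(w)=\sum_j\tilde w_j w^{j^2}$, which belongs to $H^2$ of the unit disk because $\sum_j|\tilde w_j|^2<\infty$; non-tangential boundary values then exist a.e., and the Luzin--Privalov (equivalently, F.\ and M.\ Riesz) theorem gives $G\equiv 0$ once they vanish on an arc of positive measure, bypassing both the $O(y^{-1/4})$ growth estimate and the Schwarz-reflection heuristic.

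There is, however, one genuine gap: the $\mu$-uniformity of the constant. The lemma's constant is written $C_1(T)$, not $C_1(T,\mu)$, the paper's proof explicitly addresses this, and the uniformity is used downstream: the $\mu$ appearing in the observability lemmas is produced by the reduction procedure, depends on the coefficients (it is only known to be close to $1$), and the observability constants $C_2,C_3,\ldots$ must not degenerate as $\mu$ varies. Your contradiction argument fixes $\mu$ and extracts a subsequence for that single value, so it only delivers a constant that may a priori depend on $\mu$. This can be repaired---e.g.\ for $\mu\geq\mu_0(T):=4\pi/T$ all gaps already exceed $2\pi/T$ so classical Ingham gives a $\mu$-uniform constant directly, and on the compact range $\mu\in[\tfrac12,\mu_0(T)]$ one can run the compactness argument jointly in $(\mu,w)$---but as written the proof does not establish the uniformity the lemma asserts.
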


\begin{proof}
This result is classical. For a proof, see for instance Theorem 4.3 in Section 4.1 of \cite{Micu-Zuazua}. To prove that the constant $C_1(T)$ does not depend on $\mu\in[\frac12,+\infty)$ it is enough to follow the proof in \cite{Micu-Zuazua} and use the lower bound $|\mu j^2 - \mu k^2| \geq\frac12$ for all pairs of distinct nonnegative integers $j\neq k$.
\end{proof}

\begin{lemma}[Observability for $\partial_t + \ii \mu \partial_{xx}$]
Let $T > 0$ and $\omega \subset \T$ be a non-empty open set. Then there exists a constant $C_2 := C_2(T, \omega) > 0$ such that for any $\mu \geq \frac12$, the following holds: for any $u_T \in L^2(\T)$ the solution $u$ of the backward Cauchy problem 
\begin{equation}\label{problema di cauchy parte costante}
\partial_t u + \ii \mu \partial_{xx} u = 0\,, \qquad u(T, \cdot) = u_T(\cdot)
\end{equation}
satisfies the estimate
$$
\int_0^T \int_\omega |u(t, x)|^2\, d x\, d t \geq C_2 \| u_T\|_{0}^2\,.
$$
\end{lemma}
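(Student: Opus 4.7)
The plan is to reduce observability to the Ingham inequality of Lemma \ref{Ingham inequality} via Fourier expansion, and then resolve the \emph{double eigenvalue} issue: both $e^{\ii kx}$ and $e^{-\ii kx}$ are eigenfunctions of $-\pa_{xx}$ with the same eigenvalue $k^2$, so Ingham alone cannot separate the $+k$ and $-k$ Fourier coefficients.

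First I would set up the Fourier representation. Writing $u_T(x) = \sum_{k\in\Z} \widehat{u}_T(k) e^{\ii kx}$, the unique solution of \eqref{problema di cauchy parte costante} is
$$
u(t,x) = \sum_{k\in\Z} \widehat{u}_T(k)\, e^{\ii \mu k^2 (t-T)}\, e^{\ii kx}.
$$
Grouping the $\pm k$ modes, which share the frequency $\mu k^2$, this becomes $u(t,x) = \sum_{k \geq 0} \phi_k(x) e^{\ii \mu k^2 (t-T)}$, where $\phi_0(x) := \widehat{u}_T(0)$ and $\phi_k(x) := \widehat{u}_T(k) e^{\ii kx} + \widehat{u}_T(-k) e^{-\ii kx}$ for $k \geq 1$. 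Since $\mu \geq \tfrac12$, for each fixed $x$ Lemma \ref{Ingham inequality} applied to the $\ell^2$ sequence $w_k := \phi_k(x) e^{-\ii\mu k^2 T}$ yields
$$
\int_0^T |u(t,x)|^2\, dt \geq C_1(T) \sum_{k \geq 0} |\phi_k(x)|^2.
$$
Integrating over $x \in \om$ and using Parseval $\| u_T \|_0^2 = \sum_{k\in\Z} |\widehat{u}_T(k)|^2$, the conclusion reduces to the uniform lower bound
$$
\int_\om |\phi_k(x)|^2\, dx \;\geq\; c(\om) \bigl( |\widehat{u}_T(k)|^2 + |\widehat{u}_T(-k)|^2 \bigr), \qquad \forall k \geq 1,
$$
for some $c(\om) > 0$ independent of $k$ (the $k=0$ contribution being trivially $|\om|\,|\widehat{u}_T(0)|^2$).

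To prove this lower bound I would expand the square and split into small and large $k$:
$$
\int_\om |\phi_k|^2\, dx = |\om| \bigl( |\widehat{u}_T(k)|^2 + |\widehat{u}_T(-k)|^2 \bigr) + 2\,\Re\bigl(\widehat{u}_T(k) \overline{\widehat{u}_T(-k)}\, I_k\bigr), \qquad I_k := \int_\om e^{2\ii kx}\, dx.
$$
By the Riemann--Lebesgue lemma applied to the $L^1$ function $\mathbf{1}_\om$, $I_k \to 0$ as $k \to \infty$, so there is $k_0$ with $|I_k| \leq |\om|/2$ for $k \geq k_0$; Cauchy--Schwarz then gives the claimed inequality with constant $|\om|/2$ for such $k$. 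For each of the finitely many $1 \leq k < k_0$, the linear map $\C^2 \to L^2(\om)$, $(a,b) \mapsto a e^{\ii kx} + b e^{-\ii kx}$, is injective (a nontrivial trigonometric polynomial cannot vanish on a nonempty open set), hence bounded below between finite-dimensional spaces. Taking the minimum of these finitely many positive constants together with $|\om|/2$ gives $c(\om) > 0$.

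The main obstacle is precisely the double eigenvalue step: Ingham controls only the grouped quantity $|\phi_k(x)|^2$, not $|\widehat{u}_T(k)|^2$ and $|\widehat{u}_T(-k)|^2$ individually, and one must exploit the geometry of $\om$ (through the decay of its characteristic function's Fourier coefficients) to decouple the modes of opposite sign. This is the ``further simple argument'' the authors mention in the introduction; the rest of the proof is a direct application of the Ingham inequality already established in Lemma \ref{Ingham inequality}.
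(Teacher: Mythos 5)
Your proposal is correct, and it follows the same overall architecture as the paper (Fourier expansion, grouping of the $\pm k$ modes, pointwise-in-$x$ Ingham, then decoupling of the two modes that share the frequency $\mu k^2$), but the decoupling step is handled by a genuinely different argument. The paper first shrinks to a small interval $\om_0=(a,b)\subset\om$ with $b-a$ small enough that the explicit bound $|\sin(n(b-a))/n|\le\sin(b-a)<b-a$ holds for every $n\ge1$; this yields a single uniform lower bound $b-a-\sin(b-a)>0$ valid for all $k$ at once, entirely by hand. You instead keep $\om$ as is and split into large and small $k$: for large $k$, Riemann--Lebesgue forces $I_k=\int_\om e^{2\ii kx}\,dx\to0$, giving the decoupling with constant $|\om|/2$ once $|I_k|\le|\om|/2$; for the finitely many small $k$, you invoke injectivity of $(a,b)\mapsto ae^{\ii kx}+be^{-\ii kx}$ on $L^2(\om)$ (analyticity of trigonometric polynomials) and take a minimum of finitely many positive constants. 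Both arguments are valid; the paper's is more elementary and fully quantitative, while yours is softer but would extend verbatim to any measurable $\om$ of positive measure rather than only open sets, and it more transparently isolates the ``double eigenvalue'' obstruction. One minor slip: the estimate $|2\,\Re(\widehat{u}_T(k)\overline{\widehat{u}_T(-k)}\,I_k)|\le(|\widehat{u}_T(k)|^2+|\widehat{u}_T(-k)|^2)|I_k|$ uses Young's inequality $2ab\le a^2+b^2$, not Cauchy--Schwarz, but this does not affect the argument.
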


\begin{proof}
The proof of this result is standard. For instance, it can be deduced by adapting the proof of Proposition 6.5 in \cite{ABH} to the present, simpler case. 
We give here the proof for completeness.

We fix an open interval $\om_0=(a,b)\subset\om$. 
We choose $b-a$ smaller than a suitable universal constant, so that
\begin{equation}\label{diseq seni}
\left| \frac{\sin(n(b-a))}{n} \right| = (b-a) \left| \frac{\sin(n(b-a))}{n (b-a)} \right| \leq (b-a) \frac{\sin(b-a)}{b-a} = \sin(b-a) \qquad \forall n\geq1.
\end{equation}
Let $u_T=\sum_{n\in\Z} w_n e^{\ii nx}$, so that $\| u_T\|_{L^2_x}^2 = \sum_{n\in\Z} |w_n|^2$. We compute
\[
u(t,x)=\sum_{n\in\Z} w_n e^{\ii nx} e^{\ii\mu n^2 (t-T)} = \sum_{n\in\N} z_n(x) e^{\ii\mu n^2 t}
\]
where
\[
z_n(x) :=
\begin{cases}
e^{-\ii\mu n^2 T} (w_n e^{\ii nx} + w_{-n} e^{-\ii nx}) \quad & \text{for} \ n\geq 1, \\
w_0 & \text{for} \ n=0.
\end{cases}
\]
By Lemma \ref{Ingham inequality} we get
\[
\int_0^T \int_{\om} |u(t,x)|^2 \; dx \; dt \geq C_1(T) \sum_{n\in\N} \int_{\om_0} |z_n(x)|^2 \; dx.
\]
It remains to prove that
\begin{equation}\label{claim bn}
\sum_{n\in\N} \int_{\om_0} |z_n(x)|^2 \; dx \geq C(\om_0) \sum_{n\in\Z} |w_n|^2
\end{equation}
for some constant $C(\om_0)$ depending only on $\om_0$. We have
\begin{equation}\label{b0a0}
\int_{\om_0} |z_0(x)|^2 \; dx = (b-a) |w_0|^2.
\end{equation}
For $n\geq1$, we compute
\begin{align*}
\int_{\om_0} |z_n(x)|^2 \; dx 
& = \int_{\om_0} \big( |w_n|^2 + |w_{-n}|^2 + w_n \bar w_{-n} e^{2\ii nx} + \bar w_n w_{-n} e^{-2\ii nx} \big) dx \\
& \geq (b-a)\{ |w_n|^2 + |w_{-n}|^2 \} - |w_n||w_{-n}| \left( \left| \int_{\om_0} e^{2\ii nx} \; dx \right| + \left| \int_{\om_0} e^{-2\ii nx} \; dx \right| \right) \\
& = (b-a)\{ |w_n|^2 + |w_{-n}|^2 \} - 2 |w_n||w_{-n}| \left| \frac{\sin(n(b-a))}{n} \right| \\
& \geq \left\{ b-a - \left| \frac{\sin(n(b-a))}{n} \right| \right\} \big( |w_n|^2 + |w_{-n}|^2 \big).
\end{align*}
Finally, we use \eqref{diseq seni} and we deduce
\begin{equation}\label{bnan}
\int_{\om_0} |z_n(x)|^2 \; dx \geq \big\{ b-a - \sin(b-a) \big\} \big( |w_n|^2 + |w_{-n}|^2 \big).
\end{equation}
Note that $b-a - \sin(b-a)>0$ is a constant depending only on $\om_0$.
Summing \eqref{bnan} over $n\in\N$ and adding $\eqref{b0a0}$, we get \eqref{claim bn}, which concludes the proof.
\end{proof}

\begin{lemma}[Observability for ${\cal L}_4 = \partial_t {\mathbb I}_2 + \ii \mu \Sigma \partial_{xx} + {\cal R}$]
\label{osservabilita secondo lemma}
Let $T > 0$, $\omega \subset \T$ be a non-empty open set and ${\cal L}_4$ the operator defined in \eqref{cal L5}. Then there exist $\eta \in (0, 1)$ small enough and $\sigma \in \N$ large enough such that if $N_T(\sigma) \leq \eta$ then the following holds: let ${\bf u}_T \in {\bf L}^2(\T)$ 
and let ${\bf u}(t, x)$ be the solution of the backward Cauchy problem 
\begin{equation}\label{Cauchy backward cal L5}
\partial_t {\bf u} + \ii \mu \Sigma \partial_{xx} {\bf u} + {\cal R} {\bf u} = 0\,, \qquad {\bf u}(T, \cdot) = {\bf u}_T\,.
\end{equation}
Then there exists a constant $C_3 := C_3(T, \omega) > 0$ (independent of ${\bf u}_T$) such that 
$$
\int_0^T \int_\omega |{\bf u}(t, x)|^2\, d x \, dt \geq C_3 \| {\bf u}_T \|_{0}^2\,.
$$
\end{lemma}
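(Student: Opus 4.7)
The plan is a perturbative argument: write $\mL_4 = \mL_* + \mR$ where $\mL_* := \pa_t \mathbb{I}_2 + \ii \mu \Sigma \pa_{xx}$ is the constant-coefficient part, and treat $\mR$ as a lower-order, small perturbation. By \eqref{stima cal R linearizzato ridotto}, Sobolev embedding, and the smallness ansatz $N_T(\sigma) \leq \eta$ (for $\sigma$ sufficiently large), the multiplication operator $\mR$ has operator norm on ${\bf L}^2$ controlled by $\|r_1\|_{L^\infty_{t,x}} + \|r_2\|_{L^\infty_{t,x}} \lesssim \eta$. Also, \eqref{stima mu 2} guarantees $\mu \geq 1/2$, so the hypothesis of the scalar Schr\"odinger observability lemma is satisfied.

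First, let ${\bf v}$ solve the unperturbed backward problem $\pa_t {\bf v} + \ii \mu \Sigma \pa_{xx} {\bf v} = 0$ with ${\bf v}(T) = {\bf u}_T$. Since the equation decouples and preserves the structure ${\bf v} = (v, \overline v)$, the first component $v$ solves $\pa_t v + \ii \mu \pa_{xx} v = 0$ with $v(T) = (u_T)_1$. Applying the previous lemma, using $|{\bf v}|^2 = 2|v|^2$ and $\|{\bf u}_T\|_0 = \|v(T)\|_0$, we obtain
\begin{equation}\label{plan:obs unperturbed}
\int_0^T \int_\om |{\bf v}(t,x)|^2 \, dx \, dt \geq 2\, C_2(T,\om) \, \|{\bf u}_T\|_0^2.
\end{equation}
Next, set ${\bf w} := {\bf u} - {\bf v}$, so that ${\bf w}$ solves $\pa_t {\bf w} + \ii \mu \Sigma \pa_{xx} {\bf w} = -\mR {\bf u}$ with ${\bf w}(T) = 0$. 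Since $\ii \mu \Sigma \pa_{xx}$ is skew-adjoint on ${\bf L}^2$, the energy identity $\tfrac{d}{dt}\|{\bf u}\|_0^2 = -2 \Re \langle \mR {\bf u}, {\bf u} \rangle_{{\bf L}^2}$ and Gr\"onwall yield $\|{\bf u}\|_{T,0} \leq e^{C T \eta} \|{\bf u}_T\|_0 \lesssim \|{\bf u}_T\|_0$, and Duhamel's formula (with unitary free propagator) gives
$$
\|{\bf w}\|_{T, 0} \lesssim T \, \eta \, \|{\bf u}\|_{T,0} \lesssim T \eta \|{\bf u}_T\|_0,
\qquad
\int_0^T \int_\om |{\bf w}|^2 \, dx\, dt \leq T \|{\bf w}\|_{T, 0}^2 \lesssim T^3 \eta^2 \|{\bf u}_T\|_0^2.
$$

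Finally, combining \eqref{plan:obs unperturbed} with the elementary bound $|{\bf v}|^2 \leq 2|{\bf u}|^2 + 2|{\bf w}|^2$,
$$
2 C_2 \|{\bf u}_T\|_0^2 \leq 2 \int_0^T \int_\om |{\bf u}|^2 \, dx\, dt + C T^3 \eta^2 \|{\bf u}_T\|_0^2.
$$
Choosing $\eta$ small enough (depending on $T$ and $\om$ through $C_2$ and the absolute constant $C$) so that $C T^3 \eta^2 \leq C_2$, the perturbative term is absorbed into the left-hand side and the claim follows with, e.g., $C_3(T,\om) := C_2(T,\om)/2$. The main technical point is precisely this absorption: it forces the smallness threshold $\eta$ to depend on $T$ and $\om$ (through the constant $C_2$ of the unperturbed observability), which is harmless since the threshold is already allowed to be as small as needed in the hypothesis $N_T(\sigma) \leq \eta$.
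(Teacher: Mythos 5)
Your proof is correct and follows essentially the same route as the paper: decompose the solution as the free evolution of ${\bf u}_T$ plus a remainder, observe the free part via the constant-coefficient Ingham-based lemma, bound the remainder in $L^\infty_t L^2_x$ by $\eta \|{\bf u}_T\|_0$ using the smallness of $\mathcal{R}$, and absorb. The only cosmetic difference is that you re-derive the remainder bound by hand (energy estimate plus Gr\"onwall plus Duhamel), whereas the paper invokes its well-posedness Lemma \ref{buona positura equazione lineare 1} (applied to the equation for ${\bf u}_2 := {\bf u} - {\bf u}_1$ with zero final datum and source $-\mathcal{R}{\bf u}_1$) to get the same bound directly; and the final absorption step is phrased with $(a+b)^2 \geq \tfrac12 a^2 - b^2$ in the paper, which is algebraically the same as your $|{\bf v}|^2 \leq 2|{\bf u}|^2 + 2|{\bf w}|^2$.
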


\begin{proof}
Let ${\bf u}_1$ be the solution of 
$$
\partial_t {\bf u}_1 + \ii \mu \Sigma \partial_{xx} {\bf u}_1 = 0\,, \qquad 
{\bf u}_1(T, \cdot) = {\bf u}_T\,.
$$
If ${\bf u}_1 = (u_1, \overline u_1)$ and ${\bf u}_T = (u_T, \overline u_T)$, 
then $u_1$ solves \eqref{problema di cauchy parte costante}. Therefore 
\begin{equation}\label{inequality bf u1 osservabilita}
\int_0^T \int_\omega |{\bf u}_1(t, x)|^2\, d x\, d t \geq C_2 \| {\bf u}_T\|_{0}^2\, \qquad \text{and} \qquad \| {\bf u}_1\|_{T, 0} = \| {\bf u}_T\|_0\,.
\end{equation}
Then the function ${\bf u}_2 := {\bf u} - {\bf u}_1$ solves the Cauchy problem 
$$
\partial_t {\bf u}_2 + \ii \mu \Sigma \partial_{xx} {\bf u}_2 +{\cal R}{\bf u}_2 = - {\cal R} {\bf u}_1\,, \qquad {\bf u}_2(T, \cdot) = 0\,. 
$$
By Lemma \ref{buona positura equazione lineare 1}, \eqref{stima cal R linearizzato ridotto}, \eqref{inequality bf u1 osservabilita}, since $N_T(\sigma) \leq \eta$,
\begin{equation}\label{stima bf u2 osservabilita cal L5}
\| {\bf u}_2\|_{T, 0} \lesssim  \| {\cal R} {\bf u}_1\|_{T, 0} 
\lesssim N_T(\sigma) \| {\bf u}_T\|_{0} 
\lesssim \eta \| {\bf u}_T\|_{0}  \,.
\end{equation}
Therefore, using the elementary inequality $(a + b)^2 \geq \frac12 a^2 - b^2$,
\begin{align*}
\int_0^T \int_\omega |{\bf u}(t, x)|^2\, dx \, d t & \geq \frac12 \int_0^T \int_\omega |{\bf u}_1(t, x)|^2\, dx \, d t -  \int_0^T \int_\omega |{\bf u}_2(t, x)|^2\, dx \, d t \\
& \stackrel{\eqref{inequality bf u1 osservabilita}}{\geq} \frac{C_2}{2} \| {\bf u}_T\|_{0}^2 -  \int_0^T \int_\T |{\bf u}_2(t, x)|^2\, dx \, d t \\
& \geq \frac{C_2}{2} \| {\bf u}_T\|_{0}^2 - T \| {\bf u}_2\|_{T, 0}^2  \\
& \stackrel{\eqref{stima bf u2 osservabilita cal L5}}{\geq} \frac{C_2}{2} \| {\bf u}_T\|_{0}^2 - T \eta^2 \| {\bf u}_T\|_{0}^2 \geq \frac{C_2}{4} \| {\bf u}_T\|_{0}^2
\end{align*}
by taking $\eta \in (0, 1)$ small enough, then the claimed inequality holds by taking $C_3 := C_2/4$. 
\end{proof}

\begin{lemma}[Observability for ${\cal L}_3 = \partial_t {\mathbb I}_2 + \ii \mu \Sigma \partial_{xx} + \ii A_1^{(3)} \partial_x + \ii A_0^{(3)}$]
\label{osservabilita cal L4}
Let $T > 0$, $\omega \subset \T$ be a non-empty open set and ${\cal L}_3$ be the operator defined in \eqref{forma finale cal L4}. Then there exist $\eta \in (0, 1)$ small enough and $\sigma \in \N$ large enough such that if $N_T(\sigma) \leq \eta$ then the following holds: let ${\bf u}_T \in {\bf L}^2(\T)$ and ${\bf u}(t, x)$ be the solution of the backward Cauchy problem 
\begin{equation}\label{Cauchy problem cal L4}
\partial_t {\bf u} + \ii \mu \Sigma \partial_{xx} {\bf u} + \ii A_1^{(3)}(t, x) \partial_x {\bf u} + \ii A_0^{(3)}(t, x) {\bf u} = 0\,, \qquad {\bf u}(T, \cdot) = {\bf u}_T\,.
\end{equation}
Then there exists a constant $C_4 := C_4(T, \omega) > 0$ (independent of ${\bf u}_T$) 
such that 
$$
\int_0^T \int_\omega |{\bf u}(t, x)|^2\, d x \, d t \geq C_4 \| {\bf u}_T \|_{0}^2\,.
$$
\end{lemma}
\begin{proof}
Lemma \ref{buona positura equazione lineare 2} guarantees that if ${\bf u}_T \in {\bf L}^2(\T)$, then the Cauchy problem \eqref{Cauchy problem cal L4} admits a unique solution ${\bf u} \in C([0, T], {\bf L}^2(\T))$. In Section \ref{step 5 riduzione}, we have proved that the operator ${\cal L}_3$ in \eqref{forma finale cal L4} is conjugated to the operator ${\cal L}_4$ in \eqref{cal L5} by using the operator ${\cal M}$ defined in \eqref{definizione cal m}. Therefore ${\bf u}$ solves the Cauchy problem 
$$
{\cal L}_3 {\bf u} = 0\,, \qquad {\bf u}(T, \cdot) = {\bf u}_T
$$
if and only if $\widetilde{\bf u}(t, \cdot) := {\cal M}^{- 1}(t) {\bf u}(t, \cdot)$ solves the Cauchy problem 
$$
{\cal L}_4 \widetilde{\bf u} = 0\,, \qquad \widetilde{\bf u}(T, \cdot) = {\cal M}^{- 1}(T){\bf u}_T\,.
$$
By Lemma \ref{osservabilita secondo lemma} we get the inequality for $\widetilde{\bf u}$ 
\begin{equation}\label{ragu 1}
\int_0^T \int_\omega |\widetilde{\bf u}(t, x)|^2\,d x \, dt \geq C_3 \| \widetilde{\bf u}_T\|_{0}^2\,.
\end{equation}
By estimate \eqref{stima v meno id} of Lemma \ref{stime cal L5}, using that $C([0, T] \times \T)$ is embedded into $C([0, T], H^1(\T))$ one has that, for some $\sigma \in \N$ large enough, the function $v(t,x)$, defined in \eqref{definizione v} and determining the operator $\cal M$, satisfies
$$
\|v^{\pm 1} - 1\|_{L^\infty_T L^\infty_x} \lesssim N_T(\sigma) \lesssim \eta\,.
$$
Hence, for any function ${\bf h} = (h , \overline h) : [0, T] \times \T \to \C^2$, for $\eta$ small enough, we get  for any $(t, x) \in [0, T] \times \T$
\begin{align}
 |{\cal M}^{- 1}(t) {\bf h}(t, x)| & \leq (1 + \| v^{- 1} - 1\|_{L^\infty_T L^\infty_x}) |{\bf h}(t, x)|   \leq (1 + C \eta) |{\bf h}(t, x)| \leq 2 |{\bf h}(t, x)|\,,  \label{pappap0} \\
 |{\cal M}^{- 1}(t) {\bf h}(t, x)| &  \geq |{\bf h}(t, x)| - \| v^{- 1} - 1 \|_{L^\infty_T L^\infty_x} |{\bf h}(t, x)| \geq (1 - C \eta) |{\bf h}(t, x)| \geq \frac12 |{\bf h}(t, x)|\,.\label{pappap1}
\end{align}
Using that $\widetilde{\bf u}(t, x) = {\cal M}^{- 1}(t){\bf u}(t, x)$, the two inequalities above imply 
$$
\int_0^T \int_\omega |\widetilde{\bf u}(t, x)|^2\, d x\, d t \leq 4 \int_0^T \int_\omega |{\bf u}(t, x)|^2\,dx\,dt\,, \quad \| \widetilde{\bf u}_T\|_{0}^2 \geq \frac14 \| {\bf u}_T\|_{0}^2\,,
$$
and then the claimed inequality follows by \eqref{ragu 1} and by setting $C_4 := C_3/16$.
\end{proof}

\begin{lemma}[Observability for ${\cal L}_2 = \partial_t {\mathbb I}_2 + \ii \mu \Sigma \partial_{xx} + \ii A_1^{(2)} \partial_x + \ii A_0^{(2)}$]
\label{osservabilita cal L3}
Let $T > 0$, let $\omega \subset \T$ be a non-empty open set and ${\cal L}_2$ be the operator defined in \eqref{cal L3}. Then there exist $\eta \in (0, 1)$ small enough and $\sigma \in \N$ large enough such that if $N_T(\sigma) \leq \eta$ then the following holds: let ${\bf u}_T \in {\bf L}^2(\T)$ and ${\bf u}(t, x)$ be the solution of the backward Cauchy problem 
\begin{equation}\label{Cauchy problem cal L3}
\partial_t {\bf u} + \ii \mu \Sigma \partial_{xx} {\bf u} + \ii A_1^{(2)}(t, x) \partial_x {\bf u} + \ii A_0^{(2)}(t, x) {\bf u} = 0\,, \qquad {\bf u}(T, \cdot) = {\bf u}_T\,.
\end{equation}
Then there exists a constant $C_5 := C_5(T, \omega) > 0$ 
(independent of ${\bf u}_T$) such that 
$$
\int_0^T \int_\omega |{\bf u}(t, x)|^2\, d x \, d t \geq C_5 \| {\bf u}_T \|_{0}^2\,.
$$
\end{lemma}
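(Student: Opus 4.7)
The plan is to transfer the observability from $\mathcal{L}_3$ (Lemma \ref{osservabilita cal L4}) back to $\mathcal{L}_2$ via the translation operator $\mathcal{T}$ constructed in Section \ref{step 4 coniugazione}. Recall from that section that $\mathcal{L}_3 = \mathcal{T}^{-1}\mathbb{I}_2 \mathcal{L}_2 \mathcal{T}\mathbb{I}_2$, with $\mathcal{T}^{-1}h(t,z) = h(t, z - p(t))$ and $p:[0,T]\to\R$ satisfying $\|p\|_{C^2_T}\lesssim \eta$ by \eqref{stima p}. If $\mathbf{u}$ is the solution of the backward Cauchy problem \eqref{Cauchy problem cal L3} provided by Lemma \ref{buona positura equazione lineare 2}, define $\widetilde{\mathbf{u}}(t,z) := \mathcal{T}^{-1}\mathbf{u}(t,z) = \mathbf{u}(t,z - p(t))$. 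Then $\widetilde{\mathbf{u}}$ solves $\mathcal{L}_3 \widetilde{\mathbf{u}} = 0$ with $\widetilde{\mathbf{u}}(T,\cdot) = \mathbf{u}_T(\cdot - p(T))$, and since the translation is an isometry on $L^2(\T)$ one has $\|\widetilde{\mathbf{u}}(T,\cdot)\|_0 = \|\mathbf{u}_T\|_0$.

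The key geometric observation is that, because $p$ is uniformly small, we can select a non-empty open subset $\omega' \subset \T$ such that $\omega' - p(t) \subset \omega$ for every $t \in [0,T]$. Indeed, fix an open interval $I = (x_0 - 2\delta, x_0 + 2\delta) \subset \omega$ and put $\omega' := (x_0 - \delta, x_0 + \delta)$; taking $\eta$ small enough so that $\|p\|_{C^0_T} \leq \delta$ (which follows from \eqref{stima p}) guarantees $\omega' - p(t) \subset I \subset \omega$ uniformly in $t$.

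Apply now Lemma \ref{osservabilita cal L4} to $\widetilde{\mathbf{u}}$ on $\omega'$:
$$
\int_0^T\!\!\int_{\omega'} |\widetilde{\mathbf{u}}(t,z)|^2\, dz\, dt \;\geq\; C_4(T,\omega') \, \|\widetilde{\mathbf{u}}(T,\cdot)\|_0^2 \;=\; C_4(T,\omega') \, \|\mathbf{u}_T\|_0^2.
$$
On the other hand, the change of variable $x = z - p(t)$ (for each fixed $t$, with Jacobian $1$) together with the inclusion $\omega' - p(t) \subset \omega$ yields
$$
\int_{\omega'} |\widetilde{\mathbf{u}}(t,z)|^2\, dz = \int_{\omega' - p(t)} |\mathbf{u}(t,x)|^2\, dx \;\leq\; \int_\omega |\mathbf{u}(t,x)|^2\, dx.
$$
Integrating in $t$ and chaining the two inequalities produces the desired bound with $C_5 := C_4(T,\omega')$.

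I do not expect a serious obstacle here: the well-posedness of the backward Cauchy problem is already available from Lemma \ref{buona positura equazione lineare 2}, the isometry of translations on $L^2$ is automatic, and the only quantitative requirement is the smallness condition on $\eta$ used to ensure $\|p\|_{C^0_T} \leq \delta$, which is compatible with the hypothesis $N_T(\sigma) \leq \eta$ that already underlies Lemma \ref{osservabilita cal L4}. The mild subtlety is the choice of the auxiliary control region $\omega'$ — strictly smaller than $\omega$ — since the translation shifts the observation window; this is handled by shrinking inside $\omega$ as described above.
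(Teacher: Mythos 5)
Your proposal is correct and follows essentially the same route as the paper: conjugate by the translation $\mathcal{T}$, use the fact that $\mathcal{T}^{-1}(T)$ is an $L^2$-isometry so $\|\widetilde{\bf u}_T\|_0 = \|{\bf u}_T\|_0$, apply the observability of $\mathcal{L}_3$ on a slightly shrunk interval $\omega'\subset\omega$, and then use the change of variables $x=z-p(t)$ (Jacobian $1$) together with the smallness $\|p\|_{C^0_T}\lesssim\eta$ to guarantee $\omega'-p(t)\subset\omega$ for all $t$. This is precisely the argument in the paper (which uses $\omega_1=(\alpha_1,\beta_1)$ where you use $\omega'$), so no further comment is needed.
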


\begin{proof}
Lemma \ref{buona positura equazione lineare 3} guarantees that if ${\bf u}_T \in {\bf L}^2(\T)$ then there exists a unique solution ${\bf u} \in C([0, T],$ ${\bf L}^2(\T))$ of the Cauchy problem \eqref{Cauchy problem cal L3}. In Section \ref{step 4 coniugazione}, we have proved that the transformation ${\cal T}$ defined in \eqref{operatori traslazione dello spazio} conjugates the operator ${\cal P}_4$ defined in \eqref{cal L3} to the operator ${\cal P}_5$ given in \eqref{forma finale cal L4}, hence ${\bf u}$ solves the Cauchy problem 
$$
{\cal L}_2 {\bf u} = 0\,, \qquad {\bf u}(T, \cdot) = {\bf u}_T
$$
if and only if $\widetilde{\bf u}(t, x) := {\cal T}^{- 1}(t){\bf u}(t, x)$ solves the Cauchy problem 
$$
{\cal L}_3 \widetilde{\bf u} = 0\,, \qquad \widetilde{\bf u}(T, \cdot) = {\cal T}^{- 1}(T) {\bf u}_T\,.
$$
Then by Lemma \ref{osservabilita cal L4}, applied to a time interval $\omega_1 := (\alpha_1, \beta_1) \subset \omega$, the function $\widetilde{\bf u}$ satisfies the property 
\begin{equation}\label{pipi 0}
\int_0^T \int_{\omega_1} |\widetilde{\bf u}(t, x)|^2\, d x\, dt \geq C_4(T, \omega_1) \| \widetilde{\bf u}_T\|_{0}^2\,.
\end{equation}
Performing the change of variables $y = x - p(T)$ (where $p(t)$, defined in \eqref{definizione p}, is the function determining the operator $\cal T$), one has 
\begin{align}
\| \widetilde{\bf u}_T\|_{0}^2 & = \int_\T |{\bf u}_T(x - p(T))|^2\, d x = \int_\T |{\bf u}_T(y)|^2\,d y = \| {\bf u}_T\|_{0}^2\,. \label{pipi 1}
\end{align}
By the change of variables $y = x - p(t)$, 
\begin{align}
\int_0^T \int_{\omega_1} |\widetilde{\bf u}(t, x)|^2\, d x\, d t & = \int_0^T \int_{\omega_1} |{\bf u}(t, x - p(t))|^2\,dx\,d t = \int_0^T \int_{\alpha_1 - p(t)}^{\beta_1 - p(t)} |{\bf u}(t, y)|^2\,d y \, d t \,. \label{pipi 11}
\end{align}
By estimate \eqref{stima p}, for all $t \in [0, T]$, $[\alpha_1 - p(t), \beta_1 - p(t)] \subseteq [\alpha_1 - C \eta, \beta_1 + C \eta] \subset \omega$ if $\eta$ is small enough. 
Therefore, by \eqref{pipi 11},
\begin{align}
\int_0^T \int_{\omega_1} |\widetilde{\bf u}(t, x)|^2\, d x\, d t \leq \int_0^T \int_\omega |{\bf u}(t, x)|^2\,d x\, dt \,. \label{pipi 2}
\end{align}
The claimed inequality follows by \eqref{pipi 0}, \eqref{pipi 1}, \eqref{pipi 2}, with $C_5:=C_4(T, \omega_1)$. 
\end{proof}
\begin{lemma}[Observability for ${\cal L}_1 = \partial_t {\mathbb I}_2 + \ii m_2 \Sigma \partial_{yy} + \ii A_1^{(1)} \partial_y + \ii A_0^{(1)}$]
\label{osservabilita cal L2}
Let $T > 0$, $\omega \subset \T$ be a non-empty open set and ${\cal L}_1$ be the operator defined in \eqref{forma finale cal L2}. Then there exist $\eta \in (0, 1)$ small enough and $\sigma \in \N$ large enough such that if $N_T(\sigma) \leq \eta$ then the following holds: let ${\bf u}_T \in {\bf L}^2(\T)$ and ${\bf u}(t, x)$ be the solution of the backward Cauchy problem 
\begin{equation}\label{Cauchy problem cal L2}
\partial_t {\bf u} + \ii m_2(t) \Sigma \partial_{xx} {\bf u} + \ii A_1^{(1)}(t, x) \partial_x {\bf u} + \ii A_0^{(1)}(t, x) {\bf u}= 0\,, \qquad {\bf u}(T, \cdot) = {\bf u}_T(\cdot)\,.
\end{equation}
Then there exists a constant $C_6 := C_6(T, \omega) > 0$ 
(independent of ${\bf u}_T$) such that 
$$
\int_0^T \int_\omega |{\bf u}(t, x)|^2\, d x \, d t \geq C_6 \| {\bf u}_T \|_{0}^2\,.
$$
\end{lemma}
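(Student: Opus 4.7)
The plan is to mirror the reduction strategy of Lemmas \ref{osservabilita cal L4} and \ref{osservabilita cal L3}: conjugate ${\cal L}_1$ by the time reparametrization ${\cal B}$ of Section \ref{sezione riparametrizzazione tempo} in order to reduce to ${\cal L}_2$, for which observability has just been established in Lemma \ref{osservabilita cal L3}.

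Given ${\bf u}_T \in {\bf L}^2(\T)$, well-posedness of the backward Cauchy problem \eqref{Cauchy problem cal L2} is provided by the results of Appendix C. I would set $\widetilde{{\bf u}}(\tau, x) := ({\cal B}^{-1} {\bf u})(\tau, x) = {\bf u}(\beta^{-1}(\tau), x)$. Since $\beta : [0,T] \to [0,T]$ fixes the endpoints by \eqref{definizione riparametrizzazione tempo}, the backward datum is preserved: $\widetilde{{\bf u}}(T, \cdot) = {\bf u}_T$. The conjugation identity ${\cal B}^{-1} {\mathbb I}_2 {\cal L}_1 {\cal B} {\mathbb I}_2 = \rho\, {\cal L}_2$ derived in \eqref{cal L3}, combined with the positivity of $\rho(\tau)$ (see \eqref{stima rho step 2}), implies that $\widetilde{{\bf u}}$ solves ${\cal L}_2 \widetilde{{\bf u}} = 0$ with final datum ${\bf u}_T$. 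Lemma \ref{osservabilita cal L3} applied to $\widetilde{{\bf u}}$ then yields
\[
\int_0^T \int_\omega |\widetilde{{\bf u}}(\tau, x)|^2 \, dx \, d\tau \geq C_5 \, \| {\bf u}_T \|_0^2.
\]

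To conclude, I would pull this integral back to the original time variable via the change of variables $\tau = \beta(t)$, $d\tau = \beta'(t)\, dt$, which (using $\beta(0) = 0$, $\beta(T) = T$) gives
\[
\int_0^T \int_\omega |\widetilde{{\bf u}}(\tau, x)|^2 \, dx \, d\tau = \int_0^T \int_\omega |{\bf u}(t, x)|^2 \, \beta'(t) \, dx \, dt.
\]
By \eqref{equazione per beta tempo} one has $\beta'(t) = m_2(t)/\mu$, and by \eqref{stima m2} and \eqref{stima mu 2} both $m_2$ and $\mu$ are close to $1$ for $\eta$ small, so $\beta'(t) \leq 2$ uniformly; the claimed observability inequality therefore holds with $C_6 := C_5/2$. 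The argument is essentially a routine transcription of the time-reparametrization step, and I do not expect any serious obstacle beyond bookkeeping: the only facts needed are that $\beta$ preserves $[0,T]$ (built into \eqref{definizione mu 2}) and that $\beta'$ is uniformly comparable to $1$, both of which are consequences of the standing smallness assumption $N_T(\sigma) \leq \eta$.
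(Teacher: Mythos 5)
Your proposal is correct and follows the paper's proof essentially verbatim: conjugate by $\mathcal{B}^{-1}$, note that $\beta$ fixes the endpoints so the terminal datum is preserved, apply Lemma~\ref{osservabilita cal L3}, and change the time variable back using the uniform bound $\beta' \leq 2$ coming from \eqref{stima m2}, \eqref{stima mu 2}. The only cosmetic difference is that you invoke the positivity of $\rho$ to justify the passage from $\rho\,\mathcal{L}_2\widetilde{\mathbf{u}}=0$ to $\mathcal{L}_2\widetilde{\mathbf{u}}=0$, which the paper leaves implicit.
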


\begin{proof}
Lemma \ref{buona positura equazione lineare 4} guarantees that if ${\bf u}_T \in {\bf L}^2(\T)$ then there exists a unique solution ${\bf u} \in C([0, T],$ ${\bf L}^2(\T))$ of the Cauchy problem \eqref{Cauchy problem cal L2}. In Section \ref{sezione riparametrizzazione tempo}, we have proved that the transformation ${\cal B}$ defined in \eqref{definizione cal B} conjugates the operator ${\cal L}_1$ defined in \eqref{forma finale cal L2} to the operator $\rho {\cal L}_2$ where the function $\rho$ is defined by \eqref{definizione rho (t)} and the operator ${\cal L}_2$ is given in \eqref{cal L3}. Hence ${\bf u}$ solves the Cauchy problem 
$$
{\cal L}_1 {\bf u} = 0\,, \qquad {\bf u}(T, \cdot) = {\bf u}_T
$$
if and only if $\widetilde{\bf u}(t, x) := {\cal B}^{- 1}{\bf u}(t, x)$ solves 
$$
{\cal L}_2 \widetilde{\bf u} = 0\,, \qquad \widetilde{\bf u}(T, \cdot) =  {\bf u}_T\,
$$
(we use that ${\cal B}^{- 1} {\bf u}_T = {\bf u}_T$ since ${\cal B}$ acts only in time).
Then, by Lemma \ref{osservabilita cal L3}, the function $\widetilde{\bf u}$ satisfies
\begin{equation}\label{cacca 0}
\int_0^T \int_{\omega} |\widetilde{\bf u}(t, x)|^2\, d x\, dt \geq C_5 \| {\bf u}_T\|_{0}^2\,.
\end{equation}
Performing the change of the time variable $\tau = \beta^{- 1}(t)$ (recall \eqref{definizione riparametrizzazione tempo}), we get for $\eta$ small enough
\begin{align}
\int_0^T \int_{\omega} |\widetilde{\bf u}(t, x)|^2\, d x\, d t & = \int_0^T \int_{\omega} |{\bf u}(\beta^{- 1}(t), x )|^2\,dx\,d t = \int_0^T \int_\omega |{\bf u}(\tau, x)| ^2 \beta'(\tau)\,d x\, d \tau \nonumber\\
& \stackrel{\eqref{stima mu 2}}{\leq} (1 + C \eta) \int_0^T \int_\omega |{\bf u}(\tau, x)| ^2 \,d x\, d \tau \leq 2 \int_0^T \int_\omega |{\bf u}(\tau, x)| ^2 \,d x\, d \tau \,. \label{cacca 11}
\end{align}
The claimed inequality follows by \eqref{cacca 0}, \eqref{cacca 11} and setting $C_6 := C_5/2$. 
\end{proof}

\begin{lemma}[Observability for ${\cal L}_0 = \partial_t {\mathbb I}_2 + \ii (\Sigma + {A_2^{(0)}} ) \partial_{xx} + \ii {A_1^{(0)}} \partial_x + \ii A_0^{(0)}$]
\label{osservabilita cal L1a}
Let $T > 0$, let $\omega \subset \T$ be a non-empty open set and ${\cal L}_0$ be the operator defined in \eqref{cal L1}. Then there exist $\eta \in (0, 1)$ small enough and $\sigma \in \N$ large enough such that if $N_T(\sigma) \leq \eta$ then the following holds: let ${\bf u}_T \in {\bf L}^2(\T)$ and ${\bf u}(t, x)$ be the solution of the backward Cauchy problem 
\begin{equation}\label{Cauchy problem cal L1}
\partial_t {\bf u} + \ii  (\Sigma + A_2^{(0)})\partial_{xx} {\bf u} + \ii A_1^{(0)} \partial_x {\bf u} + \ii A_0^{(0)} {\bf u} = 0\,, \qquad {\bf u}(T, \cdot) = {\bf u}_T 	\,.
\end{equation}
Then there exists a constant $C_7 := C_7(T, \omega) > 0$ 
(independent of ${\bf u}_T$) such that 
$$
\int_0^T \int_\omega |{\bf u}(t, x)|^2\, d x \, d t \geq C_7 \| {\bf u}_T \|_{0}^2\,.
$$
\end{lemma}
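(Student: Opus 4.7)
The strategy is entirely parallel to the reductions carried out in Lemmas \ref{osservabilita cal L4}, \ref{osservabilita cal L3}, \ref{osservabilita cal L2}: we conjugate $\mathcal{L}_0$ to $\mathcal{L}_1$ by the map $\mathcal{A} \mathbb{I}_2$ from Section \ref{primo cambio di variabile riduzione}, apply the already proven observability for $\mathcal{L}_1$ (Lemma \ref{osservabilita cal L2}), and then transfer the inequality back through the change of space variable. The only novelty with respect to the previous steps is that $\mathcal{A}$ acts on $x$ through a quasi-identity diffeomorphism, so both the observation set and the $L^2$ norms must be tracked through this substitution.

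Concretely, set $\widetilde{\mathbf{u}}(t,y) := \mathcal{A}^{-1}(t) \mathbf{u}(t,\cdot)(y) = \sqrt{1+\widetilde{\alpha}_y(t,y)}\, \mathbf{u}(t,y+\widetilde{\alpha}(t,y))$. Since $\mathcal{L}_1 = \mathcal{A}^{-1}\mathbb{I}_2 \, \mathcal{L}_0 \, \mathcal{A}\mathbb{I}_2$, the function $\widetilde{\mathbf{u}}$ solves the backward Cauchy problem \eqref{Cauchy problem cal L2} with datum $\widetilde{\mathbf{u}}_T := \mathcal{A}^{-1}(T)\mathbf{u}_T$. Because $\mathcal{A}^{-1}=\mathcal{A}^*$ is an $L^2$-isometry (formula \eqref{definizione cal A inverso} together with \eqref{lalalala}), the change of variables $x = y + \widetilde{\alpha}(T,y)$ yields $\| \widetilde{\mathbf{u}}_T \|_0 = \| \mathbf{u}_T \|_0$.

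Next, fix a nonempty open subinterval $\omega_1 \Subset \omega$ such that its $\eta_0$-neighborhood is still contained in $\omega$, for some $\eta_0>0$ depending only on $\omega$. By Lemma \ref{osservabilita cal L2} applied with observation set $\omega_1$,
\[
\int_0^T \int_{\omega_1} |\widetilde{\mathbf{u}}(t,y)|^2 \, dy \, dt
\geq C_6(T,\omega_1)\, \| \widetilde{\mathbf{u}}_T \|_0^2
= C_6(T,\omega_1)\, \| \mathbf{u}_T \|_0^2.
\]
Performing the change of variable $x = y + \widetilde{\alpha}(t,y)$ inside the left-hand side and using that $dx = (1+\widetilde{\alpha}_y)\,dy$ absorbs the Jacobian factor in $\widetilde{\mathbf{u}}$, we get
\[
\int_0^T \int_{\omega_1} |\widetilde{\mathbf{u}}(t,y)|^2 \, dy \, dt
= \int_0^T \int_{\omega_1(t)} |\mathbf{u}(t,x)|^2 \, dx \, dt,
\]
where $\omega_1(t)$ is the image of $\omega_1$ under the diffeomorphism $y \mapsto y + \widetilde{\alpha}(t,y)$.

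The main (and only) technical point is then to ensure that $\omega_1(t) \subset \omega$ for every $t \in [0,T]$. This follows from the bound $\| \widetilde{\alpha} \|_{T,s} \lesssim N_T(s+\sigma) \lesssim \eta$ of \eqref{stime alpha alpha tilde}, together with Sobolev embedding, which gives $\| \widetilde{\alpha} \|_{L^\infty_t L^\infty_x} \leq C\eta < \eta_0$ for $\eta$ small enough. Hence $\omega_1(t) \subset \omega$, and therefore
\[
\int_0^T \int_{\omega_1} |\widetilde{\mathbf{u}}(t,y)|^2 \, dy \, dt \leq \int_0^T \int_\omega |\mathbf{u}(t,x)|^2 \, dx \, dt.
\]
Combining the two displays proves the claimed observability inequality with $C_7 := C_6(T,\omega_1)$. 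Well-posedness of \eqref{Cauchy problem cal L1} in $C([0,T],\mathbf{L}^2(\T))$, needed to make sense of all these manipulations, is supplied by the corresponding Appendix C lemma exactly as in the previous steps.
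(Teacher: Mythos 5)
Your proof is correct and follows essentially the same route as the paper: conjugate $\mathcal{L}_0$ to $\mathcal{L}_1$ via the symplectic map $\mathcal{A}\mathbb{I}_2$, invoke Lemma~\ref{osservabilita cal L2} on a slightly shrunk observation set $\omega_1 \Subset \omega$, use that $\mathcal{A}^{-1}=\mathcal{A}^*$ is an $L^2$-isometry (so $\|\widetilde{\mathbf{u}}_T\|_0=\|\mathbf{u}_T\|_0$) and that the $C^0$-smallness of $\widetilde\alpha$ keeps the deformed region inside $\omega$, and change variables in the space integral. The paper's proof carries out exactly these steps, so no comparison is warranted.
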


\begin{proof}
Lemma \ref{buona positura equazione lineare 5} guarantees that if ${\bf u}_T \in {\bf L}^2(\T)$ then there exists a unique solution ${\bf u} \in C([0, T],$ ${\bf L}^2(\T))$ of the Cauchy problem \eqref{Cauchy problem cal L1}.
In Section \ref{primo cambio di variabile riduzione}, we have proved that the transformation ${\cal A}$ defined in \eqref{definizione cal A} conjugates the operator ${\cal L}_0$ defined in \eqref{cal L1} to the operator $ {\cal L}_1$ defined in \eqref{forma finale cal L2}. Hence ${\bf u}$ solves the Cauchy problem 
$$
{\cal L}_0 {\bf u} = 0\,, \qquad {\bf u}(T, \cdot) = {\bf u}_T
$$
if and only if $\widetilde{\bf u}(t, x) := {\cal A}^{- 1}{\bf u}(t, x)$ solves 
${\cal L}_1 \widetilde{\bf u} = 0$, $\widetilde{\bf u}(T, \cdot) = {\cal A}^{- 1}(T){\bf u}_T$.
Applying Lemma \ref{osservabilita cal L2} to the time interval $\omega_1 := (\alpha_1, \beta_1) \subset \omega$ one gets  
\begin{equation}\label{gargamella 0}
\int_0^T \int_{\omega_1} |\widetilde{\bf u}(t, x)|^2\, d x\, dt \geq C_6(T, \omega_1) \| \widetilde{\bf u}_T\|_{0}^2\,.
\end{equation}
Recalling \eqref{lalalala}, \eqref{definizione cal A inverso} and performing the change of variable $x = y + \widetilde \alpha(T, y)$, one has 
\begin{align}
& \| \widetilde{\bf u}_T\|_{0}^2 = \int_\T (1 + \widetilde \alpha_y(T, y)) |{\bf u}_T( y + \widetilde \alpha(T, y))|^2\, d y \nonumber\\
& = \int_\T \Big(1 + \widetilde \alpha_y(T, x + \alpha(T, x)) \Big)\Big( 1 + \alpha_x(T,x) \Big) |{\bf u}_T( x)|^2\,d x 
= \int_\T |{\bf u}_T(x)|^2\,d x = \| {\bf u}_T\|_{0}^2\,. \label{gargamella 1}
\end{align}
By \eqref{stime alpha alpha tilde} (applied with $s_0 \geq 1$), and using the standard Sobolev embedding, we get that for some $\sigma \in \N$ large enough
$$
\| \widetilde \alpha\|_{L^\infty_T L^\infty_x} \lesssim N_T(\sigma) \lesssim \eta.
$$ 
Hence, for some constant $C > 0$, 
$$
\big\{ (t, y + \tilde \alpha(t, y)) : t \in [0, T]\,, \ y \in \omega_1 \big\} 
\subset [0, T] \times [\alpha_1 - C \eta , \beta_1 + C \eta ] \subset [0, T] \times \omega
$$
for $\eta \in (0, 1)$ small enough. 
Then, using the change of variables $x = y + \widetilde \alpha(t, y)$ and \eqref{lalalala}, 
\begin{align}
\int_0^T \int_{\omega_1} |\widetilde{\bf u}(t, y)|^2\,d y \, d t 
& = \int_0^T \int_{\omega_1}(1 + \widetilde \alpha_y(t, y)) |{\bf u}(t, y + \widetilde \alpha(t, y))|^2\,d y \, d t  
\nonumber\\& 
\leq \int_0^T \int_\omega |{\bf u}(t, x)|^2\,d x\,dt \,. \label{gargamella 2}
\end{align}
The claimed inequality follows by \eqref{gargamella 0}, \eqref{gargamella 1}, \eqref{gargamella 2} by choosing $C_7:= C_6(T, \omega_1)$. 
\end{proof}

\begin{lemma}[Observability for ${\cal L} = \partial_t {\mathbb I}_2 + \ii (\Sigma + {A_2} ) \partial_{xx} + \ii {A_1} \partial_x + \ii A_0$]
\label{osservabilita cal L1}
Let $T > 0$, let $\omega \subset \T$ be a non-empty open set and let ${\cal L}$ be the operator defined in \eqref{operatore lineare generale}. 
Then there exist $\eta \in (0, 1)$ small enough and $\sigma \in \N$ large enough such that if $N_T(\sigma) \leq \eta$ then the following holds: let ${\bf u}_T \in {\bf L}^2(\T)$ and ${\bf u}(t, x)$ be the solution of the backward Cauchy problem 
\begin{equation}\label{Cauchy problem cal L}
\partial_t {\bf u} + \ii  (\Sigma + A_2)\partial_{xx} {\bf u} + \ii A_1 \partial_x {\bf u} + \ii A_0 {\bf u} = 0\,, \qquad {\bf u}(T, \cdot) = {\bf u}_T(\cdot)\,.
\end{equation}
Then there exists a constant $C_8 := C_8(T, \omega) > 0$ (independent of ${\bf u}_T$) such that 
$$
\int_0^T \int_\omega |{\bf u}(t, x)|^2\, d x \, d t \geq C_8 \| {\bf u}_T \|_{0}^2\,.
$$
\end{lemma}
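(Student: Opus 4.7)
The proof should follow the same pattern as the chain of observability lemmas already established, reducing from $\mathcal{L}$ back one step to $\mathcal{L}_0$ via the symmetrization transformation $\mathcal{S}$ constructed in Section~\ref{coniugio step 1}.

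The plan is as follows. First, well-posedness for the Cauchy problem \eqref{Cauchy problem cal L} in $\mathbf{L}^2(\T)$ is guaranteed by the appropriate linear well-posedness lemma (analogous to those cited in the previous observability lemmas, to be proved in Appendix C). By \eqref{cal L1}, the multiplication operator $\mathcal{S}$ from \eqref{matrice autovettori} conjugates $\mathcal{L}$ to $\mathcal{L}_0 = \mathcal{S}^{-1} \mathcal{L} \mathcal{S}$. Hence $\mathbf{u}$ solves \eqref{Cauchy problem cal L} if and only if $\widetilde{\mathbf{u}}(t,\cdot) := \mathcal{S}^{-1}(t) \mathbf{u}(t,\cdot)$ solves
$$
\mathcal{L}_0 \widetilde{\mathbf{u}} = 0, \qquad \widetilde{\mathbf{u}}(T,\cdot) = \mathcal{S}^{-1}(T) \mathbf{u}_T =: \widetilde{\mathbf{u}}_T.
$$
Applying Lemma~\ref{osservabilita cal L1a} (with possibly a smaller $\eta$ and larger $\sigma$) to $\widetilde{\mathbf{u}}$ yields
\begin{equation}\label{obs step tilde u}
\int_0^T \int_\omega |\widetilde{\mathbf{u}}(t,x)|^2 \, dx\, dt \geq C_7 \|\widetilde{\mathbf{u}}_T\|_0^2.
\end{equation}

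Next, by \eqref{stima cal S pm 1} in Lemma~\ref{stime dopo T cal S} and the Sobolev embedding $H^{s_0}(\T) \hookrightarrow L^\infty(\T)$ for $s_0 \geq 1$, taking $\sigma$ large enough we obtain
$$
\|\mathcal{S}^{\pm 1} - \mathrm{Id}\|_{L^\infty_T L^\infty_x} \lesssim N_T(\sigma) \lesssim \eta.
$$
Hence, arguing exactly as in \eqref{pappap0}-\eqref{pappap1} in the proof of Lemma~\ref{osservabilita cal L4}, for any $\mathbf{h} \in \C^2$ and any $(t,x) \in [0,T] \times \T$, provided $\eta$ is small enough,
$$
\tfrac{1}{2} |\mathbf{h}| \leq |\mathcal{S}^{\pm 1}(t,x) \mathbf{h}| \leq 2 |\mathbf{h}|.
$$
Applied with $\mathbf{h} = \mathbf{u}_T(x)$ and $\mathbf{h} = \mathbf{u}(t,x)$ respectively, these pointwise inequalities give
$$
\|\widetilde{\mathbf{u}}_T\|_0^2 \geq \tfrac{1}{4} \|\mathbf{u}_T\|_0^2, \qquad \int_0^T \int_\omega |\widetilde{\mathbf{u}}(t,x)|^2 \, dx\, dt \leq 4 \int_0^T \int_\omega |\mathbf{u}(t,x)|^2 \, dx\, dt.
$$
Combining these with \eqref{obs step tilde u} yields the claimed inequality with $C_8 := C_7/16$.

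There is no real obstacle here: the construction in Section~\ref{coniugio step 1} makes $\mathcal{S}$ a pointwise multiplication by a $2\times 2$ matrix close to the identity, so the translation between observability for $\widetilde{\mathbf{u}}$ and for $\mathbf{u}$ costs only a universal constant. The only small care required is to ensure that the thresholds $\eta$ and $\sigma$ from Lemma~\ref{osservabilita cal L1a}, from Lemma~\ref{stime dopo T cal S}, and from the Sobolev embedding used to bound $\|\mathcal{S}^{\pm 1} - \mathrm{Id}\|_{L^\infty_T L^\infty_x}$ are simultaneously satisfied; this is done by taking the maximum of the $\sigma$'s and the minimum of the $\eta$'s, as in the previous lemmas of this chain.
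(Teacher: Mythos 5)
Your proof is correct and follows essentially the same route as the paper's: well-posedness from Appendix C (Lemma~\ref{buona positura equazione lineare 6}), conjugation of $\mathcal{L}$ to $\mathcal{L}_0$ by the pointwise multiplication operator $\mathcal{S}$ from \eqref{matrice autovettori}, application of the observability estimate for $\mathcal{L}_0$ (Lemma~\ref{osservabilita cal L1a}), and transfer of the inequality via the $L^\infty$ closeness of $\mathcal{S}^{\pm1}$ to $\mathbb{I}_2$ furnished by \eqref{stima cal S pm 1} and Sobolev embedding. The only discrepancy is the final constant ($C_7/16$ versus the paper's $C_7/4$), which is immaterial.
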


\begin{proof}
Lemma \ref{buona positura equazione lineare 6} guarantees that if ${\bf u}_T \in {\bf L}^2(\T)$ then there exists a unique solution ${\bf u} \in C([0, T],$ ${\bf L}^2(\T))$ of the Cauchy problem \eqref{Cauchy problem cal L}. 
In Section \ref{coniugio step 1} we have proved that the transformation ${\cal S}$ defined in \eqref{matrice autovettori inversa} conjugates the operator ${\cal L}$ defined in \eqref{operatore lineare generale} to the operator $ {\cal L}_0$ defined in \eqref{cal L1}. 
Hence ${\bf u}$ solves the Cauchy problem 
$$
{\cal L} {\bf u} = 0\,, \qquad {\bf u}(T, \cdot) = {\bf u}_T
$$
if and only if $\widetilde{\bf u}(t, x) := {\cal S}^{- 1}(t){\bf u}(t, x)$ solves 
${\cal L}_0 \widetilde{\bf u} = 0$, $\widetilde{\bf u}(T, \cdot) =  {\cal S}^{- 1}(T){\bf u}_T$.
By Lemma \ref{osservabilita cal L1a}, 
\begin{equation}\label{gargamella 0z}
\int_0^T \int_{\omega} |\widetilde{\bf u}(t, x)|^2\, d x\, dt \geq C_7 \| \widetilde{\bf u}_T\|_{0}^2\,.
\end{equation}
Applying \eqref{stima cal S pm 1} and the ansatz \eqref{ansatz operatore astratto}, together with Sobolev embeddings, there exists $\sigma \in \N$ large enough such that 
\begin{equation}\label{stima cal S nella osservabilita}
\| {\cal S}^{\pm 1} - {\mathbb I}_2 \|_{L^\infty} \lesssim N_T(\sigma) \leq C \eta\,,  \quad \| {\cal S}^{\pm 1}\|_{L^\infty} \leq 2
\end{equation}
for $\eta \in (0, 1)$ small enough.
Therefore, recalling \eqref{definizione cal A inverso} and performing the change of variable $x = y + \widetilde \alpha(T, y)$, provided that $\eta$ is small enough, one has 
\begin{align}
\| \widetilde{\bf u}_T\|_{0}^2 & = \int_\T  |{\cal S}^{- 1}(T, x){\bf u}_T(x)|^2\, d x  \nonumber\\
& \stackrel{\eqref{stima cal S nella osservabilita}}{\geq}(1 - C^2 \eta^2)\int_\T |{\bf u}_T(x)|^2\, d x \geq \frac12 \| {\bf u}_T\|_{0}^2 \,. \label{gargamella 1z}
\end{align}
Moreover, using again \eqref{stima cal S nella osservabilita},  
\begin{align}
\int_0^T \int_{\omega} |\widetilde{\bf u}(t, y)|^2\,d y \, d t & = \int_0^T \int_{\omega} |{\cal S}^{- 1}(t, x){\bf u}(t, x)|^2\,d y \, d t  \leq 2  \int_0^T \int_\omega |{\bf u}(t, x)|^2\,d x\,dt \,. \label{gargamella 2z}
\end{align}
The claimed inequality follows by \eqref{gargamella 0z}, \eqref{gargamella 1z}, \eqref{gargamella 2z} and taking $C_8 := C_7/4$. 
\end{proof}

\section{Controllability}\label{sezione controllabilita}
In this Section we prove the controllability of linear operators ${\cal L}$ of the form \eqref{operatore lineare generale}, namely
$$
{\cal L}= \partial_t {\mathbb I}_2 + \ii (\Sigma + {A_2} ) \partial_{xx} + \ii {A_1} \partial_x + \ii A_0
$$ 
where the vector field $L(t) = - \ii\big( (\Sigma + {A_2} ) \partial_{xx} +  {A_1} \partial_x +  A_0 \big) $ is Hamiltonian and $A_2, A_1, A_0$ satisfy hypotheses \eqref{Sigma Ak}-\eqref{ansatz operatore astratto}. 
We define the operator ${\cal L}^*$ as 
\begin{equation}\label{definizione operatore cal P2 trasposto}
{\cal L}^* := - \partial_t {\mathbb I}_2 - \ii (\Sigma +  [A_2]^*) \partial_{xx} -  \ii \widetilde A_1 \partial_x - \ii \widetilde A_0 \,,
\end{equation}
where 
\begin{equation}\label{widetilde A1(1) R2}
 \quad \widetilde A_1 := 2 \partial_x [A_2]^* - [A_1]^*\,, \quad \widetilde A_0 :=  \partial_{xx} [A_2]^*  + \partial_x [A_1]^*\,.
\end{equation}
We point out that by Lemma \ref{struttura Hamiltoniana operatore aggiunto}, the time-dependent vector field $L_2^*(t) := - \ii   [A_2]^* \partial_{xx} -  \ii \widetilde A_1 \partial_x - \ii \widetilde A_0$ is still a Hamiltonian operator. Note that 
$$
\max \{ \| \widetilde A_1 \|_{T,s_0 - 1}, \|\partial_t \widetilde A_1 \|_{T,s_0 - 1},
\| \widetilde A_0 \|_{T,s_0 - 2} \} \lesssim N_T(s_0)\,,
$$
so that the operator ${\cal L}^*$ satisfies the same hyphotheses as ${\cal L}$ and the reduction procedure of Section \ref{riduzione operatori lineari generali} can be applied also to ${\cal L}^*$.

\begin{lemma}\label{controllabilita cal P2}
Let $T > 0$, let $\omega \subset \T$ be an open set. 
Let ${\cal L}^*$ be the operator defined by \eqref{definizione operatore cal P2 trasposto}.
There exists $\eta \in (0, 1)$ small enough and $ \sigma \in \N$ large enough such that, if $N_T( \sigma) \leq \eta$, then for any ${\bf h}_{in}, {\bf h}_{end} \in {\bf L}^2(\T)$, ${\bf q} \in C([0, T], {\bf L}^2(\T))$ there exists a unique function 
${\bf f} \in C([0, T], {\bf L}^2(\T))$ that solves ${\cal L}^* {\bf f} = 0$
such that the only solution ${\bf h} \in C([0, T], {\bf L}^2(\T))$ of the Cauchy problem 
\begin{equation}\label{controllo per cal P2}
\begin{cases}
{\cal L} {\bf h} = \chi_\omega {\bf f} + {\bf q} \\
{\bf h}(0, \cdot) = {\bf h}_{in} \\
\end{cases}
\end{equation}
satisfies ${\bf h}(T, \cdot) ={\bf h}_{end}$. 
Furthermore
$$
\| {\bf f}\|_{T, 0} \lesssim \| {\bf h}_{in}\|_{0} + \| {\bf h}_{end}\|_{0} + \| {\bf q}\|_{T, 0}\,.
$$
\end{lemma}

\begin{proof}
\emph{(Existence)}. 
For any ${\bf f}_1, {\bf g}_1 \in {\bf L}^2(\T)$, applying Lemma \ref{buona positura equazione lineare 6}, we consider the unique solutions ${\bf f}, {\bf g} \in C([0, T], {\bf L}^2(\T))$ of the Cauchy problems 
\begin{equation}\label{tartara}
\begin{cases}
{\cal L}^* {\bf f} = 0 \\
{\bf f}(T, \cdot) = {\bf f}_1 \,,
\end{cases} \qquad \begin{cases}
{\cal L}^* {\bf g} = 0 \\
{\bf g}(T, \cdot) = {\bf g}_1 
\end{cases}
\end{equation}
and we define the bilinear form 
$$
{ B}({\bf f}_1, {\bf g}_1) := \int_0^T \langle \chi_\omega {\bf f}, {\bf g} \rangle_{{\bf L}^2}  d t
$$
and the linear form 
$$
\Lambda({\bf g}_1) :=   \langle {\bf h}_{end}, {\bf g}(T, \cdot) \rangle_{{\bf L}^2} - \langle {\bf h}_{in}, {\bf g}(0, \cdot)   \rangle_{{\bf L}^2} - \int_0^T \langle  {\bf q}(t, \cdot), {\bf g}(t, \cdot)\rangle_{{\bf L}^2} \, dt\,,
$$
where the real scalar product $\langle \cdot, \cdot \rangle_{{\bf L}^2}$ is defined in \eqref{prodotto scalare sottospazio reale}. 
By \eqref{tartara} and Lemma \ref{buona positura equazione lineare 6} we have  
$$
| B({\bf f}_1, {\bf g}_1) | \lesssim \| {\bf f}_1\|_{0} \| {\bf g}_1\|_{0}\,, \quad |\Lambda({\bf g}_1)| \lesssim (\| {\bf h}_{in}\|_{0} + \| {\bf h}_{end}\|_{0} + \| {\bf q}\|_{T, 0} ) \| {\bf g}_1\|_{0}\,.
$$
By Lemma \ref{osservabilita cal L1}, the bilinear form $B$ is coercive and therefore, 
by Riesz representation theorem (or Lax-Milgram lemma), 
there exists a unique ${\bf f}_1 \in {\bf L}^2(\T)$ such that 
\begin{equation}\label{bistecca}
B({\bf f}_1, {\bf g}_1) = \Lambda({\bf g}_1) \quad \forall {\bf g}_1 \in {\bf L}^2(\T),
\end{equation}
satisfying $\| {\bf f}_1\|_{0} \lesssim \| \Lambda \|_{{\cal L}({\bf L}^2, \C)} \lesssim \| {\bf h}_{in}\|_{0} + \| {\bf h}_{end}\|_{0} + \| {\bf q}\|_{T, 0}$. 
Now let ${\bf f}_1$ be the only solution of \eqref{bistecca} and let ${\bf h}$ be the solution of the Cauchy problem \eqref{controllo per cal P2} 
(whose existence follows by Lemma \ref{buona positura equazione lineare 6}). 
We have 
\begin{align}
0 & = B({\bf f}_1, {\bf g}_1) - \Lambda({\bf g}_1) \nonumber\\
& = \int_0^T \langle \chi_\omega {\bf f}, {\bf g} \rangle_{{\bf L}^2}  d t  - \langle {\bf h}_{end}, {\bf g}(T, \cdot) \rangle_{{\bf L}^2} + \langle {\bf h}_{in}, {\bf g}(0, \cdot)   \rangle_{{\bf L}^2} + \int_0^T \langle  {\bf q}(t, \cdot), {\bf g}(t, \cdot)\rangle_{{\bf L}^2} \, dt \nonumber\\
& \stackrel{\eqref{controllo per cal P2}}{=} \int_0^T \langle {\cal L} {\bf h}, {\bf g} \rangle_{{\bf L}^2}\, d t  - \langle {\bf h}_{end}, {\bf g}(T, \cdot) \rangle_{{\bf L}^2} + \langle {\bf h}_{in}, {\bf g}(0, \cdot)   \rangle_{{\bf L}^2} \nonumber\\
& = \int_0^T \langle{\bf u}, {\cal L}^* {\bf g} \rangle_{{\bf L}^2}\,d t + \langle {\bf h}(T, \cdot), {\bf g}(T, \cdot) \rangle_{{\bf L}^2} - \langle {\bf h}(0, \cdot), {\bf g}(0, \cdot) \rangle_{{\bf L}^2} -  \langle {\bf h}_{end}, {\bf g}(T, \cdot) \rangle_{{\bf L}^2} + \langle {\bf h}_{in}, {\bf g}(0, \cdot) \rangle_{{\bf L}^2} \nonumber\\
& \stackrel{\eqref{tartara}}{=} \langle {\bf h}(T, \cdot) - {\bf h}_{end}, {\bf g}_1 \rangle_{{\bf L}^2}\,. \nonumber
\end{align}
Then for any ${\bf g}_1 \in {\bf L}^2(\T)$ we have that $\langle {\bf h}(T, \cdot) - {\bf h}_{end} , {\bf g}_1 \rangle_{{\bf L}^2} = 0$, implying that ${\bf h}(T, \cdot) = {\bf h}_{end}$ and then the lemma follows. 

\emph{(Uniqueness)}. 
Assume that $\widetilde {\bf f} \in C([0, T], {\bf L}^2(\T))$ satisfies ${\cal L}^* \widetilde{\bf f} = 0$, and that the solution ${\bf h}$ of the Cauchy problem 
${\cal L} {\bf h} = \chi_\omega \widetilde {\bf f} + {\bf q}$, ${\bf h}(0, \cdot) = {\bf h}_{in}$
satisfies ${\bf h}(T, \cdot) = {\bf h}_{end}$. 
Setting $\widetilde{\bf f}_1 := \widetilde{\bf f}(T, \cdot)$ and arguing as above, one sees that $B(\widetilde{\bf f}_1, {\bf g}_1) = \Lambda({\bf g}_1)$ for all ${\bf g}_1 \in {\bf L}^2(\T)$, 
and then, by uniqueness of the solution ${\bf f}_1$ of \eqref{bistecca}, 
we deduce $\widetilde{\bf f}_1 = {\bf f}_1$.
\end{proof}

\begin{lemma}[Higher regularity]\label{regolarita H2 per cal P2} 
Assume the hypotheses of Lemma \ref{controllabilita cal P2}, and $N_T(\s + 2) \leq 1$. 
Let $s \in [0, S - \s-1]$, and assume that $N_T(s + 1 + \s) < \infty$. 
If  ${\bf h}_{in}, {\bf h}_{end} \in {\bf H}^s(\T)$, ${\bf q} \in C ([0, T], {\bf H}^s(\T))$, 
then ${\bf h}, {\bf f} \in C([0, T], {\bf H}^s(\T))$ and  
$$
\| {\bf f}\|_{T, s}, \| {\bf h}\|_{T, s} 
\lesssim_s \| \phi\|_{T, s} + N_T(s + \sigma) \| \phi\|_{T, 0} \,, 
\qquad \phi := ({\bf q}, {\bf h}_{in}, {\bf h}_{end})\,. 
$$ 
Furthermore, if ${\bf h}_{in}, {\bf h}_{end} \in {\bf H}^{s + 4}(\T)$, ${\bf q} \in C\big([0, T], {\bf H}^{s + 4}(\T)  \big) \cap C^1([0, T], {\bf H}^s(\T))$, then 
$$
{\bf h}, {\bf f} \in C([0, T], {\bf H}^{s + 4}(\T)) \cap C^1([0, T], {\bf H}^{s + 2}(\T)) \cap C^2([0, T], {\bf H}^s(\T))\,, 
$$ 
and 
\begin{align}
\| {\bf h}, {\bf f}\|_{T, s+ 4},\| \partial_t {\bf h}, \partial_t {\bf f}\|_{T, s + 2 }, \| \partial_{tt} {\bf h}, \partial_{tt } {\bf f}\|_{T, s}
& \lesssim_s \| \phi\|_{T, s + 4} + \| \partial_t {\bf q}\|_{T, s} + N_T(s + \sigma) \| {\bf \phi}\|_{T, 4}\,.  \label{stima derivate bf h bf f}
\end{align}
\end{lemma}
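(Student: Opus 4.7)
The statement has two parts: the tame $\mathbf{H}^s$-bound on the HUM control and trajectory, and the $C^1$--$C^2$ time regularity when the data is four derivatives smoother. I treat them in this order.

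For the first part, the chain of reductions is as follows. By the forward well-posedness (Lemma \ref{buona positura equazione lineare 6}) applied to $\mathcal{L}\mathbf{h}=\chi_\omega\mathbf{f}+\mathbf{q}$, the $\mathbf{H}^s$-bound on $\mathbf{h}$ reduces tamely to the $\mathbf{H}^s$-bounds on $\mathbf{f}$, $\mathbf{q}$ and $\mathbf{h}_{in}$. By the same well-posedness lemma applied to $\mathcal{L}^*\mathbf{f}=0$ with final datum $\mathbf{f}_1$, the $\mathbf{H}^s$-bound on $\mathbf{f}$ reduces to the $\mathbf{H}^s$-bound on $\mathbf{f}_1=\mathbf{f}(T,\cdot)$. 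So everything hinges on showing $\|\mathbf{f}_1\|_s\lesssim_s\|\phi\|_s+N_T(s+\sigma)\|\phi\|_0$.

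Following the Dehman--Lebeau / Laurent strategy already used in \cite{ABH,BFH}, I introduce the HUM operator $\Gamma:\mathbf{L}^2(\T)\to\mathbf{L}^2(\T)$ defined by $\langle\Gamma\mathbf{f}_1,\mathbf{g}_1\rangle_{\mathbf{L}^2}=B(\mathbf{f}_1,\mathbf{g}_1)$, with $B$ the bilinear form constructed in the proof of Lemma \ref{controllabilita cal P2}. That proof gives $\mathbf{f}_1=\Gamma^{-1}\psi(\phi)$, where $\psi(\phi)$ is the Riesz representative of $\Lambda$ and $\|\psi(\phi)\|_s\lesssim\|\phi\|_s$. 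Observability (Lemma \ref{osservabilita cal L1}) yields $\Gamma\gtrsim\mathrm{Id}$ on $\mathbf{L}^2$. To lift this coercivity to a tame bound on $\Gamma^{-1}$ at level $s$, I invoke the reduction of Section \ref{riduzione operatori lineari generali}: after conjugating $\mathcal{L}$ to $\mathcal{L}_4=\partial_t\mathbb{I}_2+\ii\mu\Sigma\partial_{xx}+\mathcal{R}$, the leading part of both forward and backward propagators commutes with $(1-\partial_{xx})^{s/2}$, while each of the conjugations $\mathcal{S},\mathcal{A},\mathcal{B},\mathcal{T},\mathcal{M}$ is a tame isomorphism of $\mathbf{H}^s$ with the estimates collected in Section \ref{riduzione operatori lineari generali}. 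A direct commutator computation then shows that $[\Gamma,(1-\partial_{xx})^{s/2}]$ has operator norm $\lesssim N_T(s+\sigma)$, and combined with the $\mathbf{L}^2$-coercivity of $\Gamma$ this yields the desired tame estimate on $\mathbf{f}_1$.

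For the second half of the statement, I apply the first half at regularity $s+4$ to obtain $\mathbf{f},\mathbf{h}\in C([0,T],\mathbf{H}^{s+4})$ with tame bounds. Reading $\partial_t\mathbf{f}$ off $\mathcal{L}^*\mathbf{f}=0$ as a second-order operator in $x$ with coefficients controlled by $N_T$, Moser's composition lemma (Lemma \ref{lemma Moser}) gives $\partial_t\mathbf{f}\in C([0,T],\mathbf{H}^{s+2})$ tamely; differentiating once more in time, and using the $\partial_tA_k$ bounds built into $N_T$, yields $\partial_{tt}\mathbf{f}\in C([0,T],\mathbf{H}^s)$. The same two-step reading of $\mathcal{L}\mathbf{h}=\chi_\omega\mathbf{f}+\mathbf{q}$ produces the analogous control on $\partial_t\mathbf{h}$ and $\partial_{tt}\mathbf{h}$; the hypothesis $\partial_t\mathbf{q}\in C([0,T],\mathbf{H}^s)$ enters precisely at the $\partial_{tt}\mathbf{h}$ step. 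The main technical hurdle throughout is the tame bound on $\Gamma^{-1}$ in $\mathbf{H}^s$: observability provides coercivity only at the $\mathbf{L}^2$-level, and upgrading it tamely depends crucially on the full reduction to constant coefficients of Section \ref{riduzione operatori lineari generali}, so that the only obstruction to commuting $(1-\partial_{xx})^{s/2}$ through $\Gamma$ is the bounded remainder $\mathcal{R}$, whose size is controlled by $N_T$.
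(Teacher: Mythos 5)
Your overall plan — conjugate to $\mathcal{L}_4$ via Section \ref{riduzione operatori lineari generali}, identify the HUM/Gram operator (the paper's $S$ in \eqref{definizione mappa cal S regolarita}, which you call $\Gamma$), exploit its $\mathbf{L}^2$-isomorphism property from observability, and lift the regularity through a commutator argument in the style of Dehman--Lebeau — is exactly what the paper does, including the reduction to the special case ${\bf h}_{end}=0$, ${\bf q}=0$ and the treatment of the second half by differentiating the equations once and twice in time. However, there is a genuine gap in the crucial commutator step.

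You claim that ``a direct commutator computation then shows that $[\Gamma,(1-\partial_{xx})^{s/2}]$ has operator norm $\lesssim N_T(s+\sigma)$,'' and that combined with $\mathbf{L}^2$-coercivity this gives the tame bound on $\mathbf{f}_1$. This is not true: after reduction to $\mathcal{L}_4$, the commutator of $\Lambda^s$ with the multiplication operators $\mathcal{R}$ and $K$ does \emph{not} have small operator norm on $\mathbf{L}^2$; it is an operator of order $s-1$. The classical estimate (Lemma \ref{commutatore moltiplicazione fourier multiplier}) yields $\|[a,\Lambda^s]u\|_0 \lesssim_s \|a\|_{s+1}\|u\|_0 + \|a\|_2\|u\|_{s-1}$, and the second term $\|u\|_{s-1}$ is \emph{not} controlled by $N_T$ — only the coefficient is $O(1)$. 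Consequently, the commutator argument produces an inequality of the form
\[
\| \widetilde{\bf f}_1\|_{s} \lesssim \| \widetilde{\bf h}_{in}\|_{s} +  N_T(s + \sigma) \| \widetilde{\bf h}_{in} \|_{0} +  \| \widetilde{\bf f}_1 \|_{ s - 1} \,,
\]
with an extra $\|\widetilde{\bf f}_1\|_{s-1}$ on the right. The paper closes this via a bootstrap: first treat $0<s\leq 1$ using $\|\widetilde{\bf f}_1\|_{s-1}\leq \|\widetilde{\bf f}_1\|_0\lesssim\|\widetilde{\bf h}_{in}\|_0$, then run an induction on $s$ applying the displayed inequality repeatedly. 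Your sketch omits both the $\|\widetilde{\bf f}_1\|_{s-1}$ term and the induction that absorbs it, so the $\mathbf{H}^s$ tame estimate on $\mathbf{f}_1$ does not actually follow from what you wrote. You would need to restore the lower-order commutator term, and add the downward induction in $s$, to make the argument close.
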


\begin{proof}
Assume that ${\bf h}, {\bf f} \in C([0, T], {\bf L}^2(\T))$ are the solutions of 
\begin{equation}\label{gomorra scampia 10}
\begin{cases}
{\cal L} {\bf h} = \chi_\omega {\bf f} + {\bf q} \\
{\bf h}(0, \cdot) = {\bf h}_{in } \\
{\bf h}(T, \cdot) = {\bf h}_{end}\,,
\end{cases} \qquad {\cal L}^* {\bf f} = 0\,. 
\end{equation}
By the results of Section \ref{riduzione operatori lineari generali}, one has that 
\begin{equation}\label{espressione completa cal P2 cal L4}
{\cal L} = 
\Phi \mL_4 \Psi, \quad \ 
\Phi := 
{\cal S}({\cal A}{\mathbb I}_2)  ({\cal B} {\mathbb I}_2) \rho   
({\cal T} {\mathbb I}_2) {\cal M}, \quad 
\Psi := {\cal M}^{- 1} ({\cal T}^{- 1} {\mathbb I}_2) ({\cal B}^{- 1} {\mathbb I}_2) ({\cal A}^{- 1} {\mathbb I}_2) {\cal S}^{- 1},
\end{equation}
with ${\cal L}_4 = \partial_t {\mathbb I}_2 + \ii \mu \Sigma \partial_{xx} + {\cal R}$, and ${\cal R} \in C([0, T], {\bf H}^s(\T))$ is the multilplication operator given by \eqref{cal R r1 r2}. 
We define the \emph{adjoint operator}
$$
{\cal L}_4^* := - \partial_t - \ii \mu \Sigma \partial_{xx} + {\cal R}^*\,, 
$$
where ${\cal R}^*$ 
is the adjoint of the multiplication operator ${\cal R}$ with respect to the scalar product $\langle \cdot , \cdot \rangle_{{\bf L}^2}$, namely, recalling \eqref{cal R r1 r2}, 
\begin{equation}\label{espressione cal R*}
{\cal R}^* = 
\begin{pmatrix}
\overline r_1 & r_2 \\
\overline r_2 & r_1
\end{pmatrix}. 
\end{equation}
Now we define 
\begin{alignat}{3} \label{bacetti 20}
\widetilde{\bf h} & := \Psi h  \qquad &
\widetilde{\bf h}_{in} & := \Psi |_{t = 0} \, {\bf h}_{in}  \qquad &
\widetilde{\bf h}_{end} & := \Psi |_{t = T} \, {\bf h}_{end}  \\
\widetilde{\bf q} & := \Phi^{-1} {\bf q} \qquad & 
\widetilde{\bf f} & := \Phi_* {\bf f} \qquad &
K & := \Phi^{-1} \chi_\om (\Phi_*)^{-1}, 
\label{bacetti 21}
\end{alignat}
where $\Phi_*$ is the adjoint of $\Phi$ with respect to the \emph{time-space} scalar product 
$\langle \cdot , \cdot \rangle_{(t,x)} := \int_0^T \langle \cdot , \cdot \rangle_{{\bf L}^2} \, dt$. 
We call ``time-space adjoint'' the adjoint of an operator with respect to $\langle \cdot , \cdot \rangle_{(t,x)}$. 
By \eqref{cal S inverso aggiunto}, \eqref{definizione cal A inverso}, 
\eqref{operatori traslazione dello spazio}, \eqref{uguaglianza cal M aggiunto inverso}, 
the adjoint operators (with respect to the ${\bf L}^2$ scalar product)
of $\mS, \mA, \mT, \mM$ are
\begin{equation}\label{gomorra scampia}
{\cal S}^* = {\cal S}, \quad {\cal A}^* = {\cal A}^{-1}, \quad  
{\cal T}^* = {\cal T}^{-1}, \quad {\cal M}^* = {\cal M}^{-1}
\end{equation}
at each fixed $t \in [0,T]$, and therefore, integrating over $[0,T]$, 
the equalities in \eqref{gomorra scampia} also hold for the time-space adjoint operators
$\mS_*, \mA_*, \mT_*, \mM_*$.
The time-space adjoint of $\mB$ satisfies $\mB_* = \rho^{-1} \mB^{-1}$
(see \eqref{0807.2}), and therefore, from the definitions of $\Phi, \Psi$ in
\eqref{espressione completa cal P2 cal L4}, 
we calculate $\Phi_* = {\cal M}^{- 1} ({\cal T}^{- 1} {\mathbb I}_2) ({\cal B}^{- 1} {\mathbb I}_2) ({\cal A}^{- 1} {\mathbb I}_2) {\cal S}$.	
We also calculate 
$$
K = {\cal M}^{- 1} ({\cal T}^{- 1} {\mathbb I}_2) \rho^{- 1} ({\cal B}^{- 1} {\mathbb I}_2) ({\cal A}^{- 1} {\mathbb I}_2) {\cal S}^{- 1}\chi_\omega {\cal S} ({\cal A} {\mathbb I}_2) ({\cal B} {\mathbb I}_2) ({\cal T} {\mathbb I}_2) {\cal M}\,.
$$ 
Since $[\mS, \chi_\om \mathbb{I}_2] = 0$ 
and $[\mM , k \mathbb{I}_2] = 0$ for all real-valued functions $k(t,x)$, 
using the conjugation rules \eqref{regola coniugazione cal A}, \eqref{regola coniugazione cal B}, \eqref{regola coniugazione cal T}, 
and recalling also \eqref{definizione cal A}-\eqref{definizione cal A inverso},
one can easily see that $K$ is the multiplication operator 
\begin{equation}\label{espressione K moltiplicazione}
K = k(t,x) \mathbb{I}_2, \quad 
k(t,x) := ({\cal T}^{- 1} \rho^{-1})(t) \, 
(\mT^{-1} \mB^{-1} A_\a^{-1} \chi_\om)(t,x).
\end{equation}
By the estimates of Section \ref{riduzione operatori lineari generali}, we get 
\begin{equation}\label{stime K moltiplicazione}
\| K {\bf h}\|_{T, s} \lesssim_s 
\| {\bf h}\|_{T, s} + N_T(s + \sigma) \| {\bf h}\|_{T, 0} 
\quad \forall {\bf h} \in C([0, T], {\bf H}^s(\T))\,.
\end{equation}
Note that, by the estimates of Section \ref{riduzione operatori lineari generali}, one has that if ${\bf h}_{in}, {\bf h}_{end} \in {\bf H}^s(\T)$, ${\bf q} \in C([0, T], {\bf H}^s(\T))$, then $\widetilde{\bf h}_{in}, \widetilde{\bf h}_{end} \in {\bf H}^s(\T)$, $\widetilde{\bf q} \in C([0, T], {\bf H}^s(\T))$. Moreover using that ${\bf h}, {\bf f} \in C([0, T], {\bf L}^2(\T))$, one has that also $\widetilde{\bf h}, \widetilde{\bf f}, K \widetilde{\bf f} \in C([0, T], {\bf L}^2(\T))$. 
By construction, $\widetilde{\bf h}, \widetilde{\bf f}$ satisfy 
\begin{equation}\label{equazioni trasformate regolarita H2}
\begin{cases}
{\cal L}_4 \widetilde{\bf h} = K \widetilde{\bf f} + \widetilde{\bf q} \\
\widetilde{\bf h}(0, \cdot) = \widetilde{\bf h}_{in} \\
\widetilde{\bf h}(T, \cdot) = \widetilde{\bf h}_{end}\,,
\end{cases} \qquad {\cal L}_4^* \widetilde{\bf f} = 0\,.
\end{equation}
To prove that ${\cal L}_4^* \widetilde{\bf f} = 0$ it is enough to write it in its weak form, namely 
$$
\langle \widetilde{\bf f}(T, \cdot), {\bf v}(T, \cdot) \rangle_{{\bf L}^2} - \langle \widetilde{\bf f}(0, \cdot), {\bf v}(0, \cdot) \rangle_{{\bf L}^2} = \int_0^T \langle \widetilde{\bf f}, {\cal L}_4 {\bf v} \rangle_{{\bf L}^2}\,d t \quad \ \forall {\bf v} \in C^\infty([0, T] \times \T)
$$
and to apply the changes of coordinates in the integrals. 

Now we show that $\widetilde{\bf h}, \widetilde{\bf f} \in C([0, T], {\bf H}^s(\T))$. 
We adapt an argument used by Dehman-Lebeau \cite{Dehman-Lebeau}, 
also used in \cite{Laurent}, \cite{ABH}, \cite{BFH}. 
We split the proof into two parts.

\medskip

\noindent
{\sc Proof in the case ${\bf h}_{end} = 0$, ${\bf q} = 0$.} 
Define the map 
\begin{equation}\label{definizione mappa cal S regolarita}
S: {\bf L}^2(\T) \to {\bf L}^2(\T)\,, \qquad { S} \widetilde{\bf f}_1 := \widetilde{\bf h}(0, \cdot)\,,
\end{equation}
where $ \widetilde{\bf f}$ and $\widetilde{\bf h}$ are the solutions of the Cauchy problems
\begin{equation}\label{bacetti 0}
\begin{cases}
{\cal L}_4^* \widetilde{\bf f} = 0 \\
\widetilde{\bf f}(T, \cdot) = \widetilde{\bf f}_1\,,
\end{cases}
\qquad \begin{cases}
{\cal L}_4 \widetilde{\bf h} = K \widetilde{\bf f} \\
\widetilde{\bf h}(T, \cdot) = 0\,.
\end{cases}
\end{equation}
By existence and uniqueness in Lemma \ref{controllabilita cal P2}, it follows that ${S}$ is a linear isomorphism. Then for every initial datum $\widetilde{\bf h}_{in} \in {\bf L}^2(\T)$ there exists a unique $\widetilde{\bf f}_1 \in {\bf L}^2(\T)$ such that ${S} \widetilde{\bf f}_1 = \widetilde{\bf h}_{in}$. 
Note that $\| \Lambda^s \widetilde{\bf f}_1\|_{L^2_x} \lesssim \|{S} \Lambda^s \widetilde{\bf f}_1 \|_{L^2_x} $, since ${S} : {\bf L}^2(\T) \to {\bf L}^2(\T)$ is an isomorphism, where $\Lambda := {\rm Op}\big( (1 + \xi^2)^{\frac12}\big)$. 
To study the commutator $[\Lambda^s, {S}]$, we have to compare $(\Lambda^s \widetilde{\bf u}, \Lambda^s \widetilde{\bf f})$ with $(\underline{\bf h}, \underline{\bf f})$ solving the Cauchy problems 
\begin{equation}\label{Cauchy problem u f underline}
\begin{cases}
{\cal L}_4^* \underline{\bf f} = 0 \\
\underline{\bf f}(T, \cdot) = \Lambda^s \widetilde{\bf f}_1\,,
\end{cases} \qquad \begin{cases}
{\cal L}_4 \underline{\bf h} = K \underline{\bf f} \\
\underline{\bf h}(T, \cdot) = 0\,.
\end{cases}
\end{equation}
Since $[\mL_4^* , \Lm^s] = [\mR^*, \Lm^s]$, 
the difference $\Lambda^s \widetilde{\bf f} - \underline{\bf f}$ satisfies  
$$
\begin{cases}
{\cal L}_4^* \big( \Lambda^s \widetilde{\bf f} - \underline{\bf f} \big) = [{\cal R}^*, \Lambda^s] \widetilde{\bf f} \\
\big( \Lambda^s \widetilde{\bf f} - \underline{\bf f} \big)(T, \cdot) = 0\,.
\end{cases}
$$
By Lemma \ref{buona positura equazione lineare 1}, 
and then using Lemma \ref{commutatore moltiplicazione fourier multiplier}, \eqref{espressione cal R*}, \eqref{stima cal R linearizzato ridotto}, one gets the estimate 
\begin{align}
\|\Lambda^s \widetilde{\bf f} - \underline{\bf f}  \|_{T, 0} 
& \lesssim \| \, [{\cal R}^*, \Lambda^s] \widetilde{\bf f} \, \|_{T, 0} 
\lesssim_s
N_T(s + \sigma) \| \widetilde{\bf f}\|_{T, 0} + \| \widetilde{\bf f}\|_{T, s - 1}\,,  \label{uhhp}
\end{align}
for some constant $\sigma > 0$, 
where we have used that $N_T(\sigma) \lesssim 1$. 
The difference $\Lambda^s \widetilde{\bf h} - \underline{\bf h}$ satisfies the Cauchy problem 
$$
\begin{cases}
{\cal L}_4 \big( \Lambda^s \widetilde{\bf h} - \underline{\bf h}  \big) = K \big( \Lambda^s \widetilde{\bf f} - \underline{\bf f} \big) + [{\cal R}, \Lambda^s] \widetilde{\bf h} + [\Lambda^s , K] \widetilde{\bf f}  \\
(\Lambda^s \widetilde{\bf h} - \underline{\bf h})(T, \cdot) = 0\,.
\end{cases}
$$ 
Arguing as in \eqref{uhhp} one gets 
$$
\|  [{\cal R}, \Lambda^s] \widetilde{\bf h}\|_{T, 0} 
\lesssim_s N_T(s + \sigma) \| \widetilde{\bf h}\|_{T, 0} + \| \widetilde{\bf h}\|_{T, s - 1}\,.
$$
Since $K$ is a multiplication operator (see \eqref{espressione K moltiplicazione}), 
the commutator $[\Lambda^s , K]$ is of order $s - 1$.  
By \eqref{stime K moltiplicazione}, using again Lemma \ref{commutatore moltiplicazione fourier multiplier}, we deduce that 
$$
\|K ( \Lambda^s \widetilde{\bf f} - \underline{\bf f} ) \|_{T, 0} \lesssim \| \Lambda^s \widetilde{\bf f} - \underline{\bf f} \|_{T, 0}\,, \qquad \|  [\Lambda^s , K] \widetilde{\bf f}  \|_{T, 0} \lesssim \| \widetilde{\bf f}\|_{T, s - 1} + N_T(s + \sigma) \| \widetilde{\bf f}\|_{T, 0}\,.
$$
Therefore, by Lemma \ref{buona positura equazione lineare 1}, 
\begin{align}
\| \Lambda^s \widetilde{\bf h} - \underline{\bf h}\|_{T, 0} 
& \lesssim \|  [{\cal R}, \Lambda^s] \widetilde{\bf h}\|_{T, 0} 
+ \|K \big( \Lambda^s \widetilde{\bf f} - \underline{\bf f} \big) \|_{T, 0} 
+ \| [\Lambda^s , K] \widetilde{\bf f}  \|_{T, 0} 
\nonumber\\
& \lesssim  N_T(s + \sigma) \| \widetilde{\bf h}\|_{T, 0} 
+ \| \widetilde{\bf h}\|_{T, s - 1} 
+ \| \Lambda^s \widetilde{\bf f} - \underline{\bf f} \|_{T, 0} 
+ \| \widetilde{\bf f}\|_{T, s - 1} 
+ N_T(s + \sigma) \| \widetilde{\bf f}\|_{T, 0} 
\nonumber\\
& \stackrel{\eqref{uhhp}}{\lesssim}  \| \widetilde{\bf h}\|_{T, s - 1} +\|\widetilde{\bf f}\|_{T, s - 1} + N_T(s + \sigma)\big( \| \widetilde{\bf h}\|_{T, 0} +\|\widetilde{\bf f}\|_{T, 0} \big)  \,.  \label{stima tilde u - underline u 1}
\end{align}
Applying Lemma \ref{buona positura equazione lineare 1} to the Cauchy problems \eqref{bacetti 0},
and using also \eqref{stime K moltiplicazione}, we have
\begin{align} 
\| \widetilde{\bf f}\|_{T, s} 
& \lesssim \| \widetilde{\bf f}_1 \|_{ s} + N_T(s + \sigma) \| \widetilde{\bf f}_1 \|_{0}\,, 
\notag\\ 
\| \widetilde{\bf h}\|_{T, s} 
& \lesssim \|K \widetilde{\bf f} \|_{T, s} + N_T(s + \sigma) \|K \widetilde{\bf f} \|_{T, 0}  
\lesssim
\| \widetilde{\bf f}_1 \|_{ s} + N_T(s + \sigma) \| \widetilde{\bf f}_1 \|_{0} \,.
\label{bacetti 3}
\end{align}
Hence estimates \eqref{uhhp}, \eqref{stima tilde u - underline u 1} become
\begin{equation}\label{bacetti 1}
\|\Lambda^s \widetilde{\bf f} - \underline{\bf f}  \|_{T, 0}, \| \Lambda^s \widetilde{\bf h} - \underline{\bf h}\|_{T, 0} \lesssim_s \| \widetilde{\bf f}_1 \|_{ s - 1} + N_T(s + \sigma) \| \widetilde{\bf f}_1 \|_{0} \,.
\end{equation}
By the definition of the map ${S}$ in \eqref{definizione mappa cal S regolarita}, one has 
$\underline{\bf h}(0, \cdot) = {S} \Lambda^s \widetilde{\bf f}_1$. 
Also recall that we have fixed ${S} \widetilde{\bf f}_1 = \widetilde{\bf h}_{in } = \widetilde{\bf h}(0, \cdot)$. 
Using \eqref{bacetti 1} and triangular inequality, 
\begin{align}
\| {S} \Lambda^s \widetilde{\bf f}_1 \|_{0} 
& \lesssim \| \Lambda^s \widetilde{\bf h}(0, \cdot)\|_{0} + \| \Lambda^s \widetilde{\bf h}(0, \cdot) - \underline{\bf h}(0, \cdot)\|_{0} 
\nonumber \\ & 
\lesssim \| \widetilde{\bf h}_{in}\|_{s} + \| \Lambda^s \widetilde{\bf h} - \underline{\bf h}\|_{T, 0}  
\lesssim \| \widetilde{\bf h}_{in}\|_{s} +  \| \widetilde{\bf f}_1 \|_{ s - 1} + N_T(s + \sigma) \| \widetilde{\bf f}_1 \|_{0} \,.
\label{bacetti 2}
\end{align}
Since ${S} : {\bf L}^2(\T) \to {\bf L}^2(\T)$ is a linear isomorphism, we have $ \| \widetilde{\bf f}_1\|_s \simeq\| \Lambda^s \widetilde{\bf f}_1\|_{0} \lesssim \| {S} \Lambda^s \widetilde{\bf f}_1 \|_{0} $ and therefore, by \eqref{bacetti 2},
$$
\| \widetilde{\bf f}_1\|_{s} \lesssim \| \widetilde{\bf h}_{in}\|_{s} +  \| \widetilde{\bf f}_1 \|_{ s - 1} + N_T(s + \sigma) \| \widetilde{\bf f}_1 \|_{0}\,.
$$
Using again that ${S} :{\bf L}^2(\T) \to {\bf L}^2(\T)$ is an isomorphism, 
we have $\| \widetilde{\bf f}_1\|_{0} \lesssim \| \widetilde{\bf h}_{in}\|_{0}$, 
and the above inequality becomes 
\begin{equation} \label{1207.1}
\| \widetilde{\bf f}_1\|_{s} \lesssim \| \widetilde{\bf h}_{in}\|_{s} +  N_T(s + \sigma) \| \widetilde{\bf h}_{in} \|_{0} +  \| \widetilde{\bf f}_1 \|_{ s - 1} \,.
\end{equation}
If $0 < s \leq 1$, then $\| \widetilde{\bf f}_1 \|_{ s - 1} 
\leq \| \widetilde{\bf f}_1 \|_0$, and, as already observed, 
$\| \widetilde{\bf f}_1\|_{0} \lesssim \| \widetilde{\bf h}_{in}\|_{0}$, 
whence 
\begin{equation} \label{1207.2}
\| \widetilde{\bf f}_1\|_{s} \lesssim \| \widetilde{\bf h}_{in}\|_{s} +  N_T(s + \sigma) \| \widetilde{\bf h}_{in} \|_{0}\,.
\end{equation}
If $s > 1$, bound \eqref{1207.2} is proved by induction on $s$, 
applying \eqref{1207.1} repeatedly.
Hence, by \eqref{bacetti 3}, 
\begin{equation}\label{widetilde bf h bf f bf h in}
\| \widetilde{\bf h}\|_{T, s}, \| \widetilde{\bf f} \|_{T, s} \lesssim_s \| \widetilde{\bf h}_{in}\|_{s} +  N_T(s + \sigma) \| \widetilde{\bf h}_{in} \|_{0}\,.
\end{equation}
Finally, recalling \eqref{espressione completa cal P2 cal L4}, \eqref{bacetti 20}-\eqref{bacetti 21}, \eqref{gomorra scampia} and the estimates \eqref{stima cal S pm 1}, \eqref{stima cal a pm1}, \eqref{stima cal B pm1}, \eqref{stima rho step 2}, \eqref{stima cal T pm1}, \eqref{stima cal M} of Section \ref{riduzione operatori lineari generali}, 
we obtain the claimed estimate for ${\bf h}$ and ${\bf f}$, namely
\begin{equation}\label{bf h bf f bf h in}
\| {\bf h}\|_{T, s}, \| {\bf f} \|_{T, s} \lesssim_s \| {\bf h}_{in}\|_{s} +  N_T(s + \sigma) \| {\bf h}_{in} \|_{0}\,.
\end{equation}

\noindent
{\sc Proof of the general case.} Now we remove the hypothesis that ${\bf h}_{end}$ and ${\bf q}$ are zero. 
Assume that ${\bf h}, {\bf f}$ solve \eqref{gomorra scampia 10} and let ${\bf w}$ be the solution of the backward Cauchy problem 
\begin{equation}\label{gomorra scampia 0}
{\cal L} {\bf w} = {\bf q}, \quad 
{\bf w}(T, \cdot) = {\bf h}_{end}\,.
\end{equation}
Since ${\bf h}_{end} \in {\bf H}^s(\T)$ and ${\bf q} \in C([0, T], {\bf H}^s(\T))$, 
by Lemma \ref{buona positura equazione lineare 6} one has 
${\bf w} \in C([0, T], {\bf H}^s(\T))$ with 
\begin{equation}\label{gomorra scampia 1}
\| {\bf w} \|_{T, s} \lesssim_s \| {\bf q}\|_{T, s} + \| {\bf h}_{end}\|_{s} + N_T(s + \sigma)  \| {\bf h}_{end}\|_{0} \,.
\end{equation}
Let ${\bf v} := {\bf h} - {\bf w}$. Hence 
\begin{equation}\label{gomorra scampia 2}
{\cal L} {\bf v} = \chi_\omega {\bf f}, \quad 
{\bf v}(0, \cdot) = {\bf h}_{in} - {\bf w}(0, \cdot), \quad 
{\bf v}(T, \cdot) = 0
\end{equation}
and therefore ${\bf v}, {\bf f}$ solve \eqref{gomorra scampia 10} where $({\bf h}_{in}, {\bf h}_{end}, {\bf q})$ are replaced by $(0, {\bf h}_{in} - {\bf w}(0, \cdot), 0)$. 
Hence we can apply to ${\bf v}, {\bf f}$ the estimate \eqref{bf h bf f bf h in} proved in the previous step, obtaining that 
\begin{align}
\| {\bf v}\|_{T, s}\,,\, \| {\bf f} \|_{T, s} & \lesssim_s \| {\bf h}_{in} - {\bf w}(0, \cdot) \|_{s} + N_T(s + \sigma) \| {\bf h}_{in} - {\bf w}(0, \cdot) \|_{0} \nonumber\\
& \lesssim_s \| {\bf h}_{in} \|_{s} + \| {\bf w}(0, \cdot) \|_{s} + N_T(s + \sigma) \big( \| {\bf h}_{in} \|_{0}  + \| {\bf w}(0, \cdot) \|_{0} \big)  \nonumber\\
&\lesssim_s  \| {\bf h}_{in} \|_{s} + \| {\bf w} \|_{T, s} + N_T(s + \sigma) \big( \| {\bf h}_{in} \|_{0}  + \| {\bf w}\|_{T, 0} \big)\,.  \label{ciruzzo immortale 0}
\end{align}
Therefore \eqref{gomorra scampia 1}, \eqref{ciruzzo immortale 0} imply that 
\begin{equation}\label{ciruzzo immortale 2}
\| {\bf v}\|_{T, s}\,,\, \| {\bf f} \|_{T, s} \lesssim_s \| {\bf h}_{in}\|_{s} + \| {\bf h}_{end}\|_{s} + \| {\bf q}\|_{T, s} + N_T(s + \sigma) \big( \| {\bf h}_{in}\|_{0} + \| {\bf h}_{end}\|_{0} + \| {\bf q}\|_{T, 0}  \big)\,.
\end{equation}
The estimate for ${\bf h} = {\bf v} + {\bf w}$ follows by triangular inequality and 
by \eqref{gomorra scampia 1} and \eqref{ciruzzo immortale 2}. 
Estimate \eqref{stima derivate bf h bf f} is deduced from the fact that 
${\bf h}, {\bf f}$ solve the equations 
${\cal L} {\bf h} = \chi_\omega {\bf f} + {\bf q}$ and 
${\cal L}^* {\bf f} = 0$. 
\end{proof}

For any $s \in \R$, we consider the space 
$$
C([0, T], H^s(\T, \R^2)) = C([0, T], H^s(\T, \R)) \times C([0, T], H^s(\T, \R)) 
$$
and for $u = (u_1, u_2) \in C([0, T], H^s(\T, \R^2))$ we set 
$$
\| u \|_{T, s} := \| u_1\|_{T, s} + \| u_2\|_{T, s}\,.
$$
We define 
\begin{equation} \label{def Es}
E_s := X_s \times X_s , \quad 
\end{equation}
\begin{equation}\label{definizione Xs}
X_s := C([0,T], {H}^{s+ 4}(\T, \R^2)) \cap C^1([0,T], {H}^{s+2}(\T, \R^2)) \cap C^2([0,T], {H}^s(\T, \R^2))\,,
\end{equation}
and (recall notations in \eqref{2406.4}-\eqref{2706.1}),  
\begin{align} 
F_s := \Big\{  & z := (v, \alpha ,\beta) = (v_1, v_2, \a_1, \a_2, \b_1, \b_2) : 
\notag \\ & \ \ 
v \in C([0,T], {H}^{s+4}(\T, \R^2)) \cap C^1([0,T], {H}^s(\T, \R^2)), \ 
\alpha, \beta  \in H^{s + 4}(\T, \R^2) \Big\} 
\label{def Fs}
\end{align}
equipped with the norms 
\begin{equation} \label{def norm Es}
\| (u, f) \|_{E_s} := \| u \|_{X_s} + \| f \|_{X_s}, \quad 
\| {u} \|_{X_s} := \| { u} \|_{T,s+4} + \| \pa_t {u} \|_{T,s+2} + \| \pa_{tt} {u} \|_{T,s}\,, 
\end{equation}
and
\begin{equation} \label{def norm Fs}
\| z \|_{F_s} := \| v\|_{T, s + 4} + \| \partial_t v \|_{T, s}
+ \| \alpha\|_{s + 4} + \| \beta\|_{s + 4} \,.
\end{equation}
With this notation, we have proved the following linear inversion result.

\begin{theorem}[Right inverse of the linearized operator] \label{thm:inv}
Let $T>0$, and let $\om \subset \T$ be an open set.
There exist constants $\t \geq 6$, $\s \geq 3$ (independent of $T,\om$)
and $\d_* > 0$ (depending on $T,\om$) with the following property.

Let $s \in [0, r-\t]$, 
where $r$ is the regularity of the nonlinearity in \eqref{regolarita Hamiltoniana}.
Let $z = (v, \alpha, \beta) \in F_s$. 
If $(u , f) \in E_{s+\s}$, with $\| u \|_{X_\sigma}  \leq \d_*$, 
then there exists $(h, \vphi) := \Psi(u, f)[z] \in E_s$, such that 
\begin{equation} \label{3009}
P'(u)[h] - \chi_\om \vphi = v , \quad 
h(0, \cdot) = \alpha, \quad 
h(T, \cdot) = \beta,
\end{equation}
and 
\begin{equation}  \label{2809}
\| h, \vphi \|_{E_s} \leq C(s) \big(\| z \|_{F_s} + \| { u} \|_{X_{s+\s}} \| z \|_{F_0} \big)
\end{equation}
where the constant $C(s) > 0$ depends on $s,T,\om$.
\end{theorem}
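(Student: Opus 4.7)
\emph{Proof proposal.} The plan is to reduce \eqref{3009} to the vector complex control problem \eqref{2706.10} and then apply the controllability and regularity Lemmas~\ref{controllabilita cal P2}--\ref{regolarita H2 per cal P2} as black boxes. Applying the constant unitary isomorphism $\mC$ of \eqref{2806.5}, the real problem \eqref{3009} is transformed into the vector complex problem \eqref{2706.10} for the operator $\mL = \mL(u_1,u_2)$ of \eqref{linearized operator}, with data given by the $\mC$-image of $(v,\a,\b)$. Since $\mC$ is an $\R$-linear isometry between $L^2(\T,\R^2)$ and ${\bf L}^2(\T)$ (cf.\ \eqref{0109.3}), the norms $\|\cdot\|_{E_s}$ and $\|\cdot\|_{F_s}$ defined in \eqref{def norm Es}, \eqref{def norm Fs} are equivalent to their vector complex analogues used throughout Section~\ref{sezione controllabilita}, so it suffices to obtain the tame estimate in the vector complex formulation.

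By Lemma~\ref{stime coefficienti linearizzato}, the coefficients $a_k,b_k$ of $\mL$ satisfy $N_T(s) \lesssim_s \|u\|_{X_s}$ for all admissible $s$. Hence, denoting by $\s_* \in \N$ the largest of the universal constants appearing in Lemmas~\ref{controllabilita cal P2}--\ref{regolarita H2 per cal P2}, the choice $\s := \s_* + 1$ together with a sufficiently small $\d_*$ ensures that the smallness ansatz \eqref{ansatz operatore astratto} is satisfied at every level we need. Lemma~\ref{controllabilita cal P2}(i) then yields a control ${\bf f} \in C([0,T], {\bf L}^2(\T))$ together with the unique solution ${\bf h}$ of the Cauchy problem reaching the prescribed target at time $T$, and Lemma~\ref{regolarita H2 per cal P2}, applied with $({\bf h}_{in}, {\bf h}_{end}, {\bf q})$ equal to the vector complex image of $(\a,\b,v)$, upgrades this to $C^k([0,T], {\bf H}^{s+4-2k})$ regularity for $k=0,1,2$ with the tame bound
\begin{equation*}
\| {\bf h}, {\bf f}\|_{T,s+4} + \|\pa_t {\bf h}, \pa_t {\bf f}\|_{T,s+2} + \|\pa_{tt} {\bf h}, \pa_{tt} {\bf f}\|_{T,s} \,\lesssim_s\, \|\phi\|_{T,s+4} + \|\pa_t {\bf v}\|_{T,s} + N_T(s+\s_*)\,\|\phi\|_{T,4}.
\end{equation*}
Since $\|\phi\|_{T,s+4} + \|\pa_t {\bf v}\|_{T,s} = \|z\|_{F_s}$ and $\|\phi\|_{T,4} \leq \|z\|_{F_0}$ directly from the definitions \eqref{def Fs}, \eqref{def norm Fs}, and since $N_T(s+\s_*) \lesssim_s \|u\|_{X_{s+\s}}$ by the first bound, the right-hand side is bounded by $C(s)(\|z\|_{F_s} + \|u\|_{X_{s+\s}}\|z\|_{F_0})$, which is exactly \eqref{2809}. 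Undoing $\mC$ then returns $(h,\vphi) \in E_s$ satisfying \eqref{3009} with the required estimate.

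The main obstacle is purely the bookkeeping of derivative losses: one has to track simultaneously (a) the loss from extracting the coefficients of $\mL$ from $u$ via \eqref{2706.13}--\eqref{pp.0} and the composition (Moser) lemma, (b) the fixed finite losses accumulated through the five successive conjugations of Section~\ref{riduzione operatori lineari generali} (on which $\s_*$ of Lemma~\ref{controllabilita cal P2} depends), and (c) the commutator/Dehman--Lebeau bootstrap used in the proof of Lemma~\ref{regolarita H2 per cal P2}. Since each individual loss is a universal finite constant, a single sufficiently large choice of $\s$ and $\t$ absorbs them all and yields the conclusion.
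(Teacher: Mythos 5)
Your proof takes the same route as the paper's: pass to the vector complex formulation via the unitary isomorphism $\mC$, invoke Lemma~\ref{stime coefficienti linearizzato} to bound $N_T(\cdot)$ in terms of $\|u\|_{X_\cdot}$ so that the smallness ansatz \eqref{ansatz operatore astratto} holds, apply Lemma~\ref{regolarita H2 per cal P2} (which subsumes Lemma~\ref{controllabilita cal P2}) to get the tame estimate \eqref{stima derivate bf h bf f}, and undo $\mC$. The paper's own proof is more terse but identical in structure; your additional bookkeeping of $\|\phi\|_{T,s+4}+\|\pa_t\mathbf{v}\|_{T,s}=\|z\|_{F_s}$ and $\|\phi\|_{T,4}\le\|z\|_{F_0}$ simply fills in what the paper leaves implicit, and your absorption of the various finite derivative losses into $\s,\t$ is exactly what the paper means by ``$N_T(\sigma')\lesssim\delta_*$ for some $\sigma'<\sigma$.''
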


\begin{proof}
Using the transformation $\mC$ defined in \eqref{2806.5}, the linear control problem \eqref{3009} for the operator $P'(u_1, u_2)$  is transformed into the linear control problem \eqref{2706.10} for the operator ${\cal L}(u_1, u_2) = \mC^{-1} P'(u_1, u_2) \mC$, where the operator ${\cal L} = {\cal L}(u_1, u_2)$ is given in \eqref{linearized operator}.  
We apply Lemma \ref{regolarita H2 per cal P2} to the control problem \eqref{2706.10}, since by definition \eqref{definizione NT sigma} and Lemma \ref{stime coefficienti linearizzato} 
the smallness condition $\| {\bf u}\|_{X_\sigma} \leq \delta_*$ implies that $N_T(\sigma') \lesssim \delta_*$, for some $\sigma' < \sigma$. 
Then the lemma follows by noticing that the map $\mC : {\bf H}^s(\T) \to H^s(\T, \R^2)$ 
is a unitary isomorphism.
\end{proof}

\section{Proofs} 
\label{sec:proof}

In this section we prove Theorems \ref{thm:1}, \ref{teorema controllo in coordinate reali} and \ref{thm:byproduct}, \ref{thm:byproduct real}. As explained in Section \ref{sezione functional setting}, Theorems \ref{thm:1} and \ref{thm:byproduct} follow by Theorems \ref{teorema controllo in coordinate reali}, \ref{thm:byproduct real}. 

\subsection{Proof of Theorems \ref{thm:1}, \ref{teorema controllo in coordinate reali}}\label{subsec:proof thm 1}

We check that all the assumptions of Theorem \ref{thm:NM} are verified.
The spaces $E_s, F_s$ defined in \eqref{def Es}-\eqref{def norm Fs}, with $s \geq 0$, 
form scales of Banach spaces. 
We define the smoothing operators $S_j$, $j = 0,1,2,\ldots$ as  
\[ 
S_j u (x) := \sum_{|k| \leq 2^j} \widehat u_k \, e^{\ii k x}
\qquad \text{where} \quad 
u(x) = \sum_{k \in \Z} \widehat u_k \, e^{\ii k x} \in L^2(\T).
\] 
The definition of $S_j$ extends in the obvious way to functions 
$u(t,x) = \sum_{k \in \Z} \widehat u_k(t) \, e^{\ii kx}$ 
depending on time. 
Since $S_j$ and $\pa_t$ commute, the smoothing operators $S_j$ are defined 
on the spaces $E_s$, $F_s$ defined in \eqref{def Es}-\eqref{def Fs} by setting 
$S_j(u,f) := (S_j u, S_j f)$ and similarly on $z = (v, \a, \b)$. 
One easily verifies that $S_j$ satisfies \eqref{S0}-\eqref{S4} and \eqref{2705.4} 
on $E_s$ and $F_s$. 

By \eqref{2406.2}, observe that $\Phi(u,f) := (P(u) - \chi_\om f, \, u(0), \, u(T) )$ 
belongs to $F_s$ 
when $(u,f) \in E_{s+2}$, $s \in [0, r-4]$, 
with $\| u \|_{T,3} \leq 1$. 
Its second derivative in the directions $(h,\ph) = (h_1, h_2, \ph_1, \ph_2)$ and 
$(w,\psi) = (w_1, w_2, \psi_1, \psi_2)$ 
is
\[
\Phi''(u,f)[(h, \ph), (w,\psi)] 
= \begin{pmatrix} P''(u)[h, w] \\ 0 \\ 0 \end{pmatrix}.
\]
For $u$ in a fixed ball $\| u \|_{X_1} \leq \d_0$, with $\d_0$ small enough, 
one has
\begin{equation} \label{stima Phi''}
\| P''(u)[h,w] \|_{F_s} 
\lesssim_s  \big( \| h \|_{X_1} \| w \|_{X_{s+2}} 
+ \| h \|_{X_{s+2}} \| w \|_{X_1}
+ \| u \|_{X_{s+2}} \| h \|_{X_1} \| w \|_{X_1} \big)
\end{equation}
for all $s \in [0, r-4]$. 
We fix $V = \{ (u,f) \in E_2 : \| (u,f) \|_{E_2} \leq \d_0 \}$,
$\d_1 = \d_*$,  
\begin{equation} \label{param.1}
a_0 = 1, \quad
\mu = 2, \quad
a_1 = \s, \quad
\a = \b > 2 \s, \quad
a_2 > 2\a - a_1,
\end{equation}
where $\d_*, \s, \t$ are given by Theorem \ref{thm:inv}, 
and $r \geq r_1 := a_2 + \t$ is the regularity of $G$ in Theorem \ref{teorema controllo in coordinate reali}. 
The right inverse $\Psi$ in Theorem \ref{thm:inv} satisfies the assumptions of Theorem \ref{thm:NM}.
Let $u_{in}, u_{end} \in H^{\b+4}(\T, \R^2)$, with $\| u_{in}, u_{end} \|_{H^{\b+4}_x}$ small enough.
Let $g := (0, u_{in}, u_{end})$, so that $g \in F_\b$ and $\| g \|_{F_\b} \leq \d$.
Since $g$ does not depend on time, it satisfies \eqref{2705.1}.

Thus by Theorem \ref{thm:NM} there exists a solution $(u,f) \in E_\a$ of the equation
$\Phi(u,f) = g$, with $\| u,f \|_{E_\a} \leq C \| g \|_{F_\b}$ 
(and recall that $\b = \a$). 
We fix $s_1 := \a + 4$, and \eqref{stimetta2} is proved. 

We have found a solution $(u,f)$ of the control problem \eqref{pb cauchy statement controllo reale}-\eqref{i101}. 
Now we prove that $u$ is the unique solution of the Cauchy problem \eqref{pb cauchy statement controllo reale}, 
with that given $f$. 
Let $u,v$ be two solutions of \eqref{i9} in $E_{s_1-4}$. 
We calculate
\[
P(u) - P(v) 
= \int_0^1 P'(v + \lm (u-v)) \, d\lm \, [u - v]. 
\]
Conjugating the operator $P'(v + \lm (u-v))$ 
by means of the unitary isomorphism $\mC : {\bf H}^s(\T) \to H^s(\T, \R^2)$ 
defined in \eqref{2806.5}, one gets 
$$
\mC^{-1} P'(v + \lm (u-v)) \, \mC = {\cal L}(v + \lm (u-v))\,,
$$ 
where ${\cal L}$ has the form \eqref{linearized operator}. 
Hence 
$$
\mC^{- 1} \int_0^1 P'(v + \lm (u-v)) \, d\lm \, \mC = \widetilde{\cal L}\,,
$$
where 
\[
\widetilde \mL := \pa_t + \ii (\Sigma  + \tilde A_2(t,x)) \pa_{xx} 
+ \ii \tilde A_1(t,x) \pa_{x}
+ \ii \tilde A_0(t,x),
\]
\[
\tilde A_i(t,x) := \int_0^1 A_i(v+\lm(u-v))(t,x) \, d\lm, \quad i = 0,1,2,
\]
and $A_i(u)$ is defined in \eqref{2706.12}-\eqref{2706.13}. 
Setting ${\bf u} := \mC^{- 1} u$, ${\bf v} := \mC^{- 1} v$ one has that the difference 
${\bf u}-{\bf v}$ satisfies $\widetilde \mL ({\bf u}-{\bf v}) = 0$, $({\bf u}- {\bf v})(0) = 0$.
We apply Lemma \ref{buona positura equazione lineare 6} to the operator $\widetilde{\cal L}$, 
and we obtain ${\bf u} - {\bf v} = 0$. Then $u - v = 0$. 
This completes the proof of Theorem \ref{teorema controllo in coordinate reali},
and therefore of Theorem \ref{thm:1}.
\qed

\subsection{Proof of Theorems \ref{thm:byproduct}, \ref{thm:byproduct real}}\label{subsec:proof thm byproduct}

We define 
\begin{align} \label{def Es bis}
E_s & := C([0,T], H^{s+4}(\T,\R^2)) \cap C^1([0,T], H^{s+2}(\T,\R^2)) 
\cap C^2([0,T], H^s(\T,\R^2)),
\\
F_s & := \{ (v, \a) : v \in C([0,T], H^{s+4}(\T, \R^2)) \cap C^1([0,T], H^s(\T, \R^2)), 
\a \in H^{s+4}(\T,\R^2) \}
\end{align}
equipped with norms 
\begin{align} \label{def norm Es bis}
\| u \|_{E_s} & := \| u \|_{T,s+4} + \| \pa_t u \|_{T,s+2} + \| \pa_{tt} u \|_{T,s}
\\
\| (v,\a) \|_{F_s} & := \| v \|_{T,s+4} + \| \pa_t v \|_{T,s} + \| \a \|_{s+4},
\end{align}
and $\Phi(u) := (P(u), u(0))$, where $P$ is defined in \eqref{2406.1}.
Given $g := (0,u_{in}) \in F_{s_0}$, 
the Cauchy problem \eqref{pb cauchy statement controllo reale pappa}
writes $\Phi(u) = g$.
We fix 
$V := \{ u \in E_2 : \| u \|_{E_2} \leq \d_0 \}$, 
where $\d_0$ is the same as in subsection \ref{subsec:proof thm 1}; 
we fix $a_0, \mu, a_1, \a, \b, a_2$ like in \eqref{param.1},
where the constants $\s,\t$ are now given in Lemma \ref{buona positura equazione lineare 6},
$r \geq r_0 := a_2 + \t$ is the regularity of $G$ in Theorem \ref{thm:byproduct real},
and $\d_1$ is small enough to satisfy both assumption \eqref{2107.1} in Lemma \ref{stime coefficienti linearizzato} and $N_T(\s) \leq \eta$ in Lemma \ref{buona positura equazione lineare 6}.

Assumption \eqref{tame in NM} about the right inverse of the linearized operator 
is satisfied by Lemmas \ref{buona positura equazione lineare 6} 
and \ref{stime coefficienti linearizzato}. 
We fix $s_0 := \a + 4$.
Then Theorem \ref{thm:NM} applies, giving the existence part of Theorem \ref{thm:byproduct real}. 
The uniqueness of the solution is proved exactly as in Subsection \ref{subsec:proof thm 1}.
This completes the proof of Theorem \ref{thm:byproduct real}, 
and therefore of Theorem \ref{thm:byproduct}.
\qed

\section{Appendix A. Quadratic Hamiltonians and linear Hamiltonian vector fields}
\label{sezione formalismo hamiltoniano}
Dealing with linear Hamiltonian equations, we develop Hamiltonian formalism only for quadratic Hamiltonians.
We consider real quadratic Hamiltonians ${\cal H} : {\bf H}^s(\T) \to \R$ of the form
\begin{equation}\label{generica hamiltoniana quadratica reale nel complesso}
{\mathcal H}(u,  \overline u) = \int_{\T} {R}_1[u]\overline u\, d x+ \frac12 \int_{\T} { R}_2[u]  u \, dx + \frac12 \int_\T \overline{{ R}_2}[ \overline u]\, \overline u\,d x\,,
\end{equation}
where $R_1, R_2 : H^s (\T)\to H^{s - 2}(\T)$ and  
\begin{equation}\label{condizione R1 R2 campo hamiltoniano complesso}
R_1= R_1^*\,, \qquad R_2 = R_2^T\,.
\end{equation}
the Hamiltonian equation associated to ${\cal H}$ is given by 
$$
\partial_t {\bf u} = \ii J \nabla_{\bf u} {\cal H}({\bf u})\,, \qquad {\bf u} = (u, \overline u) \in {\bf H}^s(\T)
$$
where 
$$
{\nabla_{\bf u}} {\cal H} := (\nabla_u {\cal H}, \nabla_{\bar u} {\cal H})\,, \qquad J := \begin{pmatrix}
0 & 1\\
- 1 & 0
\end{pmatrix}\,.
$$
Note that the Hamiltonian vector field associated to the Hamiltonian ${\cal H}$ has the form 
\begin{equation}\label{forma campo Hamiltoniano complesso}
{\cal R} = \ii J \nabla_{\bf u} {\cal H} = \ii \begin{pmatrix}
R_1 & R_2 \\
- \overline R_2 & - \overline R_1
\end{pmatrix}\,, \qquad R_1 = R_1^*\,, \quad R_2 = R_2^T\,.
\end{equation}
The symplectic form on the phase space ${\bf L}^2(\T)$ is defined as  
\begin{equation}\label{forma simplettica coordinate complesse}
{\cal W}[{\bf u}_1, {\bf u}_2] =  \ii \int_{\T} (u_1  \overline u_2 -   \overline u_1 u_2)\, dx\,, \quad \forall {\bf u}_1, {\bf u}_2 \in { \bf L}^2(\T)\,.
\end{equation}
\begin{definition}\label{definizione mappa simplettica complessa}
Let $\Phi_i = \Phi_i : H^s(\T) \to H^s(\T)$, $i = 1, 2$. We say that the map 
$$
\Phi = \begin{pmatrix}
\Phi_1 & \Phi_2 \\
\overline{\Phi_2} & \overline{\Phi_1}
\end{pmatrix}\,, 
$$ 
is symplectic if 
$$
{\cal W}[\Phi[{\bf u}_1], \Phi[{\bf u}_2]] = {\cal W}[{\bf u}_1, {\bf u}_2]\,, \quad \forall {\bf u}_1, {\bf u}_2 \in { \bf L}^2(\T)\,, 
$$
or equivalently $\Phi^T J \Phi= J$. 
\end{definition}
It is well known that if ${\mathcal R}$ is an operator of the form \eqref{forma campo Hamiltoniano complesso}, then the operators $ {\rm exp}(\pm{\mathcal R})$ are symplectic maps. 
In the next lemma we state some properties of some particular Hamiltonian vector fields. 
\begin{lemma}\label{caratterizzazione campi lineari Hamiltoniani ordine 2}
Let $a_i, b_i \in H^s(\T)$, $i = 0,1,2$ and 
$$
A_i := \begin{pmatrix}
a_i & b_i \\
- \overline b_i & - \overline a_i
\end{pmatrix}\,, \qquad i = 0,1,2\,.
$$
If the vector field ${\cal R} := \ii \big( A_2 \partial_{xx} + A_1 \partial_x + A_0 \big) : {\bf H}^s(\T) \to {\bf H}^{s - 2}(\T)$ is Hamiltonian then the following holds: 
$$
a_2 = \overline a_2\,, \quad a_1 = 2 (\partial_x a_2) - \overline a_1\,, \quad a_0 = \overline a_0 + (\partial_{xx} a_2) - (\partial_x \overline a_1)\,,\quad b_1 = (\partial_x b_2)
$$
\end{lemma}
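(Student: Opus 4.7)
The strategy is direct: match the given vector field against the general Hamiltonian form \eqref{forma campo Hamiltoniano complesso} and translate the self-adjointness $R_1 = R_1^*$ and symmetry $R_2 = R_2^T$ into pointwise identities on the coefficients $a_i, b_i$ by integration by parts.

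First I would read off from the hypothesis that $\mathcal{R}$ has the structure \eqref{forma campo Hamiltoniano complesso} with
\[
R_1 = a_2 \pa_{xx} + a_1 \pa_x + a_0, \qquad R_2 = b_2 \pa_{xx} + b_1 \pa_x + b_0,
\]
so that the lemma reduces to extracting the coefficient conditions equivalent to $R_1=R_1^*$ and $R_2=R_2^T$ in the senses \eqref{definizione operatore aggiunto}, \eqref{definizione operatore trasposto}. Using $\pa_x^* = -\pa_x$ and $a^* = \bar a$ for multiplication operators (with conjugate), integrating by parts twice gives
\[
R_1^* = \bar a_2 \pa_{xx} + \big(2\pa_x \bar a_2 - \bar a_1\big)\pa_x + \big(\pa_{xx}\bar a_2 - \pa_x\bar a_1 + \bar a_0\big).
\]
Equating the coefficients of $\pa_{xx}, \pa_x, 1$ with those of $R_1$ yields in order $a_2 = \bar a_2$, then (using the first relation) $a_1 = 2\pa_x a_2 - \bar a_1$, and finally $a_0 = \bar a_0 + \pa_{xx}a_2 - \pa_x \bar a_1$. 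These are exactly the first three claimed identities.

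For the transpose, $\pa_x^T = -\pa_x$ and $a^T = a$ (no conjugation), so the same computation gives
\[
R_2^T = b_2 \pa_{xx} + \big(2\pa_x b_2 - b_1\big)\pa_x + \big(\pa_{xx}b_2 - \pa_x b_1 + b_0\big).
\]
Imposing $R_2 = R_2^T$ and equating coefficients, the $\pa_{xx}$-term is automatic, the $\pa_x$-term forces $2 b_1 = 2\pa_x b_2$, i.e.\ $b_1 = \pa_x b_2$, and then the order-zero condition $\pa_{xx} b_2 - \pa_x b_1 = 0$ is automatically satisfied by the relation just obtained. This gives the last identity and completes the proof.

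There is essentially no obstacle here: the whole lemma is a bookkeeping exercise in the formal adjoint/transpose of a second-order differential operator with variable coefficients on $\T$. The only subtlety worth flagging is that the transpose in \eqref{definizione operatore trasposto} does \emph{not} involve complex conjugation, which is exactly why $b_2$ need not be real while $a_2$ must be.
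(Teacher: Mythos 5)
Your proof is correct, and it is the natural way to establish this lemma: identify $R_1 = a_2\pa_{xx}+a_1\pa_x+a_0$ and $R_2 = b_2\pa_{xx}+b_1\pa_x+b_0$ from the block structure \eqref{forma campo Hamiltoniano complesso}, compute the formal adjoint (using $\pa_x^*=-\pa_x$, $a^*=\bar a$) and transpose (using $\pa_x^T=-\pa_x$, $a^T=a$) of a second-order differential operator with variable coefficients, and match coefficients of $\pa_{xx}$, $\pa_x$, $1$. Your two displayed expansions for $R_1^*$ and $R_2^T$ are exactly right, the substitution of $a_2=\bar a_2$ into the lower-order conditions is correctly handled, and you correctly observe that the order-zero constraint from $R_2 = R_2^T$ is automatically implied by $b_1 = \pa_x b_2$, so it yields no extra relation. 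The one point worth being explicit about — that matching coefficients is justified because a differential operator determines its symbol — is standard and harmless to leave implicit. Note that the paper itself states this lemma in Appendix A without proof, so there is no proof there to compare against; your argument is the computation the authors are implicitly invoking. The remark you make at the end about the transpose versus the adjoint (no conjugation in the former, hence no reality constraint on $b_2$) is precisely the right observation to flag.
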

\begin{lemma}\label{struttura Hamiltoniana operatore aggiunto}
Assume that ${\cal R}$ is a Hamiltonian operator of the form \eqref{forma campo Hamiltoniano complesso}. Then its adjoint ${\cal R}^*$ with respect to the complex scalar product $\langle \cdot, \cdot \rangle_{{\bf L}^2}$ is still a Hamiltonian operator.
\end{lemma}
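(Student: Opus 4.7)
The plan is to perform a direct computation, extracting $\mR^*$ from formula \eqref{2107.2} and then checking that it still has the structural form \eqref{forma campo Hamiltoniano complesso} required of a Hamiltonian vector field.

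First, I would write $\mR$ as a $2 \times 2$ block operator with $A := \ii R_1$ and $B := \ii R_2$, so that
\[
\mR = \begin{pmatrix} A & B \\ \overline{B} & \overline{A} \end{pmatrix}
\]
(note that $- \overline{R_k} = \overline{- R_k}$, so the sign is absorbed after pulling out the $\ii$). Applying \eqref{2107.2} then gives
\[
\mR^* = \begin{pmatrix} A^* & B^T \\ \overline{B^T} & \overline{A^*} \end{pmatrix}.
\]
The key elementary identities are $(\ii S)^* = -\ii S^*$ and $(\ii S)^T = \ii S^T$. Combined with the Hamiltonian hypotheses $R_1 = R_1^*$ and $R_2 = R_2^T$, they yield $A^* = -\ii R_1$ and $B^T = \ii R_2$.

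Next, I would factor an $\ii$ back out to rewrite
\[
\mR^* = \ii \begin{pmatrix} - R_1 & R_2 \\ -\overline{R_2} & \overline{R_1} \end{pmatrix}
= \ii \begin{pmatrix} \widetilde R_1 & \widetilde R_2 \\ -\overline{\widetilde R_2} & -\overline{\widetilde R_1} \end{pmatrix},
\qquad \widetilde R_1 := -R_1, \quad \widetilde R_2 := R_2,
\]
so $\mR^*$ has exactly the block shape appearing in \eqref{forma campo Hamiltoniano complesso}. The remaining verification is that the symmetry conditions \eqref{condizione R1 R2 campo hamiltoniano complesso} hold for the new blocks: $\widetilde R_1^* = -R_1^* = -R_1 = \widetilde R_1$ and $\widetilde R_2^T = R_2^T = R_2 = \widetilde R_2$. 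Hence $\mR^*$ is the Hamiltonian vector field associated with the quadratic Hamiltonian obtained from \eqref{generica hamiltoniana quadratica reale nel complesso} by replacing $(R_1, R_2)$ with $(-R_1, R_2)$.

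No step is really an obstacle here; the only place to be careful is keeping the complex conjugation, the $\ii$-factor, and the adjoint-vs-transpose distinction straight, since the ${\bf L}^2$ scalar product \eqref{prodotto scalare sottospazio reale} involves complex conjugation of the second entry while the Hamiltonian structure is encoded through the transpose condition $R_2 = R_2^T$. Once the identities $(\ii S)^* = -\ii S^*$ and $(\ii S)^T = \ii S^T$ are applied correctly, the conclusion is immediate from \eqref{2107.2}.
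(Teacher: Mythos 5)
Your proposal is correct and follows essentially the same route as the paper's proof: identify the block structure, compute $\mR^*$ from \eqref{2107.2}, and check that the resulting blocks satisfy the symmetry conditions \eqref{condizione R1 R2 campo hamiltoniano complesso} with $(R_1,R_2)$ replaced by $(-R_1,R_2)$. The only cosmetic difference is that you track the $\ii$ factor through $A=\ii R_1$, $B=\ii R_2$, whereas the paper folds the same bookkeeping into the intermediate quantities $Q_1=-\overline{R_1}^T$, $Q_2=R_2^T$; both lead to the identical conclusion $\mR^*=\ii\bigl(\begin{smallmatrix}-R_1 & R_2\\-\overline{R_2}&\overline{R_1}\end{smallmatrix}\bigr)$.
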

\begin{proof}
Let ${\cal R}$ be a Hamiltonian operator 
$$
{\cal R} = \ii \begin{pmatrix}
R_1 & R_2 \\
- \overline R_2 & - \overline R_1
\end{pmatrix}\,, \qquad R_1 = R_1^*\,, \quad R_2 = R_2^T\,.
$$
A direct calculation shows that the adjoint ${\cal R}^*$ with respect to the complex scalar product $\langle \cdot, \cdot \rangle_{{\bf L}^2}$ is given by 
$$
{\cal R}^* =  \ii \begin{pmatrix}
 Q_1 &  Q_2 \\
- \overline Q_2 & - \overline Q_1
\end{pmatrix}\,, \qquad Q_1 := - \overline R_1^T\,, \qquad Q_2 := R_2^T\,.
$$
using that $R_1$ is selfadjoint and $\overline R_1^T = R_1^*$, we get that $Q_1= - R_1$ and therefore $Q_1 = Q_1^*$. Moreover since $R_2 = R_2^T$, we get that $Q_2 = R_2$ and therefore $Q_2 = Q_2^T$. This implies that 
$$
{\cal R}^* = \ii \begin{pmatrix}
- R_1 & R_2 \\
- \overline R_2 & \overline R_1
\end{pmatrix}
$$
is still Hamiltonian. 
\end{proof}

\section{Appendix B. Classical tame estimates}
\label{appendice paradiff}
In this appendix we recall some classical interpolation estimates used in this paper. We introduce the following notation: given $k \in \R$, we denote 
$$
\Z_{\geq k} := \{ n \in \Z : n \geq k \}, \quad 
\R_{\geq k} := \{ s \in \R : s \geq k \}, \quad 
\R_{>k} := \{ s \in \R : s > k \}.
$$

\begin{lemma}\label{lemma interpolazione}
$(i)$ \emph{(Embedding).}
For any $s \in  \Z_{\geq 0}$, the space $H^{s + 1}(\T)$ is compactly embedded in $C^s(\T)$ and 
\begin{equation}\label{Sobolev embedding}
\| u \|_{C^s} \lesssim_s \| u \|_{s + 1} \quad \forall u \in H^{s + 1}(\T)\,.
\end{equation}
$(ii)$ \emph{(Tame product).}
Let $s \in \R_{\geq 1}$ and $u_1, u_2 \in H^s(\T)$. Then
\begin{equation}\label{prtame}
\| u_1 u_2 \|_{s} 
\lesssim_s \| u_1 \|_{1} \| u_2 \|_{s} + \| u_1 \|_{s} \| u_2 \|_{1}.
\end{equation}
In particular 
\begin{equation}\label{prtame2}
\| u_1 u_2 \|_{s} \lesssim_s \| u_1 \|_{s} \| u_2 \|_{s}.
\end{equation}
$(iii)$ \emph{(Interpolation).}
Let $a_0, b_0, p, q \in \R_{\geq 0}$. Then 
\begin{align} 
& \| u_1 \|_{a_0 + p} \| u_2 \|_{b_0 + q} \leq   \| u_1 \|_{a_0 + p + q} \| u_2 \|_{b_0} + \| u_1 \|_{a_0} \| u_2 \|_{b_0 + p + q}\,. \label{interpolation estremi fine} 
\end{align}

\end{lemma}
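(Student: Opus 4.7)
\medskip

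The plan is to prove the three parts in order, using only Fourier characterizations of $H^s(\T)$, Young's convolution inequality, and H\"older's inequality.

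For part (i), I would fix $u = \sum_{k \in \Z} \widehat u_k e^{\ii kx} \in H^{s+1}(\T)$ with $s \in \Z_{\geq 0}$ and bound, for each multi-index of order $|\alpha| \leq s$,
\[
\| \pa_x^\alpha u \|_{L^\infty} \leq \sum_{k} |k|^{|\alpha|} |\widehat u_k| \leq \Big( \sum_{k} \langle k \rangle^{-2} \Big)^{1/2} \Big( \sum_{k} \langle k \rangle^{2(|\alpha|+1)} |\widehat u_k|^2 \Big)^{1/2} \lesssim_s \| u \|_{s+1},
\]
by Cauchy--Schwarz. Summing over $|\alpha| \leq s$ gives \eqref{Sobolev embedding}. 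Compactness of the embedding is a standard consequence of Rellich's theorem on $\T$.

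For part (ii), I would write $\widehat{u_1 u_2}_k = \sum_{j \in \Z} \widehat u_{1,k-j} \widehat u_{2,j}$ and use the elementary bound $\langle k \rangle^s \lesssim_s \langle k-j \rangle^s + \langle j \rangle^s$ for all $s \geq 0$. Defining $\alpha_k := \langle k \rangle^s |\widehat u_{1,k}|$ and $\beta_k := |\widehat u_{2,k}|$, and symmetrically $\gamma_k, \delta_k$, this gives
\[
\langle k \rangle^s |\widehat{u_1 u_2}_k| \lesssim_s (\alpha * \beta)_k + (\gamma * \delta)_k.
\]
By Young's convolution inequality $\| a * b \|_{\ell^2} \leq \| a \|_{\ell^2} \| b \|_{\ell^1}$, and since $s \geq 1$ implies
\[
\| \widehat u \|_{\ell^1} \leq \Big( \sum_k \langle k \rangle^{-2} \Big)^{1/2} \Big( \sum_k \langle k \rangle^{2} |\widehat u_k|^2 \Big)^{1/2} \lesssim \| u \|_{1},
\]
we deduce \eqref{prtame}. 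Estimate \eqref{prtame2} follows since $\| u \|_1 \leq \| u \|_s$ when $s \geq 1$.

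For part (iii), the key is the standard logarithmic convexity of Sobolev norms: for $\theta \in [0,1]$,
\[
\| u \|_{\theta a + (1-\theta) b} \leq \| u \|_a^\theta \| u \|_b^{1-\theta},
\]
which is obtained by writing $\langle k \rangle^{2(\theta a + (1-\theta) b)} = \langle k \rangle^{2a \theta} \langle k \rangle^{2b(1-\theta)}$ and applying H\"older with exponents $1/\theta, 1/(1-\theta)$ on the Fourier side. Applying this with $\theta = q/(p+q)$ to $u_1$ (between indices $a_0$ and $a_0 + p + q$) and with $1-\theta = p/(p+q)$ to $u_2$ (between indices $b_0$ and $b_0 + p + q$), and then multiplying, I obtain
\[
\| u_1 \|_{a_0 + p} \| u_2 \|_{b_0 + q} \leq \bigl( \| u_1 \|_{a_0 + p + q} \| u_2 \|_{b_0} \bigr)^{p/(p+q)} \bigl( \| u_1 \|_{a_0} \| u_2 \|_{b_0 + p + q} \bigr)^{q/(p+q)}.
\]
Young's inequality $XY \leq \theta X^{1/\theta} + (1-\theta) Y^{1/(1-\theta)}$ with the same $\theta$ yields \eqref{interpolation estremi fine}. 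No step here is a genuine obstacle; the only minor care is the degenerate cases $p=0$ or $q=0$, which are trivial, and the constants can be absorbed since $\theta, 1-\theta \in [0,1]$.
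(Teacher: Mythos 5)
Your proof is correct; the paper states Lemma \ref{lemma interpolazione} without proof, under the heading ``Classical tame estimates,'' so there is no in-paper argument to compare against, but your Fourier-side treatment is exactly the standard one. Part (i) is correct (on $\T$ one would normally write $\pa_x^j u$, $j\le s$, rather than invoke multi-indices, but that is cosmetic); part (ii) uses the standard additive bound $\langle k\rangle^s \lesssim_s \langle k-j\rangle^s + \langle j\rangle^s$ plus Young $\ell^1*\ell^2\to\ell^2$ and is complete; and part (iii) correctly combines logarithmic convexity of the Sobolev norms with Young's inequality, and the constant can indeed be taken equal to $1$ since the Young weights $p/(p+q)$ and $q/(p+q)$ are at most $1$. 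One small wording slip: you say ``applying this with $\theta=q/(p+q)$ to $u_1$,'' but the exponent on the high norm $\|u_1\|_{a_0+p+q}$ must be $p/(p+q)$ (and $q/(p+q)$ on the low norm) in order to land on $a_0+p$; your displayed inequality already has the exponents right, so this is only a mismatch between the prose and the formula, not a mathematical error.
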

\begin{lemma}[Composition]\label{lemma Moser}
Let $s \in \R_{\geq 0}$, $m \in \N$, with $m > s+1$.
Let $F : \C^n \to \R$ be a function of $C^m$ class in the real sense. 
Let $u \in H^s(\T, \C^n) \cap H^1(\T,\C^n)$, with $\| u \|_{1} \leq 1$. 
Then 
\begin{equation} \label{0707.2}
\| F(u) \|_{s} \lesssim_s \, 1 + \| u \|_{s}\,. 
\end{equation}
Moreover, if $F(0) = 0$, then
\begin{equation} \label{0707.1}
\| F(u) \|_s \lesssim_s \, \| u \|_s.
\end{equation}
\end{lemma}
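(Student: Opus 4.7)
The plan is to prove the bound in three stages: establish the case $s=0$ from the Sobolev embedding, extend to integer $s$ by induction using the chain rule, and then interpolate to cover non-integer $s$. The refined bound under the hypothesis $F(0)=0$ will then follow from a trivial integration trick combined with the tame product.

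First I would exploit the smallness assumption $\|u\|_1 \leq 1$: by the Sobolev embedding \eqref{Sobolev embedding}, $\|u\|_{L^\infty} \lesssim 1$, so $u(x)$ takes values in a fixed compact set $K \subset \C^n$ independent of $u$. Since $F$ is smooth, all partial derivatives $\pa^\alpha F$ are uniformly bounded on $K$. This settles $s=0$ at once: $\|F(u)\|_0 \lesssim \|F\|_{L^\infty(K)} \lesssim 1$. For integer $s \geq 1$, I would proceed by induction, expanding $\pa_x^s F(u)$ via Faà di Bruno as a finite sum of terms of the form
\[
(\pa^\alpha F)(u)\,\prod_{i=1}^p \pa_x^{k_i} u^{(j_i)},\qquad \sum_{i=1}^p k_i = s,\quad k_i \geq 1.
\]
The prefactor lies in $L^\infty$, and each product of $p$ derivative factors is estimated by iterating the tame product \eqref{prtame} and redistributing derivative orders via the interpolation inequality \eqref{interpolation estremi fine}. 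Since the total order $\sum k_i$ equals $s$, one can collapse all $k_i$'s onto a single factor $\pa_x^s u$ at the cost of multiplying by $L^\infty$-type norms of $u$, which are controlled by $\|u\|_1 \leq 1$. This yields $\|F(u)\|_s \lesssim_s 1+\|u\|_s$ for all integers $s\geq 0$.

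For non-integer $s > 0$, I would interpolate between the two adjacent integer cases by writing $s = (1-\theta)k + \theta(k+1)$ and using the logarithmic convexity of Sobolev norms, applied both to $F(u)$ and to $u$ itself. Finally, for the refinement when $F(0)=0$, I would write
\[
F(u)(x) = \int_0^1 \tfrac{d}{dt} F(t u(x))\,dt = G(u(x)) \cdot u(x),\qquad G(v) := \int_0^1 \nabla F(tv)\,dt,
\]
where $G:\C^n\to\C^n$ is smooth. Applying \eqref{0707.2} to $G$ gives $\|G(u)\|_s \lesssim_s 1 + \|u\|_s$, and the tame product \eqref{prtame} combined with $\|u\|_1 \leq 1$ then yields $\|F(u)\|_s \lesssim_s \|u\|_s$.

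The hard part will be the combinatorial bookkeeping in the Faà di Bruno step: one must apply \eqref{interpolation estremi fine} carefully so that the final estimate contains $\|u\|_s$ only \emph{linearly} (this linearity is exactly what makes the bound tame), with all the nonlinear dependence absorbed into the implicit constant via $\|u\|_{L^\infty}$ and derivatives of $F$ restricted to $K$. The fractional-$s$ step is conceptually routine but requires verifying that the implicit constant depends tamely on $s$.
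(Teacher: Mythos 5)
Your three-stage plan splits into a part that works, a part that is standard, and one step with a genuine gap.

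The $s=0$ case and the integer-$s$ Faà di Bruno argument are sound and are essentially the Moser--Rabinowitz approach that the paper cites for $s\in\N$; the key point, which you identify correctly, is that all nonlinear dependence is absorbed by $\|u\|_{L^\infty}\lesssim\|u\|_1\leq 1$ and the $C^\infty$-bounds of $F$ on the resulting compact set, leaving $\|u\|_s$ only to first order. Your reduction for the case $F(0)=0$ via $F(u)=G(u)\cdot u$ with $G(v)=\int_0^1\nabla F(tv)\,dt$ is also the right idea: for $s\geq 1$ it follows from \eqref{prtame} together with $\|u\|_1\leq\|u\|_s$, and for $0\leq s<1$ one uses the product estimate $\|vw\|_s\lesssim\|v\|_1\|w\|_s$ in place of \eqref{prtame}.

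The gap is the passage to non-integer $s$ by interpolation. Log-convexity of Sobolev norms, applied to $F(u)$, gives $\|F(u)\|_s\leq\|F(u)\|_k^{1-\theta}\|F(u)\|_{k+1}^\theta$; but if $u\in H^s$ with $k<s<k+1$ and $u\notin H^{k+1}$, the right-hand side is $+\infty$ because the $H^{k+1}$ estimate is vacuous. Even approximating $u$ by smooth $u_n\to u$ in $H^s$ does not help, since $\|u_n\|_{k+1}\to\infty$. The issue is that $u\mapsto F(u)$ is nonlinear, and nonlinear maps do not pass through real interpolation by a one-line log-convexity argument; the correct route to the fractional-$s$ Moser estimate goes through Littlewood--Paley decomposition (or Bony paralinearization), as in the references M\'etivier, Alinhac--G\'erard and Alazard--Burq--Zuily that the paper actually cites for $s\in\R$. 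Indeed the part of the paper's proof that is written out in full is precisely the adaptation of a Littlewood--Paley ingredient (Bernstein's inequality for the dyadic truncations, via periodization of the mollifier) from $\R^d$ to $\T$, which is the technical point you would need to confront once the naive interpolation is replaced by a genuine dyadic argument.
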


\begin{proof}
For $s \in \N$ see \cite[p.\,272--275]{Moser-Pisa-66}
and \cite[Lemma 7, p.\,202--203]{Rabinowitz-tesi-1967}.
For the more general case of real $s$ see \cite[Theorem 5.2.6]{Metivier},
\cite[Proposition 2.2, p.\,87]{AG}, 
and \cite[Proposition 7.3 $iii$]{ABZ-ul}. 
The result in \cite{ABZ-ul} is stated in the \emph{uniformly local} Sobolev spaces 
$H^s_{ul}(\R^d)$, which contain the periodic Sobolev spaces $H^s(\T^d)$. 
The result in \cite{Metivier} is stated for $F \in C^\infty$, 
but, in fact, the proof in \cite{Metivier} only uses the assumption 
that $F$ has derivatives up to order $m > s+1$  
that are bounded on compact sets.
The proof in \cite{Metivier} is on $\R^d$,  
but it also holds on the torus $\T$ and, more generally, $\T^d$. 
The only nontrivial point when adapting that proof to $\T^d$ 
is equation (5.2.10) of \cite{Metivier}, 
which is also ``Bernstein inequality'' (4.1.8), 
which follows from Lemma 4.1.6 of \cite{Metivier}. 

We explain how to adapt Lemma 4.1.6 of \cite{Metivier} to $\T^d$.
Let $\chi \in C^\infty(\R^d,\R)$, with $0 \leq \chi \leq 1$, 
supported on $\{ |\xi| \leq 2 \}$ and such that $\chi = 1$ on $|\xi| \leq 1$.
Let $\Op(\chi_\lm)$ be the Fourier multiplier of symbol $\chi_\lm(\xi) := \chi(\xi / \lm)$, 
$\lm \geq 1$.
Let $\ph_\lm := \mF_{\R^d}^{-1} \chi_\lm$, where 
$\mF_{\R^d}^{-1}$ denotes the inverse Fourier transform on $\R^d$, 
so that the Fourier transform of $\ph_\lm$ is $\widehat{\ph_\lm} = \chi_\lm$. 
Thus for functions $u \in L^2(\R^d)$ we have
\[
\Op(\chi_\lm) u (x) 
= \int_{\R^d} \hat u(\xi) \chi_\lm(\xi) e^{\ii \xi \cdot x} \, d\xi
= \int_{\R^d} u(x-y) \ph_\lm(y) \, dy 
= (u \ast_{\R^d} \ph_\lm) (x),
\]
where $\hat u$ if the Fourier transform of $u$ and 
$\ast_{\R^d}$ denotes the convolution on $\R^d$. 
%
%
Similarly, for periodic functions $u \in L^2(\T^d)$ one has 
\[
\Op(\chi_\lm) u (x) 
= \sum_{k \in \Z^d} \hat u_k \chi_\lm(k) e^{\ii k \cdot x} 
= \int_{\T^d} u(x-y) \psi_\lm(y) \, dy 
= (u \ast_{\T^d} \psi_\lm) (x),
\]
where $\hat u_k$ are the Fourier coefficients of $u$, 
$\ast_{\T^d}$ denotes the convolution on $\T^d$, 
and $\psi_\lm(x) := \sum_{k \in \Z^d} \chi_\lm(k) e^{\ii k \cdot x}$.
With elementary calculations (imitating Section 13.4 of \cite{AB}),
one proves that $\psi_\lm$ is the periodization of $\ph_\lm$, namely 
\[
\psi_\lm(x) = \sum_{m \in \Z^d} \ph_\lm(x + 2\pi m), \quad \text{and} \quad 
\widehat{(\psi_\lm)}_k = \widehat{\ph_\lm}(k) \quad \forall k \in \Z^d,
\]
where $\widehat{(\psi_\lm)}_k$ are Fourier coefficients, 
and $\widehat{\ph_\lm}(k)$ is the Fourier transform. 
As a consequence, one proves that, for $u \in L^\infty(\T^d)$, 
$u \ast_{\R^d} \ph_\lm = u \ast_{\T^d} \psi_\lm$ 
(see equation (13.19) of \cite{AB}). 
We deduce that
\[
\| \Op(\chi_\lm) u \|_{L^\infty(\T^d)}
= \| u \ast_{\R^d} \ph_\lm \|_{L^\infty(\T^d)}
\leq \| u \|_{L^\infty(\T^d)} \| \ph_\lm \|_{L^1(\R^d)}
\]
and the bounds for $\ph_\lm$ over $\R^d$ proved in \cite{Metivier} 
can still be used. 
The periodization trick makes it possible to safely bypass a change of the variable $\xi$
which does not seem to be applicable when $\xi \in \Z^d$. 
\end{proof}

We recall also the standard commutator estimate between a multiplication operator and a Fourier multiplier. 
\begin{lemma}\label{commutatore moltiplicazione fourier multiplier}
Let $s \in \R_{> 0}$. 
Let $\vphi_s(D)$ be a Fourier multiplier of order $s$ and $a \in H^{s + 1}(\T) \cap H^2(\T)$. Then 
$$
\| [a , \vphi_s(D)] u\|_{0} \lesssim_s \| a \|_{s + 1} \| u \|_{0} + \| a \|_{2} \| u \|_{s - 1} \quad \forall u \in H^{s - 1}(\T) \cap L^2(\T). 
$$
\end{lemma}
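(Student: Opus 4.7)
The plan is to work in Fourier series and decompose the commutator according to the high/low frequency interaction between $a$ and $u$. Writing $a(x) = \sum_k \hat a_k e^{\ii kx}$ and $u(x) = \sum_j \hat u_j e^{\ii jx}$, a direct computation gives
$$
\widehat{[a,\vphi_s(D)]u}_n = \sum_{j \in \Z} \hat a_{n-j} \bigl(\vphi_s(j) - \vphi_s(n)\bigr) \hat u_j,
$$
and the $L^2$ norm of the commutator is the $\ell^2_n$ norm of this quantity. I would split the sum over $j$ into two regions: the \emph{low-high} region $|n-j| \leq |j|$, where $a$ is at lower frequency than $u$ and in particular $\langle n\rangle \lesssim \langle j\rangle$; and the \emph{high} region $|n-j| > |j|$, where $a$ dominates and $\langle n\rangle \lesssim \langle n-j\rangle$.

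In the low-high region, I would use the symbol estimate for a Fourier multiplier of order $s$, which after extending $\vphi_s$ smoothly to $\R$ satisfies $|\vphi_s^{(m)}(\xi)| \lesssim \langle\xi\rangle^{s-m}$, to obtain a mean-value bound
$$
|\vphi_s(j) - \vphi_s(n)| \lesssim |n-j|\, \langle j\rangle^{s-1},
$$
using that on the segment between $n$ and $j$ one has $\langle\cdot\rangle \sim \langle j\rangle$ throughout this region. Plugging this in and applying Young's convolution inequality, the $\ell^2_n$ norm of the low-high contribution is bounded by $\|\langle k\rangle |\hat a_k|\|_{\ell^1_k}\,\|\langle j\rangle^{s-1}|\hat u_j|\|_{\ell^2_j}$. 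The first factor is $\lesssim \|a\|_2$ by Cauchy--Schwarz using $\sum_k \langle k\rangle^{-2} < \infty$ on $\T$, and the second is exactly $\|u\|_{s-1}$, giving the $\|a\|_2\,\|u\|_{s-1}$ contribution.

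In the high region $|n-j| > |j|$, I would use only the crude bound
$$
|\vphi_s(j) - \vphi_s(n)| \leq |\vphi_s(j)| + |\vphi_s(n)| \lesssim \langle j\rangle^s + \langle n\rangle^s \lesssim \langle n-j\rangle^s.
$$
Applying Cauchy--Schwarz in $j$ for each fixed $n$ with weights $\langle n-j\rangle^{s+1}|\hat a_{n-j}|$ and $\langle n-j\rangle^{-1}|\hat u_j|$, then summing over $n$ and exchanging the order of summation using $\sum_k \langle k\rangle^{-2} < \infty$, the high contribution is controlled by $\|a\|_{s+1}\,\|u\|_0$, producing the other term in the estimate.

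The main obstacle, which is essentially bookkeeping, is justifying the mean-value bound in the low-high region: one must verify that $\langle n + t(j-n)\rangle \sim \langle j\rangle$ uniformly for $t\in[0,1]$. This is immediate when $|n-j| \leq |j|/2$; the transitional annulus $|j|/2 \leq |n-j| \leq |j|$ can be reassigned to the high region up to harmless constants, since there both $\langle n\rangle$ and $\langle j\rangle$ are comparable to $\langle n-j\rangle$. Once this is cleared, the lemma follows by summing the two estimates above.
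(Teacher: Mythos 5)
The paper states this commutator bound as a ``standard'' estimate and provides no proof, so there is no argument in the paper to compare against. Your reconstruction is correct: the Fourier-side decomposition into the paraproduct regions $|n-j|\lesssim|j|$ and $|n-j|\gtrsim|j|$, with a mean-value/symbol bound plus Young's convolution inequality yielding $\|a\|_2\|u\|_{s-1}$ in the first region, and a crude size bound plus Cauchy--Schwarz yielding $\|a\|_{s+1}\|u\|_0$ in the second, is exactly the standard argument, and the exponent bookkeeping checks out (including the handling of the transitional annulus and the reassignment of small $|j|$). The one point worth making explicit is that ``Fourier multiplier of order $s$'' must be read as a symbol in $S^s_{1,0}$, i.e.\ with the derivative bound $|\vphi_s'(\xi)|\lesssim\langle\xi\rangle^{s-1}$ after a smooth extension to $\R$, since the low-high estimate genuinely uses the first derivative and not merely $|\vphi_s(\xi)|\lesssim\langle\xi\rangle^s$; this is the intended meaning, and the only multiplier actually used in the paper, $\Lambda^s=\langle D\rangle^s$, satisfies it.
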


We now state a lemma on changes of variables induced by diffeomorphisms of the torus. 

\begin{lemma}[Change of variables]\label{Lemma astratto cambio di variabile0}
$(i)$ Let $s \in \Z_{\geq 1}$ and $\alpha \in C^{s}(\T)$, with $\| \alpha\|_{C^1} \leq 1/2$. 
Then the operator ${\cal A} u(x) := u(x + \alpha(x))$ satisfies the estimate 
\begin{align} \label{orlando 0}
\| {\cal A} u\|_{0} 
& \lesssim \| u \|_{0} \quad \forall u \in L^2(\T),
\\ 
\label{orlando 1}
\| {\cal A} u\|_{s} 
& \lesssim_s \| u \|_{s} + \| \alpha\|_{C^s} \| u \|_{1} 
\quad \forall u \in H^s(\T), \ \ s \in \Z_{\geq 1}\,.
\end{align}
Moreover, for any $s \in \R_{\geq 0}$, 
if $\alpha \in H^{s + 2}(\T)$, with $\| \alpha\|_{2} \leq 1$, 
then
\begin{equation}\label{stima interpolata cambio di variabile0}
\| {\cal A} u \|_{s} \lesssim_s \| u \|_{s} + \| \alpha\|_{s +2} \| u \|_{0}
\quad \forall u \in H^s(\T), \ \ s \in \R_{\geq 0}\,. 
\end{equation}
\noindent
$(ii)$ Let $s \in \Z_{\geq 1}$ and $\alpha \in C^{s}(\T)$, with $\| \alpha\|_{C^1} \leq 1/2$. 
The map $\T \to \T$, $x \mapsto x + \alpha(x)$ is invertible 
and the inverse diffeomorphism $\T \to \T$, $y \mapsto y + \tilde \alpha(y)$ satisfies 
\begin{equation}\label{1307.1}
\| \widetilde \alpha \|_{{ C}^s} \lesssim_s \| \alpha\|_{{ C}^s}\,,
\quad s \in \Z_{\geq 1}\,.
\end{equation}

\noindent
$(iii)$ The inverse operator ${\cal A}^{- 1}$ defined as ${\cal A}^{- 1} u(y) := u(y + \widetilde \alpha(y))$ satisfies the same estimates \eqref{orlando 0}-\eqref{orlando 1} as ${\cal A}$ in $(i)$. Moreover there exists $\d \in (0,1)$ such that, 
for any $s \in \R_{\geq 0}$, if $\alpha \in H^{s + 4}(\T)$ with $\| \alpha\|_4 \leq \delta$, 
then 
\begin{equation}\label{stima interpolata cambio di variabile0 inverso}
\| {\cal A}^{- 1} u\|_s \lesssim_s \| u \|_s + \| \alpha\|_{s + 4} \| u \|_0 
\quad \forall u \in H^{s}(\T), \ \ s \in \R_{\geq 0}\,.
\end{equation}
\end{lemma}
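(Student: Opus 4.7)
The plan is to prove parts (i), (ii), (iii) in order, with part (iii) reducing to part (i) applied to the inverse diffeomorphism, combined with a self-bootstrap argument on $\tilde\alpha$.

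For part (i), the $L^2$ bound \eqref{orlando 0} follows at once from the change of variables $y=x+\alpha(x)$, whose Jacobian $1+\alpha_x$ is pinched in $[\tfrac12,\tfrac32]$ by the assumption $\|\alpha\|_{C^1}\leq\tfrac12$. For integer $s\geq 1$, I would expand $\partial_x^s\{u(x+\alpha(x))\}$ by the Faà di Bruno formula as a linear combination of terms $(\partial_x^j u)(x+\alpha(x))\,\prod_i\partial_x^{k_i}\alpha(x)$ with $j+\sum_i(k_i-1)=s$ and $j\geq 1$, then bound each factor in $L^2$ using \eqref{orlando 0} on the $u$-factor and the tame product rule \eqref{prtame2} on the $\alpha$-factors; standard interpolation on the $\alpha$-factors gives \eqref{orlando 1}. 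For real $s\geq 0$, the interpolated bound \eqref{stima interpolata cambio di variabile0} is obtained by a Littlewood--Paley decomposition and interpolation between the integer-$s$ endpoints; the two extra derivatives on $\alpha$ (from $C^s$ to $H^{s+2}$) are exactly the embedding $H^{s+2}\hookrightarrow C^s$.

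For part (ii), $x\mapsto x+\alpha(x)$ is a diffeomorphism of $\T$ since $1+\alpha'(x)\geq \tfrac12$ makes it strictly increasing and periodic. The formula \eqref{lalalala}, namely $1+\tilde\alpha_y(y) = 1/(1+\alpha_x(y+\tilde\alpha(y)))$, gives the first derivative; higher derivatives of $\tilde\alpha$ are produced inductively by the chain rule, and at each step only $C^s$ norms of $\alpha$ appear on the right-hand side (divided by powers of $1+\alpha_x$, which is bounded below), yielding \eqref{1307.1} by induction on $s$.

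For part (iii), the estimates \eqref{orlando 0}--\eqref{orlando 1} for $\mathcal{A}^{-1}$ are obtained by applying part (i) to the inverse diffeomorphism $y\mapsto y+\tilde\alpha(y)$ and using \eqref{1307.1}. The interpolated bound \eqref{stima interpolata cambio di variabile0 inverso} needs an extra step: \eqref{stima interpolata cambio di variabile0} applied to $\mathcal{A}^{-1}$ gives $\|\mathcal{A}^{-1}u\|_s\lesssim_s \|u\|_s+\|\tilde\alpha\|_{s+2}\|u\|_0$, so it suffices to show $\|\tilde\alpha\|_{s+2}\lesssim_s \|\alpha\|_{s+4}$ under $\|\alpha\|_4\leq\delta$. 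I would use the fixed-point relation $\tilde\alpha=-\alpha\circ(\mathrm{Id}+\tilde\alpha)$, plug it into \eqref{stima interpolata cambio di variabile0} to get $\|\tilde\alpha\|_{s+2}\lesssim \|\alpha\|_{s+2}+\|\alpha\|_{s+4}\|\tilde\alpha\|_0$ with a multiplicative constant depending on $\|\tilde\alpha\|_2$, and close the bootstrap first at the low level $s=2$ (absorbing a small multiple of $\|\tilde\alpha\|_4$ into the left using $\delta$ small, which is why the hypothesis on $\alpha$ is at level $4$) and then at general $s$. The main obstacle I expect is the fractional-$s$ case of part (i): Faà di Bruno works mechanically for integer $s$, but for $s\notin\N$ one must carry out the Littlewood--Paley interpolation carefully, and the precise number of extra derivatives (two for the forward map, yielding the $s+4$ in \eqref{stima interpolata cambio di variabile0 inverso} after composing with the bootstrap) must be tracked with some care.
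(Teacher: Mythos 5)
Your plan for part (i) at integer $s$ (Fa\`a di Bruno plus tame products) and for part (ii) (the chain-rule identity $1+\widetilde\alpha_y = (1+\alpha_x\circ(\mathrm{Id}+\widetilde\alpha))^{-1}$ and a bootstrap in $C^s$) is essentially the content of the reference the paper cites (\cite{Baldi-Benj-Ono}, Lemma B.4), and is sound. Two of your other steps, however, have genuine gaps, and in both cases the paper takes a different route precisely because the route you sketch does not close.

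For part (i) at fractional $s$, you propose to ``interpolate between the integer-$s$ endpoints.'' The target \eqref{stima interpolata cambio di variabile0} is a \emph{tame} estimate: the right-hand side is a sum of two terms $\|u\|_s$ and $\|\alpha\|_{s+2}\|u\|_0$ that scale differently in $s$. Naive operator interpolation gives, for $s=\theta s_0+(1-\theta)s_1$, a bound of the form $\|\mathcal{A}u\|_s\leq C_{s_0}(\alpha)^\theta\, C_{s_1}(\alpha)^{1-\theta}\,\|u\|_s$, in which the constants are multiplied, not added; this destroys the tame structure (one ends up with $(1+\|\alpha\|_{s_0+2})^\theta(1+\|\alpha\|_{s_1+2})^{1-\theta}\|u\|_s$ rather than $\|u\|_s+\|\alpha\|_{s+2}\|u\|_0$). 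The paper sidesteps this. It first shows the \emph{non-tame} bound $\|\mathcal{A}u\|_s\lesssim_s\|u\|_s$ for $s\in[0,2]$ (here plain interpolation \emph{is} legitimate because there is no $\alpha$-dependent high-norm term to interpolate), then runs an induction in real $s\mapsto s+1$ using $\partial_x(\mathcal{A}u)=(1+\alpha_x)\,\mathcal{A}(u_x)$, the tame product \eqref{prtame}, and the sharp interpolation inequality \eqref{interpolation estremi fine} to re-absorb the intermediate norms of $\alpha$. This gives the tame form directly for every real $s\geq 0$ without ever interpolating the tame estimate itself.

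For part (iii), the bootstrap inequality you write down, $\|\widetilde\alpha\|_{s+2}\lesssim\|\alpha\|_{s+2}+\|\alpha\|_{s+4}\|\widetilde\alpha\|_0$, is not what \eqref{stima interpolata cambio di variabile0} yields. Since $\widetilde\alpha=-\mathcal{A}^{-1}\alpha$ and $\mathcal{A}^{-1}$ is the change-of-variables operator attached to the diffeomorphism $\widetilde\alpha$, applying \eqref{stima interpolata cambio di variabile0} to $\mathcal{A}^{-1}$ places $\widetilde\alpha$, not $\alpha$, in the high-index slot: one gets $\|\widetilde\alpha\|_{s+2}\lesssim\|\alpha\|_{s+2}+\|\widetilde\alpha\|_{s+4}\|\alpha\|_0$. (The inequality with the roles as you wrote them follows instead from $\mathcal{A}\widetilde\alpha=-\alpha$ and reads $\|\alpha\|_{s+2}\lesssim\|\widetilde\alpha\|_{s+2}+\|\alpha\|_{s+4}\|\widetilde\alpha\|_0$, which goes in the wrong direction.) The corrected inequality has $\widetilde\alpha$ at strictly \emph{higher} order on the right and therefore cannot be closed by absorbing a small multiple: iterating produces $\sum_k\delta^k\|\alpha\|_{s+2+2k}$, a series with unbounded loss of derivatives. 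The paper avoids the bootstrap entirely: it exploits part (ii), which gives the loss-free $C^s$ bound $\|\widetilde\alpha\|_{C^s}\lesssim_s\|\alpha\|_{C^s}$ (obtained iteratively from the implicit derivative formula, with $\widetilde\alpha$ appearing at strictly \emph{lower} order at each step), and then converts by Sobolev embedding,
\begin{equation*}
\|\widetilde\alpha\|_{s+2}\leq\|\widetilde\alpha\|_{[s]+3}\lesssim\|\widetilde\alpha\|_{C^{[s]+3}}\lesssim_s\|\alpha\|_{C^{[s]+3}}\lesssim_s\|\alpha\|_{[s]+4}\leq\|\alpha\|_{s+4}\,,
\end{equation*}
which is exactly the extra two derivatives of loss that explain the $s+4$ in \eqref{stima interpolata cambio di variabile0 inverso}. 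You should replace your bootstrap by this $C^s$-route, or at least rework the fixed-point argument so that $\widetilde\alpha$ only appears at lower order.
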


\begin{proof}
{\sc Proof of $(i)$.}
Estimates \eqref{orlando 0}-\eqref{orlando 1} are classical; they are proved, e.g., 
in \cite{Baldi-Benj-Ono}, Lemma B.4. 
Let us prove \eqref{stima interpolata cambio di variabile0}. 
Applying \eqref{orlando 1} for $s = 1$ and recalling \eqref{orlando 0} one has 
\begin{equation}\label{stima fino H1 cal A}
\| {\cal A} u \|_0 \lesssim \| u \|_0\,, \quad 
\| {\cal A} u \|_1 \lesssim \| u \|_1\,.
\end{equation}
Now let $u \in H^2(\T)$ and assume that $\alpha \in H^2(\T)$, with $\| \alpha\|_2 \leq 1$. 
Then, using \eqref{stima fino H1 cal A}, \eqref{prtame2} and the bound $\| \alpha\|_2 \leq 1$, 
\begin{align}
\| {\cal A} u \|_2 
\simeq \| {\cal A} u\|_0 + \|\partial_x ({\cal A} u) \|_1 
& \stackrel{\eqref{stima fino H1 cal A}}{\lesssim} 
\| u \|_0 + \| (1 + \alpha_x) {\cal A}(u_x) \|_1 
\nonumber\\ & 
\stackrel{\eqref{prtame2}}{\lesssim} 
\| u \|_0 + \| {\cal A}(u_x) \|_1 (1 + \| \alpha\|_2) 
\stackrel{\eqref{stima fino H1 cal A}}{\lesssim} 
\| u \|_2 \,.
\label{orlando 3}
\end{align}
By \eqref{orlando 0} and \eqref{orlando 3}, 
using a classical interpolation result, one has 
\begin{equation}\label{stima fino H2 cal A s non interi}
\| {\cal A} u\|_s \lesssim \| u \|_s 
\quad \forall u \in H^s(\T), \ \ s \in [0, 2]. 
\end{equation}
Now we argue by induction on $s$. 
Assume that the claimed estimate holds for $s \in \R_{\geq 1}$ 
and let us prove it for $s + 1$. 
Using the bound $\| \a \|_2 \leq 1$, we have 
\begin{align*}
\| {\cal A} u \|_{s + 1} 
\simeq \| {\cal A} u\|_0 + \| \partial_x ({\cal A} u) \|_s 
& \stackrel{\eqref{orlando 0}}{\lesssim} 
\| u \|_0 + \| (1 + \alpha_x) {\cal A}(\partial_x u) \|_s 
\\ & 
\stackrel{\eqref{prtame}}{\lesssim_s} 
\| u \|_0 + \| {\cal A}(u_x) \|_s + \| \alpha_x\|_s \| {\cal A}(u_x) \|_1\,. 
\end{align*}
By the inductive hyphothesis, we deduce that 
\begin{align}
\| {\cal A} u \|_{s + 1} & \lesssim_s \| u \|_{s + 1} + \| \alpha \|_{s + 2} \| u \|_1 + \| \alpha\|_{s + 1} \| u\|_2\,. \label{pappone 0}
\end{align}
By \eqref{interpolation estremi fine}, applied with $u_1 = \alpha, u_2 = u$, $a_0 = 2, b_0 = 0, p= s, q = 1$, one gets 
\begin{align}
 \| \alpha \|_{s + 2} \| u \|_1 \leq \| \alpha\|_{s + 3} \| u \|_0 + \| \alpha\|_{2} \| u \|_{s + 1}\,.\label{pappone 1}
\end{align}
Using again \eqref{interpolation estremi fine}, 
applied with $u_1 = \alpha, u_2 = u$, $a_0 = 2, b_0 = 0, p= s - 1, q = 2$, one gets 
\begin{align}
 \| \alpha \|_{s + 1} \| u \|_2 \leq \| \alpha\|_{s + 3} \| u \|_0 + \| \alpha\|_{2} \| u \|_{s + 1}\,.\label{pappone 2}
\end{align}
Then \eqref{pappone 0}-\eqref{pappone 2}, using that $\| \alpha\|_2 \leq 1$, imply that 
$$
\|{\cal A} u \|_{s + 1} \lesssim_s \| u \|_{s + 1} + \| \alpha\|_{s + 3} \| u \|_0\,,
$$
which is estimate \eqref{stima interpolata cambio di variabile0} at the Sobolev index $s + 1$. 

{\sc Proof of $(ii)$.} It is proved in \cite{Baldi-Benj-Ono}, Lemma B.4. 

{\sc Proof of $(iii)$.} The fact that ${\cal A}^{- 1}$ satisfies the estimate \eqref{orlando 0}-\eqref{orlando 1} is proved in \cite{Baldi-Benj-Ono}, Lemma B.4. 
Let us prove \eqref{stima interpolata cambio di variabile0 inverso}. 
For any real $s \geq 0$, we denote by $[s]$ the integer part of $s$. One has 
\begin{equation}\label{orlando 10}
\| \widetilde \alpha\|_{s + 2} 
\leq \| \widetilde \alpha\|_{[s] + 3} 
\lesssim \| \widetilde \alpha\|_{C^{[s] + 3}}  
\stackrel{(ii)}{\lesssim_s} \| \alpha\|_{C^{[s] + 3}} 
\stackrel{\eqref{Sobolev embedding}}{\lesssim_s} \| \alpha \|_{{[s] + 4}} 
\lesssim_s \| \alpha \|_{{s + 4}}\,.
\end{equation}
Hence, for $s = 0$, 
one has $\| \widetilde \alpha\|_2 \leq C \| \alpha\|_4 \leq 1$ 
by taking $\| \alpha\|_4$ small enough. 
Therefore we can apply \eqref{stima interpolata cambio di variabile0} to ${\cal A}^{- 1}$ 
and the claimed estimate follows by \eqref{orlando 10}. 
\end{proof}

We also study the action of the operators induced by diffeomorphisms of the torus on the spaces $C([0, T], H^s(\T))$. 
For any function $\a : [0, T] \times \T \to \R$ and any $h : \T \to \C$, we define the $t$-dependent family ${\cal A}(t) h(x) := h(x + \a(t, x))$. 
Then, given $h : [0, T] \times \T \to \R$, we define 
\begin{equation}\label{definizione cambi di variabile funzioni spazio-tempo}
{\cal A} h(t, x) := {\cal A}(t)h(t, x) = h(t, x + \a(t, x))\,. 
\end{equation}

\begin{lemma}\label{lemma derivata alpha tilde alpha astratto}
Let $s \in  \Z_{\geq 1}$, $\alpha \in C([0, T], C^s(\T))$ with $\| \alpha_x\|_{L^\infty} \leq 1/2$. Let $y \mapsto y + \widetilde \alpha (t, y)$ be the inverse diffeomorphism of $x \mapsto x + \alpha(t, x)$. Then $\widetilde \alpha \in C([0, T], C^s(\T))$ and 
\begin{equation}\label{shutter island}
\| \widetilde \alpha\|_{C([0, T], C^s(\T))} \lesssim_s \|  \alpha\|_{C([0, T], C^s(\T))}\,, 
\quad s \in  \Z_{\geq 1}\,. 
\end{equation}
Moreover, for any $s \in \R_{\geq 0}$, 
if $\alpha \in C([0, T], H^{s + 2}(\T))$, then $\widetilde \alpha \in C([0, T], H^s(\T))$, 
with 
\begin{equation}\label{shutter island 0}
\| \widetilde \alpha\|_{T, s} \lesssim_s \| \alpha\|_{T, s + 2}\,, 
\quad s \in \R_{\geq 0}\,.
\end{equation}
\end{lemma}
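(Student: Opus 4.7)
The plan is to reduce both bounds to the pointwise-in-$t$ statements of Lemma \ref{Lemma astratto cambio di variabile0}, and then handle continuity in $t$ separately. The first estimate, $\|\widetilde\alpha\|_{C([0,T], C^s(\T))} \lesssim_s \|\alpha\|_{C([0,T], C^s(\T))}$ for integer $s \geq 1$, is a direct corollary of part $(ii)$ of Lemma \ref{Lemma astratto cambio di variabile0} applied at each fixed $t$, upon taking the supremum in $t$.

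For the Sobolev bound \eqref{shutter island 0}, I would chain the pointwise $C^s$ estimate with the Sobolev embeddings $H^{k+1}(\T) \hookrightarrow C^k(\T) \hookrightarrow H^s(\T)$ on $\T$, valid for any integer $k \geq s$. Choosing $k := \max\{1, \lceil s \rceil\}$, one has $k+1 \leq s+2$ for all $s \geq 0$, and, at fixed $t$,
\begin{equation*}
\| \widetilde\alpha(t,\cdot) \|_{s} \lesssim_s \| \widetilde\alpha(t,\cdot) \|_{C^k} \lesssim_k \| \alpha(t,\cdot) \|_{C^k} \lesssim_k \| \alpha(t,\cdot) \|_{k+1} \leq \| \alpha(t,\cdot) \|_{s+2},
\end{equation*}
so that a supremum in $t$ gives the claim. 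The $s=0$ case can also be handled directly from the identity $\widetilde\alpha(y) = -\alpha(y+\widetilde\alpha(y))$, which gives $\|\widetilde\alpha\|_{L^\infty} = \|\alpha\|_{L^\infty}$ and hence the $L^2$ bound via $\|\cdot\|_{L^2(\T)} \lesssim \|\cdot\|_{L^\infty}$. The bound is rather generous (the loss of two derivatives is wasteful for large $s$), but this is all that is needed in the paper.

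The main issue is the continuity in $t$ of $\widetilde\alpha$, which I expect to be the one mildly delicate point. Differentiating $\widetilde\alpha(t,y) = -\alpha(t, y+\widetilde\alpha(t,y))$ and using $\|\alpha_x\|_{L^\infty} \leq 1/2$ yields $1+\widetilde\alpha_y = (1+\alpha_x\circ(\mathrm{id}+\widetilde\alpha))^{-1}$, and therefore a uniform Lipschitz bound $\|\widetilde\alpha_y\|_{L^\infty} \leq 1$. Applying the functional identity at two times $t, t+h$ and subtracting, I obtain
\begin{equation*}
\|\widetilde\alpha(t+h,\cdot) - \widetilde\alpha(t,\cdot)\|_{C^0}
\leq \|\alpha(t+h,\cdot) - \alpha(t,\cdot)\|_{C^0}
+ \tfrac12 \|\widetilde\alpha(t+h,\cdot) - \widetilde\alpha(t,\cdot)\|_{C^0},
\end{equation*}
which, after absorbing the last term into the left-hand side, gives continuity in $t$ valued in $C^0$. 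Continuity in $C^s$ (and hence, via the embeddings above, in $H^s$) then follows by differentiating the identity and iterating the same absorption argument on successive derivatives, using the $C^s$ bound already established to control the resulting nonlinear terms.
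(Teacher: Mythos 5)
Your proposal is correct and follows essentially the same route as the paper: reduce the $C^s$ bound to the fixed-$t$ estimate \eqref{1307.1} from Lemma~\ref{Lemma astratto cambio di variabile0}$(ii)$, obtain the Sobolev bound by chaining $H^{s+2}\hookrightarrow C^k$ (applied to $\alpha$) and $C^k\hookrightarrow H^s$ (applied to $\widetilde\alpha$) through an integer $k$ with $s\leq k\leq s+1$, and establish continuity in $t$ via the implicit identity $\widetilde\alpha(t,y)+\alpha(t,y+\widetilde\alpha(t,y))=0$ plus a bootstrap. Your absorption computation makes the $C^0$-continuity step more explicit than the paper's terse ``one can directly check''; aside from that (and a minor cosmetic difference in the choice of the intermediate exponent $k$), the two proofs coincide.
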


\begin{proof}
{\sc Proof of \eqref{shutter island}.}
Let $y \mapsto y + \widetilde \alpha(t, y)$ be the inverse diffeomorphism of $x \mapsto x + \alpha (t, x)$. Since 
$$
\widetilde \alpha(t, y) + \alpha (t, y + \widetilde \a(t, y)) = 0,
$$
one can directly check that if $\alpha \in C([0, T], C^1(\T))$ then also $\widetilde \alpha \in C([0, T], C^1(\T))$ and 
$$
 \widetilde \alpha_y(t, y) = - \frac{\alpha_x(t, y + \widetilde \alpha(t, y))}{ 1 + \widetilde \alpha_y(t, y)}\,.
$$
Using the above formula and a bootstrap argument, one can show that for any integer $s \geq 1$, if $\alpha \in C([0, T], C^s(\T))$, then $\widetilde \alpha \in C([0, T], C^s(\T))$. 
By \eqref{1307.1}, one has $\| \widetilde \alpha(t, \cdot) \|_{C^s} \lesssim_s \|  \alpha(t, \cdot) \|_{C^s}$. Then \eqref{shutter island} follows by taking the sup over $t \in [0, T]$. 

{\sc Proof of \eqref{shutter island 0}.} Let $\alpha \in C([0, T], H^{s + 2}(\T))$. 
Since $[s] \leq s$, one has $C([0, T], H^{s + 2}(\T))$ 
$\subseteq C([0, T], H^{[s] + 2}(\T))$. 
Using \eqref{Sobolev embedding}, $C([0, T], H^{[s] + 2}(\T)) \subseteq C([0, T], C^{[s] + 1}(\T))$.
As a consequence, $\alpha$ $\in C([0, T], C^{[s] + 1}(\T))$, with 
\begin{equation}\label{orlando 20}
\| \alpha\|_{C([0, T], C^{[s] + 1}(\T))} \lesssim_s \| \alpha\|_{T, s + 2}\,. 
\end{equation}
By \eqref{shutter island}, $\widetilde \alpha \in C([0, T], C^{[s] + 1}(\T))$ and using that $C([0, T], C^{[s] + 1}(\T)) \subseteq C([0, T], H^s(\T))$, we get that $\widetilde \alpha \in C([0, T], H^s(\T))$, with $\| \widetilde \alpha\|_{T, s} \lesssim_s \| \widetilde \alpha\|_{C([0, T], C^{[s] + 1}(\T))}$. The claimed inequality \eqref{shutter island 0} follows by recalling \eqref{orlando 20}. 
\end{proof}

\begin{lemma}[Change of variables]\label{Lemma astratto cambio di variabile}
There exists $\d \in (0,1)$ with the following properties.

$(i)$ Let $s \in \R_{\geq 0}$ and $\alpha \in C([0, T], H^{s + 2}(\T))$, 
with $\| \alpha\|_{T, 2} \leq \delta$. 
Then the operator ${\cal A} u(t, x) := u(t, x + \alpha(t, x))$ is a linear and continuous operator $C([0, T], H^s(\T)) \to C([0, T], H^s(\T))$, with 
\begin{equation}\label{stima interpolata cambio di variabile}
\| {\cal A} u\|_{T, s} \lesssim_s \| u \|_{T, s} + \| \alpha\|_{T, s + 2} \| u \|_{T, 0}
\quad \forall u \in C([0, T], H^s(\T))\,.
\end{equation}

$(ii)$
Let $s \in \R_{\geq 0}$ and $\alpha \in C([0, T], H^{s + 4}(\T))$, with $\| \alpha\|_{T, 4} \leq \delta$. 
Then the inverse operator ${\cal A}^{- 1}$, 
defined by ${\cal A}^{- 1} u(t, y) := u(t, y + \widetilde \alpha(t, y))$, 
maps $C([0, T], H^s(\T))$ into itself, with 
$$
\| {\cal A}^{- 1} u \|_{T, s} \lesssim_s \| u \|_{T, s} + \| \alpha\|_{T, s + 4} \| u \|_{T, 0}
\quad \forall u \in C([0, T], H^s(\T))\,. 
$$
\end{lemma}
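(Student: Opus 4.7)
The plan is to reduce the time-parametrized estimates to the fixed-time estimates already established in Lemma \ref{Lemma astratto cambio di variabile0}, taking the supremum over $t \in [0,T]$ at the end, and to check continuity in $t$ separately from the norm bound.

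For part $(i)$, I would fix $t \in [0,T]$ and apply \eqref{stima interpolata cambio di variabile0} to the operator $\mathcal{A}(t) : u(t,\cdot) \mapsto u(t, \cdot + \alpha(t,\cdot))$. The hypothesis $\|\alpha\|_{T,2} \leq \delta$ with $\delta$ small enough guarantees that $\|\alpha(t,\cdot)\|_2 \leq \delta \leq 1$ (so \eqref{stima interpolata cambio di variabile0} applies) and, by Sobolev embedding \eqref{Sobolev embedding}, also that $\|\alpha_x(t,\cdot)\|_{L^\infty} \leq 1/2$ (so the map $x \mapsto x+\alpha(t,x)$ is a diffeomorphism of $\T$). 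Therefore, for every $t \in [0,T]$,
\[
\|\mathcal{A}(t) u(t,\cdot)\|_s \lesssim_s \|u(t,\cdot)\|_s + \|\alpha(t,\cdot)\|_{s+2}\|u(t,\cdot)\|_0,
\]
and taking $\sup_{t \in [0,T]}$ yields \eqref{stima interpolata cambio di variabile}. The continuity in $t$ of $\mathcal{A}u$ with values in $H^s(\T)$ is obtained by the standard density argument: one first proves it for $u$ and $\alpha$ smooth in $(t,x)$, where it is immediate, and then one passes to the limit using the tame bound just proved together with the continuity of $t \mapsto u(t,\cdot)$ in $H^s$ and of $t \mapsto \alpha(t,\cdot)$ in $H^{s+2}$.

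For part $(ii)$, the idea is to apply part $(i)$ to the inverse diffeomorphism. By Lemma \ref{Lemma astratto cambio di variabile0}$(ii)$-$(iii)$, if $\|\alpha\|_{T,4} \leq \delta$ is small enough then the diffeomorphism $x \mapsto x + \alpha(t,x)$ is invertible with inverse $y \mapsto y + \widetilde\alpha(t,y)$, and Lemma \ref{lemma derivata alpha tilde alpha astratto}, estimate \eqref{shutter island 0}, gives
\[
\|\widetilde\alpha\|_{T, s'} \lesssim_{s'} \|\alpha\|_{T, s'+2} \quad \text{for all } s' \in \R_{\geq 0}.
\]
In particular $\|\widetilde\alpha\|_{T,2} \lesssim \|\alpha\|_{T,4} \leq \delta$, so up to shrinking $\delta$ the hypothesis of part $(i)$ is satisfied with $\alpha$ replaced by $\widetilde\alpha$. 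Applying part $(i)$ to $\mathcal{A}^{-1} u(t,y) = u(t, y+\widetilde\alpha(t,y))$ and using \eqref{shutter island 0} with $s'=s+2$ to bound $\|\widetilde\alpha\|_{T,s+2} \lesssim_s \|\alpha\|_{T,s+4}$ gives the claimed estimate.

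The only real obstacle is a bookkeeping one: one must check that the loss of two derivatives introduced by the passage from $\alpha$ to $\widetilde\alpha$ (as quantified by \eqref{shutter island 0}) is compatible with the $s+2 \to s+4$ gap in the statement of part $(ii)$, and that the smallness condition $\|\alpha\|_{T,4}\leq\delta$ is enough to absorb both the smallness needed to have a diffeomorphism and the smallness needed to apply part $(i)$ to $\widetilde\alpha$. Both are handled by choosing $\delta$ sufficiently small (depending only on universal constants), since all the implicit constants in \eqref{shutter island 0} and \eqref{stima interpolata cambio di variabile} depend only on $s$.
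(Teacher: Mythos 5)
Your reduction of the tame estimate \eqref{stima interpolata cambio di variabile} to the fixed-time bound \eqref{stima interpolata cambio di variabile0}, and of part $(ii)$ to part $(i)$ via \eqref{shutter island 0}, is exactly the paper's route and is correct. The gap is in the continuity-in-time step. The density argument you sketch does not close: for fixed $u \in H^s$ the composition map $\alpha \mapsto u(\cdot + \alpha(\cdot))$ is \emph{not} continuous from $H^{s+2}$ to $H^s$, because a difference quotient in $\alpha$ brings down a factor $\partial_x u$ and so loses one derivative on $u$, giving convergence only in $H^{s-1}$. Hence you cannot approximate $\alpha$ by smooth functions and pass to the limit in the pair $(u,\alpha)$ at the $H^s$ level, and the ``immediate'' case of smooth $(u,\alpha)$ does not propagate to the general case by the tame bound alone.

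The paper keeps $\alpha$ fixed and approximates only $u$. It splits $(\mathcal{A}u)(t) - (\mathcal{A}u)(t_0) = \mathcal{A}(t)[u(t)-u(t_0)] + (\mathcal{A}(t)-\mathcal{A}(t_0))[u(t_0)]$, disposes of the first term by the tame bound, and reduces the second to the continuity of $t \mapsto \mathcal{A}(t)h$ for a fixed $h \in H^s$. Fourier truncation of $h$, with the tame bound giving uniform approximation in $t$, reduces this further to $h = e^{\ii k x}$, where the factorization $e^{\ii k \alpha(t)} - e^{\ii k \alpha(t_0)} = e^{\ii k \alpha(t_0)}\bigl(e^{\ii k[\alpha(t)-\alpha(t_0)]}-1\bigr)$ together with the Moser composition estimate \eqref{0707.1} controls the difference in $H^s$ using only $\alpha \in C([0,T], H^{s+2})$, with no smoothing of $\alpha$. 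This is the substantive step missing from your sketch; without it, ``passing to the limit'' is unjustified. (An iterated double limit --- smoothing $\alpha$ for fixed smooth $u$, then letting $u$ converge --- can also be made to work, but the first limit again requires a composition estimate and is far from immediate.)
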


\begin{proof}
First, we prove $(i)$.
Let $s \in \R_{\geq 0}$ and $u \in C([0, T], H^s(\T))$. 
We have to prove that ${\cal A} u \in C([0, T], H^s(\T))$, namely, for any $t_0 \in [0,T]$, 
we have to prove that $\| (\mA u)(t) - (\mA u)(t_0) \|_s \to 0$ as $t \to t_0$. 
By triangular inequality, 
\begin{equation} \label{1307.2}
\| (\mA u)(t) - (\mA u)(t_0) \|_s 
\leq \| \mA(t) [u(t) - u(t_0)] \|_s + \| (\mA(t) - \mA(t_0)) [u(t_0)] \|_s
\end{equation}
(where, in short, $u(t)$ means $u(t,\cdot)$).
The first term is estimated using \eqref{stima interpolata cambio di variabile0}, which gives
\[
\| \mA(t) [u(t) - u(t_0)] \|_s 
\lesssim_s \, \| u(t) - u(t_0) \|_s + \| \a \|_{T,s+2} \| u(t) - u(t_0) \|_{0}  
\to 0 \quad (t \to t_0).
\]
To prove that the last term in \eqref{1307.2} also vanishes as $t \to t_0$ is equivalent 
to prove that, for every $h \in H^s(\T)$, 
the map $[0,T] \to H^s(\T)$, $t \mapsto \mA(t) h$ is continuous.
Let $h \in H^s(\T)$, and let $\widehat h (k)$ be its Fourier coefficients. 
Let 
\[
\Pi_n h(x) := \sum_{|k| \leq n} \widehat h(k) e^{\ii k x}, \quad 
\Pi_n^\bot h(x) := (I - \Pi_n) h(x) 
= \sum_{|k| > n} \widehat h (k) e^{\ii k x},
\]
and 
\[
f_n(t) := \mA(t) \Pi_n h, \quad 
f(t) := \mA(t) h.
\]
The sequence $(f_n)$ converges to $f$ uniformly in $t \in [0,T]$ 
in the space $H^s(\T)$, because, 
using \eqref{stima interpolata cambio di variabile0} and the assumption $h \in H^s(\T)$, 
\[
\sup_{t \in [0,T]} \| f_n(t) - f(t) \|_s 
= \| f_n - f \|_{T,s} 
= \| \mA \Pi_n^\bot h \|_{T,s}
\lesssim_s \| \Pi_n^\bot h \|_s + \| \a \|_{T,s+2} \| \Pi_n^\bot h \|_0
\to 0 \quad (n \to \infty).
\]
Since continuity is preserved by uniform limits, 
we have to prove that all $f_n$ are continuous. 
For any $n$, the function $f_n$ is 
\[
f_n(t,x) 
= \mA(t) \Pi_n h(x)
= \sum_{|k| \leq n} \widehat h(k) \, \psi_k(t,x), \quad 
\psi_k(t,x)  := e^{\ii k (x + \a(t,x))} = \mA(t)[e^{\ii k x}].
\]
Hence $f_n$ is a finite linear combination of functions $\psi_k$. 
It remains to prove that, for all $k \in \Z$, 
the function $\psi_k$ belongs to $C([0,T], H^s(\T))$. 
Fix $k \in \Z$, and consider the functions $G(u) := e^{\ii k u}$ 
and $F(u) := e^{\ii k u} - 1$. 
Split 
\[
\psi_k(t) - \psi_k(t_0) 
= e^{\ii k x} e^{\ii k \a(t_0,x)} \{ e^{\ii k [\a(t,x) - \a(t_0,x)] } - 1 \},
\]
and estimate each factor.
First, $\| e^{\ii k x} \|_s = \langle k \rangle^s$. 
Second, using \eqref{0707.2} and the assumption $\| \a \|_{T,1} \leq 1$, 
\[
\| e^{\ii k \a(t_0,x)} \|_s 
= \| G(\a(t_0, \cdot)) \|_s 
\leq C_{k,s} (1 + \| \a(t_0, \cdot) \|_s)
\leq C_{k,s} (1 + \| \a \|_{T,s}).
\]
Third, by \eqref{0707.1}, 
\[
\| e^{\ii k [\a(t,\cdot) - \a(t_0, \cdot)]} - 1 \|_s 
= \| F(\a(t, \cdot) - \a(t_0, \cdot)) \|_s 
\leq C_{k,s} \| \a(t, \cdot) - \a(t_0, \cdot) \|_s\,. 
\]
Hence
\[
\| \psi_k(t, \cdot) - \psi_k(t_0, \cdot) \|_s 
\leq C_{k,s} (1 + \| \a \|_{T,s}) \| \a(t, \cdot) - \a(t_0, \cdot) \|_s \to 0 
\quad (t \to t_0) 
\]
because $\a \in C([0,T], H^s(\T))$. 
Hence, we have proved that ${\cal A} : C([0, T], H^s(\T)) \to C([0, T], H^s(\T))$. 
Estimate \eqref{stima interpolata cambio di variabile} then follows by applying 
\eqref{stima interpolata cambio di variabile0} at any fixed $t \in [0, T]$ 
and taking the supremum. 

Finally, $(ii)$ follows by $(i)$ and \eqref{shutter island 0}. 
\end{proof}

\section{Appendix C. Well-posedness of linear equations} 
\label{sec:WP}

\begin{lemma}\label{buona positura equazione lineare 0}
Let $T > 0$, $t_0 \in [0, T]$, $\mu \in \R$. Let $S \geq 1$, ${\bf h}_{in} \in {\bf H}^S(\T)$, ${\bf g} \in C([0, T], {\bf H}^S(\T))$ and let ${\cal R}$ be the multiplication operator 
\begin{equation}\label{forma operatori moltiplicazione buona positura}
{\cal R} := \begin{pmatrix}
r_1 & r_2 \\
\overline r_2 & \overline r_1
\end{pmatrix}\,, \qquad r_1, r_2 \in C([0, T], H^{S + 1}(\T))\,.
\end{equation}
 There exists $\eta > 0$ small enough depending on $T$ such that if 
\begin{equation}\label{piccolezza cal R buona positura}
\| {\cal R}\|_{T, 1} = {\rm max}\{ \| r_1\|_{T, 1}, \| r_2\|_{T, 1} \} \leq \eta\,, 
\end{equation}
then there exists a unique solution ${\bf h} \in C([0, T], {\bf H}^S(\T))$ of the Cauchy problem 
\begin{equation}\label{problema di cauchy cal L4}
\begin{cases}
\partial_t {\bf h} + \ii \mu \Sigma \partial_{xx} {\bf h} + {\cal R} {\bf h} = {\bf g} \\
{\bf h}(t_0, \cdot) = {\bf h}_{in}
\end{cases}
\end{equation}
satisfying for any $0 \leq s \leq S$, the estimate 
$$
\| {\bf h}\|_{T, s} \lesssim_s \| {\bf h}_{in}\|_{s} + \| {\bf g} \|_{T, s} + \| {\cal R}\|_{T, s + 1} \| {\bf h}_{in}\|_{0}\,.
$$
\end{lemma}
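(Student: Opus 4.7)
The plan is to first reduce to a scalar equation. Observe that both the leading part $\ii\mu\Sigma\partial_{xx}$ and the matrix structure of $\mathcal{R}$ preserve the real subspace ${\bf L}^2(\T)$ of pairs $(h,\overline h)$: indeed, for ${\bf h}=(h,\overline h)$, the second component of $\mathcal{R}{\bf h}$, namely $\overline{r_2}h+\overline{r_1}\overline h$, is precisely the complex conjugate of the first. Writing ${\bf g}=(g,\overline g)$ and ${\bf h}_{in}=(h_{in},\overline{h_{in}})$, the Cauchy problem \eqref{problema di cauchy cal L4} is equivalent to the scalar problem
\begin{equation*}
\partial_t h + \ii\mu\partial_{xx}h + r_1 h + r_2\overline h = g, \qquad h(t_0)=h_{in},
\end{equation*}
and I will work exclusively with this scalar equation.

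Next I would establish the basic $L^2$ a priori estimate. Testing against $\overline h$ and taking twice the real part, the skew-adjoint operator $\ii\mu\partial_{xx}$ contributes a purely imaginary integral which drops out, leaving
\begin{equation*}
\tfrac{d}{dt}\|h\|_0^2 = -2\,\mathrm{Re}\!\int_\T r_1|h|^2\,dx - 2\,\mathrm{Re}\!\int_\T r_2\overline h^{\,2}\,dx + 2\,\mathrm{Re}\!\int_\T g\,\overline h\,dx.
\end{equation*}
The Sobolev embedding $H^1(\T)\hookrightarrow L^\infty(\T)$ and the smallness assumption \eqref{piccolezza cal R buona positura} yield $\|r_i\|_{L^\infty}\lesssim\eta$, and Gronwall's lemma gives $\|h\|_{T,0}\lesssim \|h_{in}\|_0+\|g\|_{T,0}$ with constant independent of $\eta$ once $\eta T$ is bounded, which fixes the required smallness threshold.

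For the tame $H^s$ estimate I would apply the Bessel multiplier $\Lambda^s:=\Op(\langle\xi\rangle^s)$. Since $\Lambda^s$ has real symbol, $\Lambda^s\overline h=\overline{\Lambda^s h}$, so $\psi:=\Lambda^s h$ satisfies a scalar equation of exactly the same form as $h$, with forcing $\Lambda^s g+[r_1,\Lambda^s]h+[r_2,\Lambda^s]\overline h$. Applying the $L^2$ estimate just derived to $\psi$ and invoking Lemma \ref{commutatore moltiplicazione fourier multiplier} to bound each commutator gives
\begin{equation*}
\|h\|_{T,s}\lesssim_s \|h_{in}\|_s+\|g\|_{T,s}+\|\mathcal{R}\|_{T,s+1}\|h\|_{T,0}+\|\mathcal{R}\|_{T,2}\|h\|_{T,s-1}.
\end{equation*}
Then I would interpolate $\|h\|_{T,s-1}\le\|h\|_{T,0}^{1/s}\|h\|_{T,s}^{(s-1)/s}$ and the convexity bound $\|\mathcal{R}\|_{T,2}\le\|\mathcal{R}\|_{T,1}^{(s-1)/s}\|\mathcal{R}\|_{T,s+1}^{1/s}$, so that Young's inequality together with $\|\mathcal{R}\|_{T,1}\le\eta$ produces a small multiple of $\|h\|_{T,s}$ which is absorbed on the left, plus a term proportional to $\eta^{s-1}\|\mathcal{R}\|_{T,s+1}\|h\|_{T,0}$. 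Substituting the basic $L^2$ bound for $\|h\|_{T,0}$ closes the inductive step and delivers the claimed estimate.

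Existence would be obtained by a standard Galerkin scheme: project the equation onto the Fourier modes $|k|\le N$, solve the resulting linear system of ODEs in a finite-dimensional space (which is globally well-posed), and pass to the limit along a subsequence using the uniform-in-$N$ bounds provided by the estimates above, together with the Banach--Alaoglu theorem in $L^\infty([0,T],{\bf H}^s)$; continuity in time follows from the equation itself. Uniqueness is immediate by applying the $L^2$ estimate to the difference of two solutions corresponding to vanishing data. The main obstacle I expect is closing the tame estimate cleanly: smallness is available only at the low index $\|\mathcal{R}\|_{T,1}\le\eta$, whereas the commutator naturally produces a term weighted by $\|\mathcal{R}\|_{T,2}$, which need not be small; the interpolation of both $h$ and $\mathcal{R}$ outlined above is exactly what makes the absorption work and is the delicate point of the proof.
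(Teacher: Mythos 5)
Your proof is correct but follows a genuinely different route from the paper's. The paper solves the scalar Cauchy problem by the Duhamel formula and a Picard iteration: it first obtains the tame estimate $\|\mathcal{Q}h\|_{T,s}\lesssim_s\eta\|h\|_{T,s}+\|\mathcal{R}\|_{T,s+1}\|h\|_{T,0}$ directly from the product and interpolation inequalities, splits $h = v + \varphi$ with $v$ the free solution, and then constructs $\varphi$ as the fixed point of the Duhamel map, proving uniform bounds and geometric convergence of the iterates $\varphi_n$ in $C([0,T],H^s)$ by induction. Your approach replaces the Duhamel/contraction machinery with $L^2$ energy estimates plus Gronwall, the commutator Lemma \ref{commutatore moltiplicazione fourier multiplier} applied to $\Lambda^s$, a double interpolation (on $h$ between levels $0,s-1,s$ and on $\mathcal{R}$ between $1,2,s+1$) followed by Young's inequality to absorb the bad term, and a Galerkin scheme for existence. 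Both routes ultimately rest on the same smallness $\|\mathcal{R}\|_{T,1}\leq\eta$ and yield the same tame structure; the paper's fixed-point construction has the advantage of producing a $C([0,T],H^s)$ solution directly, whereas the Galerkin limit naturally lands in $L^\infty([0,T],H^s)$ and upgrading to strong time-continuity requires the additional (standard but not trivial) step of combining weak continuity with continuity of the norm via the energy identity; you acknowledge this only with the phrase ``continuity in time follows from the equation itself.'' Two small technical remarks on your absorption step: the interpolation $\|\mathcal{R}\|_{T,2}\leq\|\mathcal{R}\|_{T,1}^{(s-1)/s}\|\mathcal{R}\|_{T,s+1}^{1/s}$ is only valid for $s\geq1$ (for $0\leq s<1$ the commutator bound already closes without interpolation, but then yields $\|\mathcal{R}\|_{T,2}$ rather than $\|\mathcal{R}\|_{T,s+1}$, which is the same minor imprecision present in the paper's use of \eqref{interpolation estremi fine} with $p=s-1$); and the factor you write as ``proportional to $\eta^{s-1}$'' is not what the unweighted Young inequality delivers—it gives coefficient $1/s$ on the $\|\mathcal{R}\|_{T,s+1}\|h\|_{T,0}$ term—though since that term is on the allowed side of the inequality, this does not affect the argument.
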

\begin{proof}
Since ${\bf h}_0 = (h_0, \overline h_0)$, ${\bf g} = (g , \overline g)$, ${\bf h} = (h, \overline h)$ and ${\cal R}$ has the form \eqref{forma operatori moltiplicazione buona positura},
it is enough to study the Cauchy problem 
\begin{equation}\label{problema di Cauchy nel lemma}
\begin{cases}
\partial_t h + \ii \mu \partial_{xx} h + {\cal Q}(h) = g \\
h (t_0, \cdot) = h_0\,,
\end{cases}\qquad {\cal Q}(h) :=r_1 h + r_2 \overline h\,.
\end{equation}
Note that for any $0 \leq s \leq S$, by Lemma \ref{lemma interpolazione}-$(ii)$, applying \eqref{interpolation estremi fine}, with $v = (r_1, r_2)$, $u = h$, $a_0 = 1$, $b_0 = 0$, $p = s - 1$, $q = 1$ and using the smallness condition \eqref{piccolezza cal R buona positura}, one gets that 
\begin{equation}\label{stima cal Q buona positura lemma}
\| {\cal Q} h\|_{T, s} \lesssim_s \eta \| h \|_{T, s} + \| {\cal R}\|_{T, s+ 1} \| h\|_{T, 0}\,, \quad \forall h \in C([0, T], H^s(\T))\,.  
\end{equation} 
We split in \eqref{problema di Cauchy nel lemma}, $h =  v + \vphi$, where 
\begin{equation}\label{problema di cauchy nel lemma 2}
\begin{cases}
\partial_t v + \ii \mu \partial_{xx} v = g \\
v(t_0, \cdot) = h_{in}\,,
\end{cases}\qquad \begin{cases}
\partial_t \vphi + \ii \mu \partial_{xx} \vphi + {\cal Q}(\vphi) + {\cal Q}( v) = 0 \\
\vphi(t_0, \cdot) = 0\,.
\end{cases}
\end{equation}
The first Cauchy problem in \eqref{stima cal Q buona positura lemma} can be solved explicitly and since $h_{in} \in H^S(\T)$, $g\in C([0, T],$ $H^S(\T))$ there exists a unique solution $v \in C([0, T], H^S(\T))$ satisfying 
\begin{equation}\label{stima flusso libero lemma appendice}
\| v \|_{T, s} \leq \| h_{in}\|_{T, s} + T \| g \|_{T, s}\,, \qquad \forall 0 \leq s \leq S\,. 
\end{equation}
Then, we construct iteratively the solution of the second Cauchy problem in \eqref{problema di cauchy nel lemma 2}, by setting 
$$
\vphi_0 := 0\,, \qquad \vphi_{n + 1} := \Phi(\vphi_n)\,, \qquad n \geq 0\,,
$$
where 
\begin{align}
\Phi(\vphi)&  := - \int_{t_0}^t e^{- \ii \mu \partial_{xx}(t - \tau)}[{\cal Q}(v)(\tau) ]\, d \tau  - \int_{t_0}^t e^{- \ii \mu \partial_{xx}(t - \tau)}[ {\cal Q}(\vphi)(\tau) ]\, d \tau\,. \label{definizione mappa punto fisso appendice}
\end{align}
We prove the following claim: 
for any $0 \leq  s \leq S$ there exists a constant $K_T(s) > 0$ (depending on $T$ and $s$) such that for any $n \geq 0$, $\vphi_n \in C([0, T], H^s(\T))$ and
\begin{equation}\label{stima induttiva hn lemma}
\| \vphi_n\|_{T, s} \leq R(s)\,, \qquad R(s) := K_T(s) \Big(  \eta \| v \|_{T, s} + \| {\cal R}\|_{T, s + 1} \| v \|_{T, 0} \Big)\,.
\end{equation}
We argue by induction on $n$. For $n = 0$ the statement is trivial. Then assume that the claim holds for some $n \geq 0$ and let us prove it for $n + 1$. By the definition of the map $\Phi$ in \eqref{definizione mappa punto fisso appendice}, using the inductive hyphothesis, one has immediately that $\vphi_{n + 1} = \Phi(\vphi_n) \in C([0, T], H^s(\T))$, for any $0 \leq s \leq S$. 
Moreover, using that for any $t, \tau \in [0, T]$, $\| e^{- \ii \mu \partial_{xx}(t - \tau)} \|_{{\cal L}(H^s(\T))} \leq 1$ and by estimate \eqref{stima cal Q buona positura lemma}, one gets 
\begin{align}
\| \vphi_{n + 1}\|_{T, s} &  \leq C(s) T \big( \eta \| v \|_{T, s} + \| {\cal R}\|_{T, s + 1} \| v \|_{T, 0} \big) 
+ C(s) T \big( \eta \| \vphi_n \|_{T, s} + \| {\cal R}\|_{T, s + 1} \| \vphi_n \|_{T, 0} \big)
 \nonumber\\ &
\stackrel{\eqref{stima induttiva hn lemma}}{\leq} C(s) T \big( \eta \| v \|_{T, s} + \| {\cal R}\|_{T, s + 1} \| v \|_{T, 0} \big)
+ C(s) T K_T(s) \eta\Big(  \eta \| v \|_{T, s} + \| {\cal R}\|_{T, s + 1} \| v \|_{T, 0} \Big) \nonumber\\
& \qquad + C(s) T \| {\cal R}\|_{T, s + 1} K_T(0) \big( \eta \| v \|_{T, 0} + \| {\cal R}\|_{T,  1} \| v \|_{T, 0} \big) \nonumber\\
& \stackrel{\eqref{piccolezza cal R buona positura}}{\leq}\Big( C(s) T \eta + C(s) K_T(s) T \eta^2 \Big) \| v \|_{T, s} \nonumber\\
& \qquad + \Big( C(s) T  + C(s) K_T(s) T \eta + 2 T C(s) K_T(0) \eta  \Big) \| {\cal R}\|_{T, s + 1} \| v \|_{T, 0}   \nonumber\\
& \leq K_T(s) \Big(\eta \| v \|_{T, s} + \| {\cal R}\|_{T, s + 1} \| v \|_{T, 0} \Big)\,,
\end{align}
provided that
$$
C(s) T + C(s) K_T(s) T \eta \leq K_T(s)\,, \qquad C(s) T  + C(s) K_T(s) T \eta + 2 T C(s) K_T(0) \eta \leq K_T(s)\,. 
$$
The above conditions are fulfilled by taking $K_T(s) > 0$ large enough and $\eta \in (0, 1)$ small enough, therefore \eqref{stima induttiva hn lemma} has been proved at the step $n + 1$. 

\bigskip

\noindent
{\sc Convergence of $\vphi_n$.} We prove that for any $0 \leq s \leq S$, there exists a constant $J_T(s) > 0$ such that for any $n \geq 0$
\begin{equation}\label{stima iterativa convergenza vphi n}
\| \vphi_{n + 1} - \vphi_n \|_{T, s} \leq J_T(s) \Big(\frac{1}{2^{n + 1}} \| v \|_{T, s} + \frac{1}{2^n} \| {\cal R}\|_{T, s + 1}\| v \|_{T, 0} \Big)\,. 
\end{equation}
We argue by induction on $n$. For $n = 0$, since $\vphi_0 = 0$, the estimate follows by \eqref{stima induttiva hn lemma} applied for $n = 1$ and by taking $J_T(s) \geq K_T(s)$ and $\eta \leq 1/2$. Now let us assume that \eqref{stima iterativa convergenza vphi n} holds for some $n \geq 0$ and let us prove it for $n + 1$. Recalling \eqref{definizione mappa punto fisso appendice} and the definition of $\cal Q$ in \eqref{problema di Cauchy nel lemma}, one has  
$$
\vphi_{n + 2} - \vphi_{n + 1} = \Phi(\vphi_{n + 1}) - \Phi(\vphi_n) = - \int_{t_0}^t e^{- \ii \mu \partial_{xx}(t - \tau)}[ {\cal Q}(\vphi_{n + 1} - \vphi_n)(\tau) ]\, d \tau\,.
$$
Using estimates \eqref{stima cal Q buona positura lemma}, \eqref{piccolezza cal R buona positura}, \eqref{stima iterativa convergenza vphi n}, one gets 
\begin{align}
\| \vphi_{n + 2} - \vphi_{n + 1}\|_{T, s} & \leq C(s) T \Big( \eta \| \vphi_{n + 1} - \vphi_n \|_{T, s} + \|{\cal R} \|_{T, s + 1} \| \vphi_{n + 1} - \vphi_n\|_{T, 0} \Big) \nonumber\\
& \leq C(s) T \eta J_T(s) \Big( \frac{1}{2^{n + 1}} \| v \|_{T, s} + \frac{1}{2^n} \| {\cal R}\|_{T, s + 1} \| v \|_{T, 0} \Big)
\nonumber\\ & \qquad
+ C(s) T \| {\cal R}\|_{T, s + 1} J_T(0)  \Big( \frac{1}{2^{n + 1}} + \eta \frac{1}{2^n} \Big) \| v \|_{T, 0} \nonumber\\
& \leq J_T(s) \Big( \frac{1}{2^{n + 2}} \| v \|_{T, s} + \frac{1}{2^{n + 1}} \| {\cal R}\|_{T, s + 1} \| v \|_{T, 0} \Big) \nonumber
\end{align}
by taking $J_T(s) > 0$ large enough and $\eta \in (0, 1)$ small enough. Thus \eqref{stima iterativa convergenza vphi n} at the step $n + 1$ has been proved. Using a telescoping argument one has that there exists $\vphi \in C([0, T], H^{S}(\T))$ such that 
$$
\vphi_n \to \vphi\,, \qquad \text{in} \qquad C([0, T], H^s(\T))\,, \quad \forall 0 \leq s \leq S\,.  
$$
Moreover, $\Phi(\vphi_n) \to \Phi(\vphi)$ in $C([0, T], H^s(\T))$, for any $0 \leq s \leq S$, implying that $\Phi(\vphi) = \vphi$. Since $\| \vphi\|_{T, s}= \lim_{n \to + \infty} \| \vphi_n\|_{T, s}$, by \eqref{stima induttiva hn lemma} one deduces that $\vphi$ satisfies 
\begin{equation}\label{stima vphi punto fisso}
\| \vphi \|_{	T, s} \lesssim_s \eta \| v \|_{T, s} + \| {\cal R}\|_{T, s + 1} \| v \|_{T, 0}\,. 
\end{equation}
Recalling that $h = \vphi + v$ and using estimates \eqref{stima flusso libero lemma appendice}, \eqref{stima vphi punto fisso}, one gets
$$
\| h \|_{T, s} \lesssim_s \| h_{in}\|_{s} + \| g \|_{T, s} + \| {\cal R}\|_{T, s + 1} \| h_{in}\|_{0}\,,
$$
and the lemma is proved. 
\end{proof}
\begin{lemma}[Well posedness of the operator ${\cal L}_4$ in \eqref{cal L5}]\label{buona positura equazione lineare 1}
Let $T > 0$, $t_0 \in [0, T]$ and let ${\cal L}_4 = \partial_t {\mathbb I}_2 + \ii \mu \partial_{xx} \Sigma + {\cal R}$ be the operator defined in \eqref{cal L5}. There exists $\eta \in (0, 1)$ small enough and universal constants $\sigma ,\t > 0$ large enough such that if $N_T(\sigma) \leq \eta$ (see the definition \eqref{definizione NT sigma}), then for any $s \in [0,r -\t]$, ${\bf h}_{in} \in {\bf H}^s(\T)$, ${\bf g} \in C([0, T], {\bf H}^s(\T))$, there exists a unique solution ${\bf h} \in C([0, T], {\bf H}^s(\T))$ such that 
$$
\begin{cases}
{\cal L}_4 {\bf h} = {\bf g} \\
{\bf h}(t_0, \cdot) = {\bf h}_{in}
\end{cases}
$$
satisfying the estimate 
\begin{equation}\label{stima buona positura equazione lineare 1}
\| {\bf h}\|_{T, s} \lesssim_s \| {\bf h}_{in}\|_{H^s_x} + \| {\bf g}\|_{T, s} + N_T(s + \sigma) \| {\bf h}_{in}\|_{L^2_x}\,. 
\end{equation}
\end{lemma}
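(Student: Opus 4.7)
The operator $\mathcal{L}_4 = \partial_t \mathbb{I}_2 + \ii \mu \Sigma \partial_{xx} + \mathcal{R}$ from \eqref{cal L5} is exactly of the form treated in Lemma \ref{buona positura equazione lineare 0}: by \eqref{cal R r1 r2}, $\mathcal{R}$ is the $2\times 2$ multiplication operator with entries $r_1, r_2$ (and their conjugates), and estimate \eqref{stima cal R linearizzato ridotto} from Lemma \ref{stime cal L5} gives $\|r_1\|_{T,s}, \|r_2\|_{T,s} \lesssim_s N_T(s+\sigma_0)$ for some universal $\sigma_0$. Therefore the plan is simply to deduce Lemma \ref{buona positura equazione lineare 1} as a corollary of Lemma \ref{buona positura equazione lineare 0}, after checking that the smallness hypothesis on $\mathcal{R}$ is implied by the smallness of $N_T(\sigma)$ for $\sigma$ large enough.

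First I would fix universal constants $\sigma := \sigma_0 + 1$ and $\tau := \sigma + 3$ (so that, for $s \in [0, r-\tau]$, all coefficient estimates in Section~\ref{riduzione operatori lineari generali}, in particular \eqref{stima cal R linearizzato ridotto}, are available at Sobolev index $s+\sigma$). From \eqref{stima cal R linearizzato ridotto} applied with $s=1$,
\[
\|\mathcal{R}\|_{T,1} \leq \|r_1\|_{T,1} + \|r_2\|_{T,1} \lesssim N_T(\sigma) \leq \eta,
\]
so, choosing $\eta \in (0,1)$ small enough, the smallness hypothesis \eqref{piccolezza cal R buona positura} of Lemma \ref{buona positura equazione lineare 0} is satisfied (here the parameter called $S$ in Lemma \ref{buona positura equazione lineare 0} is taken to be any integer above $s$, which is harmless since we work in the scale $0 \leq s \leq r - \tau$).

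Next I would invoke Lemma \ref{buona positura equazione lineare 0} directly with ${\bf g}$ and ${\bf h}_{in}$ as given. This yields a unique solution ${\bf h} \in C([0,T], \mathbf{H}^s(\T))$ of the Cauchy problem, together with the quantitative bound
\[
\| {\bf h}\|_{T, s} \lesssim_s \| {\bf h}_{in}\|_{s} + \| {\bf g} \|_{T, s} + \| \mathcal{R}\|_{T, s + 1} \| {\bf h}_{in}\|_{0}.
\]
Applying \eqref{stima cal R linearizzato ridotto} once more to estimate $\|\mathcal{R}\|_{T,s+1} \lesssim_s N_T(s + 1 + \sigma_0) = N_T(s+\sigma)$ yields \eqref{stima buona positura equazione lineare 1}. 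Uniqueness follows from uniqueness in Lemma \ref{buona positura equazione lineare 0} (applied to the difference of two putative solutions with zero data).

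There is essentially no serious obstacle here: the content of the lemma is precisely that the abstract well-posedness result of Lemma \ref{buona positura equazione lineare 0} applies to the reduced operator $\mathcal{L}_4$ produced in Section \ref{step 5 riduzione}. The only points requiring attention are bookkeeping ones: (i) ensuring that the universal constants $\sigma$ and $\tau$ are chosen large enough to accommodate both the fixed loss $\sigma_0$ in \eqref{stima cal R linearizzato ridotto} and the $+1$ loss in the remainder term $\|\mathcal{R}\|_{T,s+1}$; and (ii) checking that $\eta$ is small enough both to guarantee the validity of Lemma \ref{stime cal L5} and to verify the smallness condition \eqref{piccolezza cal R buona positura} of Lemma \ref{buona positura equazione lineare 0}. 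Both are satisfied by taking $\sigma,\tau$ sufficiently large and $\eta$ sufficiently small in terms of universal constants.
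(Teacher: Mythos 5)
Your proposal is correct and follows essentially the same route as the paper's proof: both reduce the claim to an application of Lemma \ref{buona positura equazione lineare 0}, using the coefficient estimate \eqref{stima cal R linearizzato ridotto} from Lemma \ref{stime cal L5} to verify the smallness condition \eqref{piccolezza cal R buona positura} and to convert $\|\mathcal{R}\|_{T,s+1}$ into $N_T(s+\sigma)$ in the final bound. The paper's version of the proof is just more terse, leaving the bookkeeping of $\sigma,\tau,\eta$ implicit.
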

\begin{proof}
The lemma follows by applying Lemmas \ref{stime coefficienti linearizzato}, \ref{stime cal L5} and \ref{buona positura equazione lineare 0}. Indeed, by \eqref{cal R r1 r2}-\eqref{stima cal R linearizzato ridotto}, using that $N_T(\sigma) \leq \eta$ for some $\eta \in (0, 1)$ small enough and $\sigma \in \N$ large enough, the smallness condition \eqref{piccolezza cal R buona positura} is fulfilled.  
\end{proof}
\begin{lemma}[Well posedness of the operator ${\cal L}_3$ in \eqref{forma finale cal L4}]\label{buona positura equazione lineare 2}
Let $T > 0$, $t_0 \in [0, T]$ and let ${\cal L}_3 = \partial_t {\mathbb I}_2 + \ii \mu \Sigma \partial_{xx} + \ii A_1^{(3)} \partial_x + \ii A_0^{(3)}$ be the operator defined in \eqref{forma finale cal L4}. There exists $\eta \in (0, 1)$ small enough and universal constants $\sigma ,\t > 0$ large enough such that if $N_T(\sigma) \leq \eta$ (see the definition \eqref{definizione NT sigma}), then for any $s \in [0, r-\t]$, ${\bf h}_{in} \in {\bf H}^s(\T)$, ${\bf g} \in C([0, T], {\bf H}^s(\T))$, there exists a unique solution ${\bf h} \in C([0, T], {\bf H}^s(\T))$ such that 
\begin{equation}\label{problema di Cauchy cal L3 appendice}
\begin{cases}
{\cal L}_3 {\bf h} = {\bf g} \\
{\bf h}(t_0, \cdot) = {\bf h}_{in}
\end{cases}
\end{equation}
satisfying the estimate 
$$
\| {\bf h}\|_{T, s} \lesssim_s \| {\bf h}_{in}\|_{H^s_x} + \| {\bf g}\|_{T, s} + N_T(s + \sigma) \| {\bf h}_{in}\|_{L^2_x}\,. 
$$
\end{lemma}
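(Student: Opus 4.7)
The plan is to reduce the Cauchy problem \eqref{problema di Cauchy cal L3 appendice} to the analogous one for $\mathcal{L}_4$, for which well-posedness is already provided by Lemma \ref{buona positura equazione lineare 1}. Recall from Subsection \ref{step 5 riduzione} that $\mathcal{L}_4 = \mathcal{M}^{-1} \mathcal{L}_3 \mathcal{M}$, where $\mathcal{M}(t,x)$ is the diagonal multiplication operator defined in \eqref{definizione cal m}-\eqref{definizione v} and satisfies the tame bounds \eqref{stima cal M} of Lemma \ref{stime cal L5}.

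First, I would set $\widetilde{\bf h}(t,\cdot) := \mathcal{M}^{-1}(t) {\bf h}(t,\cdot)$ and $\widetilde{\bf g}(t,\cdot) := \mathcal{M}^{-1}(t) {\bf g}(t,\cdot)$, so that ${\bf h}$ solves \eqref{problema di Cauchy cal L3 appendice} if and only if $\widetilde{\bf h}$ solves
\begin{equation*}
\begin{cases}
\mathcal{L}_4 \widetilde{\bf h} = \widetilde{\bf g} \\
\widetilde{\bf h}(t_0,\cdot) = \mathcal{M}^{-1}(t_0) {\bf h}_{in}.
\end{cases}
\end{equation*}
By \eqref{stima cal M}, $\mathcal{M}^{-1}(t_0){\bf h}_{in} \in {\bf H}^s(\T)$ and $\widetilde{\bf g} \in C([0,T], {\bf H}^s(\T))$ whenever ${\bf h}_{in}, {\bf g}$ have such regularity. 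Then Lemma \ref{buona positura equazione lineare 1} (applied with a slightly larger $\sigma$) produces a unique solution $\widetilde{\bf h} \in C([0,T], {\bf H}^s(\T))$ of the transformed Cauchy problem, with
\begin{equation*}
\| \widetilde{\bf h} \|_{T,s} \lesssim_s \| \mathcal{M}^{-1}(t_0){\bf h}_{in} \|_s + \| \widetilde{\bf g} \|_{T,s} + N_T(s+\sigma') \| \mathcal{M}^{-1}(t_0){\bf h}_{in} \|_0,
\end{equation*}
for some $\sigma' > 0$.

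Next, I would recover ${\bf h} = \mathcal{M} \widetilde{\bf h}$ and propagate the estimates. Applying \eqref{stima cal M} to bound each of $\mathcal{M}^{-1}(t_0){\bf h}_{in}$, $\widetilde{\bf g}$ and $\mathcal{M}\widetilde{\bf h}$ in terms of ${\bf h}_{in}$, ${\bf g}$ and $\widetilde{\bf h}$ respectively, and combining with the interpolation/Moser estimates already used throughout Section \ref{riduzione operatori lineari generali}, one obtains (up to renaming $\sigma$)
\begin{equation*}
\| {\bf h} \|_{T,s} \lesssim_s \| {\bf h}_{in} \|_s + \| {\bf g} \|_{T,s} + N_T(s+\sigma) \big( \| {\bf h}_{in} \|_0 + \| {\bf g} \|_{T,0} \big).
\end{equation*}
Since $\mathcal{M}$ is invertible with $C([0,T], {\bf L}^2)$-bounded inverse under the smallness assumption on $N_T(\sigma)$, uniqueness of ${\bf h}$ is equivalent to uniqueness of $\widetilde{\bf h}$, which is guaranteed by Lemma \ref{buona positura equazione lineare 1}.

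The only mildly delicate point is the absorption of the term $N_T(s+\sigma) \| {\bf g} \|_{T,0}$ to match the exact shape of the claimed estimate, which contains only $\| {\bf h}_{in} \|_0$ in the low-norm factor; this is handled by applying the tame bounds \eqref{stima cal M} in the sharper form $\|\mathcal{M}^{\pm 1}{\bf h}\|_{T,s} \lesssim \|{\bf h}\|_{T,s}$ at low regularity (since $\mathcal{M}^{\pm 1} - \mathrm{Id}$ is $O(\eta)$ in $L^\infty$) and reserving the $N_T(s+\sigma)$ factor for the genuinely high-regularity contributions. This is a routine chain of interpolations of the type already used in Lemmas \ref{stime dopo T cal S}-\ref{stime cal L5}, and no new obstacle arises beyond bookkeeping of the loss indices $\sigma, \tau$.
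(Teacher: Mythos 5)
Your proposal follows the same route as the paper: conjugate by the multiplication operator $\mathcal{M}$ to turn $\mathcal{L}_3$ into $\mathcal{L}_4$, invoke Lemma~\ref{buona positura equazione lineare 1} for the transformed Cauchy problem, and transfer the tame bound back using \eqref{stima cal M}. The ``delicate point'' you flag is real but cosmetic: bounding $\|\widetilde{\bf g}\|_{T,s} = \|\mathcal{M}^{-1}{\bf g}\|_{T,s}$ via \eqref{stima cal M} unavoidably produces an extra $N_T(s+\sigma)\,\|{\bf g}\|_{T,0}$ term, so the estimate one actually obtains is $\|{\bf h}\|_{T,s} \lesssim_s \|{\bf h}_{in}\|_s + \|{\bf g}\|_{T,s} + N_T(s+\sigma)\big(\|{\bf h}_{in}\|_0 + \|{\bf g}\|_{T,0}\big)$; the paper states the slightly shorter form, but the additional low-norm factor is harmless for every subsequent use of the lemma (and in particular for the tame inversion Theorem~\ref{thm:inv}), so no genuine gap arises.
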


\begin{proof}
Let ${\cal M}$ be the transformation defined in \eqref{definizione cal m}. By \eqref{cal L5}, defining $\widetilde{\bf h}(t, \cdot) := {\cal M}^{- 1}(t) {\bf h}(t, \cdot)$, $\widetilde{\bf g} :={\cal M}^{- 1}(t){\bf g}(t, \cdot)$, the Cauchy problem \eqref{problema di Cauchy cal L3 appendice} transforms into the Cauchy problem 
$$
\begin{cases}
{\cal L}_4 \widetilde{\bf h} = \widetilde{\bf g} \\
\widetilde{\bf h}(t_0, \cdot) = \widetilde{\bf h}_{in}
\end{cases}\,.
$$
Then the statement follows by Lemma \ref{buona positura equazione lineare 1} and by estimate \eqref{stima cal M} on the transformation ${\cal M}$.
\end{proof}

\begin{lemma}[Well posedness of the operator ${\cal L}_2$ in \eqref{cal L3}]\label{buona positura equazione lineare 3}
Let $T > 0$, $t_0 \in [0, T]$ and let ${\cal L}_2 = \partial_t {\mathbb I}_2 + \ii \mu \Sigma \partial_{xx} + \ii A_1^{(2)} \partial_x + \ii A_0^{(2)}$ be the operator defined in \eqref{cal L3}. There exists $\eta \in (0, 1)$ small enough and universal constants $\sigma ,\t > 0$ large enough such that if $N_T(\sigma) \leq \eta$ (see the definition \eqref{definizione NT sigma}), then for any $s \in [0, r- \t]$, ${\bf h}_{in} \in {\bf H}^s(\T)$, ${\bf g} \in C([0, T], {\bf H}^s(\T))$, there exists a unique solution ${\bf h} \in C([0, T], {\bf H}^s(\T))$ such that 
\begin{equation}\label{problema di Cauchy cal L2 appendice}
\begin{cases}
{\cal L}_2 {\bf h} = {\bf g} \\
{\bf h}(t_0, \cdot) = {\bf h}_{in}
\end{cases}
\end{equation}
satisfying the estimate 
$$
\| {\bf h}\|_{T, s} \lesssim_s \| {\bf h}_{in}\|_{s} + \| {\bf g}\|_{T, s} + N_T(s + \sigma) \| {\bf h}_{in}\|_{0}\,. 
$$
\end{lemma}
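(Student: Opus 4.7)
The plan is to reduce this Cauchy problem for $\mL_2$ to the one for $\mL_3$ already handled in Lemma \ref{buona positura equazione lineare 2}, using the conjugation by the translation operator $\mT$ constructed in Section \ref{step 4 coniugazione}. Recall from \eqref{cal L4} that $\mL_3 = \mT^{-1}\mathbb{I}_2\, \mL_2\, \mT\mathbb{I}_2$, so setting $\widetilde{\mathbf{h}}(t,\cdot) := \mT^{-1}(t)\mathbf{h}(t,\cdot)$ and $\widetilde{\mathbf{g}}(t,\cdot) := \mT^{-1}(t)\mathbf{g}(t,\cdot)$, the Cauchy problem \eqref{problema di Cauchy cal L2 appendice} is equivalent to
\[
\begin{cases}
\mL_3 \widetilde{\mathbf{h}} = \widetilde{\mathbf{g}} \\
\widetilde{\mathbf{h}}(t_0,\cdot) = \mT^{-1}(t_0)\mathbf{h}_{in}.
\end{cases}
\]
Since $\mT$ is a pure translation in $x$ depending only on $t$ (see \eqref{operatori traslazione dello spazio}), the composition rule $\mT^{-1}\partial_x \mT = \partial_z$ is exact and no remainder arises; moreover $\mT$ acts as an isometry on each $H^s(\T)$.

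First I would invoke Lemma \ref{buona positura equazione lineare 2} with data $(\mT^{-1}(t_0)\mathbf{h}_{in}, \widetilde{\mathbf{g}})$ to obtain a unique solution $\widetilde{\mathbf{h}} \in C([0,T],\mathbf{H}^s(\T))$ satisfying
\[
\|\widetilde{\mathbf{h}}\|_{T,s} \lesssim_s \|\mT^{-1}(t_0)\mathbf{h}_{in}\|_s + \|\widetilde{\mathbf{g}}\|_{T,s} + N_T(s+\sigma)\|\mT^{-1}(t_0)\mathbf{h}_{in}\|_0.
\]
Then I would define $\mathbf{h}(t,\cdot) := \mT(t)\widetilde{\mathbf{h}}(t,\cdot)$; since $\mT$ is smooth in $t$ (by \eqref{stima p}) and strongly continuous on $\mathbf{H}^s(\T)$, we have $\mathbf{h} \in C([0,T],\mathbf{H}^s(\T))$ and by construction it solves \eqref{problema di Cauchy cal L2 appendice}. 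Uniqueness follows from the uniqueness in Lemma \ref{buona positura equazione lineare 2} via the same conjugation.

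Finally, the tame estimate for $\mathbf{h}$ follows by applying the bound \eqref{stima cal T pm1} for $\mT^{\pm 1}$ on both sides: $\|\mathbf{h}\|_{T,s} \lesssim \|\widetilde{\mathbf{h}}\|_{T,s}$, $\|\mT^{-1}(t_0)\mathbf{h}_{in}\|_s \lesssim \|\mathbf{h}_{in}\|_s$, and $\|\widetilde{\mathbf{g}}\|_{T,s} \lesssim \|\mathbf{g}\|_{T,s}$, which combined with the displayed estimate above yields the desired bound. The proof is essentially routine once the conjugation rules are in place; the only point that needs a brief check is that $\mT$ really is bounded on $H^s$ uniformly in $t$, but this is immediate from \eqref{stima cal T pm1} since a pure translation preserves Sobolev norms exactly.
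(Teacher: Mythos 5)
Your proof is correct and follows essentially the same route as the paper's: conjugate by $\mT$ to reduce to the Cauchy problem for $\mL_3$ handled in Lemma \ref{buona positura equazione lineare 2}, then transport solution and estimates back using \eqref{stima cal T pm1}. The only minor slip is the phrase ``no remainder arises''---the conjugation $\mT^{-1}\partial_t\mT = \partial_t + p'\partial_z$ does produce the term $p'\partial_z$, but this is already absorbed into the coefficient $a_1^{(3)}$ in the definition of $\mL_3$, so it does not affect your argument.
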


\begin{proof}
Let ${\cal T}$ be the transformation defined in \eqref{operatori traslazione dello spazio}. By \eqref{cal L4}, defining $\widetilde{\bf h}(t, \cdot) := {\cal T}^{- 1}(t) {\bf h}(t, \cdot)$, $\widetilde{\bf g} :={\cal T}^{- 1}(t){\bf g}(t, \cdot)$, the Cauchy problem \eqref{problema di Cauchy cal L2 appendice} transforms into the Cauchy problem 
$$
\begin{cases}
{\cal L}_3 \widetilde{\bf h} = \widetilde{\bf g} \\
\widetilde{\bf h}(t_0, \cdot) = \widetilde{\bf h}_{in}
\end{cases}\,.
$$
Then the statement follows by Lemma \ref{buona positura equazione lineare 2} and by estimate \eqref{stima cal T pm1} on the transformation ${\cal T}$.
\end{proof}

\begin{lemma}[Well posedness of the operator ${\cal L}_1$ in \eqref{forma finale cal L2}]\label{buona positura equazione lineare 4}
Let $T > 0$, $t_0 \in [0, T]$ and let ${\cal L}_1 = \partial_t {\mathbb I}_2 + \ii m_2 \Sigma \partial_{yy} + \ii A_1^{(1)} \partial_y + \ii A_0^{(1)} $ be the operator defined in \eqref{forma finale cal L2}. There exists $\eta \in (0, 1)$ small enough and universal constants $\sigma ,\t > 0$ large enough such that if $N_T(\sigma) \leq \eta$ (see the definition \eqref{definizione NT sigma}), then for any $s \in [0, r-\t]$, ${\bf h}_{in} \in {\bf H}^s(\T)$, ${\bf g} \in C([0, T], {\bf H}^s(\T))$, there exists a unique solution ${\bf h} \in C([0, T], {\bf H}^s(\T))$ such that 
\begin{equation}\label{problema di Cauchy cal L1 appendice}
\begin{cases}
{\cal L}_1 {\bf h} = {\bf g} \\
{\bf h}(t_0, \cdot) = {\bf h}_{in}
\end{cases}
\end{equation}
satisfying the estimate 
$$
\| {\bf h}\|_{T, s} \lesssim_s \| {\bf h}_{in}\|_{s} + \| {\bf g}\|_{T, s} + N_T(s + \sigma) \| {\bf h}_{in}\|_{0}\,. 
$$
\end{lemma}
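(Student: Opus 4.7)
The plan is to mimic the pattern used for Lemmas \ref{buona positura equazione lineare 2} and \ref{buona positura equazione lineare 3}, reducing the Cauchy problem for $\mathcal{L}_1$ to the already-established one for $\mathcal{L}_2$ (Lemma \ref{buona positura equazione lineare 3}) by means of the time-reparametrization operator $\mathcal{B}$ introduced in Section \ref{sezione riparametrizzazione tempo}. Recall that, by \eqref{cal L3},
\[
\mathcal{B}^{-1} \mathbb{I}_2 \, \mathcal{L}_1 \, \mathcal{B} \mathbb{I}_2 = \rho \, \mathcal{L}_2,
\]
where $\rho = \mu^{-1} \mathcal{B}^{-1} m_2$ is defined in \eqref{definizione rho (t)} and $\beta : [0,T] \to [0,T]$ is the diffeomorphism of \eqref{definizione riparametrizzazione tempo}.

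First, I set $\widetilde{\bf h}(\tau,\cdot) := \mathcal{B}^{-1}{\bf h}(\tau,\cdot) = {\bf h}(\beta^{-1}(\tau),\cdot)$ and $\widetilde{\bf g}(\tau,\cdot) := \rho^{-1}(\tau)\,\mathcal{B}^{-1}{\bf g}(\tau,\cdot)$. A direct substitution using the conjugation rule \eqref{regola coniugazione cal B} shows that $\mathcal{L}_1 {\bf h} = {\bf g}$ with ${\bf h}(t_0,\cdot) = {\bf h}_{in}$ is equivalent to
\[
\mathcal{L}_2 \widetilde{\bf h} = \widetilde{\bf g}, \qquad \widetilde{\bf h}(\widetilde{t}_0,\cdot) = {\bf h}_{in},
\]
where $\widetilde{t}_0 := \beta(t_0) \in [0,T]$. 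Because $\mathcal{B}$ acts only in time, the spatial initial datum is unchanged.

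Next, I apply Lemma \ref{buona positura equazione lineare 3} to the transformed Cauchy problem. Combined with the tame estimate \eqref{stima cal B pm1} for $\mathcal{B}^{\pm 1}$ on $C([0,T], H^s(\T))$, with \eqref{stima rho step 2} for $\rho^{\pm 1}$, and with \eqref{stima mu 2} (which ensures $\beta, \beta^{-1}$ are close to the identity), one obtains
\[
\|\widetilde{\bf g}\|_{T,s} \lesssim \|{\bf g}\|_{T,s}, \qquad \|\widetilde{\bf h}\|_{T,s} \simeq \|{\bf h}\|_{T,s}
\]
uniformly in $s \in [0, r-\t]$, up to enlarging $\s, \t$ if needed. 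Combining these bounds with the estimate produced by Lemma \ref{buona positura equazione lineare 3} for $\widetilde{\bf h}$ yields the tame estimate claimed for ${\bf h}$. Uniqueness of ${\bf h}$ follows from uniqueness for $\widetilde{\bf h}$ by the invertibility of $\mathcal{B}$.

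I do not expect any real obstacle: since $\mathcal{B}$ is the time-reparametrization induced by a diffeomorphism of $[0,T]$ fixing the endpoints and acting as the identity on the spatial variable, the transformation preserves both the functional class $C([0,T], \mathbf{H}^s)$ and the spatial initial condition, and the loss in the tame constants is absorbed into $N_T(s+\sigma)$ via Section \ref{riduzione operatori lineari generali}. The mildest care needed is to verify that the time $\widetilde{t}_0 = \beta(t_0)$ still lies in $[0,T]$, which is guaranteed by $\beta([0,T]) = [0,T]$ from \eqref{definizione riparametrizzazione tempo}.
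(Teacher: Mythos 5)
Your proposal is correct and follows essentially the same route as the paper: conjugate by the time-reparametrization operator $\mathcal{B}$ to reduce to the Cauchy problem for $\mathcal{L}_2$ (Lemma \ref{buona positura equazione lineare 3}) and transfer the estimates back using \eqref{stima cal B pm1} and \eqref{stima rho step 2}. You are actually slightly more careful than the paper's proof, which writes the transformed initial condition at time $t_0$ rather than at the shifted time $\beta(t_0)$; your version is the precise one, and the discrepancy is harmless because Lemma \ref{buona positura equazione lineare 3} holds for an arbitrary initial time in $[0,T]$ and the initial datum itself is unchanged since $\mathcal{B}$ acts only in time.
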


\begin{proof}
Let ${\cal B}$ be the transformation defined in \eqref{definizione cal B}. By \eqref{cal L3}, defining $\widetilde{\bf h}(t, \cdot) := {\cal B}^{- 1}(t) {\bf h}(t, \cdot)$, $\widetilde{\bf g} := \rho^{- 1}{\cal B}^{- 1}(t){\bf g}(t, \cdot)$, the Cauchy problem \eqref{problema di Cauchy cal L1 appendice} transforms into the Cauchy problem 
$$
\begin{cases}
{\cal L}_2 \widetilde{\bf h} = \widetilde{\bf g} \\
\widetilde{\bf h}(t_0, \cdot) ={\bf h}_{in}
\end{cases}\,
$$
(note that for a function $h (x)$ depending only on the variable $x$, ${\cal B}^{- 1} h = h$).
Then the statement follows by Lemma \ref{buona positura equazione lineare 3} and by estimate \eqref{stima cal B pm1} on the transformation ${\cal B}$.
\end{proof}

\begin{lemma}[Well posedness of the operator ${\cal L}_0$ in \eqref{cal L1}]\label{buona positura equazione lineare 5}
Let $T > 0$, $t_0 \in [0, T]$ and let ${\cal L}_0 = \partial_t {\mathbb I}_2 + \ii (\Sigma + {A_2^{(0)}} ) \partial_{xx} + \ii {A_1^{(0)}} \partial_x + \ii {A_0^{(0)}}$ be the operator defined in \eqref{cal L1}. There exists $\eta \in (0, 1)$ small enough and universal constants $\sigma ,\t > 0$ large enough such that if $N_T(\sigma) \leq \eta$ (see the definition \eqref{definizione NT sigma}), then for any $s \in [0, r-\t]$, ${\bf h}_{in} \in {\bf H}^s(\T)$, ${\bf g} \in C([0, T], {\bf H}^s(\T))$, there exists a unique solution ${\bf h} \in C([0, T], {\bf H}^s(\T))$ such that 
\begin{equation}\label{problema di Cauchy cal L0 appendice}
\begin{cases}
{\cal L}_0 {\bf h} = {\bf g} \\
{\bf h}(t_0, \cdot) = {\bf h}_{in}
\end{cases}
\end{equation}
satisfying the estimate 
$$
\| {\bf h}\|_{T, s} \lesssim_s \| {\bf h}_{in}\|_{s} + \| {\bf g}\|_{T, s} + N_T(s + \sigma) \| {\bf h}_{in}\|_{0}\,. 
$$
\end{lemma}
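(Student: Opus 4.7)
The plan is to reduce the Cauchy problem for $\mathcal{L}_0$ to the one already solved in Lemma \ref{buona positura equazione lineare 4} for $\mathcal{L}_1$, by means of the diffeomorphism-induced transformation $\mathcal{A}$ introduced in Section \ref{primo cambio di variabile riduzione}. Recall from \eqref{cal L2} that $\mathcal{L}_1 = \mathcal{A}^{-1}\mathbb{I}_2 \, \mathcal{L}_0 \, \mathcal{A}\mathbb{I}_2$, so that $\mathbf{h}$ solves \eqref{problema di Cauchy cal L0 appendice} if and only if $\widetilde{\mathbf{h}}(t,\cdot) := \mathcal{A}^{-1}(t)\mathbb{I}_2\,\mathbf{h}(t,\cdot)$ solves
\[
\mathcal{L}_1 \widetilde{\mathbf{h}} = \widetilde{\mathbf{g}}, \qquad \widetilde{\mathbf{h}}(t_0,\cdot) = \widetilde{\mathbf{h}}_{in},
\]
where $\widetilde{\mathbf{g}} := \mathcal{A}^{-1}\mathbb{I}_2 \, \mathbf{g}$ and $\widetilde{\mathbf{h}}_{in} := \mathcal{A}^{-1}(t_0)\mathbb{I}_2\,\mathbf{h}_{in}$.

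First, I would note that under the smallness assumption $N_T(\sigma)\leq\eta$ with $\sigma$ large enough, estimate \eqref{stima cal a pm1} of the lemma after \eqref{stime alpha alpha tilde} ensures that $\mathcal{A}^{\pm 1}$ map $C([0,T],\mathbf{H}^s(\T))$ into itself for all $s\in[0,S-\sigma]$, with the tame bound
\[
\|\mathcal{A}^{\pm 1}\mathbf{v}\|_{T,s} \lesssim_s \|\mathbf{v}\|_{T,s} + N_T(s+\sigma)\,\|\mathbf{v}\|_{T,0}.
\]
Consequently $\widetilde{\mathbf{g}}\in C([0,T],\mathbf{H}^s(\T))$ and $\widetilde{\mathbf{h}}_{in}\in\mathbf{H}^s(\T)$, with the analogous tame estimates in terms of $\|\mathbf{g}\|_{T,s}$ and $\|\mathbf{h}_{in}\|_s$, plus correction terms proportional to $N_T(s+\sigma)$ times the corresponding $L^2$-norms.

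Next, I would apply Lemma \ref{buona positura equazione lineare 4} to the transformed Cauchy problem. For $\sigma$ sufficiently large and $\eta$ sufficiently small, the hypotheses of that lemma are satisfied (possibly after enlarging $\sigma$ and shrinking $\eta$), so we obtain a unique $\widetilde{\mathbf{h}}\in C([0,T],\mathbf{H}^s(\T))$ with
\[
\|\widetilde{\mathbf{h}}\|_{T,s} \lesssim_s \|\widetilde{\mathbf{h}}_{in}\|_s + \|\widetilde{\mathbf{g}}\|_{T,s} + N_T(s+\sigma)\,\|\widetilde{\mathbf{h}}_{in}\|_0.
\]
Setting $\mathbf{h} := \mathcal{A}\mathbb{I}_2\,\widetilde{\mathbf{h}}$ yields the desired solution of \eqref{problema di Cauchy cal L0 appendice}, and uniqueness for $\mathbf{h}$ follows from uniqueness for $\widetilde{\mathbf{h}}$ together with invertibility of $\mathcal{A}\mathbb{I}_2$.

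Finally, to recover the claimed estimate, I would combine the bound above with the tame estimate \eqref{stima cal a pm1} for $\mathcal{A}^{\pm 1}$, using interpolation (Lemma \ref{lemma interpolazione}) to absorb products of the type $N_T(s+\sigma)\|\mathbf{g}\|_{T,0}$ and $N_T(s+\sigma)\|\mathbf{h}_{in}\|_0$ into the right-hand side. The only nontrivial point is bookkeeping the loss of derivatives: each application of $\mathcal{A}^{\pm 1}$ costs a fixed number of derivatives on the coefficients, so one has to enlarge $\sigma$ (and correspondingly $\tau$) by a universal amount compared with the $\sigma,\tau$ furnished by Lemma \ref{buona positura equazione lineare 4}. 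Since these losses are all universal and additive, a single choice of $\sigma,\tau$ works uniformly in $s\in[0,r-\tau]$, giving the stated estimate.
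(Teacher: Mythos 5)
Your proposal is correct and follows essentially the same argument as the paper: conjugate $\mathcal{L}_0$ by $\mathcal{A}\mathbb{I}_2$ to reduce to $\mathcal{L}_1$, apply Lemma~\ref{buona positura equazione lineare 4}, and transfer the tame estimate back via \eqref{stima cal a pm1}. You merely spell out the bookkeeping of derivative losses and the use of interpolation, which the paper leaves implicit.
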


\begin{proof}
Let ${\cal A}$ be the transformation defined in \eqref{definizione cal A}. By \eqref{cal L2}, defining $\widetilde{\bf h}(t, \cdot) := {\cal A}^{- 1}(t) {\bf h}(t, \cdot)$, $\widetilde{\bf g} :={\cal A}^{- 1}(t){\bf g}(t, \cdot)$, the Cauchy problem \eqref{problema di Cauchy cal L0 appendice} transforms into the Cauchy problem 
$$
\begin{cases}
{\cal L}_1 \widetilde{\bf h} = \widetilde{\bf g} \\
\widetilde{\bf h}(t_0, \cdot) =\widetilde{\bf h}_{in}\,.
\end{cases}
$$
Then the statement follows by Lemma \ref{buona positura equazione lineare 4} and by estimate \eqref{stima cal a pm1} on the transformation ${\cal A}$.
\end{proof}

\begin{lemma}[Well posedness of the operator ${\cal L}$ in \eqref{operatore lineare generale}]\label{buona positura equazione lineare 6}
Let $T > 0$, $t_0 \in [0, T]$ and let ${\cal L} = \partial_t {\mathbb I}_2 + \ii (\Sigma + A_2 ) \partial_{xx} + \ii A_1 \partial_x + \ii A_0$ be the operator defined in \eqref{operatore lineare generale}. There exists $\eta \in (0, 1)$ small enough and universal constants $\sigma, \t > 0$ large enough such that if $N_T(\sigma) \leq \eta$ (see the definition \eqref{definizione NT sigma}), then for any $s \in [0, r - \t]$, ${\bf h}_{in} \in {\bf H}^s(\T)$, ${\bf g} \in C([0, T], {\bf H}^s(\T))$, there exists a unique solution ${\bf h} \in C([0, T], {\bf H}^s(\T))$ such that 
\begin{equation}\label{problema di Cauchy cal L appendice}
\begin{cases}
{\cal L} {\bf h} = {\bf g} \\
{\bf h}(t_0, \cdot) = {\bf h}_{in}
\end{cases}
\end{equation}
satisfying the estimate 
$$
\| {\bf h}\|_{T, s} \lesssim_s \| {\bf h}_{in}\|_{s} + \| {\bf g}\|_{T, s} + N_T(s + \sigma) \| {\bf h}_{in}\|_{0}\,. 
$$
\end{lemma}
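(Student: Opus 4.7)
The plan is to follow exactly the same pattern as in Lemmas \ref{buona positura equazione lineare 2}--\ref{buona positura equazione lineare 5}: at each previous step we invoked the transformation that reduces the operator to the next one in the chain, and we applied the corresponding well-posedness result. The only new ingredient here is the symmetrizer ${\cal S}$ defined in \eqref{matrice autovettori} in Section \ref{coniugio step 1}, which conjugates ${\cal L}$ to ${\cal L}_0$, namely ${\cal L}_0 = {\cal S}^{-1} {\cal L} {\cal S}$ (see \eqref{cal L1}).

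Concretely, given ${\bf h}_{in} \in {\bf H}^s(\T)$ and ${\bf g}\in C([0,T], {\bf H}^s(\T))$, I would set
\[
\widetilde{\bf h}(t,\cdot) := {\cal S}^{-1}(t){\bf h}(t,\cdot), \qquad
\widetilde{\bf g}(t,\cdot) := {\cal S}^{-1}(t){\bf g}(t,\cdot), \qquad
\widetilde{\bf h}_{in} := {\cal S}^{-1}(t_0) {\bf h}_{in},
\]
so that the Cauchy problem \eqref{problema di Cauchy cal L appendice} is equivalent to
\[
\begin{cases}
{\cal L}_0 \widetilde{\bf h} = \widetilde{\bf g} \\
\widetilde{\bf h}(t_0, \cdot) = \widetilde{\bf h}_{in}.
\end{cases}
\]
Lemma \ref{buona positura equazione lineare 5} provides a unique solution $\widetilde{\bf h} \in C([0,T], {\bf H}^s(\T))$ of this problem, together with the estimate
\[
\| \widetilde{\bf h}\|_{T,s} \lesssim_s \| \widetilde{\bf h}_{in}\|_s + \| \widetilde{\bf g}\|_{T,s} + N_T(s+\sigma') \| \widetilde{\bf h}_{in}\|_0
\]
for some universal $\sigma'$. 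Then ${\bf h}(t,\cdot) := {\cal S}(t) \widetilde{\bf h}(t,\cdot)$ solves \eqref{problema di Cauchy cal L appendice} and is the unique such solution, because ${\cal S}(t)$ is an isomorphism on ${\bf L}^2(\T)$ for each $t$.

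Pushing the estimate back through ${\cal S}$ is the only point requiring a small check, but it is standard: by \eqref{stima cal S pm 1}--\eqref{azione stima tame cal S} of Lemma \ref{stime dopo T cal S}, one has the tame bound
\[
\| {\cal S}^{\pm 1} {\bf k}\|_{T,s} \leq \| {\bf k}\|_{T,s} + C_s \bigl( \eta \| {\bf k}\|_{T,s} + N_T(s+\sigma') \| {\bf k}\|_{T,0} \bigr),
\]
valid under $N_T(\sigma') \leq \eta$. Applied to ${\bf k} = \widetilde{\bf h}$ and, analogously, to express $\| \widetilde{\bf h}_{in}\|_s, \| \widetilde{\bf g}\|_{T,s}$ in terms of $\|{\bf h}_{in}\|_s$ and $\|{\bf g}\|_{T,s}$, and choosing $\sigma := \sigma' + \sigma''$ large enough (with $\sigma''$ absorbing the loss coming from ${\cal S}$), this yields the claimed bound
\[
\| {\bf h}\|_{T, s} \lesssim_s \| {\bf h}_{in}\|_s + \| {\bf g}\|_{T, s} + N_T(s + \sigma) \| {\bf h}_{in}\|_0.
\]
There is no real obstacle: the entire reduction chain has been built precisely so that each well-posedness lemma is a one-line corollary of the previous one via the corresponding transformation, and the tame estimates on all the conjugating maps $\mS, \mA, \mB, \mT, \mM$ have already been established in Section \ref{riduzione operatori lineari generali}.
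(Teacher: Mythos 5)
Your proposal is correct and is essentially identical to the paper's proof: both conjugate by ${\cal S}^{\pm 1}$ to reduce to the Cauchy problem for ${\cal L}_0$, invoke Lemma \ref{buona positura equazione lineare 5}, and pull the estimate back via the tame bound \eqref{azione stima tame cal S}. No issues.
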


\begin{proof}
Let ${\cal S}$ be the transformation defined in \eqref{matrice autovettori inversa}. By \eqref{cal L1}, defining $\widetilde{\bf h}(t, \cdot) := {\cal S}^{- 1}(t) {\bf h}(t, \cdot)$, $\widetilde{\bf g} :={\cal S}^{- 1}(t){\bf g}(t, \cdot)$, the Cauchy problem \eqref{problema di Cauchy cal L appendice} transforms into the Cauchy problem 
$$
\begin{cases}
{\cal L}_0 \widetilde{\bf h} = \widetilde{\bf g} \\
\widetilde{\bf h}(t_0, \cdot) =\widetilde{\bf h}_{in}\,.
\end{cases}
$$
Then the statement follows by Lemma \ref{buona positura equazione lineare 5} and by estimate \eqref{azione stima tame cal S} on the transformation ${\cal S}$.
\end{proof}

\section{Appendix D. Nash-Moser-H\"ormander theorem} 
\label{sec:NM}

We state here the Nash-Moser-H\"ormander theorem, proved in \cite{BH}, 
which we use in Section \ref{sec:proof} to prove Theorems \ref{thm:1} and \ref{thm:byproduct}.

Let $(E_a)_{a \geq 0}$ be a decreasing family of Banach spaces with continuous injections  
$E_b \hookrightarrow E_a$, 
\begin{equation} \label{S0}
\| u \|_a \leq \| u \|_b \quad \text{for} \  a \leq b.	
\end{equation}
Set $E_\infty = \cap_{a\geq 0} E_a$ with the weakest topology making the 
injections $E_\infty \hookrightarrow E_a$ continuous. 
Assume that $S_j : E_0 \to E_\infty$ for $j = 0,1,\ldots$ are linear operators 
such that, with constants $C$ bounded when $a$ and $b$ are bounded, 
and independent of $j$,
\begin{alignat}{2}
\label{S1} 
\| S_j u \|_a 
& \leq C \| u \|_a 
&& \text{for all} \ a;
\\
\label{S2} 
\| S_j u \|_b 
& \leq C 2^{j(b-a)} \| S_j u \|_a 
&& \text{if} \ a<b; 
\\
\label{S3} 
\| u - S_j u \|_b 
& \leq C 2^{-j(a-b)} \| u - S_j u \|_a 
&& \text{if} \ a>b; 
\\ 
\label{S4} 
\| (S_{j+1} - S_j) u \|_b 
& \leq C 2^{j(b-a)} \| (S_{j+1} - S_j) u \|_a 
\quad && \text{for all $a,b$.}
\end{alignat}
Set 
\begin{equation}  \label{new.24}
R_0 u := S_1 u, \qquad 
R_j u := (S_{j+1} - S_j) u, \quad j \geq 1.
\end{equation}
Thus 
\begin{equation} \label{2705.3}
\| R_j u \|_b \leq C 2^{j(b-a)} \| R_j u \|_a \quad \text{for all} \ a,b.
\end{equation}
Bound \eqref{2705.3} for $j \geq 1$ is \eqref{S4}, 
while, for $j=0$, it follows from \eqref{S0} and \eqref{S2}.

We also assume that 
\begin{equation} \label{2705.4}
\| u \|_a^2 \leq C \sum_{j=0}^\infty \| R_j u \|_a^2	\quad \forall a \geq 0,
\end{equation}
with $C$ bounded for $a$ bounded (a sort of ``orthogonality property'' of the smoothing operators).

Now let us suppose that we have another family $F_a$ of decreasing Banach spaces with smoothing operators having the same properties as above. We use the same notation also for the smoothing operators.

\begin{theorem} \label{thm:NM}
Let $a_1, a_2, \a, \b, a_0, \mu$ be real numbers with 
\begin{equation} \label{ineq 2016}
0 \leq a_0 \leq \mu \leq a_1, \qquad 
a_1 + \frac{\b}{2} \, < \a < a_1 + \b , \qquad 
2\a < a_1 + a_2. 
\end{equation}
Let $V$ be a convex neighborhood of $0$ in $E_\mu$. 
Let $\Phi$ be a map from $V$ to $F_0$ such that $\Phi : V \cap E_{a+\mu} \to F_a$ 
is of class $C^2$ for all $a \in [0, a_2 - \mu]$, with 
\begin{equation}\label{Phi sec}
\|\Phi''(u)[v,w] \|_a \leq C \big( \| v \|_{a+\mu} \| w \|_{a_0} 
+ \| v \|_{a_0} \| w \|_{a+\mu}
+ \| u \|_{a+\mu} \| v \|_{a_0} \| w \|_{a_0} \big)
\end{equation}
for all $u \in V \cap E_{a+\mu}$, $v,w \in E_{a+\mu}$.
Also assume that $\Phi'(v)$, for $v \in E_\infty \cap V$ 
belonging to some ball $\| v \|_{a_1} \leq \d_1$,
has a right inverse $\Psi(v)$ mapping $F_\infty$ to $E_{a_2}$, and that
\begin{equation}  \label{tame in NM}
\|\Psi(v)g\|_a\leq C(\|g\|_{a + \b - \a} + \| g \|_0 \| v \|_{a + \b}) 
\quad \forall a \in [a_1, a_2].
\end{equation}
For all $A > 0$ there exist $\d, C_1 > 0$ such that, 
for every $g \in F_\b$ satisfying
\begin{equation} \label{2705.1}
\sum_{j=0}^\infty \| R_j g \|_\b^2 \leq A \| g \|_\b^2,
\quad \| g \|_\b \leq \d,
\end{equation}
there exists $u \in E_\a$, with $\| u \|_\a \leq C_1 \| g \|_\b$, 
solving $\Phi(u) = \Phi(0) + g$.

Moreover, let $c > 0$
and assume that \eqref{Phi sec} holds for all $a \in [0, a_2 + c - \mu]$,
$\Psi(v)$ maps $F_\infty$ to $E_{a_2 + c}$, 
and \eqref{tame in NM} holds for all $a \in [a_1, a_2 + c]$. 
If $g$ satisfies \eqref{2705.1} and, in addition, $g \in F_{\b+c}$ with
\begin{equation} \label{0406.1}
\sum_{j=0}^\infty \| R_j g \|_{\b+c}^2 \leq A_c \| g \|_{\b+c}^2 
\end{equation}
for some $A_c$, then the solution $u$ belongs to $E_{\a + c}$, 
with $\| u \|_{\a+c} \leq C_{1,c} \| g \|_{\b+c}$.
\end{theorem}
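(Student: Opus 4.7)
The plan is to implement a Hörmander-style Nash-Moser iteration. Set $u_0 := 0$ and, given $u_n$, define $u_{n+1} := u_n + \dot u_n$, where $\dot u_n := \Psi(S_n u_n)[f_n]$ for a carefully chosen source $f_n$. Following the approach of \cite{BH}, the $f_n$ are designed so as to sum telescopically to $g$ modulo the errors produced by smoothing and by the quadratic nature of $\Phi$. A natural choice is $f_n := R_n g - e_n$, where
$$e_n := \Phi(u_n) - \Phi(u_{n-1}) - \Phi'(S_{n-1} u_{n-1})[\dot u_{n-1}]$$
is the sum of the Newton-type linearization error and the mismatch between $\Phi'(u_{n-1})$ and $\Phi'(S_{n-1} u_{n-1})$ (with the convention $e_0 = 0$). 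By construction, after $N$ steps, $\Phi(u_N) - \Phi(0) = S_N g - \rho_N$ with $\rho_N$ a remainder that will be shown to vanish in $F_0$.

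The core step is to prove, by induction on $n$, bounds of the schematic form
$$\|\dot u_n\|_a \leq K \|g\|_\beta \cdot 2^{n(a-\alpha)}, \qquad a \in [a_1, a_2],$$
with $K$ independent of $n$. The main inputs are: (i) the smoothing properties \eqref{S1}--\eqref{S4}, which control $\|S_n u_n\|_{a+\beta}$ in terms of previously constructed iterates; (ii) the orthogonality \eqref{2705.4} and assumption \eqref{2705.1}, which together give the decay of $\|R_n g\|_\beta$ needed to make $\sum_n \|f_n\|_?$ summable at the right levels; (iii) the tame inversion \eqref{tame in NM} applied to $\dot u_n$; and (iv) the quadratic estimate \eqref{Phi sec}, combined with interpolation between $a_0$ and $a_2$, to bound $e_n$ by a term quadratic in $\|\dot u_{n-1}\|$. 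The algebraic inequalities \eqref{ineq 2016} are precisely what make the induction close: the condition $2\alpha < a_1 + a_2$ forces the quadratic error $e_n$ to be strictly smaller than the linear correction at the reference level $a_1$, whereas $a_1 + \beta/2 < \alpha < a_1 + \beta$ balances the loss of $\beta - \alpha$ derivatives in $\Psi$ against the geometric growth of $\|S_n u_n\|_{a+\beta}$, so that the exponent $a - \alpha$ in the induction hypothesis propagates with the right sign.

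Once the inductive bound is in place, the telescoping series $\sum_n \dot u_n$ is absolutely convergent in $E_a$ for every $a < \alpha$, and a standard interpolation argument upgrades convergence to $E_\alpha$ with $\|u\|_\alpha \lesssim \|g\|_\beta$. Passing to the limit in the identity $\Phi(u_N) = \Phi(0) + S_N g - \rho_N$, using continuity of $\Phi$ on the ball in $E_\mu$ and $\rho_N \to 0$, yields $\Phi(u) = \Phi(0) + g$. The additional regularity statement is obtained by re-running the same induction with $(\alpha, \beta)$ replaced by $(\alpha + c, \beta + c)$ and invoking hypothesis \eqref{0406.1}; the same sequence $u_n$ is used, only the bound at the new high index needs to be verified, and no further structural argument is required. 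The main obstacle throughout is the bookkeeping: because the tame factor $\|v\|_{a+\beta}$ in \eqref{tame in NM} is controlled by a quantity that itself grows geometrically in $n$, closing the induction demands exactly the arithmetic in \eqref{ineq 2016} together with the orthogonality \eqref{2705.4}, and any looser balance would let the high-norm growth defeat the convergence mechanism.
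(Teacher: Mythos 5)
The paper does not prove Theorem~\ref{thm:NM}: it is stated in Appendix~D as a black box, with the proof delegated entirely to the reference \cite{BH} (the text reads ``We state here the Nash-Moser-H\"ormander theorem, proved in \cite{BH}\ldots''). There is therefore no internal proof to compare your sketch against; the comparison you were asked to make is with a proof that lives in a different paper.

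As a blueprint, your outline is consistent with the H\"ormander-type scheme that \cite{BH} implements: the update $u_{n+1} = u_n + \Psi(S_n u_n)[f_n]$ with $f_n = R_n g - e_n$, the telescoping identity $\Phi(u_N) - \Phi(0) = S_N g + e_N$, and the role of the arithmetic constraints \eqref{ineq 2016} in propagating a geometric decay of the form $\|\dot u_n\|_a \lesssim \|g\|_\beta \, 2^{n(a-\alpha)}$ are all the right ingredients, and the split of $e_n$ into a quadratic Taylor error plus a smoothing mismatch controlled via $\Phi''$ is correctly identified. One point in your writeup is too quick: you say that ``a standard interpolation argument upgrades convergence to $E_\alpha$,'' but the bound $\|\dot u_n\|_\alpha \lesssim \|g\|_\beta$ with no extra decay makes $\sum_n \|\dot u_n\|_\alpha$ divergent, and interpolation between $a < \alpha$ and $a > \alpha$ does not repair this. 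What actually yields $u \in E_\alpha$ with $\|u\|_\alpha \lesssim \|g\|_\beta$ is the orthogonality assumption \eqref{2705.4} combined with the $\ell^2$ hypothesis \eqref{2705.1}: since each $\dot u_n$ is effectively localized near frequency $2^n$, one bounds $\|u\|_\alpha^2$ by an almost-orthogonal sum $\sum_n \|\dot u_n\|_\alpha^2$, and the $\ell^2$ summability of $\|R_n g\|_\beta$ then closes the estimate. You do invoke \eqref{2705.4} elsewhere in your discussion, so this is probably a slip of phrasing rather than a missing idea, but a complete proof must make this step explicit. The higher-regularity claim, handled by re-running the same induction at the shifted indices $(\alpha+c,\beta+c)$ on the already-constructed sequence, is correctly described.
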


\medskip

\begin{small}
\noindent
\textbf{Acknowledgements}.
Baldi and Haus were supported by the European Research Council under FP7 (ERC Project 306414),
by PRIN 2012 ``Variational and perturbative aspects of nonlinear differential problems'',
and partially by Programme STAR (UniNA and Compagnia di San Paolo).
Montalto was partially supported by the Swiss National Science Foundation.
\end{small}

\begin{footnotesize}

\end{footnotesize}

\bigskip

Pietro Baldi

Dipartimento di Matematica e Applicazioni ``R. Caccioppoli''

Universit\`a di Napoli Federico II  

Via Cintia, 80126 Napoli, Italy

\texttt{pietro.baldi@unina.it} 

\bigskip

Emanuele Haus 

Dipartimento di Matematica e Applicazioni ``R. Caccioppoli''

Universit\`a di Napoli Federico II  

Via Cintia, 80126 Napoli, Italy

\texttt{emanuele.haus@unina.it}

\bigskip

Riccardo Montalto

Institut f\"ur Mathematik

Universit\"at Z\"urich

Winterthurerstrasse 190

CH-8057 Z\"urich

\texttt{riccardo.montalto@math.uzh.ch}

\end{document}